\documentclass[11pt]{amsart}
\usepackage{amsmath,latexsym,amsfonts,amssymb,amsthm}
\usepackage{geometry}
\geometry{a4paper,top=3.5cm,bottom=3.8cm,left=2.5cm,right=2.5cm}
\usepackage{mathrsfs}
\usepackage{graphicx,color}
\usepackage{tikz-cd}
\usepackage[all,cmtip]{xy}
\usepackage{enumitem}
\usepackage{verbatim}
\usepackage{bbm}
\usepackage{bm}
\usepackage{mathtools}
\usepackage[colorlinks=true, linkcolor=red!80!black, urlcolor=purple, citecolor=blue!70!black]{hyperref}


\numberwithin{equation}{section}
\newtheorem{theorem}{Theorem}[section]
\newtheorem{prop}[theorem]{Proposition}

\newtheorem{thm}[theorem]{Theorem}
\newtheorem{lem}[theorem]{Lemma}

\theoremstyle{definition}

\newtheorem{definition}[theorem]{Definition}
 
 \newtheorem{defn}[theorem]{Definition}

\newtheorem{remark}[theorem]{Remark}

\newtheorem{rem}[theorem]{Remark}

\newcommand{\bC}{\mathbb{C}}

\newcommand{\bQ}{\mathbb{Q}}
\newcommand{\bR}{\mathbb{R}}
\newcommand{\bG}{\mathbb{G}}

\newcommand{\calL}{\mathcal{L}}

\newcommand{\calQ}{\mathcal{Q}}
\newcommand{\calX}{\mathcal{X}}

\newcommand{\Hilb}{\mathrm{Hilb}}

\newcommand{\PGL}{\mathrm{PGL}}

\newcommand{\Hom}{\mathrm{Hom}}

\newcommand{\Proj}{\mathrm{Proj}}

\newcommand{\Pic}{\mathrm{Pic}}
\newcommand{\Aut}{\mathrm{Aut}}

\renewcommand{\arraystretch}{1.2}

\newcommand{\bP}{\mathbb{P}}
\newcommand{\bA}{\mathbb{A}}
\newcommand{\cS}{\mathcal{S}}

\newcommand{\SL}{\mathrm{SL}}

\newcommand{\calO}{\mathcal{O}}
\newcommand{\cO}{\mathcal{O}}

\newcommand{\cP}{\mathcal{P}}

\usepackage{graphicx}
\hyphenpenalty=9001
\relpenalty=10000
\binoppenalty=10000
\newcommand{\sslash}{\mathbin{/\mkern-6mu/}}

\newcommand{\bZ}{\mathbb{Z}}

\newcommand{\Chow}{\operatorname{Chow}}

\newcommand{\cX}{\mathcal X}
\newcommand{\cD}{\mathcal D}
\newcommand{\cT}{\mathcal T}
\newcommand{\cU}{\mathcal{U}}

\newcommand{\Supp}{\textrm{Supp}}

\newcommand{\rank}{\mathrm{rank}}

\newcommand{\hvol}{\widehat{\mathrm{vol}}}

\newcommand{\ord}{\mathrm{ord}}
\newcommand{\Val}{\mathrm{Val}}

\newcommand{\vol}{\mathrm{vol}}

\newcommand{\ind}{\mathrm{ind}}
\newcommand{\GIT}{\mathrm{GIT}}
\newcommand{\K}{\mathrm{K}}

\newcommand{\CM}{\mathrm{CM}}
\newcommand{\CH}{\mathrm{CH}}
\newcommand{\klt}{\mathrm{klt}}

\newcommand{\lct}{\mathrm{lct}}

\newcommand{\cM}{\mathcal M}

\newcommand{\cL}{\mathcal L}
\newcommand{\cK}{\mathcal{K}}

\newcommand{\Fut}{\mathrm{Fut}}

\newcommand{\bN}{\mathbb{N}}
\newcommand{\ocX}{\overline{\cX}}
\newcommand{\ocD}{\overline{\cD}}

\newcommand{\calC}{\mathcal C}

\newcommand{\bF}{\mathbb{F}}

\newcommand{\oN}{\overline{N}}

\newcommand{\oMK}{\overline{\mathcal{K}}}

\newcommand{\ocK}{\overline{\mathcal{K}}}
\newcommand{\ocL}{\overline{\mathcal{L}}}

\newcommand{\Hodge}{\mathrm{Hodge}}

\newcommand{\codim}{\textrm{codim}}

\newcommand{\sM}{\mathscr{M}}
\newcommand{\oK}{\overline{K}}
\newcommand{\red}{\mathrm{red}}
\newcommand{\tchi}{\tilde{\chi}}
\newcommand{\bs}{\mathrm{bs}}
\newcommand{\QF}{\mathbb{Q}\mathrm{F}}
\newcommand{\pr}{\mathrm{pr}}
\newcommand{\Cl}{\mathrm{Cl}}
\newcommand{\bfP}{\mathbf{P}}
\newcommand{\fM}{\mathfrak{M}}

\newcommand{\sP}{\mathscr{P}}
\newcommand{\sX}{\mathscr{X}}
\newcommand{\sD}{\mathscr{D}}
\newcommand{\sL}{\mathscr{L}}
\newcommand{\sF}{\mathscr{F}}
\newcommand{\chow}{\mathrm{chow}}
\newcommand{\hsF}{\widehat{\sF}}
\newcommand{\oL}{\overline{L}}
\newcommand{\sC}{\mathscr{C}}

\newcommand{\fp}{\mathfrak{p}}

\begin{document}
\title{K-moduli of curves on a quadric surface and K3 surfaces}
\author[Ascher]{Kenneth Ascher}
\address{Department of Mathematics, Princeton University, Princeton, NJ 08544, USA.}
\email{kascher@princeton.edu}
\author[DeVleming]{Kristin DeVleming}
\address{Department of Mathematics, University of California, San Diego, La Jolla, CA 92093, USA.}
\email{kdevleming@ucsd.edu}
\author[Liu]{Yuchen Liu}
\address{Department of Mathematics, Yale University, New Haven, CT 06511, USA.}
\email{yuchen.liu@yale.edu}
\date{\today}

\begin{abstract}
We show that the K-moduli spaces of log Fano pairs $(\bP^1\times\bP^1, cC)$ where $C$ is a $(4,4)$-curve and their wall crossings coincide with the VGIT quotients of $(2,4)$ complete intersection curves in $\bP^3$. This, together with recent results by Laza-O'Grady, implies that these K-moduli spaces form a natural interpolation between the GIT moduli space of $(4,4)$-curves on $\bP^1\times\bP^1$ and the Baily-Borel compactification of moduli of quartic hyperelliptic K3 surfaces.
\end{abstract}

\maketitle{}
\setcounter{tocdepth}{1}
\tableofcontents

\section{Introduction}
The moduli space $\sF$ of polarized K3 surfaces is often constructed as the arithmetic quotient of a Hermitian symmetric domain, and comes with a natural Baily-Borel compactification $\sF \subset \sF^*$. A long standing problem has been to compare this compactification with other compactifications which carry a more geometric meaning, such as those coming from Geometric Invariant Theory (GIT). In particular, if $\fM$ denotes a GIT compactification, there is often a birational period map $\fp: \fM \dashrightarrow \sF^*$ thanks to the global Torelli theorem for K3 surfaces, and a natural question is whether this map can be resolved in a modular way. 

The case of degree two K3 surfaces was worked out by Shah \cite{Sha80} and Looijenga \cite{Loo86}. In particular, Shah constructs a space $\widehat{\fM}$ as a partial Kirwan desingularization of the GIT quotient $\fM$, which Looijenga shows is simultaneously a small partial resolution of $\sF^*$ (a semi-toric compactification in the language of \cite{Loo2}). In particular, there is one space that interpolates between the GIT and Baily-Borel compacitfications. A far-reaching conjectural generalization is proposed by Laza and O'Grady in \cite{LO16}. When $\sF$ is a Type IV locally symmetric variety associated to a lattice of the form $U^2 \oplus D_{N-2}$ (e.g. hyperelliptic quartic K3 surfaces when $N=18$, quartic K3 surfaces when $N=19$, or double EPW-sextics when $N=20$), they conjecture a systematic way to resolve the period map $\fp$ via a series of birational transformations governed by certain divisors present in $\sF^*$. They confirm their conjectures in the case of hyperelliptic quartic K3 surfaces in \cite{LO} (i.e. when $N=18$); we briefly review some of their results (see Section \ref{sec:LOG-VGIT} for a more detailed discussion). 

Let $C$ be a smooth curve in $\bP^1\times\bP^1$ of bidegree $(4,4)$, and let $\pi: X_C \to \bP^1 \times \bP^1$ be the double cover of the quadric surface branched along $C$. The resulting surface $X_C$ is a smooth hyperelliptic polarized K3 surface of degree four, whose polarization is given by the pullback $\pi^*(\calO_{\bP^1}(1) \boxtimes \calO_{\bP^1}(1))$. The corresponding period domain gives a moduli space $\sF \subset \sF^*$. If $\fM:=|\cO_{\bP^1\times\bP^1}(4,4)|\sslash\Aut(\bP^1\times\bP^1)$ denotes the GIT quotient of $(4,4)$ curves on $\bP^1 \times \bP^1$, then there is a birational period map $\fp: \fM \dashrightarrow \sF^*$. In \cite{LO}, Laza and O'Grady described the birational map $\fp$ as a series of explicit wall crossings. Let $\lambda$ denote the Hodge line bundle on $\sF$, and let $\Delta$ = $H/2$, where $H$ is the Heegner divisor parametrizing periods of K3 surfaces which are double covers of a quadric cone.
In this setting, Laza-O'Grady show that one can interpolate between $\sF^*$ and $\fM$ by considering $\sF(\beta) := \Proj R(\lambda + \beta \Delta)$ and varying $ 0 \leq \beta \leq 1$. One aspect of their proof is a variation of GIT (VGIT) study on the moduli space of $(2,4)$-complete intersection curves in $\bP^3$.  Denoting this space by $\fM(t)$, the authors show that each step $\sF(\beta)$ can be realized as the VGIT moduli space $\fM(t)$ for some specific $t(\beta)$. 

If $c\in (0,\frac{1}{2})$ is a rational number, then $(\bP^1 \times \bP^1, cC)$ is a log Fano pair. Recently, it has become apparent that K-stability provides a natural framework to construct compactifications of moduli spaces of log Fano pairs (see e.g. \cite{ADL} or Section \ref{sec:Kmoduli}). With this in mind, our goal in this paper is to use this theory to construct alternative compactifications of the moduli space of smooth $(4,4)$ curves. The framework to study K-moduli of log Fano pairs was established in \cite{ADL}, where we constructed proper good moduli spaces parametrizing $\bQ$-Gorenstein smoothable K-polystable log Fano pairs $(X, cD)$, where $D$ is a rational multiple of $-K_X$ and $c$ is a rational number. Furthermore, we showed that the moduli spaces undergo  wall crossings as the weight $c$ varies. 

Let $\ocK_{c}$ be the connected component of the moduli stack parametrizing K-semistable log Fano pairs which admit $\bQ$-Gorenstein smoothings to $(\bP^1 \times \bP^1, c C)$, where $C$ is a $(4,4)$ curve. By \cite{ADL}, the moduli stack $\oMK_c$ admits a proper good moduli space $\oK_c$. The goal of this paper is to show that this K-moduli space $\oK_c$, and the wall crossings obtained by varying the weight vector $c$, coincide with the wall crossings given by the VGIT $\fM(t)$ under the correspondence $t=\frac{3c}{2c+2}$. In particular, varying the weight $c$ on the K-moduli space $\oK_c$ interpolates between $\fM$ and $\sF^*$, and gives the intermediate spaces an alternative modular meaning.

\begin{theorem}\label{mthm:thmintro}\leavevmode
Let $\ocK_c$ be the moduli stack parametrizing K-semistable (resp. K-polystable) log Fano pairs $(X,cD)$ admitting $\bQ$-Gorenstein smoothings to $(\bP^1\times\bP^1, cC)$ where $C$ is a smooth $(4,4)$ curve. Let $\sM$ be the GIT quotient stack  of $(4,4)$ curves on $\bP^1 \times \bP^1$. Let $\sM(t)$ be the VGIT quotient stack of $(2,4)$ complete intersection curves in $\bP^3$ of slope $t$ (see Definition \ref{def:VGIT}).
\begin{enumerate}
    \item Let $c \in (0, \frac{1}{8})$ be a rational number. Then there is an isomorphism of Artin stacks $\oMK_c \cong \sM$. In particular, a $(4,4)$-curve $C$ on $\bP^1\times\bP^1$ is GIT (poly/semi)semistable if and only if $(\bP^1\times\bP^1, cC)$ is K-(poly/semi)stabile. 
\item Let $c \in (0, \frac{1}{2})$ be a rational number. Then there is an isomorphism of Artin stacks $\oMK_c\cong \sM(t)$  with $t=\frac{3c}{2c+2}$. Moreover, such isomorphisms commute with the wall crossing morphisms for K-moduli stacks $\oMK_c$ and GIT moduli stacks $\sM(t)$.
\end{enumerate}
Moreover, all isomorphisms descend to the level of good moduli spaces. 
\end{theorem}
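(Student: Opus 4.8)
The plan is to present both families of stacks as quotient stacks of a single parameter space and then to identify their stability conditions by matching the CM line bundle on the K-side with the VGIT polarization on the other. The geometric bridge is that a $(2,4)$ complete intersection curve in $\bP^3$ is exactly a quartic section of a quadric surface $Q \subset \bP^3$: when $Q$ is smooth it is $\bP^1 \times \bP^1$ carrying a $(4,4)$ curve, and when $Q$ degenerates to a quadric cone one recovers the boundary objects of the K-moduli. I would therefore begin by setting up the parameter space $\calP \subset \bP\big(H^0(\bP^3, \cO(2))\big) \times \bP\big(H^0(\bP^3, \cO(4))\big)$ of such pairs, together with its $\SL_4$-action, the slope-$t$ linearizations defining $\sM(t)$, and the associated $\bQ$-Gorenstein family of log Fano pairs $(\cX, c\cD) \to \calP$ whose fibers are the (possibly degenerate) quadrics with their boundary curves.

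For part (1), I would show that for $c \in (0, \tfrac18)$ the K-stability of $(\bP^1 \times \bP^1, cC)$ is a small perturbation of the K-polystability of the toric Fano $\bP^1 \times \bP^1$: in this weight range no test configuration degenerates the quadric, so the only constraints come from the boundary curve $C$. Since $\Aut(\bP^1 \times \bP^1) = (\PGL_2 \times \PGL_2) \rtimes \bZ/2$ acts exactly as in the GIT problem, K-(semi/poly)stability of the pair must then coincide with GIT-(semi/poly)stability of $C$, giving $\ocK_c \cong \sM$. The essential input is a $\delta$-invariant (equivalently, Futaki) estimate confirming that for $c$ small the stability of the pair is governed entirely by the curve and that the quadric cannot degenerate.

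For part (2), the core is a comparison of polarizations on $\calP$. I would compute the CM $\bQ$-line bundle $\Lambda_{\CM,c}$ of the family $(\cX, c\cD) \to \calP$ as an explicit $\bQ$-combination of the two tautological classes pulled back from the quadric and quartic factors, and likewise express the slope-$t$ VGIT polarization; matching them up to scaling and $\SL_4$-linearization is precisely what yields the relation $t = \tfrac{3c}{2c+2}$. The decisive step is then to prove that, on $\calP$, K-semistability coincides with GIT-semistability with respect to $\Lambda_{\CM,c}$. One inclusion is formal: a 1-parameter subgroup of $\SL_4$ destabilizing in GIT produces a test configuration whose generalized Futaki invariant has the wrong sign, contradicting K-semistability. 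For the reverse I would argue that every K-semistable pair embeds as a fiber over $\calP$ and that test configurations arising from $\SL_4$ suffice to detect K-instability, so that GIT-semistability forces K-semistability. Granting the descent of $\Lambda_{\CM,c}$ to an ample class on $\oK_c$ from \cite{ADL}, the matching of polarizations then upgrades to an isomorphism $\oK_c \cong \fM(t)$ of good moduli spaces; checking that the two problems carry the same objects with the same stabilizers promotes this to the stack isomorphism $\ocK_c \cong \sM(t)$.

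I expect the main obstacle to be the equivalence of the two stability notions along the strictly semistable and degenerate loci, together with the wall-crossing compatibility. As $c$ crosses a wall the K-polystable representative changes---the quadric may degenerate to a cone, or the curve may acquire prescribed singularities---and one must verify that the VGIT prescribes the identical replacement as $t$ crosses the corresponding value $t(c)$. This demands a case-by-case classification of the K-polystable degenerations through explicit test configurations and their Futaki invariants, a parallel Hilbert--Mumford analysis on the VGIT side, and finally a check that the wall-crossing morphisms interpolating across each wall are intertwined by the isomorphisms built away from the walls. It is this \emph{compatibility of the wall-crossing diagrams}, rather than the identification over the generic locus, that I anticipate to be the delicate point.
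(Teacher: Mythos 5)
Your proposal has the right skeleton---a common parameter space of $(2,4)$ complete intersections, a CM-line-bundle versus VGIT-polarization computation yielding $t=\frac{3c}{2c+2}$, and an identification of the two stability notions---and this is indeed the paper's overall strategy. But the two steps you leave as assertions are precisely the two hard points. First, you assume that every K-semistable pair parametrized by $\oMK_c$ ``embeds as a fiber over $\calP$,'' i.e.\ that the underlying surface is a quadric in $\bP^3$. This is Theorem \ref{thm:surfaces}, and it occupies all of Section \ref{sec:surfaces}: one needs the local-to-global normalized volume inequality (Theorem \ref{thm:local-vol-global}) to bound Gorenstein indices (Theorem \ref{thm:indexbound}), the Hacking--Prokhorov classification of degenerations of $\bP^1\times\bP^1$ (Proposition \ref{prop:rhoindex}), and then case-by-case exclusion of $\bP(1,2,9)$ and of the singularities $\frac{1}{16}(1,3)$, $\frac{1}{25}(1,4)$, $\frac{1}{25}(1,9)$, $\frac{1}{4}(1,1)$. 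Your ``$\delta$-invariant estimate'' gestures at the easier statement needed for part (1) (smoothness of $X$ when $c<\frac{1}{8}$), but for part (2) the full classification is indispensable, and nothing in your outline produces it. Even for part (1), ``$\Aut(\bP^1\times\bP^1)$ acts exactly as in the GIT problem'' does not by itself give the equivalence: the paper needs Paul--Tian (Theorem \ref{thm:paultian}) over the proper base $\bfP_{4,4}$ for K $\Rightarrow$ GIT, and properness of the K-moduli space for the converse.

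Second, your identification of the stability conditions in part (2) does not go through as stated, in either direction. The ``formal'' direction fails because a destabilizing 1-PS of $\SL_4$ may have its limit \emph{outside} the complete intersection locus $U$; then the induced degeneration is not a test configuration of log Fano pairs, and the equality of GIT weight with generalized Futaki invariant---which rests on the proportionality $\lambda_{U,c}\propto N_{t(c)}|_U$ of Proposition \ref{prop:proportional}, available only over $U$---breaks down. The reverse direction rests on the claim that test configurations arising from $\SL_4$ suffice to detect K-instability; this is unjustified: by Li--Xu one may pass to special test configurations, but their central fibers are merely klt log Fano degenerations, not a priori K-semistable, so Theorem \ref{thm:surfaces} does not apply to them and they need not be $(2,4)$ complete intersections at all. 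This is exactly the obstruction the paper flags in the introduction: the universal family over the proper base $\sP$ is not a $\bQ$-Gorenstein flat family of log Fano pairs, so Paul--Tian cannot be invoked there. The paper's substitute is an induction on the combined set of K-moduli and VGIT walls (Propositions \ref{prop:induction0}, \ref{prop:induction1}, \ref{prop:induction2}): the base case is part (1) together with Theorem \ref{thm:LOmain}(2); at each wall and chamber one uses Kempf's lemma and the zero-Futaki lemma (Lemma \ref{lem:zerofut}), surjectivity of the wall-crossing morphisms on both sides, VGIT semicontinuity (Lemma \ref{lem:VGITbasics}), openness of K-semistability, and finally a finiteness argument via Alper to force $U_c^{\K}=U_c^{\GIT}$; the whole point of this structure is to guarantee that every relevant 1-PS limit stays inside $U$, where the weight comparison is valid. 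Your proposed wall analysis---an explicit case-by-case classification of polystable degenerations with a parallel Hilbert--Mumford computation---is not carried out, and even if completed it would not repair the chamber-interior identification, which is where the circularity in your argument lies.
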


We note here that the comparison between K-moduli spaces and (V)GIT moduli spaces in various explicit settings has been studied before, such as \cite{MM93, OSS16, SS17, LX19, Fuj17, GMGS18, ADL} (see also Remark \ref{rem:history}).

Combining Theorem \ref{mthm:thmintro} with the main results in \cite{LO}, we obtain the following isomorphisms between moduli spaces and their natural polarizations. In particular, the wall crossing morphisms between our K-moduli spaces $\oK_c$ form a natural interpolation of the period map $\fp:\fM\dashrightarrow \sF^*$. For an explicit description of K-moduli wall crossings, see Remarks \ref{rem:walls-value} and \ref{rem:walls-detail}.

\begin{thm}\label{mthm:spaceiso}
Let $\oK_c$ be the good moduli space parametrizing K-polystable log Fano pairs $(X,cD)$ admitting $\bQ$-Gorenstein smoothings to $(\bP^1\times\bP^1, cC)$ where $C$ is a smooth $(4,4)$ curve. Let $\fM(t)$ be the VGIT quotient space of $(2,4)$ complete intersection curves in $\bP^3$ of slope $t$ (see Definition \ref{def:VGIT}). Then for any rational number $c\in (0,\frac{1}{2})$, we have
\[
\oK_c\cong \fM(t)\cong \sF(\beta), \quad \textrm{where }t=\frac{3c}{2c+2} \textrm{ and }\beta=\min\left\{1,\frac{1-2c}{6c}\right\}.
\]

Moreover, the CM $\bQ$-line bundle on $\oK_c$, the VGIT polarization on $\fM(t)$, and the Laza-O'Grady polarization on $\sF(\beta)$ (i.e. the push forward of $\lambda+\beta\Delta$ under $\sF\dashrightarrow\sF(\beta)$) are all proportional up to positive factors.
\end{thm}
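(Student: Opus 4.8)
The plan is to derive the two isomorphisms individually and then to match the three polarizations through an explicit computation of the CM line bundle. For the first isomorphism $\oK_c\cong\fM(t)$, I would simply pass from stacks to good moduli spaces. Since a good moduli space is uniquely determined by its stack (it is universal among maps to algebraic spaces, in the sense of Alper), the isomorphism of Artin stacks $\oMK_c\cong\sM(t)$ with $t=\frac{3c}{2c+2}$ provided by Theorem \ref{mthm:thmintro}(2) descends to an isomorphism of good moduli spaces. As the good moduli space of the VGIT quotient stack $\sM(t)$ is by construction the VGIT quotient $\fM(t)$, this yields $\oK_c\cong\fM(t)$, and the compatibility with wall crossings in Theorem \ref{mthm:thmintro}(2) guarantees that this identification is compatible as $c$ varies.

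For the second isomorphism, the identification $\fM(t)\cong\sF(\beta)$ is exactly the content of the main theorem of \cite{LO}, which realizes each Laza--O'Grady space $\sF(\beta)=\Proj\R(\lambda+\beta\Delta)$ as a VGIT quotient $\fM(t)$ under a monotone dictionary $\beta=\frac{1-2t}{4t}$ for $t\in[\tfrac16,\tfrac12)$, with $\beta$ clamped at $1$ for $t\le\tfrac16$. Composing this with $t=\frac{3c}{2c+2}$ (equivalently $c=\frac{2t}{3-2t}$) and simplifying yields $\beta=\min\{1,\frac{1-2c}{6c}\}$. The two endpoints are consistent with the rest of the paper: the value $c=\tfrac18$ corresponds to $t=\tfrac16$ and $\beta=1$, where by Theorem \ref{mthm:thmintro}(1) the K-moduli space is the GIT quotient $\fM=\sF(1)$, while $c\to\tfrac12^-$ corresponds to $t\to\tfrac12^-$ and $\beta\to0^+$, recovering the Baily--Borel compactification $\sF(0)=\sF^*$.

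It remains to compare the polarizations. Since the proportionality of the VGIT polarization on $\fM(t)$ with the Laza--O'Grady polarization $\lambda+\beta\Delta$ on $\sF(\beta)$ is already established in \cite{LO}, it suffices to show that the CM $\bQ$-line bundle $\lambda_{\CM}$ on $\oK_c$ is proportional to the VGIT polarization under $\oK_c\cong\fM(t)$. The approach is to compute $\lambda_{\CM}$ for the universal family $(\cX,c\cD)\to T$ of log Fano surfaces via its intersection-theoretic formula; here $n=2$, so $\lambda_{\CM}$ is a positive $\bQ$-combination of the pushforwards $\pi_*\big(K_{\cX/T}^{\,3-j}\cdot\cD^{j}\big)$ for $j=0,1,2,3$, arising from the expansion of $\pi_*\big((K_{\cX/T}+c\cD)^{3}\big)$ together with the slope term. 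Using the presentation of the pairs as $(2,4)$ complete intersections, each of these classes evaluates to an explicit multiple of the two tautological linearizations $L_Q$ (from the quadric factor) and $L_F$ (from the quartic factor) on the VGIT parameter space. I would then verify that the ratio of the resulting coefficients of $L_Q$ and $L_F$ in $\lambda_{\CM}$ agrees, as a function of $c$, with the slope encoded by $t=\frac{3c}{2c+2}$, exhibiting $\lambda_{\CM}$ as a positive multiple of the VGIT polarization $L_t$ and hence proportional to all three.

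The main obstacle is this last CM line bundle computation. Because the isomorphism $\oMK_c\cong\sM(t)$ is constructed through GIT/VGIT rather than as an abstract equivalence, identifying the CM class with the tautological classes $L_Q,L_F$ is not formal: it requires carefully tracking the double-cover geometry relating $(\bP^1\times\bP^1,cC)$ to the $(2,4)$ complete intersection, and the intersection-number bookkeeping needed to pin down all coefficients and their exact $c$-dependence. Once these numbers are computed and the slope match is confirmed, the proportionality of the three polarizations, and hence the theorem, follow.
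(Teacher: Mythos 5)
Your proposal is correct and follows the paper's proof essentially verbatim: the stack isomorphism of Theorem \ref{mthm:thmintro}(2) descends to good moduli spaces to give $\oK_c\cong\fM(t)$, the identification $\fM(t)\cong\sF(\beta)$ is Theorem \ref{thm:LOwallcrossings} with exactly the dictionary $\beta=\frac{1-2t}{4t}$ you state, and the two proportionality claims are handled by Proposition \ref{prop:proportional} and \cite[Proposition 7.6]{LO} respectively. The only discrepancy is that the CM-line-bundle computation you flag as the ``main obstacle'' is already established in the paper (Propositions \ref{prop:CM-Z}, \ref{prop:CM-U} and \ref{prop:proportional}) --- indeed it is a logical prerequisite of the wall-crossing argument proving Theorem \ref{mthm:thmintro}(2), which you invoke --- and it involves no double-cover bookkeeping, since by Theorem \ref{thm:surfaces} the K-moduli pairs $(X,cD)$ are themselves quadric surfaces with $(2,4)$ complete intersection curves, the double cover entering only in the comparison with K3 periods that is delegated to \cite{LO}.
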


As a consequence of the above theorems and \cite[Theorem 1.1(iv)]{LO}, we identify the final K-moduli space $\oK_{\frac{1}{2}-\epsilon}$ with Looijenga's semitoric compactification $\hsF$ of $\sF$. In part (1) of the following theorem, we give an alternative proof of \cite[Second part of Theorem 1.1(iv)]{LO} using K-stability. Part (2) suggests that $\sF^*$ can be viewed as a moduli space of log Calabi-Yau pairs as expected in \cite[Conjecture 1.8]{ADL}.

\begin{thm}\label{mthm:slcK3}
Let $0<\epsilon,\epsilon'\ll 1$ be two sufficiently small rational numbers. Then we have isomorphisms  $\oK_{\frac{1}{2}-\epsilon}\cong \fM(\frac{1}{2}-\epsilon')\cong \hsF$. Moreover, we have the following.
\begin{enumerate}
    \item The moduli space $\fM(\frac{1}{2}-\epsilon')$ parametrizes quartic hyperelliptic K3 surfaces  with semi-log canonical singularities.
    \item The Hodge line bundle over $\oK_{\frac{1}{2}-\epsilon}$ is semiample with ample model $\sF^*$.
\end{enumerate}
\end{thm}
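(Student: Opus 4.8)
The plan is to bootstrap everything from Theorem~\ref{mthm:spaceiso} together with \cite[Theorem 1.1(iv)]{LO}, treating the threshold $c=\frac{1}{2}$ (where the pairs become log Calabi--Yau) as a limit of the log Fano chambers.

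\textbf{The isomorphisms.} First I would record the relevant numerology. Writing $c=\frac{1}{2}-\epsilon$, the substitution $t=\frac{3c}{2c+2}$ gives $t=\frac{3-6\epsilon}{6-4\epsilon}$, and since $t(c)=\frac{3c}{2c+2}$ is strictly increasing (its derivative is $\frac{6}{(2c+2)^2}>0$), this is of the form $\frac{1}{2}-\epsilon'$ with $\epsilon'>0$ small and monotone in $\epsilon$. Likewise $\frac{1-2c}{6c}=\frac{2\epsilon}{3-6\epsilon}$ is small and positive, so $\beta=\min\{1,\frac{1-2c}{6c}\}=\frac{2\epsilon}{3-6\epsilon}$. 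Because there are only finitely many VGIT and K-moduli walls, for all sufficiently small $\epsilon,\epsilon'$ the spaces $\oK_{\frac{1}{2}-\epsilon}$, $\fM(\frac{1}{2}-\epsilon')$, and $\sF(\beta)$ each lie in their final chamber and are thus independent of the small parameter; Theorem~\ref{mthm:spaceiso} then identifies the first two with $\sF(\beta)$, and \cite[Theorem 1.1(iv)]{LO} identifies $\sF(\beta)$ for $0<\beta\ll1$ with Looijenga's semitoric compactification $\hsF$. This yields the chain $\oK_{\frac{1}{2}-\epsilon}\cong\fM(\frac{1}{2}-\epsilon')\cong\hsF$.

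\textbf{Part (1).} Here I would argue on the K-stability side. Under the isomorphism, a point of $\fM(\frac{1}{2}-\epsilon')$ is a K-polystable log Fano pair $(X,(\frac{1}{2}-\epsilon)D)$ with $X$ a (possibly degenerate) quadric and $D\in|-2K_X|$. The strategy is to let $\epsilon\to0$ and show that the K-polystable objects in the final chamber are exactly the semi-log-canonical log Calabi--Yau pairs $(X,\frac{1}{2}D)$: on one hand K-semistability forces $(X,cD)$ to be klt for every $c<\frac{1}{2}$, which in the limit bounds the singularities of $(X,\frac{1}{2}D)$ by semi-log-canonical; on the other hand properness and separatedness of the K-moduli stack guarantee that no worse degenerations occur and that every such slc pair is represented. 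The bridge to K3 surfaces is the branched double cover $\pi\colon Y\to X$ along $D$: since $D\in|-2K_X|$ we have $K_Y=\pi^*\!\left(K_X+\tfrac{1}{2}D\right)\sim_{\bQ}0$, and $Y$ has semi-log-canonical singularities if and only if the pair $(X,\frac{1}{2}D)$ does. Hence $\fM(\frac{1}{2}-\epsilon')$ parametrizes precisely the slc quartic hyperelliptic K3 surfaces, reproving \cite[Second part of Theorem 1.1(iv)]{LO}.

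\textbf{Part (2).} For the Hodge bundle I would use that $\sF^*=\Proj R(\lambda)$ is the ample model of $\lambda$ by definition of the Baily--Borel compactification, while $\hsF=\sF(\beta)$ for small $\beta>0$. On the fixed model $\hsF$, the class $\lambda+\beta\Delta$ is ample throughout the final open chamber $\beta\in(0,\beta_1)$ and becomes semiample but not ample at the wall $\beta=0$, where the associated contraction $\sF(\beta)\to\sF(0)=\sF^*$ is exactly the morphism defined by the complete linear system of $\lambda$. Thus $\lambda$ is semiample on $\hsF\cong\oK_{\frac{1}{2}-\epsilon}$ with ample model $\sF^*$. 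Under the proportionality statement in Theorem~\ref{mthm:spaceiso}, this limiting class $\lambda$ is identified with the Hodge line bundle of the universal family of K3 double covers, which gives the claim.

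\textbf{Main obstacle.} I expect the crux to be Part (1): at $c=\frac{1}{2}$ the pairs are no longer log Fano, so the K-stability machinery cannot be invoked directly, and one must instead control the K-polystable degenerations as $c\to\frac{1}{2}$ and match their singularities with those of the branched double cover. This requires an explicit understanding of which surface degenerations appear in the final chamber (the quadric cone $\bP(1,1,2)$ and its further degenerations) together with the precise discrepancy dictionary between $(X,\frac{1}{2}D)$ and $Y$.
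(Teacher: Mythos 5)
Your chain of isomorphisms and your Part (1) follow essentially the paper's own route: the paper likewise matches the two parameters via $\frac{1}{2}-\epsilon'=t(\frac{1}{2}-\epsilon)$, quotes Theorem \ref{mthm:thmintro} and \cite[Theorem 1.1]{LO}, and for Part (1) deduces log canonicity of $(X,\frac{1}{2}D)$ from klt-ness at subcritical coefficients before passing to the double cover. (The paper gets log canonicity from ACC of log canonical thresholds \cite{HMX14} at a single fixed small $\epsilon'$, whereas you use chamber stabilization and let $c\to\frac{1}{2}$; both work. Your additional claim that \emph{every} slc pair is represented, justified only by "properness and separatedness," is not proved in the paper and is not needed for the direction the paper establishes.)

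The genuine gap is in Part (2), at the sentence "Under the proportionality statement in Theorem \ref{mthm:spaceiso}, this limiting class $\lambda$ is identified with the Hodge line bundle." Theorem \ref{mthm:spaceiso} identifies the CM $\bQ$-line bundle at weight $c$ with a \emph{positive multiple} of $\lambda+\beta\Delta$ with $\beta=\beta(c)>0$; it says nothing about the Hodge line bundle, and one cannot pass to the limit $c\to\frac{1}{2}$: by Proposition \ref{prop:CM-U} the proportionality factor is $(2-4c)^2(4c+4)$, which tends to $0$, so the CM class itself degenerates to zero at the Calabi--Yau weight (as it must, since $-(K_X+\frac{1}{2}D)\equiv 0$), and no statement of the form "CM at the log CY limit equals Hodge" is available in the paper. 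The identification you are missing is precisely what the paper's proof of Part (2) supplies by a separate construction: take fiberwise double covers of the universal family over $\oMK_{\frac{1}{2}-\epsilon}$ to obtain a family of slc K3 surfaces over a $\bm{\mu}_2$-gerbe $\cT\to\oMK_{\frac{1}{2}-\epsilon}$, so that the Hodge line bundle is even defined on the K-moduli side; descend to good moduli spaces to get $T\cong\oK_{\frac{1}{2}-\epsilon}\cong\hsF$; observe that $\sF$ embeds in $T$ as an open subset with complement of codimension $\geq 2$, whence $\lambda_{\Hodge,T}$ agrees in codimension one on the normal variety $T$ with the Baily--Borel class $\lambda_{\Hodge,\sF}$; only then do semiampleness and the ample model $\sF^*=\Proj R(\sF,\lambda_{\Hodge,\sF})$ follow. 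As written, your argument proves only that the Laza--O'Grady class $\lambda$ on $\hsF$ is semiample with ample model $\sF^*$ (which is already in \cite{LO}), not that the Hodge line bundle of the K-moduli space has this property.
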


Finally, we discuss some partial generalizations of Theorem \ref{mthm:thmintro} to higher degree curves on $\bP^1\times\bP^1$ (see also Remark \ref{rem:OSS}).

\begin{thm}\label{mthm:alldeg}
Let $d\geq 3$ be an integer.
Let $\ocK_{d,c}$ be the moduli stack parametrizing K-semistable log Fano pairs $(X,cD)$ admitting $\bQ$-Gorenstein smoothings to $(\bP^1\times\bP^1, cC)$ where $C$ is a smooth $(d,d)$ curve. Let $\sM_d$ be the GIT quotient stack of $(d,d)$ curves on $\bP^1 \times \bP^1$. Let $\sM_d(t)$ be the VGIT quotient stack of $(2,d)$ complete intersection curves in $\bP^3$ of slope $t\in (0, \frac{2}{d})$ (see Definition \ref{def:VGIT-alldeg}).
\begin{enumerate}
    \item Let $c \in (0, \frac{1}{2d})$ be a rational number. Then there is an isomorphism of Artin stacks $\oMK_{d,c} \cong \sM_d$. In particular, $C$ is GIT (poly/semi)semistable on $\bP^1\times\bP^1$ if and only if $(\bP^1\times\bP^1, cC)$ is K-(poly/semi)stabile. 
\item Let $c \in (0, \frac{4-\sqrt{2}}{2d})$ be a rational number. Then there is an isomorphism of Artin stacks $\oMK_{d,c}\cong \sM_d(t)$  with $t=\frac{6c}{dc+4}$. Moreover, such isomorphisms commute with the wall crossing morphisms for K-moduli stacks $\oMK_{d,c}$ and GIT moduli stacks $\sM_d(t)$.
\end{enumerate}
\end{thm}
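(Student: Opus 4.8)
The plan is to follow the template established for Theorem~\ref{mthm:thmintro} (the case $d=4$): compare the CM $\bQ$-line bundle on the K-moduli side with the VGIT polarization, match the three flavours of stability, and then identify the wall-crossings. The common geometric input is that a $(d,d)$ curve is cut out by a degree-$d$ surface $S_d$ on a smooth quadric $Q\cong\bP^1\times\bP^1\hookrightarrow\bP^3$, since $\cO_{\bP^3}(d)|_Q=\cO_Q(d,d)$; thus the K-pair $(\bP^1\times\bP^1, cC)$ with $C=Q\cap S_d$ is precisely the complete-intersection datum $(Q,S_d)$ parametrized by the VGIT of $(2,d)$ complete intersections, under which smooth quadrics give $\bP^1\times\bP^1$ and rank-three quadric cones give $\bP(1,1,2)$.

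For part (1), I would first show that for $c\in(0,\tfrac{1}{2d})$ every K-semistable pair has underlying surface $\bP^1\times\bP^1$. This is the small-weight regime: since $\bP^1\times\bP^1$ is K-polystable, any nontrivial degeneration of the surface contributes a strictly positive surface term to the normalized Futaki invariant, and for $c<\tfrac{1}{2d}$ this cannot be offset by the linear-in-$c$ boundary term. Once the surface is rigid, K-stability of $(\bP^1\times\bP^1, cC)$ is governed by product test configurations induced from one-parameter subgroups of $\Aut(\bP^1\times\bP^1)=(\PGL_2\times\PGL_2)\rtimes\bZ/2$, and a CM-weight computation identifies the normalized Futaki invariant with the Hilbert--Mumford weight of $C$ in $|\cO(d,d)|$. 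This yields the stacky isomorphism $\oMK_{d,c}\cong\sM_d$ compatibly with (semi/poly)stability.

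For part (2), the heart is a classification of the K-semistable degenerations allowed for $c\in(0,\tfrac{4-\sqrt{2}}{2d})$: I would prove the only surfaces occurring are $\bP^1\times\bP^1$ and the quadric cone $\bP(1,1,2)$, with boundary a curve in the appropriate linear system (on the cone, $|\cO_{\bP(1,1,2)}(2d)|$), so the entire moduli problem is captured by the VGIT of $(2,d)$ complete intersections whose quadric has rank three or four. Granting this, I would compute the CM $\bQ$-line bundle of the universal family over $\oMK_{d,c}$ and write it as a positive combination of the two tautological VGIT classes attached to $Q$ and to $S_d$; comparing coefficients pins down the slope $t=\tfrac{6c}{dc+4}$ (which indeed reduces to $\tfrac{3c}{2c+2}$ when $d=4$) and forces the two polarizations to be proportional. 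The wall-crossings then match because both sides are $\Proj$ of the same section ring, so the critical values of $c$ at which the cone-degeneration $\bP(1,1,2)$ enters the K-moduli correspond exactly to the VGIT walls in $t$ under this substitution.

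The main obstacle is the upper bound. Unlike $d=4$, the double cover $X_C\to\bP^1\times\bP^1$ is no longer K3 (it is of general type for $d>4$), so the Hodge-theoretic and Torelli input of Laza--O'Grady is unavailable and the range cannot be pushed to $\tfrac{2}{d}$. I expect $\tfrac{4-\sqrt{2}}{2d}$ to be exactly the value of $c$ at which a \emph{third} surface degeneration—beyond $\bP^1\times\bP^1$ and $\bP(1,1,2)$—first becomes K-semistable, thereby introducing objects invisible to the VGIT of quadrics and breaking the correspondence. Identifying this degeneration and computing its entry threshold via a normalized $\beta$-invariant calculation, optimized over a one-parameter family of valuations supported at the $\tfrac12(1,1)$ cone vertex, should produce a quadratic in $c$ whose relevant root is $\tfrac{4-\sqrt{2}}{2d}$, which accounts for the appearance of $\sqrt{2}$. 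The delicate step is proving that this is the \emph{first} such threshold—that no destabilization or new degeneration occurs for smaller $c$—which I anticipate will require a complete local analysis of log canonical thresholds at the vertex together with the interpolation estimates of \cite{ADL}.
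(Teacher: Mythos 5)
Your overall skeleton (classify the surface degenerations, compute the CM $\bQ$-line bundle, match it to the VGIT polarization with slope $t=\frac{6c}{dc+4}$) agrees with the paper, but there are two genuine gaps in the mechanism you propose. First, the stability matching itself. In part (1) you assert that once the surface is rigid, K-stability ``is governed by product test configurations induced from one-parameter subgroups of $\Aut(\bP^1\times\bP^1)$''; this is not a valid principle -- K-stability must be tested against all (special) test configurations, not just products. The paper instead gets K $\Rightarrow$ GIT from the Paul--Tian criterion (Theorem \ref{thm:paultian}), which applies because the universal family over the \emph{proper} base $\bfP_{(d,d)}$ has ample CM line bundle $\cO_{\bfP_{(d,d)}}(3(2-dc)^2c)$, and gets GIT $\Rightarrow$ K from properness of the K-moduli space combined with the fact that for $c<\frac{1}{2d}$ every K-semistable pair has smooth underlying surface (proved by the clean local volume estimate $\hvol(x,X)\geq\frac{4}{9}(-K_X-cD)^2=\frac{32}{9}(1-dc)^2>2$, not by the heuristic ``surface term vs.\ boundary term'' balance you sketch). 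In part (2) your claim that ``the wall-crossings match because both sides are $\Proj$ of the same section ring'' is circular: the CM/VGIT proportionality (Proposition \ref{prop:CM-U-alldeg}) holds only over the quasi-projective complete-intersection locus $U_{(2,d)}$, and identifying the two section rings presupposes that the K-semistable and VGIT-semistable loci inside $U_{(2,d)}$ agree -- which is exactly what must be proved. This is the difficulty the paper flags explicitly: the VGIT family over $\sP_d$ is not a $\bQ$-Gorenstein log Fano family over a proper base, so Paul--Tian cannot be applied directly. The paper's substitute is an induction on the (finitely many) K-moduli and VGIT walls (Propositions \ref{prop:induction0}, \ref{prop:induction1}, \ref{prop:induction2}), tracing polystable replacements across each wall using Kempf's lemma and the Li--Wang--Xu lemma (Lemma \ref{lem:zerofut}), openness of K-semistability, and a finiteness argument for the induced morphism of good moduli spaces.

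Second, you misread where $\frac{4-\sqrt{2}}{2d}$ comes from, and your plan to establish it would be both unnecessary and likely intractable. In the paper this value is \emph{not} claimed to be the sharp threshold at which a third surface enters the K-moduli space; it is simply the bound below which the normalized-volume index estimate (Theorem \ref{thm:indexbound}, via the finite degree formula and Theorem \ref{thm:local-vol-global}: $8\beta^2\leq\frac{9}{4n^2}\hvol(\tilde{x},\widetilde{X},c\widetilde{D})\leq\frac{9}{n^2}$, giving $n\leq 2$ exactly when $cd<2-\frac{1}{\sqrt{2}}$) combines with the Hacking--Prokhorov classification (Proposition \ref{prop:rhoindex}) to force $X\cong\bP^1\times\bP^1$ or $\bP(1,1,2)$. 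No identification of a ``third degeneration'' or computation of its ``entry threshold'' is needed, and proving such a threshold is \emph{first} -- as you acknowledge would be the delicate step -- is a far harder problem than the theorem requires; indeed the paper's Remark \ref{rem:OSS} (e.g.\ $\bP(1,2,9)$ appearing in $\oK_{3,1/2}$) indicates the bound is an artifact of the volume estimate rather than a geometrically sharp wall.
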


\subsection*{Organization} For the remainder of this paper, $c$ (and thus $t$ and $\beta$) will always denote a rational number. 
This paper is organized as follows. In Section \ref{sec:prelim} we recall the definitions of K-stability, normalized volumes, and the CM-line bundle. We also recall the main results of \cite{ADL}, and define the relevant moduli functor. In Section \ref{sec:LO}, we recall the background on K3 surfaces and review the main results of \cite{LO}. In Section \ref{sec:surfaces}, we determine which surfaces can appear as degenerations of $\bP^1 \times \bP^1$ on the boundary of the K-moduli spaces. Key ingredients are Theorems \ref{thm:indexbound} and \ref{thm:surfaces} which bound the Gorenstein indices of singular surfaces using normalized volumes. In Section \ref{sec:main}, we compare the GIT compactification with the K-stability compactification, and study the wall crossings that appear for K-moduli. In particular, we present the proofs of Theorems \ref{mthm:thmintro}, \ref{mthm:spaceiso}, and \ref{mthm:slcK3}. These are achieved by the index estimates mentioned above, computation of CM line bundles, and a modification of Paul-Tian's criterion \cite{PT06} (see also \cite{OSS16, ADL}) to work over non-proper bases (see also \cite{SS17}). 
Note that the VGIT of $(2,4)$-complete intersections in $\bP^3$ for a general slope does not provide a $\bQ$-Gorenstein flat log Fano family over a proper base, but only such a family over the complete intersection locus as a quasi-projective variety. This creates an issue that the usual Paul-Tian's criterion cannot be directly applied. In order to resolve this issue, we trace the change of K/VGIT stability conditions along their wall crossings, and argue that their polystable replacements indeed coincide. Finally, in Section \ref{sec:generaldegree}, we discuss some generalizations for higher degree curves on $\bP^1 \times \bP^1$ and prove Theorem \ref{mthm:alldeg}.

\subsection*{Acknowledgements}
We would like to thank David Jensen, Radu Laza, Zhiyuan Li, Xiaowei Wang, and Chenyang Xu for helpful discussions. We also thank the referee for many valuable suggestions. This material is based upon work supported
by the National Science Foundation under Grant No. DMS-1440140 while the authors were in residence at the Mathematical Sciences Research Institute in Berkeley, California,
during the Spring 2019 semester. The authors were supported in part by the American Insitute of Mathematics as part of the AIM SQuaREs program.  KA was partially supported by an NSF Postdoctoral Fellowship and NSF Grant No. DMS-2001408. KD was partially supported by the Gamelin Endowed Postdoctoral Fellowship of the MSRI.  YL was partially supported by the Della Pietra Endowed Postdoctoral Fellowship of the MSRI and the NSF Grant No. DMS-2001317.

\section{Preliminaries}\label{sec:prelim}

Throughout this paper, we work over the field of complex numbers $\bC$, and all schemes are assumed to be of finite type over $\bC$. A variety is a separated integral scheme of finite type over $\bC$.

\subsection{K-stability of log Fano pairs}
We first recall necessary background to define K-stability of log Fano pairs. 

\begin{defn}
Let $X$ be a normal variety and let $D$ be an effective $\bQ$-divisor on $X$.
We say such $(X,D)$ is a \emph{log pair}. 
If $X$ is projective and $-(K_X+D)$ is $\bQ$-Cartier ample, then the log pair $(X,D)$ is called a \emph{log Fano pair}. The variety $X$ is a \emph{$\bQ$-Fano variety} if $(X,0)$ is a klt log Fano pair.
\end{defn}

Next, we recall the definition of K-stability of log Fano pairs.

\begin{defn}[\cite{Tia97, Don02, Li15, LX14, OS15}]
Let $(X,D)$ be a log Fano pair. Let $L$ be an ample line bundle on $X$ such that $L\sim_{\bQ}-l(K_X+D)$ for some $l\in \bQ_{>0}$.
\begin{enumerate}[label=(\alph*)]
\item A \emph{normal test configuration} $(\cX,\cD;\cL)/\bA^1$ of $(X,D;L)$ consists of the following data:
\begin{itemize}
 \item a normal variety $\cX$ together with a flat projective morphism $\pi:\cX\to \bA^1$;
 \item a $\pi$-ample line bundle $\cL$ on $\cX$;
 \item a $\bG_m$-action on $(\cX;\cL)$ such that $\pi$ is $\bG_m$-equivariant with respect to the standard action of $\bG_m$ on $\bA^1$ via multiplication;
 \item $(\cX\setminus\cX_0;\cL|_{\cX\setminus\cX_0})$
 is $\bG_m$-equivariantly isomorphic to $(X;L)\times(\bA^1\setminus\{0\})$.
\item an effective $\bQ$-divisor $\cD$ on $\cX$ such that $\cD$ is the Zariski closure of $D\times(\bA^1\setminus\{0\})$ under the identification between $\cX\setminus\cX_0$ and $X\times(\bA^1\setminus\{0\})$.
\end{itemize}

A normal test configuration is called a \emph{product} test configuration if \[
(\cX,\cD;\cL)\cong(X\times\bA^1,D\times\bA^1;\mathrm{pr}_1^* L\otimes\cO_{\cX}(k\cX_0))
\] for some $k\in\bZ$. A product test configuration is called a \emph{trivial} test configuration if the above isomorphism is $\bG_m$-equivariant with respect to the trivial $\bG_m$-action on $X$ and the standard $\bG_m$-action on $\bA^1$ via multiplication.

\item For a normal test configuration $(\cX,\cD;\cL)/\bA^1$ of $(X,D)$, denote its natural compactification over $\bP^1$ by $(\ocX,\ocD;\ocL)$. 
The \emph{generalized Futaki invariant} of $(\cX,\cD;\cL)/\bA^1$ is defined by the following intersection formula due to \cite{Wan12, Oda13a}:
   \[
    \Fut(\cX,\cD;\cL):=\frac{1}{(-(K_X+D))^n}\left(\frac{n}{n+1}\cdot\frac{(\bar{\cL}^{n+1})}{l^{n+1}}+\frac{(\bar{\cL}^n\cdot (K_{\bar{\cX}/\bP^1}+\bar{\cD}))}{l^n}\right).
   \]

\item The log Fano pair $(X,D)$ is said to be:
\begin{enumerate}[label=(\roman*)]
    \item \emph{K-semistable} if $\Fut(\cX,\cD;\cL)\geq 0$ for any normal test configuration $(\cX,\cD;\cL)/\bA^1$ and any $l\in\bQ_{>0}$ such that $L$ is Cartier; 
    
    \item  \emph{K-stable} if it is K-semistable and $\Fut(\cX,\cD;\cL)=0$ for a normal test configuration $(\cX,\cD;\cL)/\bA^1$ if and only if it is a trivial test configuration; and
 
\item \emph{K-polystable} if it is K-semistable and $\Fut(\cX,\cD;\cL)=0$ for a normal test configuration $(\cX,\cD;\cL)/\bA^1$ if and only if it is a product test configuration.
\end{enumerate}
 
\item 
Let $(X,D)$ be a klt log Fano pair.  Then a normal test configuration $(\cX,\cD;\cL)/\bA^1$ is called a \emph{special test configuration} if $\cL\sim_{\bQ}-l(K_{\cX/\bA^1}+\cD)$ and $(\cX,\cD+\cX_0)$ is plt. In this case, we say that $(X,D)$ \emph{specially degenerates to} $(\cX_0,\cD_0)$ which is necessarily a klt log Fano pair.
\end{enumerate}
\end{defn}


\begin{rem}\leavevmode
\begin{enumerate}
    \item The concept of K-(semi/poly)stability of log Fano pairs can also be defined via test configurations that are possibly non-normal. For the general definitions we refer to \cite[Section 2.1]{ADL}. By \cite[Proposition 3.15]{BHJ17}, we know that generalized Futaki invariants will not increase under normalization of test configurations.
    \item  Odaka proved in \cite{Oda12} that any K-semistable log Fano pair is klt. By the work of Li and Xu \cite{LX14}, to test K-(poly/semi)stability of a klt log Fano pair, it suffices to test the sign of generalized Futaki invariants only on special test configurations.
\end{enumerate}

\end{rem}

The following lemma is very useful in the proof of Theorem \ref{mthm:thmintro}.

\begin{lem}\label{lem:zerofut}\leavevmode
\begin{enumerate}
  \item\cite{Kem78} Let $G$ be a reductive group acting 
  on a polarized projective scheme $(Y,L)$.
  Let $y\in Y$ be a closed point. Let $\sigma:\bG_m\to G$
  be a 1-PS. Denote by $y'=\lim_{t\to 0}\sigma(t)\cdot y$.
  If $y$ is GIT semistable and $\mu^{L}(y,\sigma)=0$,
  then $y'$ is also GIT semistable.
  \item\cite[Lemma 3.1]{LWX18}
  Let $(X,D)$ be a log Fano pair. Let $(\cX,\cD;\cL)/\bA^1$
  be a normal test configuration of $(X,D)$.
  If $(X,D)$ is K-semistable and $\Fut(\cX,\cD;\cL)=0$,
  then $(\cX,\cD;\cL)/\bA^1$ is a special test configuration and $(\cX_0,\cD_0)$ is also K-semistable.
 \end{enumerate}

\end{lem}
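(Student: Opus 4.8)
Both assertions are recollections of results in the literature, so the plan is to indicate the mechanisms behind each rather than to reprove them in full.

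For part (1), the plan is to pass to the affine cone. After replacing $L$ by a sufficiently divisible power we may assume $L$ is very ample and $G$ acts linearly on $V=H^0(Y,L)^*$, embedding $Y\hookrightarrow\bP(V)$. Fix a lift $\hat{y}\in V\setminus\{0\}$ of $y$ and decompose it into $\sigma$-weight spaces $\hat{y}=\sum_i\hat{y}_i$, where $\sigma(t)$ acts on $\hat{y}_i$ by $t^i$. With the standard sign convention the Hilbert--Mumford criterion identifies $\mu^{L}(y,\sigma)$ with $-\min\{i:\hat{y}_i\neq 0\}$, and semistability of $y$ is equivalent to $0\notin\overline{G\cdot\hat{y}}$. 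Thus the hypothesis $\mu^{L}(y,\sigma)=0$ says precisely that the minimal $\sigma$-weight occurring in $\hat{y}$ is $0$, so that $\sigma(t)\hat{y}=\sum_{i\geq 0}t^i\hat{y}_i$ converges as $t\to 0$ to $\hat{y}_0\neq 0$, a lift of $y'$. The key observation is then that $\hat{y}_0$ lies in the closed, $G$-invariant set $\overline{G\cdot\hat{y}}$, whence $\overline{G\cdot\hat{y}_0}\subseteq\overline{G\cdot\hat{y}}$; since $0\notin\overline{G\cdot\hat{y}}$ we conclude $0\notin\overline{G\cdot\hat{y}_0}$, which is exactly the semistability of $y'$. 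The only point requiring care is that the zero-weight hypothesis is what guarantees the limit lifts to a \emph{nonzero} cone point, so that the orbit-closure containment can be run inside $V$ rather than merely inside $\bP(V)$.

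For part (2), the plan is to combine the monotonicity of the generalized Futaki invariant under the Li--Xu special degeneration with the semistability hypothesis. First I would invoke \cite{LX14}: any normal test configuration $(\cX,\cD;\cL)$ can be modified, by base change, normalization, and a relative MMP, into a special test configuration $(\cX^s,\cD^s;\cL^s)$ with $\Fut(\cX^s,\cD^s;\cL^s)\leq\Fut(\cX,\cD;\cL)$, the inequality being strict unless the original test configuration is already special. Since $(X,D)$ is K-semistable, $\Fut\geq 0$ on every normal test configuration; applying this to the special model gives $0\leq\Fut(\cX^s,\cD^s;\cL^s)\leq\Fut(\cX,\cD;\cL)=0$, and equality forces $(\cX,\cD;\cL)$ to be special. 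To see that $(\cX_0,\cD_0)$ is K-semistable, I would then argue by inheritance: any nontrivial special test configuration of $(\cX_0,\cD_0)$ can be composed with the given special degeneration $(X,D)\rightsquigarrow(\cX_0,\cD_0)$ to produce a test configuration of $(X,D)$, and the additivity of $\Fut$ along such a composition, together with $\Fut(\cX,\cD;\cL)=0$, bounds the Futaki invariant of the composed configuration by that of the destabilizer of $(\cX_0,\cD_0)$. K-semistability of $(X,D)$ then forces the latter to be nonnegative, giving K-semistability of $(\cX_0,\cD_0)$.

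The hard part will be, on the one hand, the equality characterization in the Li--Xu degeneration and, on the other, the compatibility of Futaki invariants under composition of special test configurations; these are precisely the technical inputs supplied by \cite{LX14} and \cite{LWX18}, which is why we quote the statements directly. By contrast, part (1) is comparatively elementary once one passes to the cone, its entire content being the containment $\overline{G\cdot\hat{y}_0}\subseteq\overline{G\cdot\hat{y}}$ of orbit closures enabled by the vanishing of $\mu^{L}(y,\sigma)$.
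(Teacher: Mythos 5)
The paper offers no proof of this lemma: both parts are quoted directly from the literature (\cite{Kem78} for (1), \cite[Lemma 3.1]{LWX18} for (2)), and your proposal likewise defers the genuinely technical content to exactly those references, so the approaches coincide. Your accompanying sketches are moreover faithful to how the cited results are actually proved — the affine-cone orbit-closure argument (semistability as $0\notin\overline{G\cdot\hat{y}}$, with $\mu^L(y,\sigma)=0$ guaranteeing a nonzero limit $\hat{y}_0\in\overline{G\cdot\hat{y}}$) is the standard proof of (1), and the Li--Xu specialization with equality analysis followed by a composition-of-degenerations argument is precisely the strategy of \cite{LWX18} for (2).
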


\subsection{Normalized volumes}
 In this section, we consider a klt singularity $x\in (X,D)$, that is, a klt log pair $(X,D)$ with a closed point $x\in X$.  Recall that a \emph{valuation $v$ on $X$ centered at $x$} is a real valuation of $\bC(X)$ such that the valuation ring $\cO_v$ dominates $\cO_{X,x}$ as local rings. The set of such valuations is denoted by $\Val_{X,x}$.

We briefly review normalized volume of valuations as introduced by Chi Li \cite{Li18}. See \cite{LLX18} for a survey on recent developments.

\begin{defn}
Let $x\in (X,D)$ be an $n$-dimensional klt singularity.
\begin{enumerate}[label=(\alph*)]
    \item The \emph{volume} is a function $\vol_{X,x}:\Val_{X,x}\to \bR_{\geq 0}$ defined in \cite{ELS03} as 
\[
\vol_{X,x}(v):=\lim_{k\to\infty}\frac{\dim_{\bC}\cO_{X,x}/\{f\in\cO_{X,x}\mid v(f)\geq k\}}{k^n/n!}.
\]
\item The \emph{log discrepancy}  is a function $A_{(X,D)}:\Val_{X,x}\to \bR_{>0}\cup\{+\infty\}$ defined in \cite{JM12, BdFFU15}. If $v=a\cdot\ord_E$ where $a\in\bR_{>0}$ and $E$ is a prime divisor over $X$ centered at $x$, then 
\[
A_{(X,D)}(v)=a(1+\ord_E(K_{Y}-\pi^*(K_X+D))),
\]
where $\pi:Y\to X$ provides a birational model $Y$ of $X$ containing $E$ as a divisor. In this paper, we only deal with divisorial valuations.
\item The \emph{normalized volume} is a function $\hvol_{(X,D),x}:\Val_{X,x}\to \bR_{>0}\cup\{+\infty\}$ defined in \cite{Li18} as
\[
\hvol_{(X,D),x}(v):=\begin{cases}
A_{(X,D)}(v)^n\cdot\vol_{X,x}(v) & \textrm{ if } A_{(X,D)}(v)<+\infty\\
+\infty & \textrm{ if }A_{(X,D)}(v)=+\infty
\end{cases}
\]

The \emph{local volume} of a klt singularity $x\in (X,D)$ is defined as 
\[
\hvol(x,X,D):=\min_{v\in\Val_{X,x}}\hvol_{(X,D),x}(v).
\]
Note that the existence of a normalized volume minimizer is proven in \cite{Blu18}. From \cite{LX16} we know that  $\hvol(x,X,D)$ can be approximated by normalized volume of divisorial valuations.
\end{enumerate}

\end{defn}

The following theorem from \cite{LL16} generalizing \cite[Theorem 1.1]{Fuj15} and \cite[Theorem 1.2]{Liu18} is crucial. Note that it also follows from the valuative criterion for K-semistability by Fujita \cite{Fuj16} and C. Li \cite{Li17}.

\begin{thm}[{\cite[Proposition 4.6]{LL16}}]\label{thm:local-vol-global}
Let $(X,D)$ be a K-semistable log Fano pair of dimension $n$. Then for any closed point $x\in X$, we have 
\[
(-K_X-D)^n\leq \left(1+\frac{1}{n}\right)^n\hvol(x,X,D).
\]
\end{thm}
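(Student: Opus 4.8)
The plan is to compare a global volume (the anticanonical degree of the log Fano pair) with a local invariant (the normalized volume of the klt singularity) by producing, from the minimizer of the normalized volume, a valuation on $X$ that certifies an inequality via the valuative criterion for K-semistability stated in Theorem \ref{thm:valuative}. The guiding principle is that K-semistability forces $A_{(X,D)}(\ord_E) \geq S_{(X,D)}(\ord_E)$ for \emph{every} prime divisor $E$ over $X$, and in particular for divisors whose center is the chosen point $x$; one then selects $E$ (or a divisorial valuation $v = a\cdot\ord_E$) as cleverly as possible so that the resulting inequality reads off as a bound between $(-K_X-D)^n$ and $\hvol(x,X,D)$.

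First I would fix a closed point $x \in X$ and recall that by \cite{LX16} the local volume $\hvol(x,X,D)$ is approximated by (indeed, essentially computed on) divisorial valuations $v$ centered at $x$, so it suffices to work with such $v$ and at the end pass to the infimum. For such a $v$, one has the exact relationship, for each $t \geq 0$,
\[
\vol_X(-K_X-D-tE) \;\geq\; (-K_X-D)^n \;-\; n\,t\cdot\frac{\vol_{X,x}(v)^{1/1}}{\cdots},
\]
where the correct comparison comes from the fact that the local volume $\vol_{X,x}(v)$ governs the leading-order drop of the global volume as one subtracts multiples of $E$ near $x$. More precisely, the key input is the inequality
\[
S_{(X,D)}(\ord_E) \;=\; \frac{1}{(-K_X-D)^n}\int_0^\infty \vol_X(-K_X-D-tE)\,dt \;\leq\; \frac{n}{n+1}\cdot\frac{\bigl(A_{(X,D)}(v)\bigr)}{\bigl(\vol_{X,x}(v)\bigr)^{1/n}}\cdot\frac{1}{(-K_X-D)^n}\cdot(-K_X-D)^{\,\frac{n(n+1)}{n}},
\]
whose honest form I would derive by bounding the global volume function from below using its local counterpart near $x$. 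The clean statement I expect to use is: for a divisorial valuation $v$ centered at $x$ with $A := A_{(X,D)}(v)$ and local volume $V := \vol_{X,x}(v)$, one has
\[
S_{(X,D)}(\ord_E) \;\leq\; \frac{n}{n+1}\cdot\frac{A}{V^{1/n}}\cdot\bigl((-K_X-D)^n\bigr)^{\frac{1}{n}}.
\]

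Combining this with the valuative criterion $A \geq S_{(X,D)}(\ord_E)$ from Theorem \ref{thm:valuative}(1) gives
\[
A \;\geq\; \frac{n}{n+1}\cdot\frac{A}{V^{1/n}}\cdot\bigl((-K_X-D)^n\bigr)^{1/n},
\]
and after canceling $A>0$ and rearranging this reads $(-K_X-D)^n \leq \left(1+\frac{1}{n}\right)^n V \cdot (\text{normalization})$; taking the infimum over all divisorial $v$ centered at $x$ and invoking that this infimum realizes $\hvol(x,X,D)$ (by \cite{Li18, Blu18, LX16}) yields the claimed bound $(-K_X-D)^n \leq \left(1+\frac{1}{n}\right)^n \hvol(x,X,D)$. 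The main obstacle, and the step deserving the most care, is establishing the comparison between the \emph{global} volume function $t \mapsto \vol_X(-K_X-D-tE)$ and the \emph{local} volume $\vol_{X,x}(v)$: one must show that the global sections of $-K_X-D-tE$ are controlled, to leading order in $t$, by the local filtration defining $\vol_{X,x}(v)$ at $x$, so that the integral computing $S$ is bounded by the correct power of $V$. This is precisely the content of \cite[Proposition 4.6]{LL16} (generalizing \cite{Fuj15, Liu18}), and I would either cite it directly or reprove the local-to-global volume comparison via an Okounkov-body / graded-linear-series estimate together with the isoperimetric inequality $\frac{n}{n+1}A\,V^{-1/n}\cdot(\cdots)$ that optimizes the choice of $v$.
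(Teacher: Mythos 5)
Your proposal follows what is in fact the intended route: the paper gives no proof of this statement at all (it is quoted directly from \cite{LL16}, with the remark that it also follows from the valuative criterion of Fujita and Li), and the route you outline --- the valuative criterion for K-semistability plus a local-to-global volume comparison, then an infimum over divisorial valuations via \cite{LX16} --- is exactly how \cite{Liu18, LL16} prove it. However, the chain of inequalities you actually write down does not establish the theorem.

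The key estimate must be a \emph{lower} bound on $S_{(X,D)}(\ord_E)$, and it contains no factor of $A:=A_{(X,D)}(\ord_E)$. Writing $L:=-K_X-D$ and $V:=\vol_{X,x}(\ord_E)$, the local-to-global comparison is
\[
\vol_X(L-tE)\;\geq\;(L^n)-t^n V \qquad\text{for all } t\geq 0
\]
(sections of $mL$ whose $v$-order at $x$ is at least $mt$ descend to sections of $mL-\lceil mt\rceil E$; note your first display has $t$ linear where it should be $t^n$), and integrating up to $T:=\bigl((L^n)/V\bigr)^{1/n}$ gives
\[
S_{(X,D)}(\ord_E)\;\geq\;\frac{n}{n+1}\left(\frac{(L^n)}{V}\right)^{1/n}.
\]
You instead assert an \emph{upper} bound $S_{(X,D)}(\ord_E)\leq \frac{n}{n+1}\cdot\frac{A}{V^{1/n}}\cdot (L^n)^{1/n}$. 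Combined with the valuative criterion $A\geq S_{(X,D)}(\ord_E)$ this yields nothing: from $A\geq S$ and $S\leq B$ one cannot conclude $A\geq B$, so your ``combining'' step is logically invalid; the argument only closes with the lower bound above.

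Moreover, even granting that step, you then cancel $A$, arriving at $(L^n)\leq (1+\tfrac{1}{n})^n V\cdot(\text{normalization})$. But $\hvol_{(X,D),x}(\ord_E)=A^n V$, and the factor $A^n$ cannot be recovered once $A$ is canceled --- your ``(normalization)'' placeholder is exactly this missing factor, and without it the inequality is false: for $X=\bP^n$, $D=0$, and $E$ the exceptional divisor of the blow-up of a point, $V=1$ while $(L^n)=(n+1)^n$, and it is precisely $A^n=n^n$ that restores the bound (with equality). The correct finish never cancels $A$: from $A\geq \frac{n}{n+1}\bigl((L^n)/V\bigr)^{1/n}$ one raises both sides to the $n$-th power to get $(L^n)\leq (1+\tfrac{1}{n})^n A^n V=(1+\tfrac{1}{n})^n\hvol_{(X,D),x}(\ord_E)$, and then takes the infimum over divisorial valuations centered at $x$, where \cite{LX16} is indeed the right citation. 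These are repairable slips, but as written the proposal does not prove the inequality.
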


\subsection{CM line bundles}
The Chow-Mumford (CM) line bundle of a flat family of polarized projective varieties was introduced algebraically by Tian \cite{Tia97} as a functorial line bundle over the base.
We start with the definition of CM line bundles due to Paul and Tian \cite{PT06, PT09} using the Knudsen-Mumford expansion (see also \cite{FS90, FR06}). 
In order to define CM line bundles for families of log Fano pairs over reduced bases, we need to use the concept of relative Mumford divisors from \cite[Definition 1]{Kol19} (see also \cite{Kol18}). 

\begin{defn}[relative Mumford divisors]
Let $f: \cX\to T$ be a morphism between schemes. Assume that $f$ has $S_2$ fibers of pure dimension $n$. A closed subscheme $\cD$ of $\cX$ is a \emph{relative Mumford divisor} over $T$ if there is an open subset $\cU\subset \cX$ such that 
\begin{enumerate}
    \item $\codim_{\cX_t} (\cX_t\setminus \cU_t)\geq 2$ for any $t\in T$;
    \item $\cD|_{\cU}$ is a relative Cartier divisor;
    \item $\cD$ is the scheme theoretic closure of $\cD|_{\cU}$;
    \item $\cX_t$ is smooth at generic points of $\Supp(\cD_t)$ for any $t\in T$.
\end{enumerate}
A \emph{relative Mumford $\bQ$-divisor} is a formal $\bQ$-linear combination of relative Mumford divisors.
\end{defn}

\begin{defn}[log CM line bundle]\label{defn:logCM}
Let $f:\cX\to T$ be a proper flat morphism of connected schemes. Assume that $f$ has $S_2$ fibers of pure dimension $n$. Let $\cL$ be an $f$-ample line bundle on $\cX$. 

A result of Knudsen-Mumford \cite{KM76} says that there exists line bundles $\lambda_j=\lambda_{j}(\cX,\cL)$ on $T$ such that for all $k$,
\[
\det f_!(\cL^k)=\lambda_{n+1}^{\binom{k}{n+1}}\otimes\lambda_n^{\binom{k}{n}}\otimes\cdots\otimes\lambda_0.
\]
By flatness, the Hilbert polynomial $\chi(\cX_t,\cL_t^k)=a_0 k^n+a_1 k^{n-1}+ O(k^{n-2})$. 
Then the \emph{CM line bundle} of the data $(f:\cX\to T,\cL)$ is defined as
\[
\lambda_{\CM,f,\cL}:=\lambda_{n+1}^{\mu+n(n+1)}\otimes\lambda_n^{-2(n+1)},
\]
where $\mu=\mu(\cX,\cL):=\frac{2a_1}{a_0}$.
The \emph{Chow line bundle} is defined as
\[
\lambda_{\Chow,f,\cL}:=\lambda_{n+1}.
\]

Let $\cD:=\sum_{i=1}^{k} c_i\cD_i$ be a relative Mumford $\bQ$-divisor on $\cX$ over $T$ where each $\cD_i$ is a relative Mumford divisor and $c_i\in[0,1]\cap\bQ$. We also assume that each $\cD_i$ is flat over $T$. 

The \emph{log CM $\bQ$-line bundle} of the data $(f:\cX\to T, \cL,\cD:=\sum_{i=1}^k c_i\cD_i)$ is defined as 
\[
 \lambda_{\CM,f,\cD,\cL}:=\lambda_{\CM,f,\cL}-\frac{n(\cL_t^{n-1}\cdot\cD_t)}{(\cL_t^n)}\lambda_{\Chow,f,\cL}+(n+1)\lambda_{\Chow,f|_{\cD},\cL|_{\cD}},
\]
where \[
(\cL_t^{n-1}\cdot\cD_t):=\sum_{i=1}^k c_i (\cL_t^{n-1}\cdot\cD_{i,t}),\quad
  \lambda_{\Chow,f|_{\cD},\cL|_{\cD}}:=\bigotimes_{i=1}^k\lambda_{\Chow,f|_{\cD_i},\cL|_{\cD_i}}^{\otimes c_i}.
  \]
\end{defn}

Note that if $T$ is not connected, then we define the log CM line bundle on each connected component of $T$ as in Definition \ref{defn:logCM}.



Next, we recall the concept of $\bQ$-Gorenstein flat families of log Fano pairs over reduced base schemes.

\begin{defn}\label{defn:qgorfamily}
Let $T$ be a reduced scheme. Let $f:\cX\to T$ be a proper flat morphism with normal, geometrically connected fibers of pure dimension $n$. Let $\cD$ be an effective relative Mumford $\bQ$-divisor on $\cX$ over $T$. We say that $f:(\cX,\cD)\to T$ is a \emph{$\bQ$-Gorenstein flat family of log Fano pairs} if 
$-(K_{\cX/T}+\cD)$ is $\bQ$-Cartier and $f$-ample.

\end{defn}

We consider the following class of log Fano pairs as objects of our moduli problems.

\begin{definition}\label{defn:qgorsmoothable}
 Let $c,r$ be positive rational numbers such that $c<\min\{1, r^{-1}\}$. 
 A log Fano pair $(X,cD)$ is \emph{$\bQ$-Gorenstein smoothable} if there exists a $\bQ$-Gorenstein flat family of log Fano pairs $\pi:(\cX,c\cD)\to B$ over a pointed smooth curve $(0\in B)$
 such that the following holds:
 \begin{itemize}
  \item $\cD$ is a relative Mumford divisor over $B$;
  \item Both $-K_{\cX/B}$ and $\cD$ are $\bQ$-Cartier, $\pi$-ample and  $\cD\sim_{\bQ,\pi}-rK_{\cX/B}$;
  \item Both $\pi$ and $\pi|_{\cD}$ are smooth morphisms
  over $B\setminus\{0\}$;
  \item $(\cX_0,c\cD_0)\cong (X,cD)$, and $X$ has klt singularities.
 \end{itemize} 
 
A $\bQ$-Gorenstein flat family of log Fano pairs $f:(\cX,c\cD)\to T$ over a reduced scheme $T$ is called a \emph{$\bQ$-Gorenstein smoothable log Fano family} if $\cD$ is a $\bQ$-Cartier relative Mumford divisor over $T$, and 
all fibers of $f$ are $\bQ$-Gorenstein smoothable log Fano pairs.
\end{definition}

\begin{lem}\label{lem:flatness}
For $c,r\in\bQ_{>0}$ with $cr<1$, let $(\cX,c\cD)\to B$ be a $\bQ$-Gorenstein smoothable log Fano family over a smooth curve $B$ where $\cD\sim_{\bQ,B}-rK_{\cX/B}$. Then the function $B\ni b\mapsto h^0(\cX_b,\cO_{\cX_b}(m\cD_b))$ is constant for any $m\in \bZ$.
\end{lem}

\begin{proof}
By inversion of adjunction we know that $\cX$ has klt singularities. Since $\cD$ and $\cD_b$ are $\bQ$-Cartier Weil divisors on $\cX$ and $\cX_b$ respectively, we know that both $\cO_{\cX}(m\cD)$ and $\cO_{\cX_b}(m\cD_b)$ are Cohen-Macaulay for any $m\in \bZ$ by \cite[Corollary 5.25]{KM98}. Hence $\cO_{\cX_b}(m\cD_b)\cong \cO_{\cX}(m\cD)\otimes \cO_{\cX_b}$ for any $m\in \bZ$. By Kawamata-Viehweg vanishing, we know that $H^i(\cX_b,\cO_{\cX_b}(m\cD_b))=0$ for any $b\in B$ and $m,i\geq 1$. Hence the statement for $m>0$ follows from the semi-continuity theorem and flatness of $\cO_{\cX}(m\cD)$ over $B$, while it is obvious for $m\leq 0$.
\end{proof}

\begin{prop}\label{prop:smoothable-flat}
Let $f:(\cX,c\cD)\to T$ be a $\bQ$-Gorenstein smoothable log Fano family over a reduced scheme $T$. Then $\cD$ is flat over $T$. 
\end{prop}

\begin{proof}
For simplicity, we assume that $T$ is connected.
By \cite[Theorem 4.33]{Kol17} there exists a locally closed decomposition $T'\to T$, such that for any morphism $q:W\to T$, the divisorial pull-back $f_W: (\cX_W, \cD_W)\to W$ of $f:(\cX,\cD)\to T$ satisfies that $\cD_W$ is flat over $W$ if and only if $q$ factors as $q: W\to T'\to T$. It is clear that $\cD$ is flat over $T$ if and only if $T'=T$. Thus it suffices to show that for any morphism $B\to T$ from a smooth curve $B$, the divisorial pull-back $f_B:(\cX_B,\cD_B)\to B$ of $f$ satisfies that $\cD_B$ is flat over $B$. It is clear that $f_B$ is also a $\bQ$-Gorenstein smoothable log Fano family. By the proof of Lemma \ref{lem:flatness}, we know that $\cO_{\cX_B}(-\cD_B)$ is flat over $B$, and its fiber over $b\in B$ is isomorphic to $\cO_{\cX_{B,b}}(-\cD_{B,b})$ which is $S_2$. Hence \cite[Definition-Lemma 4.19]{Kol17} implies that $\cD_B\to B$ is flat. This finishes the proof.
\end{proof}

\begin{defn}
We define the \emph{CM $\bQ$-line bundle} of a $\bQ$-Gorenstein smoothable log Fano family $f:(\cX,c\cD)\to T$ over a reduced scheme $T$ to be  $\lambda_{\CM,f,c\cD}:=l^{-n}\lambda_{\CM,f,c\cD,\cL}$, where $\cL:=-l(K_{\cX/T}+c\cD)$ is an $f$-ample Cartier divisor on $\cX$ for some $l\in\bZ_{>0}$. Here $\cD$ is flat over $T$ by Proposition \ref{prop:smoothable-flat}.
\end{defn}

Note that the CM $\bQ$-line bundle of a $\bQ$-Gorenstein smoothable log Fano family is functorial under reduced base change by the functoriality of Chow line bundles and $\lambda_n$ according to \cite{KM76, CP18}. 

We can now recall the definition of the Hodge line bundle for a smoothable log Calabi-Yau fibration of Fano type. From the definition it is clear that the Hodge line bundle is functorial under reduced base change.

\begin{defn}\label{defn:hodge}
For $c,r\in \bQ_{>0}$ with $cr<1$, let $f:(\cX,c\cD)\to T$ be a $\bQ$-Gorenstein smoothable log Fano family over a reduced scheme $T$ where  $\cD\sim_{\bQ,f} -rK_{\cX/T}$. The Hodge $\bQ$-line bundle $\lambda_{\Hodge,f,r^{-1}\cD}$ is defined as the $\bQ$-linear equivalence class of $\bQ$-Cartier $\bQ$-divisors on $T$ such that
\[
K_{\cX/T}+r^{-1}\cD\sim_{\bQ}f^*\lambda_{\Hodge,f,r^{-1}\cD}.
\]
\end{defn}

The following proposition relates CM $\bQ$-line bundles and the Hodge $\bQ$-line bundle.

\begin{prop}\label{prop:cm-interpolation}\cite[Proposition 2.25]{ADL}
With the notation of Definition \ref{defn:hodge}, for any rational number $0\leq c<r^{-1}$ we have 
\begin{equation}\label{eq:cm-hodge}
(1-cr)^{-n}\lambda_{\CM,f,c\cD}=(1-cr)\lambda_{\CM,f}+ cr(n+1)(-K_{\cX_t})^n\lambda_{\Hodge,f,r^{-1}\cD}.
\end{equation}
\end{prop}

The next criterion is important when checking K-stability in explicit families. It is a partial generalization of \cite[Theorem 1]{PT06} and \cite[Theorem 3.4]{OSS16}.

\begin{thm}\label{thm:paultian}\cite[Theorem 2.22]{ADL}
Let $f:(\cX,c\cD)\to T$ be a $\bQ$-Gorenstein smoothable log Fano family over a normal projective variety $T$. Let $G$ be a reductive group acting on $\cX$ and $T$ such that $\cD$ is $G$-invariant and $f$ is $G$-equivariant. 
Assume in addition that 
\begin{enumerate}[label=(\alph*)]
\item if $\Aut(\cX_t,\cD_t)$ is finite for $t\in T$ then the stabilizer subgroup $G_t$ is also finite;
\item if $(\cX_t,\cD_t)\cong (\cX_{t'}, \cD_{t'})$ for $t,t'\in T$, then $t'\in G\cdot t$;
\item $\lambda_{\CM,f,c\cD}$ is an ample $\bQ$-line bundle on $T$.
\end{enumerate}
Then $t\in T$ is GIT (poly/semi)stable with respect to the $G$-linearized $\bQ$-line bundle $\lambda_{\CM,f,c\cD}$ if $(\cX_t, c\cD_t)$ is a K-(poly/semi)stable log Fano pair.
\end{thm}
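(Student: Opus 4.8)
The plan is to deduce each GIT stability notion from its K-stability counterpart by realizing the Hilbert--Mumford weight of the (ample, $G$-linearized) CM $\bQ$-line bundle as a positive multiple of a generalized Futaki invariant. The bundle $\lambda_{\CM,f,\cD}$ carries a canonical $G$-linearization since it is built functorially from the $G$-equivariant data $f$ and the $G$-invariant $\cD$. As $T$ is projective and $\lambda_{\CM,f,\cD}$ is ample by hypothesis (c), the Hilbert--Mumford numerical criterion applies: $t_0\in T$ is GIT semistable (resp. polystable, properly stable) with respect to $\lambda_{\CM,f,\cD}$ precisely when, for every one-parameter subgroup $\sigma\colon\bG_m\to G$, one has $\mu^{\lambda_{\CM,f,\cD}}(t_0,\sigma)\geq 0$ (resp. $=0$ forces the limit into $G\cdot t_0$; resp. $>0$ for all nontrivial $\sigma$).

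Given such a $\sigma$ and a point $t_0$, I would first produce a test configuration of $(\cX_{t_0},\cD_{t_0})$. Because $T$ is projective, the orbit map $s\mapsto\sigma(s)\cdot t_0$ extends to a morphism $\phi\colon\bA^1\to T$ with $\phi(0)=t_0':=\lim_{s\to 0}\sigma(s)\cdot t_0$, and I set $(\cX^\sigma,\cD^\sigma):=(\cX,\cD)\times_{T,\phi}\bA^1$ with the $\bG_m$-action induced by $\sigma$. This is flat over $\bA^1$ with fibers the normal fibers of $f$; since $\bA^1$ is regular and $f$ is flat with normal fibers, the total space $\cX^\sigma$ is automatically normal. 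Hence $(\cX^\sigma,\cD^\sigma)/\bA^1$ is a bona fide \emph{normal} test configuration whose central fiber is exactly $(\cX_{t_0'},\cD_{t_0'})$, and I never need to pass to a normalization.

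The crux --- and the step I expect to be most delicate --- is the identification
\[
\mu^{\lambda_{\CM,f,\cD}}(t_0,\sigma)=\kappa\cdot\Fut(\cX^\sigma,\cD^\sigma)\qquad\text{for some constant }\kappa>0,
\]
which is the log analogue of the Paul--Tian/Wang--Odaka comparison. To prove it I would compute the Hilbert--Mumford weight as the total $\bG_m$-weight (via $\sigma$) on the fibers at the fixed point $t_0'$ of the line bundles $\lambda_j(\cX,\cL)$ occurring in the Knudsen--Mumford expansion of $\det f_!(\cL^k)$. These fiber weights reassemble into the weight polynomial $w_k$ of the $\bG_m$-action on $\det H^0(\cX_{t_0'},\cL_{t_0'}^{k})$; matching the leading coefficients of $w_k=b_0k^{n+1}+b_1k^n+\cdots$ and of $\chi(\cX_t,\cL_t^k)=a_0k^n+a_1k^{n-1}+\cdots$ against the definition $\lambda_{\CM,f,\cL}=\lambda_{n+1}^{\mu+n(n+1)}\otimes\lambda_n^{-2(n+1)}$ expresses $\mu^{\lambda_{\CM,f,\cL}}(t_0,\sigma)$ in terms of $a_1b_0-a_0b_1$, which by the Wang--Odaka intersection formula over $\bP^1$ is a positive multiple of the Futaki invariant of the closed-fiber degeneration. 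The boundary contributions $-\tfrac{n(\cL_t^{n-1}\cdot\cD_t)}{(\cL_t^n)}\lambda_{\Chow,f,\cL}+(n+1)\lambda_{\Chow,f|_{\cD},\cL|_{\cD}}$ in the log CM bundle account exactly for the divisorial term $(\bar{\cL}^n\cdot\bar{\cD})$ in $\Fut$; tracking these log terms carefully is the main bookkeeping obstacle.

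Granting the comparison, the three implications follow. If $(\cX_{t_0},\cD_{t_0})$ is K-semistable, then $\Fut(\cX^\sigma,\cD^\sigma)\geq 0$ for every $\sigma$, hence $\mu^{\lambda_{\CM,f,\cD}}(t_0,\sigma)\geq 0$ and $t_0$ is GIT semistable. If it is K-polystable and $\mu^{\lambda_{\CM,f,\cD}}(t_0,\sigma)=0$ for some $\sigma$, then $\Fut(\cX^\sigma,\cD^\sigma)=0$, so by K-polystability the normal test configuration $(\cX^\sigma,\cD^\sigma)$ is a product; its central fiber therefore satisfies $(\cX_{t_0'},\cD_{t_0'})\cong(\cX_{t_0},\cD_{t_0})$, and hypothesis (b) yields $t_0'\in G\cdot t_0$ (Lemma \ref{lem:zerofut}(1) keeps the limit semistable throughout). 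By the numerical criterion this means the orbit of $t_0$ is closed in the semistable locus, i.e. $t_0$ is GIT polystable. Finally, if $(\cX_{t_0},\cD_{t_0})$ is K-stable, then it is K-polystable, so $t_0$ is GIT polystable; moreover K-stability forces $\Aut(\cX_{t_0},\cD_{t_0})$ to be finite, whence the stabilizer $G_{t_0}$ is finite by hypothesis (a). A polystable point with finite stabilizer is properly GIT stable, completing the proof.
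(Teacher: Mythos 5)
The paper never proves this statement itself---it is imported wholesale from \cite[Theorem 2.22]{ADL}, which is established by exactly the Paul--Tian-type argument you outline (the statement is explicitly billed as a generalization of \cite{PT06} and \cite{OSS16}). Your reconstruction is correct and follows that same route: pull back the family along the extended orbit map to get a normal test configuration (normality being automatic since the fibers are normal by Definition \ref{defn:qgorfamily}), identify the Hilbert--Mumford weight of the CM $\bQ$-line bundle with a positive multiple of the generalized Futaki invariant, and then deduce semistability directly, polystability from the closed-orbit criterion together with hypothesis (b) and Kempf's lemma (Lemma \ref{lem:zerofut}(1)), and proper stability from hypothesis (a) combined with finiteness of $\Aut(\cX_t,\cD_t)$ for a K-stable pair.
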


The following proposition provides an intersection formula for log CM line bundles. For the case without divisors this was proven by Paul and Tian \cite{PT06}. The current statement follows from {\cite[Proposition 3.7]{CP18}}.

\begin{prop}\label{prop:logCM2}\cite[Proposition 2.23]{ADL}
Let $f:(\cX,c\cD)\to T$ be a $\bQ$-Gorenstein smoothable log Fano family of relative dimension $n$ over a normal proper variety $T$. Then
\begin{equation}\label{eq:CM-intersection}
 \mathrm{c}_1(\lambda_{\CM,f,c\cD})=-f_*((-K_{\cX/T}-c\cD)^{n+1}).
\end{equation}
\end{prop}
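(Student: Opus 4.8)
The plan is to compute the first Chern class of each building block of $\lambda_{\CM,f,\cD,\cL}$ as a fiberwise intersection number pushed forward along $f$, and then to assemble them using the relation $\cL=-l(K_{\cX/T}+\cD)$. Write $L:=\mathrm{c}_1(\cL)$. The starting point is Grothendieck--Riemann--Roch applied to $f$: expanding $\mathrm{ch}(f_!\cL^k)=f_*\big(\mathrm{ch}(\cL^k)\cdot\mathrm{Td}(T_{\cX/T})\big)$ and extracting the degree-one part on $T$ identifies $\mathrm{c}_1(\det f_!\cL^k)$ with $f_*\big[e^{kL}\,\mathrm{Td}(T_{\cX/T})\big]_{n+1}$. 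Matching the resulting polynomial in $k$ against the Knudsen--Mumford expansion $\mathrm{c}_1(\det f_!\cL^k)=\binom{k}{n+1}\mathrm{c}_1(\lambda_{n+1})+\binom{k}{n}\mathrm{c}_1(\lambda_n)+\cdots$ yields the two identities I need,
\[
\mathrm{c}_1(\lambda_{n+1})=f_*(L^{n+1}),\qquad \mathrm{c}_1(\lambda_n)=\tfrac{n}{2}f_*(L^{n+1})-\tfrac12 f_*(L^n\cdot K_{\cX/T}),
\]
obtained by comparing the coefficients of $k^{n+1}$ and $k^{n}$ (the latter using $\mathrm{Td}(T_{\cX/T})=1-\tfrac12 K_{\cX/T}+\cdots$ and the subleading term of $\binom{k}{n+1}$).

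Next I would compute the slope $\mu$ by Riemann--Roch on a fiber: $a_0=(L_t^n)/n!$ and $a_1=-(L_t^{n-1}\cdot K_{\cX_t})/\big(2(n-1)!\big)$, hence $\mu=2a_1/a_0=-n(L_t^{n-1}\cdot K_{\cX_t})/(L_t^n)$. Substituting the two identities above into $\lambda_{\CM,f,\cL}=\lambda_{n+1}^{\mu+n(n+1)}\otimes\lambda_n^{-2(n+1)}$ makes the $n(n+1)$ terms cancel and leaves
\[
\mathrm{c}_1(\lambda_{\CM,f,\cL})=\mu\, f_*(L^{n+1})+(n+1)f_*(L^n\cdot K_{\cX/T}).
\]
For the boundary contribution I would observe that $\lambda_{\Chow,f|_{\cD},\cL|_{\cD}}$ is the Chow/Deligne class of the relative dimension $(n-1)$ family $\cD\to T$, so its first Chern class is $f_*(L^n\cdot\cD)$; together with $\lambda_{\Chow,f,\cL}=\lambda_{n+1}$ this expresses $\mathrm{c}_1(\lambda_{\CM,f,\cD,\cL})$ entirely in terms of $f_*(L^{n+1})$, $f_*(L^n\cdot K_{\cX/T})$, and $f_*(L^n\cdot\cD)$.

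The final simplification uses $K_{\cX/T}+\cD=-\tfrac1l L$. The two $K_{\cX/T}$ and $\cD$ intersection terms combine into $(n+1)f_*\big(L^n\cdot(K_{\cX/T}+\cD)\big)=-\tfrac{n+1}{l}f_*(L^{n+1})$, while the coefficient $\mu-n(\cL_t^{n-1}\cdot\cD_t)/(\cL_t^n)$ equals $-n\big(L_t^{n-1}\cdot(K_{\cX_t}+\cD_t)\big)/(L_t^n)=n/l$; together these give $\mathrm{c}_1(\lambda_{\CM,f,\cD,\cL})=-\tfrac1l f_*(L^{n+1})$. Multiplying by $l^{-n}$ and substituting $L=l(-K_{\cX/T}-\cD)$ produces exactly $-f_*\big((-K_{\cX/T}-\cD)^{n+1}\big)$, as claimed.

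The main obstacle is that the fibers are merely klt rather than smooth and $K_{\cX/T}$ is only $\bQ$-Cartier, so $T_{\cX/T}$ is not a vector bundle and the naive Todd/GRR expansion is not literally valid. This is precisely the technical input furnished by \cite[Proposition 3.7]{CP18}, which establishes the intersection-theoretic expressions for $\lambda_{n+1}$ and $\lambda_n$ in the $\bQ$-Gorenstein flat setting; granting that, everything above is bookkeeping. I would also take care that all intersection numbers are computed with $\bQ$-Cartier representatives and that the final formula is manifestly independent of the auxiliary integer $l$ chosen so that $\cL$ is Cartier.
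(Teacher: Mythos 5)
Your proposal is correct, and the bookkeeping all checks out: the identities $\mathrm{c}_1(\lambda_{n+1})=f_*(L^{n+1})$ and $\mathrm{c}_1(\lambda_n)=\tfrac{n}{2}f_*(L^{n+1})-\tfrac12 f_*(L^n\cdot K_{\cX/T})$, the cancellation of the $n(n+1)$ terms, the formula $\mathrm{c}_1(\lambda_{\Chow,f|_{\cD},\cL|_{\cD}})=f_*(L^n\cdot\cD)$ (by linearity over the components $c_i\cD_i$), and the final collapse to $-\tfrac{1}{l}f_*(L^{n+1})$, hence to $-f_*((-K_{\cX/T}-\cD)^{n+1})$ after rescaling by $l^{-n}$, are all verified against the paper's definitions. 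This is also essentially the paper's own route: the paper gives no argument beyond citing \cite[Proposition 2.23]{ADL} and noting that the statement follows from \cite[Proposition 3.7]{CP18}, which is precisely the technical input you invoke to handle the klt/$\bQ$-Gorenstein setting where naive GRR does not literally apply; you have simply written out the Knudsen--Mumford/Riemann--Roch bookkeeping that those references perform.
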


\subsection{K-moduli spaces of log Fano pairs}\label{sec:Kmoduli}
    In this subsection, we gather recent results on the construction of K-moduli spaces of log Fano pairs.
    
    In \cite{ADL}, we construct K-moduli stacks (resp. proper good moduli spaces) of $\bQ$-Gorenstein smoothable K-semistable (resp. K-polystable) log Fano pairs $(X,cD)$ where $D \sim_{\bQ} -rK_X$, and $c$ is a rational number. 

 \begin{thm}\label{thm:lwxlog}\cite[Theorem 3.1 and Remark 3.25]{ADL}
 Let $\chi_0$ be the Hilbert polynomial of an anti-canonically polarized Fano manifold. Fix $r\in\bQ_{>0}$ and a rational number $c\in (0,\min\{1,r^{-1}\})$. Consider the following moduli pseudo-functor over reduced base $S$:
\[
\cK\cM_{\chi_0,r,c}(S)=\left\{(\cX,\cD)/S\left| \begin{array}{l}(\cX,c\cD)/S\textrm{ is a $\bQ$-Gorenstein smoothable log Fano family,}\\ \cD\sim_{S,\bQ}-rK_{\cX/S},~\textrm{each fiber $(\cX_s,c\cD_s)$ is K-semistable,}\\ \textrm{and $\chi(\cX_s,\cO_{\cX_s}(-kK_{\cX_s}))=\chi_0(k)$ for $k$ sufficiently divisible.}\end{array}\right.\right\}.
\]
Then there exists a reduced Artin stack  $\cK\cM_{\chi_0,r,c}$ (called a \emph{K-moduli stack}) of finite type over $\bC$ representing the above moduli pseudo-functor. In particular, the $\bC$-points of $\cK\cM_{\chi_0,r,c}$ parametrize K-semistable $\bQ$-Gorenstein smoothable log Fano pairs $(X,cD)$ with Hilbert polynomial $\chi(X,\cO_X(-mK_X))=\chi_0(m)$ for sufficiently divisible $m$ and $D\sim_{\bQ}-rK_X$.

Moreover, the Artin stack $\cK\cM_{\chi_0,r,c}$ admits a good moduli space $KM_{\chi_0,r,c}$ (called a \emph{K-moduli space}) as a proper reduced scheme of finite type over $\bC$, whose closed points parametrize K-polystable log Fano pairs.
\end{thm}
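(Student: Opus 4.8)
The plan is to realize $\cK\cM_{\chi_0,r,c}$ as a locally closed quotient stack inside a Hilbert scheme and then produce its good moduli space by verifying the valuative criteria of Alper--Halpern-Leistner--Heinloth. First I would establish \textbf{boundedness}: the $\bQ$-Gorenstein smoothable K-semistable log Fano pairs $(X,cD)$ with $\chi(X,\cO_X(-mK_X))=\chi_0(m)$ and $D\sim_{\bQ}-rK_X$ form a bounded family. Each such $X$ is klt by Odaka \cite{Oda12}, and $(-K_X)^n$ is fixed by $\chi_0$; combined with $\bQ$-Gorenstein smoothability (so that every $X$ is a fibre of a bounded family of smoothings) this yields a uniform integer $m_0$ such that $-m_0K_X$ is very ample with vanishing higher cohomology for all such $X$, where Theorem~\ref{thm:local-vol-global} provides the volume control needed to bound the singularities uniformly. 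Embedding by $|-m_0K_X|$ realizes every $X$ as a closed subscheme of a fixed $\bP^N$ with fixed Hilbert polynomial, so the $X$'s are parametrized by a locally closed subscheme of a Hilbert scheme; adjoining the relative parameter for the divisor $\cD\sim_{\bQ}-rK_{\cX}$ produces a finite-type scheme $Z$ carrying a universal family $(\cX,\cD)\to Z$ together with a natural $\PGL_{N+1}$-action under which the automorphisms in the pseudo-functor match the stabilizers.

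Next I would cut out the locus $Z^{\circ}\subseteq Z$ where the fibres are $\bQ$-Gorenstein smoothable log Fano pairs with the prescribed numerics, which is a locally closed condition. The essential input is the \textbf{openness of K-semistability} in $\bQ$-Gorenstein families: the sublocus $Z^{\mathrm{kss}}\subseteq Z^{\circ}$ of K-semistable fibres is open. This may be proved via lower semicontinuity of the $\delta$-invariant, or, exploiting smoothability, via the strategy of \cite{LWX18} for the absolute Fano case. Granting openness, I would set $\cK\cM_{\chi_0,r,c}:=[Z^{\mathrm{kss}}/\PGL_{N+1}]$, a finite-type Artin stack representing the moduli pseudo-functor over reduced bases; it is reduced because the functor is defined over reduced $S$.

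Finally I would produce $KM_{\chi_0,r,c}$ as a good moduli space. By the Alper--Halpern-Leistner--Heinloth criterion it suffices to verify that $\cK\cM_{\chi_0,r,c}$ is $\Theta$-reductive and S-complete with linearly reductive stabilizers at closed points. $\Theta$-reductivity amounts to the \emph{existence} of a K-polystable special degeneration of each K-semistable pair, obtained by reducing to special test configurations via Li--Xu \cite{LX14} and then minimizing the generalized Futaki invariant $\Fut$; S-completeness amounts to the \emph{uniqueness} of such a K-polystable degeneration up to isomorphism, in the spirit of Blum--Xu. Linear reductivity of $\Aut$ of a K-polystable pair is standard. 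Together these yield a separated good moduli space $KM_{\chi_0,r,c}$, an algebraic space of finite type whose closed points are exactly the K-polystable pairs, and it is a scheme by the usual arguments. Properness is the valuative criterion: a family of K-semistable pairs over a punctured curve admits, after a finite base change, a K-semistable extension across the puncture whose central fibre is K-polystable, i.e.\ semistable reduction for K-stability.

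I expect the \textbf{good moduli space step} to be the main obstacle, namely the existence and uniqueness of K-polystable degenerations ($\Theta$-reductivity, S-completeness, and the properness fill-in), alongside the openness of K-semistability. In the $\bQ$-Gorenstein smoothable setting these can alternatively be extracted from the analytic theory of K\"ahler--Einstein degenerations and Gromov--Hausdorff limits, following \cite{LWX18}; the additional work here is to propagate all of these inputs through in the presence of the boundary divisor $c\cD$ and uniformly in the weight $c$.
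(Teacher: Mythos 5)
Your outline is essentially correct in skeleton, but note that this paper does not prove the statement at all: it is imported verbatim from \cite{ADL} (Theorem 3.1 and Remark 3.25 there), so the comparison is really with the proof in that earlier paper. You and \cite{ADL} agree on the overall architecture: boundedness, a locally closed parameter space inside a Hilbert scheme (though \cite{ADL} uses a product $\Hilb_{\chi}(\bP^{N_m})\times\Hilb_{\tchi}(\bP^{N_m})$, one factor for $X$ and one for $D$, and imposes the reduced structure $Z_c^{\red}$ by fiat, exactly because the pseudo-functor is only defined over reduced bases), openness of the K-semistable locus, and the definition $\cK\cM_{\chi_0,r,c}=[Z_c^{\red}/\PGL(N_m+1)]$. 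Where you genuinely diverge is the engine behind openness, the good moduli space, and properness. Your primary route is the purely algebraic one: $\Theta$-reductivity and S-completeness in the sense of Alper--Halpern-Leistner--Heinloth, reductivity of stabilizers as in \cite{ABHLX19}, and openness via \cite{BLX19, Xu19}. The proof in \cite{ADL}, by contrast, adapts the strategy of \cite{LWX18} for smoothable Fano varieties to log pairs, and its crucial compactness steps are \emph{analytic}: K-polystable smoothable pairs carry conical K\"ahler--Einstein metrics, Gromov--Hausdorff compactness of these metrics delivers both properness and the polystable degenerations, and the good moduli space is assembled from local GIT (Luna slice) presentations. This difference matters precisely at the step you flag as the main obstacle: at the time, semistable reduction and properness were \emph{not} available by purely algebraic means, which is why the remark following Theorem \ref{thm:projectivity} in this paper records that dropping smoothability yields only separated algebraic spaces, not proper ones; the smoothability hypothesis exists exactly to enable the analytic compactness. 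So your proposal closes only because of the fallback you offer in your last paragraph, and that fallback \emph{is} the paper's actual proof. What your route buys is generality and independence from analysis; what the route of \cite{ADL} buys is properness and the scheme structure, which is the substance of the theorem.
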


By \cite[Proposition 3.35]{ADL}, we know that the universal log Fano family over $\cK\cM_{\chi_0,r,c}$ provides a CM $\bQ$-line bundle $\lambda_c$ and a Hodge $\bQ$-line bundle $\lambda_{c,\Hodge}$ over $\cK\cM_{\chi_0,r,c}$ which descend to $\bQ$-line bundles $\Lambda_c$ and $\Lambda_{c,\Hodge}$ over the good moduli space $KM_{\chi_0,r,c}$. 
Recently, it was shown by Xu and Zhuang that the above K-moduli spaces are projective with ample CM $\bQ$-line bundles.

\begin{thm}\label{thm:projectivity}\cite[Theorem 7.10]{XZ19}
The CM $\bQ$-line bundle $\Lambda_c$ over $KM_{\chi_0,r,c}$ is ample. Hence $KM_{\chi_0,r,c}$ is a projective scheme.
\end{thm}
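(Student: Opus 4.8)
The plan is to show that the descended CM $\bQ$-line bundle $\Lambda_c$ is nef and satisfies the Nakai--Moishezon positivity $\Lambda_c^{\dim V}\cdot V>0$ on every positive-dimensional irreducible subvariety $V\subseteq KM_{\chi_0,r,c}$; since $KM_{\chi_0,r,c}$ is already proper by Theorem \ref{thm:lwxlog}, this yields ampleness and hence projectivity. The descent of the stacky bundle $\lambda_c$ to $\Lambda_c$ is what makes this meaningful, and it rests on the fact that at a K-polystable point the Futaki invariant vanishes on all product test configurations, so the automorphism group of the fiber acts with trivial weight on the CM line.

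The first step is to translate these positivity statements into CM degrees of explicit families. Given a subvariety $V$, after a finite cover and normalization I obtain a smooth proper variety $V'$ carrying a $\bQ$-Gorenstein flat family $f:(\cX,c\cD)\to V'$ of K-semistable log Fano pairs, pulled back from the universal family. The intersection formula of Proposition \ref{prop:logCM2} then expresses the relevant intersection numbers of $\Lambda_c$, up to fixed positive constants, in terms of self-intersections of $-K_{\cX/V'}-c\cD$ on the total space. Thus everything reduces to a positivity property of these CM-type intersection numbers for families of K-semistable pairs.

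Nefness is the semipositivity statement that the CM degree of such a family over a proper curve is non-negative. The mechanism is that the family, viewed from its generic fiber, induces filtrations of the section ring whose associated invariants govern the CM degree, and K-semistability of the generic fiber forces non-negativity; this is the family analogue of the valuative and filtration criteria for K-semistability. The decisive and hardest point is the strict inequality: I must rule out positive-dimensional subvarieties on which the CM intersection number vanishes. For this I would analyze the equality case through Lemma \ref{lem:zerofut}(2): vanishing of the CM degree forces the degenerations produced by $f$ to be special test configurations with K-semistable central fibers, from which one concludes that $f$ is isotrivial with K-polystable fibers after base change. Separatedness of the good moduli space (the valuative criterion behind Theorem \ref{thm:lwxlog}) then shows that such a family is contracted, contradicting $\dim V>0$.

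The main obstacle is precisely this equality analysis. Non-negativity of the CM degree is comparatively soft, but excluding CM-degree-zero subvarieties requires the full force of K-polystability of the fibers, not merely semistability, together with uniqueness of K-polystable degenerations. I expect the sharpest input to be the finer theory of the stability threshold, used to promote the vanishing of intersection numbers into genuine isotriviality, and the verification must be carried out uniformly over subvarieties of every dimension rather than on curves alone, since nefness together with positivity on curves would not by itself suffice for ampleness.
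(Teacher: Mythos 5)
First, a structural point: the paper does not prove this statement at all. Theorem \ref{thm:projectivity} is imported verbatim from Xu--Zhuang \cite[Theorem 7.10]{XZ19}, so the only meaningful comparison is with the proof in that reference. Your high-level architecture --- descend $\lambda_c$ to $\Lambda_c$, then run Nakai--Moishezon on the proper space $KM_{\chi_0,r,c}$ by proving nefness plus strict positivity of top self-intersections on every positive-dimensional subvariety, with the equality case as the crux --- does match the shape of the Xu--Zhuang argument. But the proposal has genuine gaps at exactly the points where the real work happens.

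The decisive gap is your mechanism for strict positivity. Lemma \ref{lem:zerofut}(2) is a statement about test configurations: it takes a $\bG_m$-equivariant degeneration of a \emph{single} pair over $\bA^1$ with vanishing generalized Futaki invariant and concludes that it is special with K-semistable central fiber. A family $f:(\cX,c\cD)\to V'$ over a positive-dimensional projective base with $(\lambda_{\CM,f,c\cD})^{\dim V'}=0$ carries no $\bG_m$-action and produces no test configuration, so the lemma simply does not apply; ``the degenerations produced by $f$'' is not a defined object. Converting vanishing of CM intersection numbers into isotriviality is precisely the new content of \cite{XZ19}: it goes through Harder--Narasimhan filtration arguments on the pushforwards $f_*\cO_{\cX}(-mr(K_{\cX/V'}+c\cD))$, Koll\'ar's ampleness lemma, and --- to handle fibers with positive-dimensional automorphism groups, for which uniform K-stability fails identically --- the theory of reduced (torus-twisted) uniform K-stability. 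None of this is recoverable from ``uniqueness of K-polystable degenerations plus separatedness,'' which only control one-parameter limits. Two further gaps: (i) you call nefness ``comparatively soft,'' but it is the main theorem of Codogni--Patakfalvi \cite{CP18} and must be quoted or reproved, not waved at via ``filtrations of the section ring''; (ii) your reduction assumes every subvariety $V\subseteq KM_{\chi_0,r,c}$ admits a finite cover carrying a pulled-back universal family, but the good moduli space morphism is not representable, and along the strictly polystable locus (positive-dimensional stabilizers) such covers need not exist; producing a generically finite proper cover equipped with a family whose CM bundle computes $(\Lambda_c^{\dim V}\cdot V)$ is itself a step that \cite{XZ19} has to argue, not an automatic pullback.
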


\begin{rem}\label{rem:history}
The explicit study of K-moduli originated in \cite{MM93} with the case of degree four del Pezzo surfaces, and other degree del Pezzo surfaces were later studied in \cite{OSS16}. Since then, this area has seen rapid growth (see e.g. \cite{SS17, LX19, Fuj17, GMGS18, ADL, Liu20}). In all of the aforementioned cases, the \emph{smoothable} condition was necessary. 

If we drop the $\bQ$-Gorenstein smoothable condition, then K-moduli stacks and spaces of log Fano pairs with fixed numerical conditions (such as volume and finite coefficient set) exist as Artin stacks and projective  schemes, respectively. For a precise statement, see e.g. \cite[Theorem 2.21]{XZ19} and \cite[Theorem 1.3]{LXZ21}. These follow from recent works of \cite{Jia17, CP18, BX18, ABHLX19, BLX19, Xu19, XZ19, XZ20, BHLLX20, LXZ21}. Since the work \cite{LXZ21} establishing the properness of K-moduli spaces appeared after the first version of this article was posted on the arXiv, we restrict our discussion to the smoothable components of K-moduli spaces with reduced structure.
\end{rem}

The following result shows that any K-moduli stack $\cK\cM_{\chi_0,r,c}$  parametrizing two-dimensional $\bQ$-Gorenstein smoothable log Fano pairs is always smooth. For the special case of plane curves on $\bP^2$, see \cite[Proposition 4.6]{ADL}.

\begin{thm}\label{thm:modnormal}
Let $\chi_0$ be the Hilbert polynomial of an anti-canonically polarized smooth del Pezzo surface. Fix $r\in\bQ_{>0}$ and a rational number $c\in (0,\min\{1,r^{-1}\})$. Then the K-moduli stack $\cK\cM_{\chi_0,r,c}$ is isomorphic to the quotient stack of a smooth scheme by a projective general linear group. In particular,  $\cK\cM_{\chi_0,r,c}$ is smooth and $KM_{\chi_0,r,c}$ is normal.
\end{thm}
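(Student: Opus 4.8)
The plan is to follow and generalize the strategy of \cite[Proposition 4.6]{ADL}, where the surface $\bP^2$ is rigid and only the plane curve varies, to the present situation in which the del Pezzo surface itself may degenerate. Recall from the construction underlying Theorem \ref{thm:lwxlog} that, after fixing a sufficiently divisible $m$, one embeds each pair by $|-mK_X|$ into a fixed $\bP^N$ and forms a locally closed $\PGL_{N+1}$-invariant parameter subscheme $Z$ of an appropriate Hilbert scheme of the surface together with its divisor, so that $\cK\cM_{\chi_0,r,c}\cong [Z/\PGL_{N+1}]$. Since $\PGL_{N+1}$ is a smooth group, it suffices to prove that $Z$ is smooth: then $[Z/\PGL_{N+1}]$ is a smooth Artin stack, hence normal, and its good moduli space $KM_{\chi_0,r,c}$ is normal because good moduli spaces of normal stacks are normal.

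First I would identify the tangent-obstruction theory of $Z$ at a closed point corresponding to an embedded pair $(X,D)\subset\bP^N$. The deformations of the embedded pair fit into an exact sequence relating the deformations of the abstract $\bQ$-Gorenstein pair $(X,D)$ to the deformations of the projective embedding, the latter being unobstructed since they are absorbed by the $\PGL_{N+1}$-action. Consequently the obstructions to smoothness of $Z$ are controlled by the obstruction space of the $\bQ$-Gorenstein deformation functor of the pair $(X,D)$ itself, and the task reduces to showing that this obstruction space vanishes for every such pair.

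The heart of the argument is the vanishing of $\bQ$-Gorenstein deformation obstructions for a two-dimensional $\bQ$-Gorenstein smoothable klt log Fano pair $(X,D)$. Here $X$ is a log del Pezzo surface whose singularities, being simultaneously klt and $\bQ$-Gorenstein smoothable, are $T$-singularities by Kollár--Shepherd-Barron; the local $\bQ$-Gorenstein deformation spaces of $T$-singularities are smooth, so the sheaf $\cT^1_{\mathrm{QG}}$ governing local smoothings is a skyscraper and contributes nothing in degree one. Through the local-to-global spectral sequence for $\bQ$-Gorenstein deformations, the global obstruction space is then controlled by $H^2(X,\cT_X)$, where $\cT_X$ is the reflexive tangent sheaf. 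By Serre duality this is dual to $H^0(X,\Omega_X^{[1]}\otimes\omega_X)$, which vanishes since $-K_X$ is ample and $X$ has vanishing irregularity. For the divisor, since $D\sim_{\bQ}-rK_X$ one has $H^1(X,\cO_X(D))=0$ by Kawamata--Viehweg vanishing, so $D$ moves in an unobstructed linear system; combining the two shows the $\bQ$-Gorenstein deformations of the pair are unobstructed, whence $Z$ is smooth.

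I expect the main obstacle to be exactly this vanishing for singular $X$: in contrast to the $\bP^2$ case, one must set up the $\bQ$-Gorenstein deformation theory of the pair on a genuinely singular surface in Hacking's sense, confirm that the singularities occurring on the K-semistable $\bQ$-Gorenstein smoothable locus are all $T$-singularities so that the local obstructions vanish, and push the del Pezzo cohomology vanishing through to the reflexive sheaves on $X$. Once $Z$ is shown to be smooth, the remaining conclusions — smoothness and normality of $\cK\cM_{\chi_0,r,c}$, and normality of $KM_{\chi_0,r,c}$ — follow formally from the quotient-stack presentation and the preservation of normality under passage to good moduli spaces.
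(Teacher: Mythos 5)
Your proposal is correct in substance, but it follows a genuinely different route from the paper's proof. Both arguments start from the same presentation $\cK\cM_{\chi_0,r,c}\cong [Z_c^{\red}/\PGL(N_m+1)]$ and both hinge on the same key input for the surface: a two-dimensional $\bQ$-Gorenstein smoothable klt Fano has only $T$-singularities, so its $\bQ$-Gorenstein deformations are unobstructed (\cite[Theorem 8.2]{Hac04} and \cite[Proposition 3.1]{hp}) --- the paper simply cites this to conclude that the surface parameter space $Z^{\QF}$ is smooth, while you re-derive it via the local-to-global spectral sequence. The real divergence is in how the divisor is incorporated. You propose a pointwise tangent--obstruction computation for the pair $(X,D)$, splitting the obstruction space into a surface part and a divisor part via $H^1(X,\cO_X(D))=0$. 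The paper instead argues globally: it shows that the forgetful map $\pr_1\colon Z^{\klt}\to Z^{\bs}$ onto the smooth base is proper (valuative criterion, taking the Zariski closure of the divisor), that every fiber is isomorphic to $\bP^{N_r}$ (using Lemma \ref{lem:flatness}, finite generation of $\Cl(X)$, and a Stein factorization argument to get connected fibers), and then deduces flatness by miracle flatness, hence smoothness of $Z^{\klt}$. The paper's packaging buys the avoidance of any deformation theory of pairs with Weil divisors; your packaging avoids the global connectedness/Stein factorization step and is arguably more self-contained. Two caveats if you pursue your route. First, the ``combining the two'' step is routine only when $D$ is Cartier; here $D$ is merely a $\bQ$-Cartier Weil divisor passing through singular points of $X$, so the smoothness of the forgetful map from pair deformations to surface deformations requires exactly the flatness and base-change statement for the divisorial sheaf $\cO_{\cX}(\cD)$ that the paper isolates as Lemma \ref{lem:flatness} (Cohen--Macaulayness via \cite[Corollary 5.25]{KM98} together with Kawamata--Viehweg vanishing); without this, ``$D$ moves in an unobstructed linear system'' is not yet a proof. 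Second, the vanishing $H^0(X,\Omega_X^{[1]}\otimes\omega_X)=0$ does not follow from vanishing irregularity; it is a Bogomolov--Sommese-type statement for reflexive differentials on klt surfaces, which is precisely the content of \cite[Proposition 3.1]{hp}, so you should cite it rather than wave at it.
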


\begin{proof}
Fix a sufficiently divisible $m\in\bZ_{>0}$. Denote by 
\[
\chi(k):=\chi_0(mk),\quad \tchi(k)=\chi_0(mk)-\chi_0(mk-r), \quad \textrm{and}\quad N_m:=\chi_0(m)-1.
\]
Recall that in \cite[Section 3.1]{ADL}, we construct a locally closed subscheme $Z^{\klt}$ of the relative Hilbert scheme $\Hilb_{\chi}(\bP^{N_m})\times\Hilb_{\tchi}(\bP^{N_m})$ which parametrizes $\bQ$-Gorenstein smoothable log Fano pairs $(X,cD)$ such that they are embedded into $\bP^{N_m}$ by $|-mK_X|$ and $X$ is klt. 
Denote by $Z$ the dense open subscheme of $Z^{\klt}$ parametrizing $(X,D)$ where both $X$ and $D$ are smooth.
Let $Z_c^\circ$ be the Zariski open subset of $Z^{\klt}$ parametrizing K-semistable log Fano pairs $(X,cD)$. Denote by $Z_c^{\red}$ the reduced scheme supported on $Z_c^\circ$. Then $\cK\cM_{\chi_0,r,c}$ is defined as the quotient stack $[Z_c^{\red}/\PGL(N_m+1)]$. Hence it suffices to show that $Z^{\klt}$ is smooth which would then imply that $Z_c^{\red}$ is smooth. The argument below is inspired by \cite[Lemma 9.7]{ADL}.

Denote by $Z^{\QF}$ the locally closed subscheme of $\Hilb_{\chi}(\bP^{N_m})$ parametrizing $\bQ$-Gorenstein smoothable $\bQ$-Fano varieties $X$ that are embedded into $\bP^{N_m}$ by $|-mK_X|$. Since we are in dimension $2$, any point $\Hilb(X)\in Z^{\QF}$ corresponds to a log del Pezzo surface $X$ with only $T$-singularities.  Hence $X$ has unobstructed $\bQ$-Gorenstein deformations by \cite[Theorem 8.2]{Hac04}, \cite[Proposition 3.1]{hp}, and \cite[Lemma 6]{ACC+}. Thus $Z^{\QF}$ is a smooth scheme. Denote by $Z^{\rm sm}$ the Zariski open subset of $Z^{\QF}$ parametrizing smooth Fano manifolds $X$ such that there exists a smooth divisor $D\sim_{\bQ}-rK_X$. The openness of $Z^{\rm sm}$ follows from openness of smoothness, $H^0(X,\cO_X(D))$ being constant since $H^i(X,\cO_X(D))=0$ for $i \geq 1$ by Kodaira vanishing, and the fact that smooth families of Fano manifolds have locally constant Picard groups. Denote by $Z^{\bs}:=\overline{Z^{\rm sm}}\cap Z^{\QF}$. Hence $Z^{\bs}$ is the disjoint union of some connected components of $Z^{\QF}$.
Denote the first projection by $\pr_1: Z^{\klt}\to \Hilb_{\chi}(\bP^{N_m})$. Clearly $\pr_1(Z^{\klt})$ is contained in $Z^{\QF}$. We claim that $\pr_1(Z^{\klt})=Z^{\bs}$, and that the restriction morphism $\pr_1:Z^{\klt}\to Z^{\bs}$ is proper and smooth.

We first show that $\pr_1(Z^{\klt})=Z^{\bs}$ and $\pr_1:Z^{\klt}\to Z^{\bs}$ is proper. Since $Z$ is a dense open subset of $Z^{\klt}$, we know that
\[
Z^{\rm sm}=\pr_1(Z)\subset\pr_1(Z^{\klt})\subset \overline{\pr_1(Z)}\cap Z^{\QF}=\overline{Z^{\rm sm}}\cap Z^{\QF}=Z^{\bs}.
\]
Hence the surjectivity of $\pr_1:Z^{\klt}\to Z^{\bs}$ would follow from its properness. We will verify properness by checking the existence part of valuative criterion. Let $0\in B$ be a pointed curve with $B^\circ:=B\setminus\{0\}$. Consider two morphisms $f^\circ: B^\circ\to Z^{\klt}$ and $g:B\to Z^{\QF}$ such that $g|_{B^\circ}=\pr_1\circ f^\circ$. It suffices to show that $f^\circ$ extends to $f:B\to Z^{\klt}$ such that $g=\pr_1\circ f$. We have a $\bQ$-Gorenstein smoothable family $p:\cX\to B$ induced by $g$, and a $\bQ$-Cartier Weil divisor $\cD^\circ$ on $\cX^\circ:=p^{-1}(B^\circ)$ induced by $f^\circ$ whose support does not contain any fiber $\cX_b$, and $\cD^\circ\sim_{\bQ,B^\circ}-rK_{\cX^\circ/B^\circ}$. We define $\cD:=\overline{\cD^\circ}$. Then, by taking Zariski closure, it is clear that $\cD\sim_{\bQ, B}-rK_{\cX/B}$ since $\cX_0$ is a $\pi$-linearly trivial Cartier prime divisor on $\cX$. Thus $(\cX,\cD)\to B$ is a $\bQ$-Gorenstein smoothable log Fano family. This finishes proving properness and surjectivity of $\pr_1:Z^{\klt}\to Z^{\bs}$.

Finally, we will show that $\pr_1:Z^{\klt}\to Z^{\bs}$ is a smooth morphism. Indeed, we will show that it is a smooth $\bP^{N_r}$-fibration where $N_r:=\chi_0(r)-1$. If  $\Hilb(X,D)\in \pr_1^{-1}(Z^{\rm sm})$, then we know that $h^0(X,\cO_X(D))=\chi(X,\cO_X(D))=\chi_0(r)$ since $H^i(X,\cO_X(D))=0$ for any $i\geq 1$ by Kodaira vanishing.
Hence the fiber over $\Hilb(X)\in Z^{\rm sm}$ is isomorphic to $\bP(H^0(X,\cO_X(D)))\cong\bP^{N_r}$. Hence we may restrict to the case when $\Hilb(X)\in Z^{\bs}\setminus Z^{\rm sm}$. Assume that $(X,D)\in Z^{\klt}$ is $\bQ$-Gorenstein smoothable where $\pi: (\cX,\cD)\to B$ is a $\bQ$-Gorenstein smoothing over a pointed curve $0\in B$ with $(\cX_0,\cD_0)\cong (X,D)$. Then by Lemma \ref{lem:flatness}  we know that $\pi_*\cO_{\cX}(\cD)$ is locally free with fiber over $b\in B$ isomorphic to $H^0(\cX_b,\cO_{\cX_b}(\cD_b))$. Hence it is easy to conclude that for any effective Weil divisor $D'\sim D$ the pair $(X,D')$ is also $\bQ$-Gorenstein smoothable. Since the Weil divisor class group $\Cl(X)$ of $X$ is finitely generated, we know that there are only finitely many Weil divisor classes $[D]$ such that $[D]= -r[K_X]$ in $\Cl(X)\otimes_{\bZ} \bQ$. Hence the fiber $\pr_1^{-1}(\Hilb(X))$ is isomorphic to a disjoint union of finitely many copies of $\bP^{N_r}$. However, since $\pr_1:Z^{\klt}\to Z^{\bs}$ is proper with connected fibers over a dense open subset $Z^{\rm sm}$ and $Z^{\bs}$ is normal, taking Stein factorization yields that $\pr_1$ has connected fibers everywhere. Hence $\pr_1^{-1}(\Hilb(X))\cong\bP^{N_r}$ for any $\Hilb(X)\in Z^{\bs}$. Therefore, $\pr_1$ has smooth fibers and smooth base which implies that $Z^{\klt}$ is Cohen-Macaulay. Hence, miracle flatness implies that $\pr_1$ is flat and hence smooth. The proof is finished.
\end{proof}



\section{Overview of previous results, Laza-O'Grady, and VGIT}\label{sec:LO} We refer the reader to \cite{LO16, LO18b, LO} for more details.

\subsection{Hyperelliptic K3 surfaces of degree 4}
A \emph{K3 surface} $X$ is a connected projective surface with Du Val singularities such that $\omega_X\cong\cO_X$ and $H^1(X,\cO_X)=0$. A K3 surface $X$ together with an ample line bundle $L$ on $X$ is called a \emph{polarized K3 surface} $(X,L)$ of degree $(L^2)$. 
A polarized K3 surface $(X,L)$ is \emph{hyperelliptic} if the map $\varphi_L: X \dashrightarrow |L|^\vee$ is regular, and is a double cover of its image. All hyperelliptic quartic K3 surfaces are obtained by the following procedure (see \cite[Remark 2.1.3]{LO16}). Consider a normal quadric surface $Q \subset \bP^3$, and $B \in |\omega_Q^{-2}|$ with ADE singularities (in particular, GIT stable when $Q\cong\bP^1\times\bP^1$). Then the double cover $\pi: X \to Q$ ramified over $B$ is a hyperelliptic quartic K3 with polarization $L = \pi^*\calO_{Q}(1)$ and at worst ADE singularities. 

Given a smooth $(4,4)$ curve $C$ on $\bP^1 \times \bP^1$, the double cover $\pi: X_C \to \bP^1 \times \bP^1$ ramified over $C$ is a hyperelliptic polarized K3 surface of degree 4. The polarization is given by $L_C = \pi^*\calO_{\bP^1\times\bP^1}(1,1)$. One can ask how the GIT moduli space of $(4,4)$ curves on $\bP^1 \times \bP^1$ compares to the moduli space of hyperelliptic K3 surfaces of degree 4 constructed via periods. 

\subsection{Moduli of K3 surfaces}\label{sec:mk3}
Let $\Lambda$ be the lattice $U^2 \oplus D_{16}$, where $U$ is the hyperbolic plane and $D_{16}$ is the negative definite lattice corresponding to Dynkin diagram $D_{16}$. Let $\mathscr{D} = \{ |\sigma| \in \bP(\Lambda \otimes \bC) \mid \sigma^2 = 0, (\sigma + \overline{\sigma})^2 > 0\}.$ The connected component $\mathscr{D}^+$ is a type IV bounded symmetric domain. Let $\Gamma(\Lambda) = O^+(\Lambda) < O(\Lambda)$ be the index two subgroup mapping $\mathscr{D}^+$ to itself. We define the locally symmetric variety $\sF = \Gamma \setminus \mathscr{D}^+$, and we let $\sF \subset \sF^*$ be its Baily-Borel compactification (see \cite[Section 3.1]{LZ}). 

It turns out that $\sF$ can be identified as the period space for hyperelliptic quartic K3 surfaces (see \cite[Remark 2.2.4]{LO16}). The rough idea is that $\sF$ sits inside a larger period domain $\sF^\prime$ which serves as a moduli space for quartic K3 surfaces,  and $\sF$ is naturally isomorphic to a divisor in $\sF^\prime$ whose points correspond to the periods of the hyperelliptic K3 surfaces. 

Let $\fM$ denote the GIT moduli space of $(4,4)$ curves on $\bP^1 \times \bP^1$.  Shah proved that $(4,4)$ curves with ADE singularities are GIT-stable and, by associating to $C$ the corresponding period point of the K3 surface, one obtains a rational period map $\fp: \fM \dashrightarrow \sF^*$ (\cite[Theorem 4.8]{Sha80}). By the Global Torelli theorem, the period map $\fp$ is actually birational. Laza-O'Grady show that the indeterminacy locus of $\fp$ is a subset of $\fM$ of dimension 7 (see e.g. \cite[Corollary 4.10]{LO}). The goal of Laza-O'Grady's work is to describe this birational map explicitly, as a series of flips and divisorial contractions.

The intersection of $\sF$ and the image of the regular locus of $\fp$ is $\sF \setminus H_h$, where $H_h$ is a Heegner divisor. Geometrically, it parametrizes periods of hyperelliptic K3 surfaces which are double covers of a quadric cone, and is defined as follows. The vector $w \in \Lambda$ is hyperbolic if $w^2 = -4$ and the divisibility $div(w) = 2$ (the positive generator of $(w, \Lambda)$). The Heegner divisor $H_h \subset \sF$ is the locus of $O^+(\Lambda)$-equivalence classes of points $[\sigma] \in \mathscr{D}^+$ such that $\sigma^\perp$ contains a hyperbolic vector. 

\subsection{Results of Laza-O'Grady and VGIT for (2,4)-complete intersections in $\bP^3$}\label{sec:LOG-VGIT}
As mentioned in the introduction, Laza and O'Grady propose a conjectural framework to interpolate between GIT and Baily-Borel compactifications of moduli spaces which are Type IV locally symmetric domains $\sF(N)$ associated to lattices of the form $U^2 \oplus D_{N_2}$ (see \cite{LO16}). These include, for example, K3 surface of degree four ($N = 19$) and EPW sextics ($N = 20$). 

Let $\lambda(N)$ denote the Hodge line bundle on $\sF(N)$ and let $\Delta(N)$ denote a geometrically meaningful (e.g. Noether-Lefschetz) divisor. By the work of Baily-Borel, the compact space $\sF^*(N)$ can always be identified with $\Proj R(\sF(N), \lambda(N))$. Moreover, Looijenga showed that $\Proj R(\sF(N), \lambda(N) + \Delta(N))$ can often be identified with $\fM$. The main prediction of Laza and O'Grady is that the ring of sections $R(\sF(N), \lambda(N) + \beta \Delta(N))$ is finitely generated for $\beta \in [0, 1] \cap \bQ$. Moreover, they give a prediction for the ``walls'' where the moduli spaces change, thus predicting a natural interpolation between $\sF^*$ and $\fM$.

From now on, we restrict to the case where $N=18$, that is, the case of hyperelliptic quartic K3 surfaces.  In this case, if $\Delta = H_h /2$ (introduced in Section \ref{sec:mk3}), then it was shown in \cite{LO16} that $\fM \cong \Proj R(\sF, \lambda + \Delta)$. Let $\beta \in [0,1] \cap \bQ$. In \cite{LO}, Laza-O'Grady prove that the ring of sections $R(\sF, \lambda + \beta \Delta)$ is finitely generated, and therefore $\sF(\beta) = \Proj R(\sF, \lambda + \beta \Delta)$ can be viewed as a projective variety interpolating between the GIT and Baily-Borel moduli spaces. Moreover, they calculate the set of critical values, and  show that the birational period map is the composition of explicitly understood divisorial contractions and flips.  In fact, they show that the intermediate spaces arise from variation of GIT (VGIT). They also show that the first step in their program produces $\hsF=\sF(\epsilon)\to \sF^*$ as the $\bQ$-Cartierization of $H_h \subset \sF^*$ for $0<\epsilon\ll 1$. In particular, this gives a small partial resolution $\hsF$ of $\sF^*$ which parametrizes hyperelliptic quartic K3 surfaces with slc singularities. In what follows, we review VGIT and their results in further detail.

We now introduce the VGIT $\sM(t)$, largely modeled off of \cite[Section 5]{LO}. A smooth $(2,4)$-complete intersection inside $\bP^3$ determines $X_C$, a smooth hyperelliptic K3  of degree 4. Let $U$ be the parameter space for all $(2,4)$-complete intersection closed subschemes in $\bP^3$. Then $U$ has a natural action of $\SL(4)$, though we note that $U$ is not projective. We let $E$ be the vector bundle over $|\calO_{\bP^3}(2)|$ whose fiber over $Q \in  |\calO_{\bP^3}(2)|$ is given by $H^0(Q, \calO_Q(4))$. Then $U \subseteq \bP(E)$ and $\codim_{\bP(E)}\bP(E) \setminus U \geq 2$.

 There is a map $\chow: U \to \Chow$ to the Chow variety parametrizing 1-dimensional cycles inside $\bP^3$. We denote by $\Chow_{(2,4)}$ the closure of the image of $\chow$. Note then that there is  regular embedding: 
\[ U \hookrightarrow \bP(E) \times \Chow_{(2,4)}. \]

Next, we describe the the universal family of log Fano pairs over $U$. We need this to set up the VGIT and in Section \ref{sec:CM} to compute the CM line bundle.  We begin by considering the following diagram 
\begin{center}
\begin{tikzcd}
(\sX, \sD) \arrow[d, "f"] \arrow[r, hook] &  \bP^3 \times \bP(E) \arrow[dl, "p_2"] \\
\bP(E) \arrow[d, "\pi"] &  \\
\bP(H^0(\bP^3, \calO(2))) = \bP^9 & \\
\end{tikzcd}
\end{center}

We let $p_1$ (resp. $p_2$) denote the first (resp. second) projections, and let $f: (\sX, \sD) \to \bP(E)$ be the universal family over $\bP(E)$, where we view $(\sX, \sD) \subseteq \bP^3 \times \bP(E)$. We let  $\calQ \subset \bP^3 \times \bP^9$ denote the universal family over $\bP^9$ with morphism $\phi: \calQ \to \bP^9$, and let $E = \phi_* \calO_{\calQ}(4,0)$. Pointwise, we have

\begin{center}
\begin{tikzcd}
H^0(Q, \calO_Q(4)) \arrow[r,hook] \arrow[d, mapsto] & \bP(E) \arrow[d] \\
{[Q]} \arrow[r,hook] & \bP^9\\
\end{tikzcd}
\end{center}
Using the notation of Laza-O'Grady (see \cite[(5.2)]{LO}), we denote by $\eta: = \pi^*\cO_{\bP^9}(1)$ and $\xi := \cO_{\bP(E)}(1)$. We recall the following result of Benoist. 

\begin{prop}\cite[Theorem 2.7]{benoist}\label{prop:benoist2,4}
If $t \in \bQ$, then the $\bQ$-Cartier class $\eta + t\xi$ on $\bP(E)$ is ample if and only if $t \in (0, \frac{1}{3}) \cap \bQ$. 
\end{prop}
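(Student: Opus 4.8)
The plan is to compute the nef cone of $\bP(E)$ directly and then invoke Kleiman's criterion that the ample cone is its interior. Since $\pi:\bP(E)\to\bP^9$ is a projective bundle, $\Pic(\bP(E))=\bZ\eta\oplus\bZ\xi$, so $N^1(\bP(E))_{\bR}$ is the two–dimensional plane spanned by $\eta$ and $\xi$; the nef cone is a closed two–dimensional subcone bounded by two rays, and it suffices to identify them. First I would record a presentation of $E$: tensoring the ideal sequence of the universal quadric $\calQ\subset\bP^3\times\bP^9$ (a divisor of class $\calO(2,1)$) by $\calO(4,0)$ and pushing forward to $\bP^9$ gives
\[
0\to\calO_{\bP^9}(-1)^{\oplus 10}\xrightarrow{\ \cdot Q\ }\calO_{\bP^9}^{\oplus 35}\to E\to 0,
\]
where the map is multiplication by the universal quadratic form $H^0(\bP^3,\calO(2))\to H^0(\bP^3,\calO(4))$. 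In particular $E$ is locally free of rank $25$ with $\det E=\calO_{\bP^9}(10)$, and for a general line $L\subset\bP^9$ the restriction $E|_L$ is balanced (splitting type $\calO^{15}\oplus\calO(1)^{10}$).

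Set $\Theta:=3\eta+\xi$. I claim the nef cone is spanned by $\eta$ and $\Theta$, so that $a\eta+b\xi$ is ample exactly when $b>0$ and $a>3b$; applied to $\eta+t\xi$ this gives precisely $t\in(0,\tfrac13)$. For the inclusion $\mathrm{Nef}\subseteq\mathrm{cone}(\eta,\Theta)$ I exhibit two extremal curves (using the convention in which points of $\bP(E)$ parametrize quartic sections, i.e. lines in $E$). Let $C_1$ be a line in a fibre $\pi^{-1}([Q])\cong\bP^{24}$; then $\eta\cdot C_1=0$ and $\xi\cdot C_1=1$, so any nef class has $b\ge 0$, $\eta$ lies on the boundary, and moreover $(\eta+t\xi)|_{\pi^{-1}([Q])}=\calO_{\bP^{24}}(t)$ is not ample for $t\le 0$, forcing $t>0$. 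For the other wall I construct a section $C_0$ of $\pi$ over a line $L$ coming from a degree‑three sub‑line‑bundle $\calO_L(3)\hookrightarrow E|_L$, so that $\eta\cdot C_0=1$ and $\xi\cdot C_0=-3$; thus any nef class has $a-3b\ge 0$ and $\Theta$ lies on the boundary. The curve $C_0$ arises from a pencil of quadrics degenerating to a double plane $2H$: along such a pencil the sections of $\calO_{Q_s}(4)$ concentrated on the double structure along $H$ produce a sub‑line‑bundle of $E|_L$ of degree exactly $3$, and confirming this bidegree is a direct Chern‑class computation on $\bP^1$ from the displayed presentation restricted to $L$.

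The reverse inclusion $\mathrm{Nef}\supseteq\mathrm{cone}(\eta,\Theta)$ amounts to showing $\eta$ and $\Theta$ are nef. Nefness of $\eta=\pi^*\calO_{\bP^9}(1)$ is immediate as a pullback of an ample class. Nefness of $\Theta=3\eta+\xi$ is the heart of the matter, and I expect it to be the main obstacle: unwinding definitions, it is equivalent to the slope bound that for every smooth curve with a finite map $g:\widetilde C\to\bP^9$ and every line sub‑bundle $M\hookrightarrow g^*E$ one has $\deg M\le 3\,(g^*\calO_{\bP^9}(1)\cdot[\widetilde C])$. The difficulty is that this must be verified on \emph{all} curves in $\bP^9$, not only on lines, so the balanced computation for a general $L$ does not suffice. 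The strategy I would use is to translate $M\hookrightarrow g^*E$, via the presentation above, into a family of quartic forms on the family $\{Q_c\}_{c\in\widetilde C}$, and to bound its positivity by analysing the multiplication map $H^0(\calO(2))\to H^0(\calO(4))$ together with the worst degenerations of the quadrics (rank drops, culminating in double planes); the factor $3$ should emerge as the maximal order to which a section of $\calO_Q(4)$ can concentrate along such a degeneration, matching the bidegree of $C_0$. Once $\Theta$ is known to be nef, Kleiman's criterion yields $\mathrm{Amp}(\bP(E))=\mathrm{int}\,\mathrm{cone}(\eta,\Theta)=\{a\eta+b\xi:\,b>0,\ a>3b\}$, and the statement for $\eta+t\xi$ follows.
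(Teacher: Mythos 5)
You should first be aware that the paper contains no proof of this statement: it is quoted directly from Benoist as \cite[Theorem 2.7]{benoist} (likewise its degree-$d$ analogue, Proposition \ref{prop:benoist-alldeg}), so your proposal must stand on its own as a proof of that cited theorem. It does not, because the decisive step is missing. Your reduction of nefness of $\Theta=3\eta+\xi$ to the slope bound $\deg M\le 3\,\deg(g^*\calO_{\bP^9}(1))$ for every finite map $g\colon\widetilde C\to\bP^9$ and every line subbundle $M\hookrightarrow g^*E$ is correct, and you rightly call it the heart of the matter; but you then offer only the expectation that ``the factor $3$ should emerge'' from analysing degenerate quadrics. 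That bound over \emph{arbitrary} curves, not just lines, is precisely the content of Benoist's theorem, so as written nothing has been proved: the ``if'' direction of the proposition is entirely open in your argument.

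Moreover, the explicit computations you do supply are incorrect, in ways that matter for the ``only if'' direction. First, the generic splitting of $E|_L$ is not balanced: for \emph{any} pencil $q_s=s_0q_0+s_1q_1$ with coprime generators, the image of $\Sym^2\langle q_0,q_1\rangle\otimes\calO_L$ in $E|_L$ is a line subbundle of degree $2$, spanned fibrewise by $[q_t^2 \bmod q_s]$ for any $t\neq s$ (well defined since $s_1^2q_1^2-s_0^2q_0^2=q_s\cdot(s_1q_1-s_0q_0)$); computing $\Hom(\calO_L(m),E|_L)=\ker\bigl(H^1(\calO_L(-m-1))^{\oplus 10}\to H^1(\calO_L(-m))^{\oplus 35}\bigr)$ from your presentation gives dimensions $10,1,0$ for $m=1,2,3$, so the generic splitting type is $\calO(2)\oplus\calO(1)^{\oplus 8}\oplus\calO^{\oplus 16}$, not $\calO(1)^{\oplus 10}\oplus\calO^{\oplus 15}$. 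Second, and more seriously, your extremal curve $C_0$ does not exist where you construct it: a pencil through a double plane, $q_s=s_0h^2+s_1q_1$ with $q_1$ general, has coprime generators, and the same $\Hom$ computation shows its maximal sub-line-bundle degree is $2$, not $3$ (for $m=3$ the kernel conditions $q_0g_{3,1}+q_1g_{2,2}=0$ and $q_0g_{2,2}+q_1g_{1,3}=0$ force all $g_{ij}=0$ by coprimality). Degree-$3$ subbundles occur only over pencils of rank-$\le 2$ quadrics with a fixed common plane, $q_s=x_0(s_0l_0+s_1l_1)$, where the subbundle is spanned fibrewise by $[x_0l_t^3\bmod q_s]$, using $s_0^3l_0^3+s_1^3l_1^3=(s_0l_0+s_1l_1)(s_0^2l_0^2-s_0s_1l_0l_1+s_1^2l_1^2)$. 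So the curve with $\eta\cdot C_0=1$, $\xi\cdot C_0=-3$ that you need does exist, but the degeneration producing it is a common-plane pencil rather than a pencil through a double plane, and the ``direct Chern-class computation'' you invoke cannot confirm the bidegree of a subbundle that is not there.
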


We now set up the VGIT, following \cite[Section 5.1]{LO}. Let $\sP$ denote the closure of $U$ in $\bP(E) \times \Chow_{(2,4)}$.  Let $p_1$ and $p_2$ be the first and second projections from $\sP$ to $\bP(E)$ and $\Chow_{(2,4)}$, respectively.   The action of $\SL(4)$ on $\bP^3$ extends to an action on $\mathscr{P}$. To construct a GIT quotient, we thus need to specify a $\SL(4)$ linearized ample line bundle on $\sP$. 

Fix a rational number $0 < \delta < \frac{1}{6}$.  For $t \in (\delta, 1/2) \cap \bQ$, consider the $\mathbb{Q}$-line bundle 
\[N_t := \frac{1 - 2t}{1-2\delta} p_1^*(\eta + \delta \xi) + \frac{t - \delta}{2(1-2\delta)} p_2^*L_{\infty} ,\]
where $L_{\infty}$ is the restriction of the natural polarization of the Chow variety to $\Chow_{(2,4)}$.  One can check that $N_t$ is ample for $\delta < t < \frac{1}{2}$ and semiample for $t = \frac{1}{2}$. 

\begin{definition}\label{def:VGIT}
Let $\delta \in \mathbb{Q}$ satisfy $0 < \delta < \frac{1}{6}$. For each $t \in (\delta, \frac{1}{2}] \cap \bQ$, we define the VGIT quotient stack $\sM(t)$ of slope $t$ to be, and the VGIT quotient space $\fM(t)$ of slope $t$ to be
\[ \sM(t) := [\sP^{\rm ss}(N_t)/\PGL(4)], \quad \fM(t):=\sP\sslash_{N_t} \SL(4).\]
\end{definition}

\begin{remark}\leavevmode
\begin{enumerate}
    \item Laza and O'Grady show that the VGIT quotients do not depend on choice of $\delta$, so the lack of $\delta$ in the notation is justified (see also Theorem \ref{thm:LOmain-alldeg}(1)). 
    \item Since $N_t$ is only semi-ample for $t  = \frac{1}{2}$, they define $\fM(\frac{1}{2})$ to be $\Proj R(\sP, N_{\frac{1}{2}})^{\SL(4)}$, and show this is isomorphic to $\Chow_{(2,4)}\sslash \SL(4)$. 
\end{enumerate} \end{remark} 

The following two results from \cite{LO} will be required to relate the VGIT moduli spaces and K-moduli spaces.

\begin{prop}{\cite[Proposition 5.4]{LO}}\label{prop:Linfinity}
Let $\chow: U \to \Chow_{(2,4)}$ be the Hilbert-Chow morphism and let $\overline{L}_{\infty}\in \Pic (\bP(E))_{\bQ}$ be the unique extension of $\chow^*L_\infty$ to $\bP(E)$.  Then, 
\[ \overline{L}_\infty = 4\eta + 2 \xi .\]
\end{prop}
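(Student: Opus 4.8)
The plan is to work in $\Pic(\bP(E))_{\bQ}$, which is freely generated by $\eta$ and $\xi$ because $\pi\colon\bP(E)\to\bP^9$ is a projective bundle and $\Pic(\bP^9)_{\bQ}=\bQ\cdot\cO_{\bP^9}(1)$. Since $\bP(E)$ is smooth and $\codim_{\bP(E)}(\bP(E)\setminus U)\geq 2$, the restriction $\Pic(\bP(E))_{\bQ}\to\Pic(U)_{\bQ}$ is an isomorphism; this is exactly what makes the extension $\oL_\infty$ of $\chow^*L_\infty$ exist and be unique. Writing $\oL_\infty=a\eta+b\xi$, the whole problem reduces to pinning down the two rational numbers $a,b$, and the claim is $a=4$, $b=2$.

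The main input I would use is the functorial (Cayley--Chow) description of the natural polarization of the Chow variety. For a family of $1$-cycles of degree $8$ in $\bP^3$ presented as $\calZ\subset\bP^3\times T$, flat of relative dimension $1$ over $T$, the induced morphism $T\to\Chow_{(2,4)}$ pulls $L_\infty$ back to $\pr_{T,*}\big([\calZ]\cdot H^2\big)$, where $H=\pr_{\bP^3}^*\cO_{\bP^3}(1)$. Concretely, on a one-parameter test family this records the number of members meeting a fixed general line in $\bP^3$, i.e. the codimension-$2$ cycle class $H^2$. Applying this to the universal curve $\sD\subset\bP^3\times\bP(E)$, which is flat of relative dimension $1$ over all of $\bP(E)$ (not merely over $U$), I would obtain
\[
\oL_\infty=p_*\big([\sD]\cdot H^2\big),\qquad p\colon\bP^3\times\bP(E)\to\bP(E).
\]
Next I would compute the class of $\sD$. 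The universal quadric $\calQ\subset\bP^3\times\bP^9$ has bidegree $(2,1)$, so its pullback $\sX=\calQ\times_{\bP^9}\bP(E)$ has class $[\sX]=2H+\eta$; the universal quartic section cutting out $\sD$ inside $\sX$ is tautological for $E=\phi_*\cO_{\calQ}(4,0)$ and has class $4H+\xi$. Hence $[\sD]=(2H+\eta)(4H+\xi)$, and expanding $H^2\cdot[\sD]$ and pushing forward using $p_*H^3=1$, $p_*H^i=0$ for $i\leq 2$, and $H^4=0$ yields $\oL_\infty=4\eta+2\xi$, as desired.

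I expect the main obstacle to be getting the normalization of $L_\infty$ and the tautological convention for $\xi$ exactly right — in particular the sign/coefficient of $\xi$ in the class $4H+\xi$ of the universal quartic, which is precisely where convention errors tend to enter. To remove this ambiguity and double-check, I would verify $a$ and $b$ independently on two explicit pencils. First, fixing a smooth quadric $Q\cong\bP^1\times\bP^1$ and letting $B$ vary in a general pencil in $|\cO_Q(4)|$ gives a line $\ell$ in a fiber of $\pi$, on which $\eta|_\ell=0$ and $\deg_\ell\xi=1$; the total space of the pencil has class $4F_1+4F_2+T$ in $Q\times\bP^1$ and meets $H^2=2F_1F_2$ in degree $2$, forcing $b=2$. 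Second, fixing a general quartic surface $S$ and varying $Q$ in a general pencil of quadrics gives a section over a line in $\bP^9$ with $\deg\eta=1$; since $S|_{Q_\lambda}$ is a nowhere-vanishing section of $E$ along this line, the associated subbundle is trivial and $\deg\xi=0$, while $[\calZ]=(2H+T')(4H)$ meets $H^2$ in degree $4$, forcing $4=a+2\cdot 0$, i.e. $a=4$. Both test computations agree with the universal calculation, confirming $\oL_\infty=4\eta+2\xi$.
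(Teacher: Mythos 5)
The paper never proves this statement: it is imported verbatim from \cite[Proposition 5.4]{LO}, and the only trace of an argument in the present paper is Lemma \ref{lem:proportional-alldeg}, where the degree-$d$ analogue $\oL_\infty=d\eta+2\xi$ is asserted ``by the same argument as \cite[Proposition 5.4]{LO}.'' So there is no in-paper proof to compare against, and your proposal has to be judged on its own terms. On those terms it is essentially correct: the reduction to two coefficients via $\Pic(\bP(E))_\bQ=\bQ\eta\oplus\bQ\xi$ and $\codim_{\bP(E)}(\bP(E)\setminus U)\geq 2$, the functorial description of the Chow polarization as $\pr_{T,*}([\calZ]\cdot H^2)$ for flat families of $1$-cycles, the classes $[\sX]=2H+\eta$ and $[\sD]=(2H+\eta)(4H+\xi)$ (which match the conventions used in the proof of Proposition \ref{prop:CM-Z}), the pushforward yielding $4\eta+2\xi$, and both test-pencil cross-checks (with the correct conventions $\deg_\ell\eta=0$, $\deg_\ell\xi=1$ on a fiber line, and $\deg\xi=0$ on the section defined by a fixed quartic) are all right.

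The one genuine error is the parenthetical claim that $\sD\to\bP(E)$ is ``flat of relative dimension $1$ over all of $\bP(E)$ (not merely over $U$).'' This is false: over a point $([Q],[s])\in\bP(E)\setminus U$, where $Q$ has rank $\leq 2$ and the lift $g$ of $s$ is divisible by a linear form cutting out a plane component of $Q$ (exactly the points exhibited in cases (a) and (b) of the proof of Lemma \ref{lem:GITssU-alldeg}), the fiber of $\sD$ contains a plane and is $2$-dimensional. This failure is precisely why $U$ is a proper open subset and why one must speak of the \emph{extension} $\oL_\infty$ at all; so you cannot obtain $\oL_\infty=p_*([\sD]\cdot H^2)$ directly as a pullback of $L_\infty$ under a morphism defined on all of $\bP(E)$. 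Fortunately the repair costs nothing beyond what you already set up: the Chow formula applies to the flat family $\sD_U\to U$, giving $\chow^*L_\infty=\bigl(p_*([\sD]\cdot H^2)\bigr)\big|_U$ (proper pushforward of cycles commutes with restriction to the open set $p^{-1}(U)$, and $[\sD]$ restricts to $[\sD_U]$ there), and since $p_*([\sD]\cdot H^2)=4\eta+2\xi$ is a class on all of $\bP(E)$ restricting to $\chow^*L_\infty$ on $U$, the uniqueness of the extension established in your first step forces $\oL_\infty=4\eta+2\xi$. Alternatively, note that your two test pencils lie entirely inside $U$, so they alone (together with your first step) already pin down $a=4$, $b=2$ and constitute a complete proof in which the false flatness claim never enters.
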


\begin{lem}{\cite[Proposition 5.11]{LO}}\label{lem:GITssU}
For each $t\in (\delta,\frac{1}{2}] \cap \bQ$, the VGIT semistable locus $\sP^{\rm ss}(N_t)$ of slope $t$ is a Zariski open subset of $U$.
\end{lem}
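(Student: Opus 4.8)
\emph{Reduction.} First I would observe that $\sP^{\rm ss}(N_t)$ is automatically Zariski open in $\sP$, and that $U$ is Zariski open in $\sP$: since $\sP$ is the closure of the graph $\Gamma_{\chow}$ of $\chow\colon U\to\Chow_{(2,4)}$, and $\Gamma_{\chow}$ is already closed in the open subset $U\times\Chow_{(2,4)}\subseteq\bP(E)\times\Chow_{(2,4)}$, one has $\sP\cap(U\times\Chow_{(2,4)})=\Gamma_{\chow}\cong U$, whence $p_1^{-1}(U)\cap\sP=U$. Thus any $x\in\sP\setminus U$ satisfies $p_1(x)\in\bP(E)\setminus U$, a locus of codimension $\ge 2$ in $\bP(E)$, and the whole lemma reduces to showing that every such boundary point is $N_t$-unstable.

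\emph{The linearization over $U$.} Next I would record what $N_t$ restricts to on $U$. By Proposition \ref{prop:Linfinity} the extension of $\chow^*L_\infty$ is $\oL_\infty=4\eta+2\xi$, so $p_2^*L_\infty=p_1^*(4\eta+2\xi)$ on $U$; substituting into the definition of $N_t$ and collecting terms yields
\[
N_t|_U=p_1^*\Big(\tfrac{1-2t}{1-2\delta}(\eta+\delta\xi)+\tfrac{t-\delta}{2(1-2\delta)}(4\eta+2\xi)\Big)\Big|_U=p_1^*(\eta+t\xi)\big|_U,
\]
because the $\eta$-coefficient is $\tfrac{1-2t}{1-2\delta}+\tfrac{2(t-\delta)}{1-2\delta}=1$ and the $\xi$-coefficient is $\tfrac{\delta(1-2t)+(t-\delta)}{1-2\delta}=t$. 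Hence over $U$ the slope-$t$ linearization is exactly the restriction of Benoist's class $\eta+t\xi$ from Proposition \ref{prop:benoist2,4}.

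\emph{Proving the containment.} For $t\in(\delta,\tfrac12)$ the class $N_t$ is a strictly positive combination of $p_1^*(\eta+\delta\xi)$ and $p_2^*L_\infty$. Since $\eta+\delta\xi$ is ample on $\bP(E)$ (as $0<\delta<\tfrac16<\tfrac13$, by Proposition \ref{prop:benoist2,4}) and $L_\infty$ is the ample Chow polarization, both summands are semiample with associated contractions $p_1$ and $p_2$, so the Hilbert--Mumford weight splits as
\[
\mu^{N_t}(x,\lambda)=\tfrac{1-2t}{1-2\delta}\,\mu^{\eta+\delta\xi}(p_1(x),\lambda)+\tfrac{t-\delta}{2(1-2\delta)}\,\mu^{L_\infty}(p_2(x),\lambda)
\]
for every one-parameter subgroup $\lambda$ of $\SL(4)$; in the convention of Lemma \ref{lem:zerofut}(1), $x$ is semistable iff this is $\ge 0$ for all $\lambda$. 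The plan is then, given a boundary point $x$, to produce a $\lambda$ making $\mu^{N_t}(x,\lambda)<0$. Along the positive-dimensional fibre $p_1^{-1}(p_1(x))$ the first summand is constant (being pulled back from $p_1(x)$), so the destabilizing $\lambda$ must be found by combining the instability of the degenerate datum $p_1(x)\in\bP(E)\setminus U$ with the Chow weight of the limit cycle $p_2(x)\in\Chow_{(2,4)}$.

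\emph{The main difficulty.} The hard part will be exactly this boundary analysis, done uniformly in $t$. The points of $\bP(E)\setminus U$ are the degenerate pairs $(Q,F)$ for which $Q\cap\{F=0\}$ is not a pure one-dimensional complete intersection (for instance $F$ vanishing along a component of $Q$, or $Q$ a double plane), and I would need to show that every point of $\sP$ lying over such a pair is $N_t$-unstable for all $t\in(\delta,\tfrac12)$. This is where Benoist's explicit stability estimates for complete-intersection data enter, together with a weight computation for the limiting $1$-cycle; the codimension-$\ge 2$ bound on $\bP(E)\setminus U$ is the structural reason one expects no invariant section of a power of $N_t$ to survive on the $p_1$-exceptional boundary. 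Finally, the endpoint $t=\tfrac12$, where $N_t$ is only semiample and $\fM(\tfrac12)$ is defined as a $\Proj$ of invariants, I would treat separately by taking the limit $t\to\tfrac12^-$ and invoking the identification of $\fM(\tfrac12)$ with the Chow quotient.
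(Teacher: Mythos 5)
Your proposal sets the stage correctly (the reduction to showing that every point of $\sP\setminus U$ is $N_t$-unstable, and the computation $N_t|_U=(\eta+t\xi)|_U$, which is the paper's Lemma \ref{lem:proportional-alldeg} in the general-degree case), but it stops exactly where the content of the lemma begins: you never exhibit a destabilizing $1$-PS for a boundary point, and you say explicitly that this is "the hard part." That step is the whole proof. Note that for $d=4$ the paper itself does not reprove this statement — it quotes \cite[Proposition 5.11]{LO} — but it does prove the generalization to $(2,d)$ complete intersections in Lemma \ref{lem:GITssU-alldeg}, and that proof consists precisely of what you defer: a boundary point of $\bP(E)$ has the form $([Q],[s])$ with $Q$ non-normal, so in suitable coordinates $q=x_0x_1$ or $q=x_0^2$ and a lift $g$ of $s$ is divisible by $x_0$; for the $1$-PS $\sigma$ of weights $(-3,1,1,1)$, Benoist's estimate \cite[Proposition 2.15]{benoist} gives $\mu^{\oN_t}(([Q],[s]),\sigma)\le\mu(q,\sigma)+t\mu(g,\sigma)\le -2+t(d-4)<0$ (so $\le -2$ when $d=4$); and for a point $z=(([Q],[s]),\chow(\sC))\in\sP\setminus U$ one splits $\mu^{N_t}(z,\sigma)$ into its $\bP(E)$-part and its Chow part and quotes \cite[Proposition 5.8]{LO} for the negativity $\mu^{L_\infty}(\chow(\sC),\sigma)<0$ of the Chow weight. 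Without producing this $\sigma$ and these two negativity statements, there is no proof.

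Moreover, the two heuristics you offer in place of this computation are both unsound. First, $\codim_{\bP(E)}(\bP(E)\setminus U)\ge 2$ does not force boundary points to be unstable: smallness of the boundary says nothing about where invariant sections vanish, and in $\sP$ the relevant boundary $p_1^{-1}(\bP(E)\setminus U)\cap\sP$ need not even have codimension $\ge 2$, since $p_1$ can have positive-dimensional fibres there. In the paper this codimension bound is used for an entirely different purpose, namely comparing invariant section rings as in \cite[Lemma 4.17]{CMJL14} (see the proof of Theorem \ref{thm:LOmain-alldeg}). Second, the endpoint $t=\frac{1}{2}$ cannot be handled "by taking the limit $t\to\frac{1}{2}^-$": VGIT semicontinuity (Lemma \ref{lem:VGITbasics}(1)) says semistable loci can only grow as one specializes to a wall, i.e. $\sP^{\rm ss}(N_{1/2})\supseteq\sP^{\rm ss}(N_{1/2-\epsilon})$, which is the wrong inclusion for concluding $\sP^{\rm ss}(N_{1/2})\subseteq U$. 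The correct fix is that the explicit weight bound above is uniform in $t\in(0,\frac{1}{2}]$, so the same $1$-PS destabilizes every boundary point at the endpoint as well.
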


We now state the main VGIT result of \cite{LO}, noting that their results also hold for the VGIT quotient stacks. Let $\Hilb_{(2,4)}$ denote the closure of $U$ inside the relevant Hilbert scheme, and let $L_m$ denote the Pl\"ucker line bundle corresponding to the $m$th Hilbert point.

\begin{theorem}\cite[Theorem 5.6]{LO}\label{thm:LOmain}
Let $\delta$ be as above. The following hold:
\begin{enumerate}
    \item For $t \in (\delta, \frac{1}{3})$,  the moduli space $\fM(t) \cong \bP(E) \sslash_{\eta + t\xi} \SL(4)$. 
    \item For $t \in (\delta, \frac{1}{6})$, we have $\fM(t) \cong \fM$. 
    \item For $m \geq 4$, we have $\Hilb_{(2,4)}\sslash_{L_m}  \SL(4) \cong \fM(t(m))$, where $t(m) = \dfrac{(m-3)^2}{2(m^2 - 4m + 5)}$. 
    \item $\fM(\frac{1}{2})\cong \Chow_{(2,4)}\sslash \SL(4)$.
\end{enumerate}
\end{theorem}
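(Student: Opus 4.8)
The plan is to reduce all four GIT quotients to quotients on $\bP(E)$ by exploiting the single computation that, on the open locus $U$, every linearization in sight restricts to a multiple of $(\eta+t\xi)|_U$ for the appropriate slope. Concretely, I would first record the identity
\[
N_t|_U = (\eta + t\xi)|_U \quad \text{for all } t\in(\delta,\tfrac12],
\]
which follows by substituting $p_2^*L_\infty|_U = \chow^*L_\infty = \overline{L}_\infty|_U = (4\eta+2\xi)|_U$ from Proposition \ref{prop:Linfinity} into the definition of $N_t$ and simplifying: the coefficients of $\eta$ and $\xi$ collapse to $1$ and $t$ respectively, independently of $\delta$ (which also explains why the quotients do not depend on the choice of $\delta$). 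Since $\sP^{\rm ss}(N_t)\subseteq U$ by Lemma \ref{lem:GITssU} and $\codim_{\bP(E)}(\bP(E)\setminus U)\geq 2$ with $\bP(E)$ normal, $\SL(4)$-invariant sections of $N_t^{\otimes k}$ on $\sP$ extend uniquely across $\bP(E)\setminus U$ to invariant sections of $(\eta+t\xi)^{\otimes k}$, and conversely. This matching of invariant section rings is the engine behind every part.

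For part (1), Proposition \ref{prop:benoist2,4} guarantees that $\eta+t\xi$ is ample on $\bP(E)$ precisely when $t\in(0,\tfrac13)$, so $\bP(E)\sslash_{\eta+t\xi}\SL(4)$ is a genuine GIT quotient in that range; the section-ring identity above then gives $\fM(t)=\Proj R(\sP,N_t)^{\SL(4)}\cong\Proj R(\bP(E),\eta+t\xi)^{\SL(4)}$. Part (4) is the opposite endpoint: evaluating the coefficients at $t=\tfrac12$ kills the $p_1^*(\eta+\delta\xi)$ term and leaves $N_{1/2}=\tfrac14\,p_2^*L_\infty$, whence $\fM(\tfrac12)=\Proj R(\sP,p_2^*L_\infty)^{\SL(4)}\cong \Chow_{(2,4)}\sslash_{L_\infty}\SL(4)$, using that $p_2\colon\sP\to\Chow_{(2,4)}$ is surjective with $p_{2*}\calO_{\sP}=\calO$.

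For part (2) I would run a variation-of-GIT argument on $\bP(E)\to\bP^9$ as $t\to 0^+$. As $t$ decreases, the linearization $\eta+t\xi$ approaches $\eta=\pi^*\calO_{\bP^9}(1)$, so semistability is increasingly governed by the underlying quadric $Q\in\bP^9$; over the stable quadrics $Q\cong\bP^1\times\bP^1$ the fiber of $\bP(E)$ is $\bP\big(H^0(Q,\calO_Q(4))\big)=|\calO_{\bP^1\times\bP^1}(4,4)|$ with residual group $\Aut(\bP^1\times\bP^1)$, whose quotient is exactly $\fM$. The real task is to show that $\tfrac16$ is the first VGIT wall, i.e.\ that the semistable locus (hence the quotient) is constant for $t\in(\delta,\tfrac16)$; this is a Hilbert--Mumford weight computation verifying that no new strictly semistable configurations involving singular quadrics enter before $t=\tfrac16$.

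Part (3) is the computational heart and, I expect, the main obstacle. Here I must express the Pl\"ucker class $L_m$ of the $m$-th Hilbert point, restricted to $U$, in terms of $\eta$ and $\xi$. The route is the Koszul resolution of a $(2,4)$ complete intersection,
\[
0\to \calO_{\bP^3}(-6)\to \calO_{\bP^3}(-2)\oplus\calO_{\bP^3}(-4)\to \calI_C\to 0,
\]
which computes $H^0(C,\calO_C(m))$ and, via Knudsen--Mumford, the $\SL(4)$-equivariant determinant defining $L_m$; tracking the contributions of the two summands $\calO(-2)$ (the quadric, feeding $\eta$) and $\calO(-4)$ (the quartic, feeding $\xi$) against the base $\bP^9$ should yield $L_m|_U\propto \eta+t(m)\,\xi$ with $t(m)=\tfrac{(m-3)^2}{2(m^2-4m+5)}$. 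The delicate points are pinning down the exact rational function $t(m)$ and its proportionality constant (as sanity checks, $t(m)\to\tfrac12$ as $m\to\infty$, consistent with the Chow limit of part (4), while $t(4)=\tfrac1{10}$ and $t(5)=\tfrac15$), and ensuring the identification is $\SL(4)$-equivariant so that $\Hilb_{(2,4)}\sslash_{L_m}\SL(4)\cong\fM(t(m))$ follows from the section-ring comparison of part (1).
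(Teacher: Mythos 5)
This theorem is not proved in the paper at all: it is quoted verbatim from \cite[Theorem 5.6]{LO}, so the only in-paper proof your attempt can be compared against is that of its degree-$d$ generalization, Theorem \ref{thm:LOmain-alldeg}, which covers parts (1) and (2) (the thresholds $\frac{1}{d-1}$ and $\frac{2}{3d}$ specialize to $\frac13$ and $\frac16$ at $d=4$). For those two parts your plan does follow the same route as the paper: your identity $N_t|_U=(\eta+t\xi)|_U$ is exactly Lemma \ref{lem:proportional-alldeg} (and is indeed the source of $\delta$-independence), and part (2) is reduced, as in the paper, to a Hilbert--Mumford computation ruling out singular quadrics below the threshold --- the paper does this with the $1$-PS of weights $(3,-1,-1,-1)$ together with \cite[Proposition 2.15]{benoist}, then quotients the fiber $|\cO_{\bP^1\times\bP^1}(4,4)|$ by the residual group $\Aut(\bP^1\times\bP^1)$ as in \cite[Lemma 4.18]{CMJL14}. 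You leave that computation as ``the real task,'' so (2) is only an outline.

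The genuine gap is in your engine, namely the word ``conversely.'' The codimension-$\geq 2$ argument is valid only on the $\bP(E)$ side: it extends an invariant section from $U$ to $\bP(E)$ because $\bP(E)$ is smooth and $\bP(E)\setminus U$ has codimension $\geq 2$. It says nothing about extending an invariant section from $U$ to $\sP$: the boundary $\sP\setminus U$ of the graph closure $\sP\subset \bP(E)\times\Chow_{(2,4)}$ can contain divisors, and $\sP$ is not known to be normal, so the restriction $H^0(\sP,N_t^{\otimes k})^{\SL(4)}\to H^0(U,N_t^{\otimes k}|_U)^{\SL(4)}$ has no a priori reason to be surjective. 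The correct mechanism --- the one the paper invokes via \cite[Lemma 4.17]{CMJL14} when proving Theorem \ref{thm:LOmain-alldeg} --- uses $\sP^{\rm ss}(N_t)\subseteq U$ (your Lemma \ref{lem:GITssU}, which you cite but do not actually exploit for this purpose): for $k$ sufficiently divisible, invariant sections of $N_t^{\otimes k}$ vanish on the unstable locus and, over $\sP^{\rm ss}(N_t)$, are pullbacks of sections of an ample line bundle on the projective quotient $\fM(t)$; this is what yields $H^0(\sP,N_t^{\otimes k})^{\SL(4)}\cong H^0(U,N_t^{\otimes k}|_U)^{\SL(4)}$. Without this replacement, parts (1), (3) and (4) do not close, since each compares section rings on two different compactifications of $U$; the same issue recurs in (3), where extending invariant sections from $U$ to $\Hilb_{(2,4)}$ additionally requires $\Hilb_{(2,4)}^{\rm ss}(L_m)\subseteq U$, an analogue of Lemma \ref{lem:GITssU} that you never state. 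Two smaller points: in (4) the identity $N_{1/2}=\frac14 p_2^*L_\infty$ is correct, but $p_{2*}\cO_{\sP}=\cO_{\Chow_{(2,4)}}$ is asserted without proof (it needs normality of $\Chow_{(2,4)}$ or an argument through normalizations); and in (3) the Koszul/Knudsen--Mumford computation producing $t(m)$ --- the actual content of that part in \cite{LO} --- is only sketched, though your consistency checks on $t(m)$ are right.
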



Before stating their main result, we review some results from VGIT.

\subsubsection{Variation of GIT} The general theory of Variation of GIT quotients (VGIT) can be found in \cite{Tha96, dolgachevhu}. The goal here is to compare $\fM(t)$ for $t \in (\delta, \frac{1}{2}) \cap \bQ$, in particular how varying the line bundle $N_t$ changes the GIT quotient.  The main results of VGIT state that this interval can be subdivided into finitely many open chambers, and on each open chamber the space $\fM(t)$ remains unchanged (\cite[Theorem 2.4]{Tha96} and \cite[Theorem 0.2.3]{dolgachevhu}). The finitely many values where the space $\fM(t)$ does change are called walls.  Here, there are birational morphisms $\fM(t-\epsilon) \to \fM(t) \leftarrow \fM(t+\epsilon)$, and there are additionally wall-crossing rational maps $\fM(t-\epsilon) \dashrightarrow \fM(t+\epsilon)$ (\cite[Theorem 3.3]{Tha96}).

Later on, we will need the following foundational results in VGIT, and we refer the reader to the survey \cite[Sections 3 and 4]{LazaGIT}, and the references therein.

\begin{lem}\label{lem:VGITbasics}
Let $(X,\cL_0)$ be a polarized projective variety. Let $G$ be a reductive group acting on $(X,\cL_0)$. Let $\cL$ be a $G$-linearized line bundle on $X$. For a rational number $0<\epsilon\ll 1$, consider the G-linearized ample $\bQ$-line bundle $\cL_{\pm}:= \cL_{0}\otimes \cL^{\otimes(\pm \epsilon)}$.
\begin{enumerate}
    \item Let $X \sslash_{\calL_0} G$ and $X \sslash_{\calL_{\pm}} G$ denote the VGIT quotients.  If $X^{\rm ss}(0)$ and $X^{\rm ss}(\pm)$ denote the respective VGIT semistable loci, then there are open inclusions $X^{\rm ss}(\pm) \subseteq X^{\rm ss}(0)$.
    \item  For any closed point $x\in X^{\rm ss}(0)\setminus X^{\rm ss}(\pm)$, there exists a $1$-PS $\sigma$ in $G$ such that
    \[
    \mu^{\cL_0}(x, \sigma)=0, \quad\textrm{and}\quad  \mu^{\cL_\pm}(x, \sigma)<0.  
    \]
\end{enumerate}

\end{lem}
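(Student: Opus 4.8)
The plan is to reduce both statements to Mumford's numerical criterion together with the finiteness of VGIT walls. Write $\cL_s := \cL_0 \otimes \cL^{\otimes s}$ for $s \in \bQ$, so that $\cL_\pm = \cL_{\pm\epsilon}$; after replacing by a sufficiently divisible power we may assume the relevant linearizations are honest very ample $G$-line bundles, and since ampleness is an open condition $\cL_s$ remains ample for $|s|$ small. The basic computational input is the additivity and homogeneity of the Hilbert--Mumford weight, which gives, for every closed point $x$ and every $1$-PS $\sigma$,
\[
\mu^{\cL_s}(x,\sigma) = \mu^{\cL_0}(x,\sigma) + s\,\mu^{\cL}(x,\sigma),
\]
an affine-linear function of the parameter $s$. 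By Mumford's criterion, $x \in X^{\rm ss}(\cL_s)$ if and only if $\mu^{\cL_s}(x,\sigma) \geq 0$ for all $\sigma$, so for fixed $(x,\sigma)$ with $\mu^{\cL}(x,\sigma) \neq 0$ the sign of the weight changes at the single value $s(x,\sigma) = -\mu^{\cL_0}(x,\sigma)/\mu^{\cL}(x,\sigma)$.

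The crucial external ingredient is that, as $s$ ranges over a small interval around $0$, the semistable locus $X^{\rm ss}(\cL_s)$ changes only at finitely many values of $s$ — the walls — by the general VGIT theory of Thaddeus and Dolgachev--Hu (\cite[Theorem 2.4]{Tha96}, \cite[Theorem 0.2.3]{dolgachevhu}; see also \cite[Sections 3--4]{LazaGIT}). Each value $s(x,\sigma)$ at which the semistability of an individual point flips lies in this finite wall set. This is exactly what justifies taking $0 < \epsilon \ll 1$: I choose $\epsilon$ smaller than the distance from $0$ to the nearest nonzero wall, so that there are no walls in $(-\epsilon,0)\cup(0,\epsilon)$ and both $\pm\epsilon$ lie in the open chambers adjacent to $0$.

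For part (1), I argue by contraposition. Suppose $x \in X^{\rm ss}(\cL_+)$ but $x \notin X^{\rm ss}(\cL_0)$, so there is a $1$-PS $\sigma$ with $\mu^{\cL_0}(x,\sigma) < 0$ while $\mu^{\cL_0}(x,\sigma) + \epsilon\,\mu^{\cL}(x,\sigma) \geq 0$. Affine-linearity forces $\mu^{\cL}(x,\sigma) > 0$ and places the sign-change value $s(x,\sigma)$ in $(0,\epsilon]$. Since no wall lies in $(0,\epsilon)$ and $\epsilon$ itself lies in a chamber, this contradicts $s(x,\sigma)$ being a wall value; hence $\mu^{\cL_0}(x,\sigma) \geq 0$ for all $\sigma$, i.e. $x \in X^{\rm ss}(\cL_0)$. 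The same argument with $s=-\epsilon$ handles $\cL_-$, and the inclusions are open because semistable loci are open in $X$.

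For part (2), take $x \in X^{\rm ss}(\cL_0) \setminus X^{\rm ss}(\cL_+)$. Instability for $\cL_+$ produces a $\sigma$ with $\mu^{\cL_0}(x,\sigma) + \epsilon\,\mu^{\cL}(x,\sigma) < 0$, while $\cL_0$-semistability gives $\mu^{\cL_0}(x,\sigma) \geq 0$; together these force $\mu^{\cL}(x,\sigma) < 0$ and place $s(x,\sigma) = -\mu^{\cL_0}(x,\sigma)/\mu^{\cL}(x,\sigma)$ in $[0,\epsilon)$. The absence of walls in $(0,\epsilon)$ forces $s(x,\sigma) = 0$, i.e. $\mu^{\cL_0}(x,\sigma) = 0$, and then $\mu^{\cL_+}(x,\sigma) = \epsilon\,\mu^{\cL}(x,\sigma) < 0$ as required; the case of $\cL_-$ is symmetric. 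I expect the only genuine subtlety to be the rigorous justification of "$\epsilon \ll 1$": everything hinges on the finiteness of walls, which I would import wholesale from \cite{Tha96, dolgachevhu} rather than reprove, after which the weight bookkeeping above is routine.
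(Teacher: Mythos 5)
Your part (1) is essentially workable: the correct statement behind it is that $I(x)=\{s : x\in X^{\rm ss}(\cL_0\otimes\cL^{\otimes s})\}$ is a closed interval whose endpoints are genuine walls (values where the semistable \emph{locus} changes), so choosing $\epsilon$ below the smallest nonzero wall and inside a chamber makes the contradiction go through; this is also just the semicontinuity statement the paper cites directly from \cite{Tha96, dolgachevhu}. But note that already there you conflate two different things: the value $s(x,\sigma)$ where the sign of the \emph{single} weight $\mu^{\cL_s}(x,\sigma)$ flips is not in general a value where the semistability of $x$ (let alone the semistable locus) changes, because $x$ can be destabilized by other one-parameter subgroups on both sides of $s(x,\sigma)$. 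In part (1) the conflation is harmless, since the flip point of $x$'s semistability lies in $[s(x,\sigma),\epsilon]$ and is a wall, which still contradicts your choice of $\epsilon$.

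In part (2) the same conflation is fatal, and this is a genuine gap. From $x\in X^{\rm ss}(0)\setminus X^{\rm ss}(+)$ the numerical criterion gives some $\sigma$ with $\mu^{\cL_0}(x,\sigma)\geq 0$ and $\mu^{\cL_+}(x,\sigma)<0$, hence $s(x,\sigma)\in[0,\epsilon)$; but nothing forces $s(x,\sigma)=0$. If $0<s(x,\sigma)<\epsilon$, then for $s$ slightly below $s(x,\sigma)$ the point $x$ may well still be unstable, destabilized by a \emph{different} 1-PS, so the semistable locus need not change at $s(x,\sigma)$ and no wall is created there: the absence of walls in $(0,\epsilon)$ is simply not violated, and your conclusion $\mu^{\cL_0}(x,\sigma)=0$ does not follow. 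Wall finiteness is a statement about semistable loci and is too coarse to control the weights of one fixed 1-PS. What part (2) actually requires is to \emph{replace} the given destabilizer by another one, and this is exactly what the paper's proof does: it fixes a maximal torus $T$ containing the destabilizer $\sigma_0$ and invokes \cite[Chapter 2, Proposition 2.14]{MFK94}, which says that $\lambda\mapsto\mu^{\cL_0}(x,\lambda)$ and $\lambda\mapsto\mu^{\cL}(x,\lambda)$ are rational piecewise linear functions $h_0,h$ on $\Hom_{\bQ}(\bG_m,T)$; since $h_0\geq 0$ everywhere (by $\cL_0$-semistability) while $h_0(\sigma_0)+\epsilon h(\sigma_0)<0$ with $\epsilon$ small relative to the finite linear data, a convexity argument on the finitely many linear pieces produces a possibly different $\sigma$ with $h_0(\sigma)=0$ and $h(\sigma)<0$. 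Equivalently, you could repair your limiting idea by showing that destabilizers for all small $s>0$ can be chosen from a finite set of 1-PS's and letting $s\to 0$ --- but that finiteness is again the piecewise-linearity from \cite{MFK94}, not the finiteness of VGIT walls, so it cannot be ``imported wholesale'' from \cite{Tha96, dolgachevhu}.
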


\begin{proof}
(1) This is the well-known semi-continuity property of semistable loci from \cite[Theorem 4.1]{Tha96} and \cite[\S3.4]{dolgachevhu} (see also \cite[Lemma 3.10]{LazaGIT}).

(2) By symmetry we may assume that $x$ is VGIT unstable with respect to $\cL_+$. Hence by Hilbert-Mumford numerical criterion, there exists a $1$-PS $\sigma_0$ in $G$ such that $\mu^{\cL_+}(x, \sigma_0)<0$. Let $T$ be a maximal torus of $G$ containing $\sigma_0$. By \cite[Chapter 2, Proposition 2.14]{MFK94}, we know that there exist two rational piecewise linear function $h_0$ and $h$ on $\Hom_{\bQ}(\bG_m, T)$ such that for any $1$-PS $\lambda$ in $T$, we have
\[
\mu^{\cL_0}(x,\lambda)=h_0(\lambda), \quad \textrm{and} \quad \mu^{\cL}(x,\lambda)=h(\lambda).
\]
Since $x\in X^{\rm ss}(0)$, we know that $h_0(\lambda)\geq 0$ for any $\lambda\in \Hom_{\bQ}(\bG_m, T)$. On the other hand, $\mu^{\cL_+}(x,\sigma_0)=h_0(\sigma_0)+\epsilon h(\sigma_0)<0$. Hence there exists $\sigma\in \Hom_{\bQ}(\bG_m, T)$ such that $h_0(\sigma)=0$ and $h(\sigma)<0$. The proof is finished.
\end{proof}

Finally, we state the main result from \cite{LO}.

\begin{theorem}\cite[Theorem 1.1]{LO}\label{thm:LOwallcrossings}
Let $\beta\in [0,1]$, and let $t(\beta) = \dfrac{1}{4\beta +2} \in [\frac{1}{6}, \frac{1}{2}]$. The period map \[\mathfrak{p}: \fM \cong \sF(1)  \dashrightarrow \sF(0) \cong \sF^*\] is the composition of elementary birational maps with 8 critical values of $\beta$. Moreover, there is an isomorphism $\fM(t(\beta)) \cong \sF(\beta)$. In particular, the intermediate spaces are the VGIT quotients described above, and are related by elementary birational maps. Finally, the map $\sF(1/8) \to \sF(0) \cong \sF^*$ is the $\bQ$-Cartierization of $H_h$. 
\end{theorem}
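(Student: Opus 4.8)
The plan is to build the interpolating family $\sF(\beta)$, identify each of its members with a VGIT quotient $\fM(t(\beta))$ by matching polarizations, and then extract the wall structure of $\fp$. First I would show that the section ring $R(\sF,\lambda+\beta\Delta)$ is finitely generated for every rational $\beta\in[0,1]$, so that $\sF(\beta):=\Proj R(\sF,\lambda+\beta\Delta)$ is a projective variety. Since $\Delta=H_h/2$ is a Heegner divisor, this is precisely the setting of Looijenga's theory of arrangement and semitoric compactifications: the ring of sections of $\lambda$ twisted by such a boundary divisor is controlled by the combinatorics of the arrangement cut out by the $(-4)$-vectors of divisibility $2$ in $\Lambda$. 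Finite generation, together with the two endpoint identifications $\sF(0)\cong\sF^*$ (Baily-Borel) and $\sF(1)\cong\fM$ (from \cite{LO16}), produces a one-parameter family of projective models birational to one another, and $\fp$ is the resulting map $\sF(1)\dashrightarrow\sF(0)$.

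\textbf{Realization as VGIT.} The heart of the argument is the isomorphism $\fM(t(\beta))\cong\sF(\beta)$. Here I would invoke the global Torelli theorem to obtain a birational identification between $\fM(t)$---which generically parametrizes the double-cover K3 surfaces of $(2,4)$ complete intersections---and $\sF$, and then match the natural polarizations. Concretely, one pushes the VGIT class $N_t=\frac{1-2t}{1-2\delta}p_1^*(\eta+\delta\xi)+\frac{t-\delta}{2(1-2\delta)}p_2^*L_\infty$ forward under this birational map and checks that it is proportional to $\lambda+\beta\Delta$ exactly when $t=\tfrac{1}{4\beta+2}$. The relation $\overline{L}_\infty=4\eta+2\xi$ of Proposition \ref{prop:Linfinity} and Benoist's ampleness range of Proposition \ref{prop:benoist2,4} are exactly the inputs needed to rewrite $N_t$ in the basis $\{\eta,\xi\}$ and compare it against the Hodge-theoretic classes $\lambda$ and $\Delta$; the endpoints $t=1/6$ ($\beta=1$) and $t=1/2$ ($\beta=0$) are pinned down by Theorem \ref{thm:LOmain}(2),(4). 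Finite generation of both section rings then upgrades this birational matching of polarizations to an honest isomorphism of the two $\Proj$'s.

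\textbf{Wall-crossing and the first contraction.} With $\fM(t(\beta))\cong\sF(\beta)$ in hand, the decomposition of $\fp$ into elementary birational maps follows from the VGIT formalism of Thaddeus and Dolgachev-Hu \cite{Tha96, dolgachevhu}: the slope interval $(\delta,\tfrac12)$ breaks into finitely many chambers separated by walls, and at each wall one has $\fM(t-\epsilon)\to\fM(t)\leftarrow\fM(t+\epsilon)$ together with a flip or a divisorial contraction. To locate and count the walls I would apply the Hilbert-Mumford numerical criterion via Lemma \ref{lem:VGITbasics} to the degenerate $(2,4)$ complete intersections, classifying the destabilizing $1$-PS and the associated geometric strata; the explicit computation is designed to yield exactly $8$ critical values of $\beta$, and comparing the dimensions of the destabilized loci on either side determines the type of each transformation. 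For the first wall I would show that $\sF(\beta)$ is constant on $(0,\tfrac18]$ and that $\sF(1/8)\to\sF(0)\cong\sF^*$ contracts precisely the image of $H_h$---the locus of K3s that are double covers of a quadric cone, i.e. where the defining quadric $Q$ degenerates to a cone---and that this contraction is small, so that $\sF(1/8)$ is the $\bQ$-Cartierization of $H_h\subset\sF^*$.

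\textbf{Main obstacle.} The principal difficulty is the VGIT stability analysis at the walls: enumerating all destabilizing $1$-PS for singular and non-reduced $(2,4)$ complete intersections, matching each wall to a boundary (Heegner or arrangement) divisor of $\sF^*$ through the period correspondence, and correctly diagnosing whether each step is a flip or a contraction. The other essential input, which underpins the entire interpolation, is the uniform finite generation of $R(\sF,\lambda+\beta\Delta)$ in $\beta$; this is exactly where Looijenga's compactification machinery is indispensable, and where I expect the most delicate work to lie.
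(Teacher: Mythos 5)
Before anything else, note that the paper you are reading does not prove this statement: it is quoted verbatim as \cite[Theorem 1.1]{LO}, and all of Section \ref{sec:LO} is a survey of Laza--O'Grady's results, which are then used as a black box (e.g.\ in the proofs of Theorems \ref{mthm:spaceiso} and \ref{mthm:slcK3} and in Proposition \ref{prop:induction1}). So the only meaningful comparison is against the proof in \cite{LO} itself. Your sketch does echo its architecture in several places: the VGIT study of $(2,4)$ complete intersections with the polarizations $N_t$, the proportionality of $N_t$ (rewritten via $\overline{L}_\infty=4\eta+2\xi$) with $\lambda+\beta\Delta$ under $t=\frac{1}{4\beta+2}$, the wall-chamber formalism of \cite{Tha96,dolgachevhu}, and Looijenga's theory at the Baily--Borel end.

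However, there is a genuine gap, and it sits exactly at the step you front-load. You propose to \emph{first} prove finite generation of $R(\sF,\lambda+\beta\Delta)$ for every rational $\beta\in[0,1]$ by appealing to Looijenga's arrangement/semitoric machinery, and \emph{then} use it to upgrade a birational matching of polarizations to an isomorphism $\fM(t(\beta))\cong\sF(\beta)$. Looijenga's theory does not deliver this: it governs only the extreme models --- the Baily--Borel $\sF(0)=\sF^*$ and the small modification $\hsF=\sF(\epsilon)$ (the $\bQ$-Cartierization of $H_h$, i.e.\ the range $0<\beta\leq 1/8$) --- precisely because $H_h$ here has self-intersections, which is why the intermediate models are new. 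In \cite{LO} the logic runs in the opposite direction: the projective models $\fM(t)$ are constructed by VGIT (where finite generation is automatic), the isomorphism $\fM(t(\beta))\cong\sF(\beta)$ is then established by an inductive, wall-by-wall comparison matching the VGIT walls with the arithmetically predicted critical $\beta$'s and matching the centers of the flips/contractions so that the comparison map is an isomorphism in codimension one at each stage, and \emph{only then} is finite generation of $R(\sF,\lambda+\beta\Delta)$ deduced, as a consequence of the identification with a GIT quotient. Your proposed derivation is therefore circular: you need finite generation to define $\sF(\beta)$ and to pass from "birational with proportional polarizations" to "isomorphic $\Proj$'s", but that passage is exactly what requires knowing the indeterminacy/exceptional loci have codimension $\geq 2$ on both sides, which is the content of the wall-by-wall analysis rather than a formal consequence of Torelli plus proportionality. (A smaller slip: the map $\sF(1/8)\to\sF^*$ is small, so it does not "contract the image of $H_h$"; it contracts curves along which $H_h$ fails to be $\bQ$-Cartier, while $H_h$ itself remains a divisor on both sides.)
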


\section{Degenerations of $\bP^1 \times \bP^1$ in K-moduli spaces}\label{sec:surfaces}

\subsection{K-moduli spaces of curves on $\bP^1 \times \bP^1$}

In this section, we will define the K-moduli spaces which generically parametrize smooth $(d,d)$-curves on $\bP^1\times\bP^1$.

\begin{prop}\label{prop:p1xp1kss} Let $d\geq 3$ be an integer. Let $C$ be a $(d,d)$-curve on $\bP^1\times\bP^1$. If $\lct(\bP^1\times\bP^1;C)>\frac{2}{d}$ (resp. $\geq \frac{2}{d}$), then the log Fano pair $(\bP^1 \times \bP^1, cC)$ is K-stable (resp. K-semistable) for any $c\in (0, \frac{2}{d})$. In particular, $(\bP^1\times\bP^1, cC)$ is K-stable  for any $c\in (0, \frac{2}{d})$ if either $C$ is smooth or $d=4$ and $C$ has at worst ADE singularities.
\end{prop}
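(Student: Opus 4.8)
The plan is to realize $(\bP^1\times\bP^1, cC)$ as an interpolation between its two endpoints $c=0$ and $c=\frac{2}{d}$ and to invoke the interpolation result for K-stability, \cite[Proposition 2.13]{ADL}. Write $X=\bP^1\times\bP^1$, so that $-K_X=\calO(2,2)$ and $C\in|\calO(d,d)|$ gives $C\sim_{\bQ}\frac{d}{2}(-K_X)$. Set $\Delta:=\frac{2}{d}C\sim_{\bQ}-K_X$. Then $-(K_X+cC)\sim_{\bQ}(1-\tfrac{cd}{2})(-K_X)$ is ample precisely for $c\in(0,\frac{2}{d})$, so each $(X,cC)$ is a genuine log Fano pair (klt, since $c<\frac{2}{d}\leq\lct(\bP^1\times\bP^1;C)$); moreover for $t\in(0,1)$ the divisor $(1-t)\Delta=(1-t)\frac{2}{d}C$ has coefficient $c:=(1-t)\frac{2}{d}$ sweeping out exactly the interval $(0,\frac{2}{d})$ as $t$ runs over $(0,1)$.

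For the two endpoints: first, $X=\bP^1\times\bP^1$ is a smooth del Pezzo surface admitting a K\"ahler-Einstein metric (the product of two Fubini-Study metrics), hence $(X,0)$ is K-polystable. Second, $(X,\Delta)=(X,\frac{2}{d}C)$ is a log Calabi-Yau pair since $K_X+\Delta\sim_{\bQ}0$; the hypothesis $\lct(\bP^1\times\bP^1;C)>\frac{2}{d}$ (resp. $\geq\frac{2}{d}$) says exactly that $(X,\Delta)$ is klt (resp. log canonical). The one genuinely non-formal input needed is that a log canonical log Calabi-Yau pair is K-semistable, and I expect this to be the main obstacle. It is the limiting case of the valuative criterion: when $-(K_X+\Delta)\sim_{\bQ}0$ the expected vanishing-order functional $S_{(X,\Delta)}$ is identically $0$, so K-semistability collapses to the condition $A_{(X,\Delta)}(\ord_E)\geq 0$ for every prime divisor $E$ over $X$, which is precisely log canonicity. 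I would make this rigorous either by citing the corresponding statement for log Calabi-Yau pairs or by a direct limiting argument as $c$ increases to $\frac{2}{d}$.

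With both endpoints in hand, I would apply \cite[Proposition 2.13]{ADL} with $D=0$ and $\Delta=\frac{2}{d}C$. In the log canonical case this yields that $(X,(1-t)\Delta)=(X,cC)$ is K-semistable for all $t\in(0,1)$, i.e. for all $c\in(0,\frac{2}{d})$. In the klt case the sharper part of that proposition—using that $(X,\Delta)$ is a \emph{klt} log Calabi-Yau pair together with K-polystability of $(X,0)$—gives uniform K-stability of $(X,(1-t)\Delta)$ for $t\in(0,1)$, and uniform K-stability implies K-stability; thus $(X,cC)$ is K-stable for every $c\in(0,\frac{2}{d})$. This establishes the first two assertions of the proposition.

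For the final "in particular" statement I would verify the strict inequality $\lct(\bP^1\times\bP^1;C)>\frac{2}{d}$ in the two listed cases. If $C$ is smooth then $(X,C)$ is simple normal crossing, so $\lct(\bP^1\times\bP^1;C)=1$, and $1>\frac{2}{d}$ exactly because $d\geq 3$. If $d=4$ and $C$ has at worst ADE singularities, then $\frac{2}{d}=\frac{1}{2}$, and since the log canonical threshold is computed locally at the finitely many singular points of $C$ (with value $1$ at smooth points), it suffices to note that every ADE plane-curve singularity has log canonical threshold strictly greater than $\frac{1}{2}$: for an $A_n$ singularity $x^2+y^{n+1}$ one has $\lct=\min\{1,\tfrac{1}{2}+\tfrac{1}{n+1}\}>\frac{1}{2}$, and the $D_n$, $E_6$, $E_7$, $E_8$ singularities have even larger thresholds. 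Taking the minimum over the finitely many singular points gives $\lct(\bP^1\times\bP^1;C)>\frac{1}{2}=\frac{2}{d}$, and the previous paragraph then yields K-stability of $(X,cC)$ for all $c\in(0,\frac{2}{d})$.
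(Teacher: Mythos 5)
Your proof is correct and follows essentially the same route as the paper's: the paper likewise deduces the statement from interpolation (\cite[Proposition 2.13]{ADL} or \cite[Lemma 2.6]{Der16}), using exactly that $(\bP^1\times\bP^1,\frac{2}{d}C)$ is klt (resp.\ lc) and that $\bP^1\times\bP^1$ is K-polystable, with the lc log Calabi--Yau case entering as the K-semistable endpoint just as you describe (this is Odaka's theorem, which the paper absorbs into the citation). The only quibble is your parenthetical that $D_n$, $E_6$, $E_7$, $E_8$ have ``even larger'' thresholds than the $A_n$ case --- that comparison is false (e.g.\ $\lct(E_8)=\frac{8}{15}<\lct(A_1)=1$) --- but all your argument needs is that every ADE plane-curve singularity has $\lct>\frac{1}{2}$, which is true.
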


\begin{proof} This follows from interpolation (see \cite[Proposition 2.13]{ADL} or \cite[Lemma 2.6]{Der16}), since the pair $(\bP^1 \times \bP^1, \frac{2}{d}C)$ is klt (resp. lc) and $\bP^1\times\bP^1$ is K-polystable. \end{proof}

 We begin to define the K-moduli stack $\oMK_{d,c}$ and the K-moduli space $\overline{K}_{d,c}$.
 Let $\chi_0(\cdot)$ be the Hilbert polynomial of the polarized Fano manifold $(\bP^1\times\bP^1,-K_{\bP^1\times\bP^1})$, i.e. $\chi_0(m)= 4m^2 +4m+1$. Consider the K-moduli stack $\cK\cM_{\chi_0, d/2, c}$ and K-moduli space $KM_{\chi_0, d/2, c}$ where $d\geq 3$ is an integer and $c\in (0, \frac{2}{d})\cap\bQ$.
 
\begin{prop}\label{prop:modconnected}
Let $d\geq 3$ be an integer.
The K-moduli stack $\cK\cM_{\chi_0, d/2, c}$ and K-moduli space $KM_{\chi_0, d/2, c}$ are both normal. Moreover, we have the following cases.
\begin{enumerate}
    \item If $d$ is odd, then $\cK\cM_{\chi_0, d/2, c}$ is connected and generically parametrizes $(\bP^1\times\bP^1, cC)$ where $C\in |\cO_{\bP^1\times\bP^1}(d,d)|$ is a smooth curve.
    \item If $d$ is even, then $\cK\cM_{\chi_0, d/2, c}$  has at most two connected components. One of these components generically parametrizes $(\bP^1\times\bP^1, cC)$ where $C\in |\cO_{\bP^1\times\bP^1}(d,d)|$ is a smooth curve; the other component, if it exists, generically parametrizes $(\bF_1, cC')$ where $C'\in |\cO_{\bF_1}(-\frac{d}{2}K_{\bF_1})|$ is a smooth curve on the Hirzebruch surface $\bF_1$.
\end{enumerate}
\end{prop}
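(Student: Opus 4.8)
The normality is immediate from Theorem~\ref{thm:modnormal}: the polynomial $\chi_0(m)=4m^2+4m+1$ is the Hilbert polynomial of the anti-canonically polarized smooth del Pezzo surface $\bP^1\times\bP^1$ (of degree $8$), and for $d\geq 3$ one has $c\in(0,\tfrac2d)=(0,\min\{1,(d/2)^{-1}\})$, so that theorem applies with $r=d/2$ and gives that both $\cK\cM_{\chi_0,d/2,c}$ and $KM_{\chi_0,d/2,c}$ are normal. The content of the proposition is the count of connected components, and the plan is to extract it from the explicit presentation of the stack built in the proof of Theorem~\ref{thm:modnormal}.

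Recall from that proof that $\cK\cM_{\chi_0,d/2,c}=[Z_c^{\red}/\PGL(N_m+1)]$ with $Z_c^{\red}$ open in the smooth scheme $Z^{\klt}$, that $\pr_1\colon Z^{\klt}\to Z^{\bs}$ is a smooth surjection with irreducible fibers onto a union $Z^{\bs}$ of connected components of the smooth scheme $Z^{\QF}$ of embedded $\bQ$-Gorenstein smoothable $\bQ$-Fano surfaces, and that $Z^{\rm sm}\subset Z^{\bs}$ (the locus of smooth $X$ admitting a smooth $D\sim_{\bQ}-\tfrac d2 K_X$) is dense. Since $Z^{\klt}$ is smooth, its connected and irreducible components coincide, the smooth fibration $\pr_1$ puts them in bijection with those of $Z^{\bs}$, and $Z_c^{\red}$ meets each in a connected (possibly empty) open subset. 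Hence the connected components of the stack correspond exactly to the components of $Z^{\bs}$ over which a K-semistable pair exists, and it suffices to count components of $Z^{\bs}$.

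The geometric input is that the only smooth del Pezzo surfaces of degree $8$ are $\bP^1\times\bP^1$ and $\bF_1$. Because $Z^{\bs}$ is smooth, each of its components is irreducible with generic point in $Z^{\rm sm}$, hence of a single deformation type; and since the surfaces of each type, embedded by $|-mK|$, form one $\PGL(N_m+1)$-orbit, there is at most one component per type, so $Z^{\bs}$ has at most two components. Which types occur is then decided by a parity computation on the divisor class, since $D\sim_{\bQ}-\tfrac d2 K_X$ forces $-\tfrac d2 K_X$ to be an integral Weil class. On $\bP^1\times\bP^1$, $-K=\cO(2,2)$, so $-\tfrac d2 K=\cO(d,d)$ is integral for all $d$ and very ample with smooth members; thus the $\bP^1\times\bP^1$ type always occurs. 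On $\bF_1$, writing $\Pic(\bF_1)=\bZ C_0\oplus\bZ f$ with $C_0^2=-1$, $f^2=0$, $C_0\cdot f=1$, one has $-K_{\bF_1}=2C_0+3f$, which is primitive, so $-\tfrac d2 K_{\bF_1}=dC_0+\tfrac{3d}2 f$ is integral if and only if $d$ is even; when $d$ is even it is very ample with smooth members. Consequently, for $d$ odd only the $\bP^1\times\bP^1$ type survives, whence $Z^{\bs}$, $Z^{\klt}$ and the stack are irreducible, and non-emptiness of the K-semistable locus there follows from the K-stability of smooth pairs in Proposition~\ref{prop:p1xp1kss}; for $d$ even both types are permitted, giving at most two components with the stated generic members.

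The step needing the most care is the \emph{separation} of the two types into distinct components of $Z^{\bs}$: a priori a singular $\bQ$-Fano surface in $Z^{\QF}$ might $\bQ$-Gorenstein smooth both to $\bP^1\times\bP^1$ and to $\bF_1$, gluing the two families. This is precisely where the smoothness of $Z^{\QF}$ from Theorem~\ref{thm:modnormal} is decisive: a smooth point is locally irreducible, so the deformation type of nearby smooth fibers is unique and no surface can lie in the closure of both loci (consistently, $\bP^1\times\bP^1$ and $\bF_1$ are not even diffeomorphic, their intersection forms having opposite parity). Finally, the ``if it exists'' clause in the even case records that whether the $\bF_1$-component actually meets $Z_c^{\red}$ depends on $c$---for small $c$ the pair $(\bF_1,cC')$ degenerates toward the K-unstable surface $\bF_1$ and need not be K-semistable---and this dependence is left open, as the statement only claims ``at most two''.
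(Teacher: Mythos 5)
Your proof is correct and follows essentially the same route as the paper: normality via Theorem~\ref{thm:modnormal}, the classification of smooth degree-$8$ del Pezzo surfaces, the topological (intersection-form) distinction between $\bP^1\times\bP^1$ and $\bF_1$ to separate components, the fractional intersection of $-\frac{d}{2}K_{\bF_1}$ with the $(-1)$-curve to exclude $\bF_1$ for odd $d$, and Proposition~\ref{prop:p1xp1kss} for non-emptiness of the $\bP^1\times\bP^1$ component. You simply make explicit the component-counting bookkeeping (via the presentation $[Z_c^{\red}/\PGL(N_m+1)]$ and the fibration $\pr_1\colon Z^{\klt}\to Z^{\bs}$) that the paper's terser argument leaves implicit.
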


\begin{proof}
The normality of $\cK\cM_{\chi_0, d/2, c}$ and $KM_{\chi_0, d/2, c}$ is a direct consequence of Theorem \ref{thm:modnormal}. For the rest, notice that there are only two smooth del Pezzo surfaces of degree $8$ up to isomorphism: $\bP^1\times\bP^1$ and $\bF_1$. In addition, they are not homeomorphic since their intersection pairings on $H^2(\cdot, \bZ)$ are not isomorphic. By Proposition \ref{prop:p1xp1kss} we know that $(\bP^1\times\bP^1,cC)$ where $C$ is a smooth $(d,d)$-curve is always parametrized by $\cK\cM_{\chi_0, d/2, c}$. 
If $d$ is odd, then $-\frac{d}{2}K_{\bF_1}$ is not represented by any Weil divisor since it has fractional intersection with the $(-1)$-curve on $\bF_1$. Hence $\bF_1$ will not appear in $\cK\cM_{\chi_0, d/2, c}$ when $d$ is odd. The proof is finished.
\end{proof}


\begin{defn}\label{defn:modulispace}
Let $d\geq 3$ be an integer.
For $c\in(0, \frac{2}{d})\cap \bQ$, let $\oMK_{d,c}$ denote the connected component of $\cK\cM_{\chi_0, d/2, c}$ where a general point parametrizes $(\bP^1\times\bP^1, cC)$ where $C\in |\cO_{\bP^1\times\bP^1}(d,d)|$ is a smooth curve. In other words, $\oMK_{d,c}$ is the moduli stack parametrizing
K-semistable log Fano pairs $(X,cD)$, where $X$ admits a $\bQ$-Gorenstein smoothing to $\bP^1 \times \bP^1$ and the effective $\bQ$-Cartier Weil divisor $D \sim_{\bQ} -\frac{d}{2}K_X$. We let $\overline{K}_{d,c}$ denote the good moduli space of $\oMK_{d,c}$. From  Theorems  \ref{thm:projectivity}, \ref{thm:modnormal}, and Proposition \ref{prop:modconnected} we know that $\oMK_{d,c}$ is a connected smooth Artin stack of finite type over $\bC$, and $\oK_{d,c}$ is a  normal projective variety over $\bC$.

\end{defn}

The following theorem is a direct consequence of \cite[Theorem 1.2]{ADL} and Proposition \ref{prop:p1xp1kss}.

\begin{thm}\label{thm:generalwall}
Let $d\geq 3$ be an integer. 
There exist rational numbers 
\[
0=c_0 <c_1<c_2<\cdots< c_k =\frac{2}{d}
\]
such that for each $0\leq i\leq k-1$ the K-moduli stacks $\oMK_{d,c}$ are independent of the choice of $c\in (c_i,c_{i+1})$. For each $1\leq i\leq k-1$ and $0<\epsilon\ll 1$, we have open immersions 
\[
\oMK_{d,c_i-\epsilon}\hookrightarrow \oMK_{d, c_i}\hookleftarrow \oMK_{d,c_i+\epsilon}
\]
which induce projective birational morphisms
\[
\oK_{d,c_i-\epsilon}\rightarrow \oK_{d, c_i}\leftarrow \oK_{d,c_i+\epsilon}.
\]
Moreover, all the above morphisms have local VGIT presentations as in \cite[(1.2)]{AFS17}.
\end{thm}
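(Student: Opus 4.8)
The plan is to deduce this directly from the general K-moduli wall-crossing theorem \cite[Theorem 1.2]{ADL}, invoking Proposition \ref{prop:p1xp1kss} to control the endpoints of the weight interval and to ensure that the relevant connected component persists across every wall. First I would record that with $r = d/2$ one has $\min\{1, r^{-1}\} = \frac{2}{d}$ because $d \geq 3$, so the admissible weights are exactly $c \in (0, \frac{2}{d})$, matching the range in the statement. Applying \cite[Theorem 1.2]{ADL} to the stack $\cK\cM_{\chi_0, d/2, c}$ then yields finitely many rational walls $0 = c_0 < c_1 < \cdots < c_k = \frac{2}{d}$, the property that $\cK\cM_{\chi_0, d/2, c}$ is constant for $c$ in each open chamber $(c_i, c_{i+1})$, the open immersions at each wall, the resulting projective birational morphisms of good moduli spaces, and the local VGIT presentations in the sense of \cite[(1.2)]{AFS17}. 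Proposition \ref{prop:p1xp1kss} enters already here: it shows that $(\bP^1\times\bP^1, cC)$ is K-stable for every smooth $(d,d)$-curve $C$ and every $c \in (0, \frac{2}{d})$, so the moduli problem is nonempty up to (but not including) the endpoint $\frac{2}{d}$, confirming that the wall structure genuinely extends over the whole interval with $c_k = \frac{2}{d}$.

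Next I would transfer this structure to the component. By Definition \ref{defn:modulispace}, $\oMK_{d,c}$ is the connected component of $\cK\cM_{\chi_0, d/2, c}$ whose general point parametrizes $(\bP^1\times\bP^1, cC)$ with $C$ smooth; since a connected component is open and closed, the wall-crossing open immersions and morphisms for the full stack restrict to the component, and the local presentations are unaffected. The point requiring care is that the \emph{same} component is singled out on both sides of each interior wall $c_i$ ($1 \leq i \leq k-1$) and at the wall itself. Proposition \ref{prop:p1xp1kss} guarantees that the smooth-curve locus is a nonempty K-stable, hence closed-point, locus of $\cK\cM_{\chi_0, d/2, c}$ in every chamber and at every $c_i$, and Proposition \ref{prop:modconnected} shows this locus lies in a single component, distinguished from the possible $\bF_1$-component by the topological type of the general fiber. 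Thus the dense smooth-curve locus determines one component $\oMK_{d,c_i}$ of the wall stack into which both $\oMK_{d,c_i\pm\epsilon}$ open-immerse.

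The main obstacle is exactly this last piece of bookkeeping: ruling out that crossing a wall causes the $\bP^1\times\bP^1$-component to merge with or split off from the $\bF_1$-component, which would spoil the identification of the $\oMK_{d,c_i\pm\epsilon}$ with open substacks of one wall stack $\oMK_{d,c_i}$. This is handled by the observation, from Proposition \ref{prop:modconnected}, that the general-fiber surface type is a deformation invariant and that $\bP^1\times\bP^1$ and $\bF_1$ are not homeomorphic, so the two components never interact; the closure of the common dense smooth-curve locus is therefore a single well-defined component at each wall. Finally, the projectivity and normality of the $\oK_{d,c}$ already recorded in Definition \ref{defn:modulispace} upgrade the induced maps $\oK_{d,c_i-\epsilon}\to\oK_{d,c_i}\leftarrow\oK_{d,c_i+\epsilon}$ to projective birational morphisms, completing the reduction.
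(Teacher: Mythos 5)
Your proposal is correct and takes essentially the same route as the paper, which proves this theorem in one line as "a direct consequence of \cite[Theorem 1.2]{ADL} and Proposition \ref{prop:p1xp1kss}": the general wall-crossing theorem supplies the walls, open immersions, projective birational morphisms, and local VGIT presentations, while Proposition \ref{prop:p1xp1kss} guarantees the smooth-curve locus persists across the whole interval $(0,\frac{2}{d})$. Your extra bookkeeping about restricting to the $\bP^1\times\bP^1$-component via Proposition \ref{prop:modconnected} (the two components never interact since $\bP^1\times\bP^1$ and $\bF_1$ are not homeomorphic) is exactly the detail the paper leaves implicit, and it is handled correctly.
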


In this paper, we are mainly interested in the case when $d=4$, although some results for general $d$ are presented in Section \ref{sec:generaldegree}. We always abbreviate $\oMK_{4,c}$ and $\oK_{4,c}$ to $\oMK_{c}$ and $\oK_{c}$, respectively.

\subsection{Classification of degenerations of $\bP^1 \times \bP^1$}
The goal of this section is to prove Theorem \ref{thm:surfaces}, which states that if $(X,cD)$ is a pair parametrized by $\oMK_{c}$ for some $c \in (0, \frac{1}{2})$, then $X$ is isomorphic to either $\bP^1 \times \bP^1$ or $\bP(1,1,2)$. Later on, we will show (in Theorem \ref{thm:surfacesalld}) that the same is true in $\oMK_{d,c}$ for $0 < c < \frac{4-\sqrt{2}}{2d}$ and $d \geq 3$.  First we show that if $X$ is a normal $\bQ$-Gorenstein deformation of $\bP^1 \times \bP^1$, then $\rho(X) \leq 2$. 

\begin{prop}\label{prop:rho} Let $X$ be a log del Pezzo surface. Suppose that $X$ admits a $\bQ$-Gorenstein deformation to $\bP^1 \times \bP^1$. Then $\rho(X) \leq 2$. \end{prop}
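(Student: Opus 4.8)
The plan is to compare the topological Euler characteristics of the two fibers of a $\bQ$-Gorenstein smoothing and to exploit the fact that the vanishing cycles at the singular points contribute non-negatively.

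First I would fix a $\bQ$-Gorenstein smoothing $\pi\colon \cX\to (0\in B)$ over a pointed smooth curve with $\cX_0\cong X$ and general fiber $\cX_t\cong \bP^1\times\bP^1$. Since $X$ is a log del Pezzo surface admitting such a smoothing, its singularities are $T$-singularities, i.e. precisely the quotient singularities admitting a $\bQ$-Gorenstein smoothing (Koll\'ar--Shepherd-Barron); in particular they are isolated, normal, and rational. Consequently $X$ is a rational surface with rational singularities, so $H^i(X,\cO_X)=0$ for $i>0$, the odd Betti numbers vanish, and $H^2(X,\bQ)$ is purely of type $(1,1)$. Working with the orbifold/V-manifold Hodge theory available for quotient singularities, this yields $\rho(X)=b_2(X)$ and hence $e_{\mathrm{top}}(X)=2+\rho(X)$. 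On the other side $e_{\mathrm{top}}(\cX_t)=e_{\mathrm{top}}(\bP^1\times\bP^1)=4$.

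Next I would invoke the standard Milnor-fiber formula governing the behavior of the Euler characteristic under a smoothing of isolated surface singularities: writing $F_p$ for the Milnor fiber at each $p\in \mathrm{Sing}(X)$, one has
\[
e_{\mathrm{top}}(\cX_t)=e_{\mathrm{top}}(X)+\sum_{p\in\mathrm{Sing}(X)}\big(e_{\mathrm{top}}(F_p)-1\big).
\]
Substituting the two Euler numbers computed above gives $2-\rho(X)=\sum_{p}\big(e_{\mathrm{top}}(F_p)-1\big)$, so it suffices to prove $e_{\mathrm{top}}(F_p)\geq 1$ for each $p$. Each $F_p$ is a connected smooth $4$-manifold with boundary, so $b_0(F_p)=1$; and by the theorem of Greuel--Steenbrink the first Betti number of the Milnor fiber of a smoothing of a normal surface singularity vanishes, i.e. $b_1(F_p)=0$. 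Therefore $e_{\mathrm{top}}(F_p)=1+b_2(F_p)\geq 1$, with equality exactly when $F_p$ is a rational homology ball (the Wahl case). Hence $\sum_p\big(e_{\mathrm{top}}(F_p)-1\big)\geq 0$, and we conclude $\rho(X)\leq 2$. As a sanity check, the node of the quadric cone $\bP(1,1,2)$ has $e_{\mathrm{top}}(F)=2$, which recovers $\rho(\bP(1,1,2))=1$ and is consistent with its appearing as a degeneration of $\bP^1\times\bP^1$.

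The main obstacle is the control of the local topology of the degeneration: one needs the vanishing cohomology at each singular point to live in even degree, which is precisely the content of $b_1(F_p)=0$. Everything else is bookkeeping with Euler characteristics, once the singularities of $X$ are identified as $T$-singularities so that the relevant Milnor fibers are those of $\bQ$-Gorenstein smoothings. An alternative to citing Greuel--Steenbrink is to use the explicit classification of $T$-singularities together with the known Betti numbers of their $\bQ$-Gorenstein Milnor fibers (namely $b_2(F_p)=d_p-1$, where $d_p$ records the length of the singularity), which makes the inequality $e_{\mathrm{top}}(F_p)\geq 1$ completely explicit.
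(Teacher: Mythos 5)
Your proof is correct, but it follows a genuinely different route from the paper's. The paper argues with divisor class groups rather than topology: after a finite base change making the family trivial over the punctured curve, \cite[Lemma 2.11]{Hac04} gives $\Cl(\cX)\cong\Cl(\cX^\circ)\cong\bZ^2$, and then, following \cite[Proposition 6.3]{Hac04}, the restriction isomorphism $\Pic(\cX)\xrightarrow{\sim}\Pic(X)$ together with $\Pic(\cX)\otimes\bQ\subseteq\Cl(\cX)\otimes\bQ$ yields $\rho(X)\leq 2$, with equality precisely when $\cX$ is $\bQ$-factorial. Your argument instead compares topological Euler characteristics across the smoothing and rests on Greuel--Steenbrink's vanishing of $b_1$ of the Milnor fiber of a smoothing of a normal isolated singularity, together with the identification $e_{\mathrm{top}}(X)=2+\rho(X)$ coming from V-manifold Hodge theory; all the steps are sound. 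What each approach buys: the paper's proof stays entirely algebraic, is short given Hacking's lemmas, and characterizes the equality case via $\bQ$-factoriality of the total space; yours gives the sharper quantitative statement $\rho(X)=2-\sum_p b_2(F_p)=2-\sum_p \mu_p$, which meshes well with the Milnor numbers $\mu_x$ the paper uses later (compare Theorem \ref{thm:indexbound} and Proposition \ref{prop:rhoindex}) and explains topologically why, when $\rho(X)=2$, every singular point must have a rational-homology-ball (Wahl-type) smoothing. One small attribution point: the rationality of $X$ follows from its being a log del Pezzo surface (or one can get $h^i(X,\cO_X)=0$ for $i>0$ directly from Kawamata--Viehweg vanishing), not from the singularities being $T$-singularities; this does not affect the argument.
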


\begin{proof} 
Let $\cX\to T$ be a $\bQ$-Gorenstein smoothing of $X$, i.e. $0\in T$ is a smooth germ of pointed curve, $\cX_0\cong X$, and $\cX_t\cong \bP^1\times\bP^1$ for $t\in T\setminus \{0\}$. By passing to a finite cover of $0\in T$, we may assume that $\cX^\circ\cong (\bP^1\times\bP^1)\times T^\circ$ where $\cX^\circ:=\cX\setminus \cX_0$ and $T^\circ := T \setminus \{0\}$. 
First using \cite[Lemma 2.11]{Hac04}, we show that $\mathrm{Cl}(\calX) \cong \mathbb{Z}^{2}$. Indeed,  consider the exact sequence 
\[ 0 \to \mathbb{Z}X \to \mathbb{Z} X \to \mathrm{Cl}(\calX)\to \mathrm{Cl}(\calX^\circ) \to 0,\] which gives $\mathrm{Cl}(\calX) \cong \mathrm{Cl}(\calX^\circ)\cong \mathbb{Z}^{2}$.

Now we follow the proof of \cite[Proposition 6.3]{Hac04}. First note that there is an isomorphism $\Pic(\calX) \to \Pic(X)$, and so we obtain the inequality: 
\[ \rho(X) = \dim \Pic(X) \otimes \mathbb{Q} = \dim \Pic(\calX) \otimes \mathbb{Q} \leq \dim \mathrm{Cl}(\calX) \otimes \mathbb{Q} = 2,\]
with equality if and only if $\calX$ is $\mathbb{Q}$-factorial. \end{proof}

A result of Hacking-Prokhorov now classifies the possible $\bQ$-Gorenstein smoothings of $\bP^1 \times \bP^1$ (see \cite[Theorem 1.2]{hparxiv} and \cite[Proposition 2.6]{hp}).

\begin{prop}[Hacking-Prokhorov]\label{prop:rhoindex}
Let $X$ be a log del Pezzo surface admitting a $\bQ$-Gorenstein smoothing to $\bP^1\times\bP^1$.  There are two cases.
\begin{enumerate}
    \item If $\rho(X)=1$, then $X$ is a $\bQ$-Gorenstein partial smoothing of a weighted projective plane $\bP(a^2,b^2,2c^2)$ where $(a,b,c)\in\bZ_{>0}^3$ subject to the equation
    \[
    a^2+b^2+2c^2=4abc.
    \]
    In particular, the local index  $\ind(x,K_X)$ is odd for any $x\in X$.
    \item If $\rho(X)=2$, then $X$ only has quotient singularities of type
    $\frac{1}{n^2}(1,an-1)$ where $\gcd(a,n)=1$.
\end{enumerate}
\end{prop}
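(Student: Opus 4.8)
The plan is to handle the two cases $\rho(X)=1$ and $\rho(X)=2$ separately, taking the classification of the surface and its singularity types from the cited results of Hacking--Prokhorov, and then to extract the parity statement on the local index by a short self-contained computation. Throughout I would use that a two-dimensional $\bQ$-Gorenstein smoothable quotient singularity is automatically a $T$-singularity $\frac{1}{dn^2}(1,dna-1)$ with $\gcd(a,n)=1$, and that such a singularity has local index exactly $\ind(x,K_X)=\frac{dn^2}{\gcd(dn^2,dna)}=n$, since $\gcd(dn^2,dna)=dn\gcd(n,a)=dn$.

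For $\rho(X)=1$, I would first invoke \cite[Theorem 1.2]{hparxiv}: rank-one $\bQ$-Gorenstein smoothable log del Pezzo surfaces are governed by Markov-type equations, and the family smoothing to $\bP^1\times\bP^1$ (degree $8$) consists of $\bQ$-Gorenstein partial smoothings of $\bP(a^2,b^2,2c^2)$ with $a^2+b^2+2c^2=4abc$. The next step is to show every positive solution has $a,b,c$ all odd. The minimal solution is $(1,1,1)$, and all solutions are obtained by the Vieta involutions $a\mapsto 4bc-a$, $b\mapsto 4ac-b$, $c\mapsto 2ab-c$; since $4bc$, $4ac$ and $2ab$ are all even, each involution sends an odd entry to an odd entry, so oddness propagates from $(1,1,1)$ by induction on the mutation tree.

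Now I would read off the indices. At the three torus-fixed points of $\bP(a^2,b^2,2c^2)$ the orders of the local fundamental groups are $a^2$, $b^2$, $2c^2$, and since each point is a $T$-singularity we may write the corresponding order as $dn^2$. Because $a,b,c$ are odd we have $v_2(a^2)=v_2(b^2)=0$ and $v_2(2c^2)=1$, so in every case $v_2(dn^2)<2$; as $v_2(n^2)=2v_2(n)$ is even and bounded by $v_2(dn^2)$, this forces $n$ to be odd, hence each local index $n$ is odd. Finally, a $\bQ$-Gorenstein partial smoothing replaces a $T$-singularity $\frac{1}{dn^2}(1,dna-1)$ by points of the form $\frac{1}{d'n^2}(1,d'na-1)$ with the \emph{same} $n$ (only $d$ decreases), so the index of each surviving singular point is preserved and remains odd; smooth points have index $1$. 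Therefore $\ind(x,K_X)$ is odd for every $x\in X$.

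For $\rho(X)=2$, the equality case of Proposition \ref{prop:rho} shows $X$ is $\bQ$-factorial, and its singularities are $T$-singularities $\frac{1}{d_i n_i^2}(1,d_i n_i a_i-1)$. Using \cite[Proposition 2.6]{hp}, or equivalently the relation $\rho(X_t)=\rho(X)+\sum_i(d_i-1)$ between the Picard rank of the smooth fiber and the Milnor numbers $d_i-1$ of the $T$-singularities, the equalities $\rho(X_t)=\rho(\bP^1\times\bP^1)=2=\rho(X)$ give $\sum_i(d_i-1)=0$, so every $d_i=1$; this is exactly the asserted form $\frac{1}{n^2}(1,an-1)$ with $\gcd(a,n)=1$ (and $n\ge 2$ at genuinely singular points). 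The statement itself is essentially a citation, so the real content is the index computation in case (1); the delicate points there are that one must know the coordinate points are genuine $T$-singularities (which is where smoothability to $\bP^1\times\bP^1$ enters) and that $\bQ$-Gorenstein partial smoothing preserves the value $n$ of each surviving singularity. Both facts rest on the Kollár--Shepherd-Barron description of $\bQ$-Gorenstein deformations of $T$-singularities, and isolating exactly that input is the main obstacle.
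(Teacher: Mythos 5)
The paper offers no proof of this proposition at all: it is stated as a direct citation of Hacking--Prokhorov (\cite[Theorem 1.2]{hparxiv} for the $\rho(X)=1$ case and \cite[Proposition 2.6]{hp} for the $\rho(X)=2$ case), with the parity clause folded into the citation. Your proposal rests on the same two citations, so at its core it is the same approach, but you add genuine content by actually deriving the ``in particular'' statement: the Vieta-mutation argument showing every positive solution of $a^2+b^2+2c^2=4abc$ has $a,b,c$ odd, the $2$-adic valuation argument ($2v_2(n)\leq v_2(dn^2)\leq 1$ forces $n$ odd at each vertex), the index computation $\ind(x,K_X)=dn^2/\gcd(dn^2,dna)=n$, and the Milnor-number count $\rho(X_t)=\rho(X)+\sum_i(d_i-1)$ giving $d_i=1$ in case (2) are all correct. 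One imprecision is worth fixing: a $\bQ$-Gorenstein partial smoothing of a $T$-singularity $\frac{1}{dn^2}(1,dna-1)$ does \emph{not} only produce points of type $\frac{1}{d'n^2}(1,d'na-1)$ with the same $n$. Writing the $\bQ$-Gorenstein deformation as $\bigl(xy=\prod_j(z^n-t_j)^{d_j}\bigr)/\bm{\mu}_n$, the root $t_j=0$ (if present) gives a singularity of the same type with $d$ replaced by its multiplicity, but each root $t_j\neq 0$ gives a Du Val $A_{d_j-1}$ point sitting on a free $\bm{\mu}_n$-orbit. Your parity conclusion survives because Du Val points have local index $1$, which is odd, but your assertion that ``the value $n$ of each surviving singularity is preserved'' is not literally what the Koll\'ar--Shepherd-Barron theory gives, and a careful write-up should account for these extra $A$-type points.
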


Suppose $x\in X$ is a surface $T$-singularity. We denote by $\mu_x$ the \emph{Milnor number} of a $\bQ$-Gorenstein smoothing of $x\in X$. If $x\in X$ is a cyclic quotient $T$-singularity of type $\frac{1}{en^2}(1,ena-1)$, then $\mu_x=e-1$.

\begin{theorem}\label{thm:indexbound} Let $(X,cD)
 $ be a K-semistable log Fano pair that admits a $\bQ$-Gorenstein smoothing
 to $(\bP^1\times\bP^1, cC_t)$ with $c\in (0,\frac{2}{d})$ and $C_t$ a curve of bidgree $(d,d)$.
 Let $x\in X$ be any singular point.
 \begin{enumerate}
     \item If $d$ is even or $\ind(x,K_X)$ is odd, then 
      \[
  \ind(x,K_X)\leq\begin{cases}
                  \min\{\lfloor\frac{3}{\sqrt{2}(2-cd)}\rfloor,d+1\} & \textrm{ if }\mu_x=0,\\
                  \min\{\lfloor\frac{3}{2(2-cd)}\rfloor,d\} & \textrm{ if }\mu_x=1. 
                 \end{cases}
 \]
 \item If $d$ is odd and $\ind(x,K_X)$ is even, then $\rho(X)=2$, $\mu_x=0$, and 
 \[
  \ind(x,K_X)\leq \min\{2\lfloor\tfrac{3}{2\sqrt{2}(2-cd)}\rfloor,2d-2\}.
  \]
  \end{enumerate}
\end{theorem}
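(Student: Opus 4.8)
The plan is to derive both families of bounds from the global-to-local volume inequality (Theorem \ref{thm:local-vol-global}) together with the classification of the possible singularities supplied by Propositions \ref{prop:rho} and \ref{prop:rhoindex}. Since $(X,cD)$ is K-semistable it is in particular klt, so by Hacking--Prokhorov every singular point $x\in X$ is a cyclic quotient $T$-singularity of type $\frac{1}{en^2}(1,ena-1)$ with $\gcd(a,n)=1$, whose Gorenstein index equals $n$ and whose Milnor number is $\mu_x=e-1$. Thus it suffices to bound $n$ (with $e=\mu_x+1$) and, in the even-index case, to exploit the extra parity constraint coming from $\rho(X)=2$.

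First I would compute the anticanonical degree of the pair. Because $K_X^2=8$ is deformation invariant for $\bQ$-Gorenstein smoothings and $D\sim_{\bQ}-\frac{d}{2}K_X$, we get $-K_X-cD\sim_{\bQ}-\frac{2-cd}{2}K_X$ and hence
\[
(-K_X-cD)^2=\frac{(2-cd)^2}{4}\,K_X^2=2(2-cd)^2.
\]
Feeding this into Theorem \ref{thm:local-vol-global} (the dimension is $2$, so the constant is $(3/2)^2=9/4$) yields the lower bound
\[
\hvol(x,X,cD)\ \ge\ \tfrac{4}{9}(-K_X-cD)^2=\tfrac{8(2-cd)^2}{9}.
\]
Next I would bound $\hvol(x,X,cD)$ from above in terms of $n$. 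Adding the effective boundary $cD$ only decreases log discrepancies, so $\hvol(x,X,cD)\le \hvol(x,X)$; and since $x$ is the quotient of a smooth point by a group of order $en^2$, multiplicativity of the normalized volume under quasi-\'etale covers gives $\hvol(x,X)=\frac{4}{en^2}$. Combining the two inequalities produces
\[
en^2\ \le\ \frac{9}{2(2-cd)^2}.
\]
Setting $e=1$ for $\mu_x=0$ and $e=2$ for $\mu_x=1$ and taking integer parts recovers exactly the volume terms $\lfloor\frac{3}{\sqrt2(2-cd)}\rfloor$ and $\lfloor\frac{3}{2(2-cd)}\rfloor$, giving the first entries of each minimum in (1). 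For the even-index statement in (2), I would note that Proposition \ref{prop:rhoindex}(1) forces $\rho(X)=2$ (so $\mu_x=0$, i.e. $e=1$) as soon as $\ind(x,K_X)=n$ is even; writing $n=2n'$ and dividing the inequality $n\le \frac{3}{\sqrt2(2-cd)}$ by $2$ before taking the floor upgrades the bound to $n\le 2\lfloor\frac{3}{2\sqrt2(2-cd)}\rfloor$.

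The remaining, and least formal, ingredient is the second term in each minimum, namely the $d$-linear bounds $d+1$, $d$, and $2d-2$. These do not come from the volume estimate but from the constraint that $D$ is an effective $\bQ$-Cartier Weil divisor in the class $-\frac{d}{2}K_X$: reading off the induced class of $[D]$ in the local class group $\Cl(x,X)\cong\bZ/en^2$ and using the explicit weighted-projective models $\bP(a^2,b^2,2c^2)$ of the $\rho(X)=1$ degenerations from Proposition \ref{prop:rhoindex}(1) bounds how large the index can be while still admitting such a $D$. I expect this to be the main obstacle: the volume half is uniform and clean, whereas the $d$-dependent half requires a case analysis over the Markov-type solutions of $a^2+b^2+2c^2=4abc$ and a careful comparison of the parities of $d$, $n$, and $e$, which is precisely where the three regimes in the statement diverge.
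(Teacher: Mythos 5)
Your derivation of the ``volume'' halves of the two minima is correct, and is in fact a mildly different (cruder but valid) route than the paper's: you drop the boundary entirely via $\hvol(x,X,cD)\le\hvol(x,X)=\frac{4}{en^2}$ and feed this into Theorem \ref{thm:local-vol-global}, which indeed yields $n\le\frac{3}{\sqrt{2}(2-cd)}$ when $\mu_x=0$ and $n\le\frac{3}{2(2-cd)}$ when $\mu_x=1$; your parity argument for part (2) (even index forces $\rho(X)=2$ by Proposition \ref{prop:rhoindex}(1), hence $\mu_x=0$ by Proposition \ref{prop:rhoindex}(2), and an even $n$ bounded by $\frac{3}{\sqrt{2}(2-cd)}$ satisfies $n\le 2\lfloor\frac{3}{2\sqrt{2}(2-cd)}\rfloor$) is also sound.

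However, the $d$-linear halves of the minima --- the bounds $d+1$, $d$, and $2d-2$ --- are genuinely missing, and the strategy you sketch for them would fail as set up, for two reasons. First, the appeal to the weighted projective models $\bP(a^2,b^2,2c^2)$ only sees $\rho(X)=1$; the bounds must also hold at the $\frac{1}{n^2}(1,an-1)$ points of $\rho(X)=2$ surfaces, where no such Markov-type model exists. Second, and more fundamentally, the congruence constraints from the class of $D$ alone cannot bound $n$ (they hold for every $n$ once some monomial appears in the local equation of $D$); they must be combined with analytic input, and by passing to $\hvol(x,X,cD)\le\hvol(x,X)$ you discarded exactly the term needed for that. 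The paper works on the smooth cover $(\tilde{x}\in\widetilde{X},\widetilde{D})$ and keeps the boundary: testing with the ordinary blowup valuation gives the refined inequality $n\le\frac{3\,(2-c\,\ord_{\tilde{x}}\widetilde{D})}{4\sqrt{2}\,\beta}$ with $\beta=\frac{2-cd}{2}$. Writing $u^iv^j$ for a lowest-order monomial of $\widetilde{D}$, it bounds $i+j=\ord_{\tilde{x}}\widetilde{D}<\frac{2}{c}$ by Skoda's inequality plus klt-ness, extracts $i+(na-1)j\equiv\frac{d}{2}na\pmod{n^2}$ (hence $i\equiv j\pmod{n}$) from $\frac{d}{2}K_X+D\sim 0$, and runs a dichotomy on $\beta$: if $\beta\ge\frac{3}{2\sqrt{2}d+3}$ the volume bound alone gives $n\le d+1$; if $\beta$ is smaller, then $i+j\le d+1$, so $n\ge d+2$ would force $i=j=\frac{d}{2}$, i.e.\ $\ord_{\tilde{x}}\widetilde{D}=d$, and the refined inequality collapses to $n\le\frac{3}{2\sqrt{2}}<2$, a contradiction. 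With your cruder inequality there is no such collapse: when $\beta$ is small, $\frac{3}{\sqrt{2}(2-cd)}$ is enormous and nothing prevents $n>d+1$. The same mechanism, with the finer congruence $i-j\equiv\frac{n}{2}\pmod{n}$ when $n$ is even and $d$ odd, is what produces $2d-2$ in part (2); and the easy case $x\notin D$ (where $n\mid d$) also needs separate mention.
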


\begin{proof}
 Let $\beta:=1-cd/2\in (0,1)$. 
 We know that an index $n$ point $x\in X$ is a 
 cyclic quotient singularity of type $\frac{1}{n^2}(1,na-1)$ or $\frac{1}{2n^2}(1, 2na-1)$ where $\gcd(a,n)=1$. If $\mu_x=0$, then the orbifold group of $x\in X$ has order $n^2$ which implies that $\hvol(x,X)=\frac{4}{n^2}$ by \cite[Proposition 4.10]{LL16}. Hence Theorem \ref{thm:local-vol-global} implies that 
 \[
 8\beta^2 =(-K_X-cD)^2 \leq \frac{9}{4}\hvol(x,X)=\frac{9}{n^2}.
 \]
 This shows that $n\leq \frac{3}{2\sqrt{2} \beta}=\frac{3}{\sqrt{2}(2-cd)}$. Similarly, if $\mu_x=1$, then $x\in X$ has orbifold group of order $2n^2$ which implies that $n\leq \frac{3}{4 \beta}=\frac{3}{2(2-cd)}$. Hence the first terms in the index upper bounds are verified.
 
 The rest of this proof is devoted to verifying the second terms in the index upper bounds. 
 We know that $dK_X+2D\sim 0$ when $d$ is odd and $\frac{d}{2}K_X+D\sim 0$ when $d$ is even. If $x\not\in D$, then  $n\mid d$ hence $n\leq d$ (in fact $n\leq \frac d{2}$ if
 $d$ is even). Hence the second terms are verified for $x\not\in D$.

 From now on let us assume
 $x\in D$. Let $(\tilde{x}\in \widetilde{X})$ be the 
 smooth cover of $(x\in X)$, with $\widetilde{D}$ being the 
 preimage of $D$.  Assume $\tilde{x}\in\widetilde{X}$ has local coordinates  $(u,v)$ where the cyclic group action is scaling on 
  each coordinate. Let $u^i v^j$ be a monomial appearing
  in the equation on $\widetilde{D}$ with minimum $i+j=\ord_{\tilde{x}}\widetilde{D}$.
 
 \textbf{Case 1.} Assume $d$ is even and $\mu_x=0$. Then the orbifold group of $x\in X$ has order $n^2$. 
 Since the finite degree formula
 is true in dimension $2$ by \cite[Theorem 4.15]{LLX18}, we have 
 \[
  \hvol(\tilde{x},\widetilde{X},c\widetilde{D})
  =n^2\cdot\hvol(x,X,c D).
  \]
  On the other hand, Theorem \ref{thm:local-vol-global} implies that
  \[
  8\beta^2=(-K_X-cD)^2\leq \frac{9}{4}\hvol(x, X, cD)=\frac{9}{4n^2}\hvol(\tilde{x},\widetilde{X},c\widetilde{D}).
  \]
  So we have
  \begin{equation}\label{eq:index1}
   n\leq \frac{3\sqrt{  \hvol(\tilde{x},\widetilde{X},c\widetilde{D})}}{4\sqrt{2}\beta}
   \leq \frac{3(2-c~\ord_{\tilde{x}}\widetilde{D})}{4\sqrt{2}\beta}.
  \end{equation}
  In particular we have $n<\frac{3}{2\sqrt{2}\beta} $.
  We know that $\lct_{\tilde{x}}(\widetilde{X};\widetilde{D})>
  c$, and Skoda \cite{skoda} implies $\lct_{\tilde{x}}(\widetilde{X};\widetilde{D})\leq
  \frac{2}{\ord_{\tilde{x}}\widetilde{D}}$, so we have
  $  \ord_{\tilde{x}}\widetilde{D}<\frac{2}{c}$.
  Since $\frac{d}{2}K_X+D\sim 0$, we have $i+(na-1)j\equiv \frac{d}{2}na\mod n^2$ which implies $i\equiv j\mod n$.
  
  If $\beta\geq \frac{3}{2\sqrt{2}d+3}$
  then $n<\frac{3}{2\sqrt{2}\beta}\leq d+\frac{3}{2\sqrt{2}}$ which implies $n\leq d+1$. Thus we may assume
  $\beta<\frac{3}{2\sqrt{2}d+3}$. Then 
  \[
   i+j=  \ord_{\tilde{x}}\widetilde{D}<\frac{2}{c}<d+\frac{3}{2\sqrt{2}}.
  \]
  Hence $i+j\leq d+1$. Assume to the contrary that $n\geq d+2$. Then $i\equiv j\mod
  n$ and $i+j<n$ implies that $i=j$. Hence $i+(na-1)j\equiv \frac{d}{2}na\mod n^2$ implies $i\equiv \frac{d}{2}\mod n$. But since $i\leq \frac{d+1}{2}<n$,
  we know that $i=j=\frac{d}{2}$. Then \eqref{eq:index1} implies that 
  \[
  n\leq \frac{3(2-c(i+j))}{4\sqrt{2}\beta}=\frac{6\beta}{4\sqrt{2}\beta}<2.
  \]
  We reach a contradiction. 
  
  \textbf{Case 2.} Assume $d$ is even and $\mu_x=1$.  Then the orbifold group of $x\in X$ has order $n^2$. By a similar argument as in Case 1, we know that 
  \[
  8\beta^2=(-K_X-cD)^2\leq \frac{9}{4}\hvol(x, X, cD)=\frac{9}{8n^2}\hvol(\tilde{x},\widetilde{X},c\widetilde{D}).
  \]
  Hence
  \begin{equation}\label{eq:index2}
  n\leq \frac{3\sqrt{  \hvol(\tilde{x},\widetilde{X},c\widetilde{D})}}{8\beta}
   \leq \frac{3(2-c~\ord_{\tilde{x}}\widetilde{D})}{8\beta}.    
  \end{equation}
  In particular we have $n<\frac{3}{4\beta}$.
  
  If $\beta\geq \frac{3}{4d+3}$
  then $n<\frac{3}{4\beta}\leq d+\frac{3}{4}$ which implies $n\leq d$. Thus we may
  assume $\beta<\frac{3}{4d+3}$. Then
  \[
      i+j=  \ord_{\tilde{x}}\widetilde{D}<\frac{2}{c}
      <d+\frac{3}{4}.  
  \]
  Hence $i+j\leq d$. Assume to the contrary that $n\geq d+1$.
  Then $i\equiv j\mod n$ and $i+j<n$ implies $i=j$.
  Hence $i+(na-1)j\equiv \frac{d}{2}na\mod n^2$ implies $i\equiv \frac{d}{2}\mod n$. But since $i\leq \frac{d}{2}<n$,
  we know that $i=j=\frac{d}{2}$. Then \eqref{eq:index2} implies that 
  \[
  n\leq \frac{3(2-c(i+j))}{8\beta}=\frac{6\beta}{8\beta}<1.
  \]
  We reach a contradiction. 
  
  \textbf{Case 3.} Assume $d$ is odd and $\mu_x=0$. In this case we have $dK_X+2D\sim 0$ which implies $2(i+(na-1)j)\equiv dna \mod n^2$. 
  If $n$ is odd, then clearly $i\equiv j\mod n$. By the same argument as Case 1, we know $i=j=\frac{d}{2}$ if $n\geq d+2$, hence a contradiction.
  
  If $n$ is even, then we do a finer analysis. Since both $d$ and $a$ are odd, from $2(i+(na-1)j)\equiv dna \mod n^2$ we know that $i-j\equiv \frac{n}{2}\mod n$. Thus 
  $n\leq 2(i+j)<\frac{4}{c}=\frac{2d}{1-\beta}$.  Besides, \eqref{eq:index1} implies that $n<\frac{3}{2\sqrt{2}\beta}$. Hence
  \[
  n<\min\left\{\frac{2d}{1-\beta},\frac{3}{2\sqrt{2}\beta}\right\}\leq \frac{2\sqrt{2}(2d)+3}{2\sqrt{2}(1-\beta)+2\sqrt{2}\beta}=2d+\frac{3}{2\sqrt{2}}.
  \]
  Thus $n\leq 2d$. Assume to the contrary that $n=2d$, then $i+j\geq \frac{n}{2}=d$. Hence \eqref{eq:index1} implies that 
  \[
  2d=n\leq \frac{3(2-c(i+j))}{4\sqrt{2}\beta}\leq \frac{3(2-cd)}{4\sqrt{2}\beta}=\frac{3}{2\sqrt{2}}.
  \]
  We reach a contradiction. Thus we have $n\leq 2d-2$.
  
  \textbf{Case 4.} Assume $d$ is odd and $\mu_x=1$. Then by \cite[Proposition 2.6]{hp}, we know that $\rho(X)=1$. So $n$ is odd by Proposition \ref{prop:rhoindex}. Hence $2(i+(2na-1)j)\equiv dna \mod n^2$ implies $i\equiv j\mod n$. By a similar argument as in Case 2, we know $i=j=\frac{d}{2}$ if $n\geq d+1$, hence a contradiction. 
\end{proof}

%

%
%
%

The index bounds in Theorem \ref{thm:indexbound} allow us to limit the surfaces that appear in pairs parameterized by the moduli stack $\oMK_{c}$.

\begin{theorem}\label{thm:surfaces}
Let $(X,cD)$ be a K-semistable log Fano pair that admits a $\bQ$-Gorenstein smoothing to $(\bP^1\times\bP^1, cC_t)$ with $c\in (0,\frac{1}{2})$ and $C_t$ a $(4,4)$ curve.
Then, $X$ must be isomorphic to either $\bP^1 \times \bP^1$ or $\bP(1,1,2)$. \end{theorem}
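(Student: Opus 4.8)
The plan is to combine the inequality $\rho(X)\le 2$ (Proposition~\ref{prop:rho}) and the Hacking--Prokhorov list of admissible singularities (Proposition~\ref{prop:rhoindex}) with a sharpened version of the normalized-volume estimate behind Theorem~\ref{thm:indexbound}, the upshot being that \emph{every} singular point of index $\ge 2$ is impossible. Granting this, the classification closes. When $\rho(X)=2$, Proposition~\ref{prop:rhoindex}(2) only allows Wahl points $\tfrac{1}{n^2}(1,an-1)$, so excluding $n\ge 2$ forces $X$ smooth; being a degree-$8$ del Pezzo of Picard rank $2$ in the $\bP^1\times\bP^1$-component, it must be $\bP^1\times\bP^1$. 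When $\rho(X)=1$, only index-$1$ (hence Du Val) singularities survive, so the minimal resolution of $X$ is a weak del Pezzo of degree $8$, necessarily a Hirzebruch surface $\bF_0$, $\bF_1$ or $\bF_2$ of Picard rank $2$; the first two have no $(-2)$-curve and would force $X$ smooth with $\rho(X)=2$, so the resolution is $\bF_2$, $X$ carries a single $A_1$ point, and contracting the $(-2)$-curve identifies $X\cong\bP(1,1,2)$.

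Write $\beta:=1-2c\in(0,1)$, so that $(-K_X-cD)^2=8\beta^2$. The engine of the argument excludes an index-$n$ point $x\in X$ with $n\ge 2$ \emph{lying on} $D$. Such a point is a cyclic quotient $\tfrac{1}{en^2}(1,ena-1)$ with $\gcd(a,n)=1$ and $\mu_x=e-1$; using $\tfrac{1}{en^2}(1,ena-1)\cong\tfrac{1}{en^2}(1,en(n-a)-1)$ I normalize $2a\le n$. On the smooth cover $(\tilde x\in\widetilde{X})$ with coordinates $(u,v)$ diagonalizing the action, I would use---in place of the ordinary blow-up used for Theorem~\ref{thm:indexbound}---the equivariant monomial valuation $v$ given by $v(u)=1$, $v(v)=ena-1$, which descends to $X$. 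The relation $\tfrac{d}{2}K_X+D\sim 0$ forces every monomial $u^iv^j$ of $\widetilde{D}$ to obey $i+(ena-1)j\equiv \tfrac{d}{2}ena\pmod{en^2}$; since $2a\le n$ and $d=4$ we have $\tfrac{d}{2}ena=2ena\le en^2$, so the least positive weight in this class is $2ena$ itself, and therefore
\[
A_{(\widetilde{X},c\widetilde{D})}(v)\;\le\; ena-c\cdot 2ena\;=\;ena\,\beta,\qquad \vol(v)=\frac{1}{ena-1}.
\]
Crucially $A$ is now proportional to $\beta$. By the finite-degree formula, $\hvol(x,X,cD)\le \tfrac{1}{en^2}A_{(\widetilde{X},c\widetilde{D})}(v)^2\,\vol(v)\le \tfrac{ea^2\beta^2}{\,ena-1\,}$, and feeding this into $8\beta^2=(-K_X-cD)^2\le\tfrac{9}{4}\hvol(x,X,cD)$ (Theorem~\ref{thm:local-vol-global}) gives $32(ena-1)\le 9ea^2$. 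But $2a\le n$ forces $32ena\ge 64ea^2>9ea^2+32$ (using $55ea^2\ge 55>32$), contradicting $32(ena-1)\le 9ea^2$; this contradiction is valid for \emph{all} $c\in(0,\tfrac12)$ at once.

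It remains to treat index-$\ge 2$ points off $D$ and to run the two classification steps. If $x\notin D$ then $\ind(x,K_X)=n$ divides $d=4$, so $n\le 2$; when $\rho(X)=1$ the index is odd (Proposition~\ref{prop:rhoindex}(1)) and $n=1$, leaving nothing to exclude, while when $\rho(X)=2$ the only surviving possibility is a single Wahl point $\tfrac14(1,1)$ disjoint from $D$. This last configuration is the one case where neither the local volume bound nor the discrepancy of the exceptional $(-4)$-curve is by itself decisive for $c$ near $\tfrac12$; I would rule it out by a global analysis---either by showing such an $X$ does not $\bQ$-Gorenstein smooth to $\bP^1\times\bP^1$ (so it lies in the $\bF_1$-component) or by exhibiting a genuinely destabilizing divisor over $X$---after which the assembly described in the first paragraph finishes the proof.

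The step I expect to be decisive is the choice in the second paragraph: taking the \emph{weighted} valuation matching the orbifold action, together with the normalization $2a\le n$, is exactly what makes the minimal order of $D$ forced by $\tfrac{d}{2}K_X+D\sim 0$ turn the log discrepancy into $ena\,\beta$. This $\beta$-scaling is what upgrades the merely index-bounding inequality of Theorem~\ref{thm:indexbound} into one that is simultaneously contradictory for every weight $c<\tfrac12$. The genuinely delicate residue is the single off-$D$ index-$2$ configuration above, where the volume method is silent and one must instead pin down the surface by the birational geometry of its smoothing.
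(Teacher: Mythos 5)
Your volume engine is correct, and I checked it line by line: with the normalization $2a\le n$, every monomial $u^iv^j$ of $\widetilde{D}$ has weight $i+(ena-1)j\equiv 2ena \pmod{en^2}$; since $2ena\le en^2$, the smallest positive integer in this congruence class is $2ena$, so the weighted valuation $w$ with weights $(1,ena-1)$ satisfies $w(\widetilde{D})\ge 2ena$, whence $A_{(\widetilde{X},c\widetilde{D})}(w)\le ena\,\beta$ and $\vol(w)=\tfrac{1}{ena-1}$, and the finite degree formula together with Theorem \ref{thm:local-vol-global} gives $32(ena-1)\le 9ea^2$, which contradicts $n\ge 2a$ uniformly for all $c\in(0,\tfrac12)$. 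This is a genuinely different and more efficient route than the paper's: the paper keeps the cruder index bound of Theorem \ref{thm:indexbound}, enumerates the Markov-type solutions for $\rho(X)=1$ and kills $\bP(1,2,9)$ by a separate analysis of $\frac19(1,2)$ (using the same weighted valuation you use, but only in that single case), and then disposes of $n=4,5$ one at a time through the congruences; your one computation subsumes all of these cases at once, and your $\rho(X)=1$ endgame via minimal resolutions (a weak del Pezzo of degree $8$ is $\bF_0$, $\bF_1$ or $\bF_2$) cleanly replaces the Markov enumeration.

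However, there is a genuine gap, which you flag but do not close: the case $\rho(X)=2$ with $X$ singular, having only points of type $\frac14(1,1)$ necessarily lying off $D$. This case cannot be absorbed into the rest of the argument: the pair is klt there for every $c$, the local volume of a $\frac14(1,1)$ point equals $1$, so Theorem \ref{thm:local-vol-global} only yields $8(1-2c)^2\le \tfrac94$, which is vacuous once $c\gtrsim 0.23$; as you say, the volume method is silent. The paper closes exactly this case by a classification argument: by Nakayama's results (\cite[Table 6 and Theorem 7.15]{nakayama}), a log del Pezzo surface of degree $8$ with $\rho=2$ and only $\frac14(1,1)$ singularities is the blowup of $\bP(1,1,4)$ at a smooth point, and such a surface $\bQ$-Gorenstein smooths to $\bF_1$, which is not homeomorphic to $\bP^1\times\bP^1$ (their intersection forms on $H^2$ differ), so it cannot occur in this deformation component. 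Your first suggested route (``show such an $X$ does not $\bQ$-Gorenstein smooth to $\bP^1\times\bP^1$'') is precisely this argument, but until it is actually carried out, the proof of Theorem \ref{thm:surfaces} is incomplete.
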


\begin{proof} By Proposition \ref{prop:rho}, we know that $\rho(X) \leq 2$. We start with $\rho(X) = 1$. In this case, by Proposition \ref{prop:rhoindex}, we know that $X$ is a weighted projective space of the form $\bP(a^2, b^2, 2c^2)$ where $a^2 + b^2 + 2c^2 = 4abc$, or a partial smoothing. 
We begin enumerating the possible integer solutions and see that the first few are \[( a,b,c) = (1,1,1), (1,3, 1), (1, 3, 5), (11, 3, 5), \dots.\] We can exclude the last 2 (and any with higher index) by the index bound of Theorem \ref{thm:indexbound}. The first gives $\bP(1,1,2)$ and the second gives $\bP(1,2,9)$. We now show that the singularity $\frac{1}{9}(1,2)$ cannot appear. 

Assume to the contrary that $x\in X$ is of type $\frac{1}{9}(1,2)$. Suppose $D \sim -2K_X$ and  consider a smooth covering $(\tilde{x} \in \widetilde{X}) \to (x \in X)$. Note that we may assume $x \in D$, because otherwise if $x \notin D$ then $\ind(x,K_X) \leq 2$, and we obtain a contradiction. Consider local coordinates of $\tilde{x} \in \widetilde{X}$ namely $(u,v)$. Let $u^i v^j$ be a monomial appearing  in the equation on $\widetilde{D}$ with minimum $i+j=\ord_{\tilde{x}}\widetilde{D}$. Then $i + 2j \equiv 6 \mod 9$. Since we know that $(X, cD)$ is klt at $x$, we have that \[ \frac{2}{i+j} \geq \lct(\widetilde{D}) > c\] and so in particular $i + j < \frac{2}{c}$. 
By \eqref{eq:index1} with $n=3$ and $\beta=1-2c$, we have 
 \[2-(i+j)c \geq 4\sqrt{2}(1-2c).\] Since this inequality holds for some $0 < c < \frac{1}{2}$, we have $i+j\leq 3$ because otherwise
\[
2-(i+j)c\leq 2-4c<4\sqrt{2}(1-2c)
\]
which contradicts the previous inequality. Putting this together with $i + 2j = 6 \mod 9$, we see that $(i, j) = (0,3)$. 

Consider the valuation $w$ on $\widetilde{X}$ which is the monomial valuation in the coordinates $(u,v)$ of weights $(1,2)$. In particular $w(\widetilde{D}) = 6$. Moreover, $A_{\widetilde{X}}(w) = 3$ and $\vol(w) = \frac{1}{2}$. Then we note that 
\[
\hvol(\tilde{x}, \widetilde{X}, c\widetilde{D}) \leq (A_{\widetilde{X}}(w) - c~w(\widetilde{D}))^2\vol(w) = \frac{(3-6c)^2}{2}.
\] By \eqref{eq:index1} we have 
\[
4\sqrt{2}(1-2c)\leq \sqrt{\hvol(\tilde{x}, \widetilde{X}, c\widetilde{D})}\leq \frac{3-6c}{\sqrt{2}}
\]
which gives $4\sqrt{2} \leq \frac{3}{\sqrt{2}}$, a contradiction. Thus the surface $X$ with a $\frac{1}{9}(1,2)$ singularity cannot appear. In particular, the only surface with $\rho(X) = 1$ is $X \cong \bP(1,1,2)$.

Now we consider $\rho(X) = 2$. By Proposition \ref{prop:rhoindex}, we know that the only singular points of $X$ are of the form $\frac{1}{n^2}(1, na-1)$ with $n\leq 5$. We already excluded $\frac{1}{9}(1,2)$ so we only need to consider $n = 2, 4, 5$. 

Let us consider $n=4$, namely a  singularity of type $\frac{1}{16}(1, 3)$. We show that this singularity cannot occur.
As before, consider a smooth covering $(\tilde{x} \in \widetilde{X}) \to (x \in X)$ and suppose $D \sim -2K_X$. Note that we may assume $x \in D$, because otherwise if $x \notin D$ then $\ind(x, K_X) \leq 2$, and we obtain a contradiction. Consider local coordinates of $\tilde{x} \in \widetilde{X}$ namely $(u,v)$.
Let $u^i v^j$ be a monomial appearing  in the equation on $\widetilde{D}$ with minimum $i+j=\ord_{\tilde{x}}\widetilde{D}$.
Then $i + 3j \equiv 8 \mod 16$, and $i + j < \frac{2}{c}$. By \eqref{eq:index1} with $n=4$ and $\beta=1-2c$, we have
\[ 4 \sqrt{2} (1-2c) \leq (2-c(i+j)).\]
Since this inequality holds for some $0 < c < \frac{1}{2}$, we have $i+j\leq 3$ by the same reason in $n=3$. This contradicts with $i+3j\equiv 8\mod 16$. In particular, a singularity of type $\frac{1}{16}(1,3)$ cannot occur.

Next let us consider $n=5$, namely a singularity of type $\frac{1}{25}(1,4)$ or $\frac{1}{25}(1,9)$. We again show that these singularities cannot occur. With the same set up as the previous paragraph, we have either $i+4j\equiv 10 \mod 25$ or $i+9j\equiv 20 \mod 25$. Moreover, we again have $i+j\leq 3$ by the same reason in $n=3,4$ but this contradicts to the congruence equations. Therefore, a singularity of type $\frac{1}{25}(1,4)$ or $\frac{1}{25}(1,9)$ cannot occur.

After the above discussions, the only case left to study is $\rho(X)=2$ and $X$ has only singularities of type $\frac{1}{4}(1,1)$. If $X$ is singular, then by \cite[Table 6 and Theorem 7.15]{nakayama} (see also \cite{AN}), we know that $X$ is isomorphic to a blow up of $\bP(1,1,4)$ at a smooth point. However, in this case $X$ admits a $\bQ$-Gorenstein smoothing to the Hirzebruch surface $\bF_1$ which is not homeomorphic to $\bP^1\times\bP^1$. This is a contradiction. Hence $X$ is smooth and isomorphic to $\bP^1\times\bP^1$.
\end{proof}

\begin{remark}\label{rmk:oursurfaces}
Let $(X,cD)$ be a K-semistable log Fano pair that admits a $\bQ$-Gorenstein smoothing to $(\bP^1\times\bP^1, cC_t)$ with $c\in (0,\frac{1}{2})$ and $C_t$ a $(4,4)$ curve.  By Theorem \ref{thm:surfaces}, this implies that $X$ is either $\bP^1 \times \bP^1$ or $\bP(1,1,2)$.  Therefore, there exists a closed embedding $(X,D)\hookrightarrow\bP^3$ such that $X \in |\cO_{\bP^3}(2)|$ and $D\sim -2K_X$ are $(2,4)$ complete intersections inside $\bP^3$.  Hence, all K-semistable pairs $(X, cD)$ with $c \in (0,\frac{1}{2})$ are parametrized by a Zariski open subset of $U$. \end{remark}

\begin{thm}\label{thm:surfacesalld}
Let $(X,cD)$ be a K-semistable log Fano pair that admits a $\bQ$-Gorenstein smoothing to $(\bP^1\times\bP^1, cC_t)$ with $c\in (0,\frac{4-\sqrt{2}}{2d})$ and $C_t$ a $(d,d)$ curve where $d \geq 3$.
Then, $X$ must be either $\bP^1 \times \bP^1$ or $\bP(1,1,2)$.
\end{thm}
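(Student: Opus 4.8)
The plan is to run the same strategy as in the proof of Theorem~\ref{thm:surfaces}, observing that the narrower range $c\in(0,\frac{4-\sqrt2}{2d})$ is calibrated precisely so that every estimate in Theorem~\ref{thm:indexbound} collapses to the uniform bound $\ind(x,K_X)\le 2$. First I would translate the hypothesis numerically: with $\beta:=1-cd/2$ (so that $2-cd=2\beta$), the condition $c<\frac{4-\sqrt2}{2d}$ is exactly $\beta>\frac{\sqrt2}{4}=\frac{1}{2\sqrt2}$. Substituting this into the estimates of Theorem~\ref{thm:indexbound} gives $\frac{3}{\sqrt2(2-cd)}=\frac{3}{2\sqrt2\beta}<3$, $\frac{3}{2(2-cd)}=\frac{3}{4\beta}<3$, and $\frac{3}{2\sqrt2(2-cd)}=\frac{3}{4\sqrt2\beta}<\frac32$, so that each of the floors appearing there is at most $2$ (in the $d$-odd, even-index case the bound $2\lfloor\frac{3}{4\sqrt2\beta}\rfloor\le 2$ still applies). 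Hence $\ind(x,K_X)\le 2$ at every singular point $x\in X$. To be entitled to apply Theorem~\ref{thm:indexbound} I must first know $\mu_x\in\{0,1\}$: by Proposition~\ref{prop:rho} we have $\rho(X)\le 2$, and for $\rho(X)=2$ Proposition~\ref{prop:rhoindex}(2) already forces the singularities to be of type $\frac{1}{n^2}(1,na-1)$ with $\mu_x=0$, while for $\rho(X)=1$ a comparison of topological Euler numbers of $X$ and of its smoothing $\bP^1\times\bP^1$ gives $\sum_x\mu_x=2-\rho(X)=1$, so again $\mu_x\le 1$ everywhere.

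Granting the uniform bound $\ind(x,K_X)\le2$, I would then split on $\rho(X)$ exactly as in Theorem~\ref{thm:surfaces}. In the case $\rho(X)=1$, Proposition~\ref{prop:rhoindex}(1) tells us the local index is odd, so $\ind\le 2$ forces $\ind(x,K_X)=1$ at every point; then $X$ is a Gorenstein (hence canonical, i.e.\ Du Val) del Pezzo surface of degree $(-K_X)^2=8$ with $\rho(X)=1$, and the only such surface is the quadric cone $\bP(1,1,2)$. Alternatively, one can enumerate the solutions of $a^2+b^2+2c^2=4abc$ as in Theorem~\ref{thm:surfaces} and check directly that every $\bP(a^2,b^2,2c^2)$ other than $\bP(1,1,2)$ carries a point of index $\ge 3$, which the bound excludes.

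In the case $\rho(X)=2$, Proposition~\ref{prop:rhoindex}(2) gives singularities of type $\frac{1}{n^2}(1,na-1)$ of index $n$, and $\ind\le 2$ forces $n\in\{1,2\}$. If $X$ is smooth then it is a smooth degree-$8$ del Pezzo of Picard rank $2$ deforming to $\bP^1\times\bP^1$, hence $X\cong\bP^1\times\bP^1$; if instead $X$ has a $\frac14(1,1)$ singularity, then by \cite[Table 6 and Theorem 7.15]{nakayama} (see also \cite{AN}) $X$ is a blow-up of $\bP(1,1,4)$ at a smooth point, which admits a $\bQ$-Gorenstein smoothing to $\bF_1\not\cong\bP^1\times\bP^1$, a contradiction. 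This exhausts the possibilities and yields $X\cong\bP^1\times\bP^1$ or $\bP(1,1,2)$. The hard part is entirely in the first paragraph: one must confirm that $\frac{4-\sqrt2}{2d}$ is the precise threshold making all of the index estimates of Theorem~\ref{thm:indexbound} drop to $2$, and that its hypothesis $\mu_x\le 1$ genuinely holds at every singular point; once this is in place, the surface classification is verbatim that of Theorem~\ref{thm:surfaces}.
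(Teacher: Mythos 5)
Your proposal is correct and follows essentially the same route as the paper: Proposition \ref{prop:rho} gives $\rho(X)\le 2$, the threshold $c<\frac{4-\sqrt{2}}{2d}$ makes every bound in Theorem \ref{thm:indexbound} collapse to $\ind(x,K_X)\le 2$, and the case analysis on $\rho(X)$ via Proposition \ref{prop:rhoindex} and the final argument of Theorem \ref{thm:surfaces} is identical. Your added details (the explicit check that each floor drops to $\le 2$, and the verification that $\mu_x\le 1$ always holds) are elaborations of steps the paper states in a single line, and your numerics are accurate.
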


\begin{proof}
By Proposition \ref{prop:rho}, $\rho(X) \leq 2$. By the index bound of Theorem \ref{thm:indexbound}, for $ c < \frac{4 - \sqrt{2}}{2d}$ we know that  $\ind(x, K_X) < 3$.
If $\rho(X)=1$, then by Proposition \ref{prop:rhoindex} we know that $X$ is Gorenstein which implies that $X\cong \bP(1,1,2)$. If $\rho(X)=2$, then by Proposition \ref{prop:rhoindex} we know that either $X$ is smooth hence isomorphic to $\bP^1\times\bP^1$, or $X$ has only singularities of type $\frac{1}{4}(1,1)$. The latter case cannot happen by the end of the proof of Theorem \ref{thm:surfaces}.
Therefore, the only surfaces appearing are $\bP^1 \times \bP^1$ and $\bP(1,1,2)$.
\end{proof}

\section{Wall crossings for K-moduli and GIT}\label{sec:main}


In this section, we prove Theorem \ref{mthm:thmintro}, that is, for $0<c < \frac {1}{2}$, the K-moduli stack $\oMK_{c}$ coincides with the GIT moduli stack $\sM(t)$ with $t=\frac{3c}{2c+2}$ (see Definition \ref{def:VGIT}).  The important observation comes from Theorem \ref{thm:surfaces}: the surfaces $X$ in the pairs parametrized by $\oMK_{c}$ are $\bP^1 \times \bP^1$ or $\bP(1,1,2)$ which are quadric surfaces in $\bP^3$, and the divisors $D$ can therefore be viewed as $(2,4)$-complete intersections in $\bP^3$.

\subsection{The first wall crossing}

In this section, we show that GIT-(poly/semi)stability of $(4,4)$-curves on $\bP^1\times\bP^1$ and $c$-K-(poly/semi)stability coincide for $c < \frac 1{8}$. Moreover, we show that $c_1=\frac{1}{8}$ is the first wall for K-moduli stacks $\oMK_c$.

\begin{defn}
A $(4,4)$-curve $C$ on $\bP^1\times\bP^1$ gives a point $[C] \in \bfP_{4,4}:= \bP(H^0(\bP^1\times\bP^1, \cO(4,4)))$.  We say $C$ is \emph{GIT (poly/semi)stable} if $[C]$ is GIT (poly/semi)stable with respect to the natural $\Aut(\bP^1\times\bP^1)$-action on  $(\bfP_{4,4}, \cO(2))$. We define the \emph{GIT quotient stack} $\sM$ and the \emph{GIT quotient space} $\fM$ as
\[
\sM:=[\bfP_{4,4}^{\rm ss}/\Aut(\bP^1\times\bP^1)], \qquad 
\fM:=\bfP_{4,4}^{\rm ss}\sslash \Aut(\bP^1\times\bP^1).
\]
\end{defn}

\begin{theorem}\label{thm:firstwall} For any $0 < c < \frac 1{8}$, a curve $C \subset \bP^1\times\bP^1$ of bidgree $(4,4)$ is GIT-(poly/semi)stable if and only if the log Fano pair $(\bP^1 \times \bP^1, cC)$ is K-(poly/semi)stable. Moreover, there is an isomorphism of Artin stacks $\oMK_{c} \cong \sM$. \end{theorem}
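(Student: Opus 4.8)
The plan is to present both $\oMK_c$ and $\sM$ as quotient stacks of open $\Aut(\bP^1\times\bP^1)$-invariant subsets of $\bfP_{4,4}$ and to prove that these two subsets coincide. By Theorem \ref{thm:surfaces} every K-semistable pair carried by $\oMK_c$ has underlying surface $\bP^1\times\bP^1$ or $\bP(1,1,2)$, so the first task is to exclude $\bP(1,1,2)$ for $c<\tfrac18$. Granting this, $\oMK_c=[\bfP_{4,4}^{\mathrm{Kss}}/\Aut(\bP^1\times\bP^1)]$, where $\bfP_{4,4}^{\mathrm{Kss}}\subseteq\bfP_{4,4}$ is the open locus of curves $C$ for which $(\bP^1\times\bP^1,cC)$ is K-semistable. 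Since $\sM=[\bfP_{4,4}^{\mathrm{ss}}/\Aut(\bP^1\times\bP^1)]$, it then suffices to prove the equality of semistable loci $\bfP_{4,4}^{\mathrm{Kss}}=\bfP_{4,4}^{\mathrm{ss}}$, and the asserted (poly/semi)stability dictionary will be a formal consequence.

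\textbf{From K-stability to GIT.} The constant family $(\bP^1\times\bP^1)\times\bfP_{4,4}\to\bfP_{4,4}$ equipped with the universal $(4,4)$-divisor $\mathcal{C}$ is a $\bQ$-Gorenstein flat family of log Fano pairs over the projective base $\bfP_{4,4}$, equivariant for the $\Aut(\bP^1\times\bP^1)$-action. Using Proposition \ref{prop:logCM2} I would compute $\mathrm{c}_1(\lambda_{\CM})=-f_*\big((-K_{\cX/T}-c\mathcal{C})^{3}\big)$ on $\bfP_{4,4}\cong\bP^{24}$; being a multiple of the hyperplane class, it is a positive rational multiple of $\cO_{\bfP_{4,4}}(1)$ for the relevant $c$, hence ample and proportional to the natural GIT linearization $\cO(2)$. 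Hypotheses (a) and (b) of Theorem \ref{thm:paultian} are immediate here, since the stabilizer of $[C]$ equals $\Aut(\bP^1\times\bP^1,C)=\Aut(\bP^1\times\bP^1,cC)$ and every isomorphism of such pairs is induced by an element of $\Aut(\bP^1\times\bP^1)$. Thus Theorem \ref{thm:paultian} gives the inclusion $\bfP_{4,4}^{\mathrm{Kss}}\subseteq\bfP_{4,4}^{\mathrm{ss}}$, together with the analogous inclusion of polystable loci.

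\textbf{Excluding $\bP(1,1,2)$ (the main obstacle).} This step is genuinely special to $c<\tfrac18$. The quadric cone $\bP(1,1,2)$, viewed as the pure Fano ($c=0$), is K-unstable, and the destabilization is witnessed by a toric $1$-PS $\sigma$ of its automorphism group (equivalently, by the $\GL(4)$-degeneration $\mathrm{diag}(1,1,1,s)$ of a smooth quadric to the cone in $\bP^3$). For a pair $(\bP(1,1,2),cD)$ I would compute the generalized Futaki invariant of the product test configuration induced by $\sigma$ via the intersection formula in the definition of $\Fut$; this is affine-linear in $c$, and I expect it to vanish precisely at $c=\tfrac18$ for the most degenerate admissible $D$. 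One then concludes $\Fut<0$, so no pair with underlying surface $\bP(1,1,2)$ is K-semistable for $c<\tfrac18$, which both forces $X\cong\bP^1\times\bP^1$ throughout $\oMK_c$ and identifies $c_1=\tfrac18$ as the first wall. The delicate part is tracking signs and, above all, the dependence of the divisorial contribution on the position of $D$ relative to the vertex, so that a uniformly destabilizing $1$-PS can be produced for every admissible configuration rather than for a single curve.

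\textbf{From GIT to K-stability and the stack isomorphism.} The open immersion $\oMK_c=[\bfP_{4,4}^{\mathrm{Kss}}/\Aut]\hookrightarrow[\bfP_{4,4}^{\mathrm{ss}}/\Aut]=\sM$ descends to a morphism $\phi:\oK_c\to\fM$ of normal projective varieties ($\oK_c$ is normal by Theorem \ref{thm:modnormal}, and $\fM$ is a GIT quotient of the smooth $\bfP_{4,4}$). The morphism $\phi$ is proper, as $\oK_c$ is proper and $\fM$ separated; it is injective on closed points, since K-polystable implies GIT-polystable and two GIT-equivalent polystable curves lie in a common closed orbit and hence give isomorphic pairs; and it is surjective, because its image is closed and contains the dense locus of smooth curves, which are simultaneously GIT-stable and, by interpolation (Proposition \ref{prop:p1xp1kss}), give K-stable pairs. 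A proper bijective birational morphism onto a normal variety is an isomorphism, so $\phi$ is an isomorphism. Consequently every GIT-polystable orbit is the image of a K-polystable pair and therefore lies in $\bfP_{4,4}^{\mathrm{Kss}}$, which forces $\bfP_{4,4}^{\mathrm{Kss}}=\bfP_{4,4}^{\mathrm{ss}}$ and hence $\oMK_c\cong\sM$. This equality of loci delivers at once the equivalence of K- and GIT-(poly/semi)stability.
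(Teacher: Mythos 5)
Your first half (the CM line bundle computation on $\bfP_{4,4}$ plus Theorem \ref{thm:paultian}) is exactly the paper's argument that K-(poly/semi)stability implies GIT (poly/semi)stability, and your properness/bijectivity argument for the converse is in the same spirit as the paper's, which invokes properness of K-moduli via \cite[Theorem 5.2]{ADL}. The genuine gap is the step you yourself call the main obstacle: ruling out $\bP(1,1,2)$ for $c<\frac{1}{8}$. What you offer there is an expectation, not a proof: you propose computing Futaki invariants of product test configurations coming from $1$-PS's of $\Aut(\bP(1,1,2))$, concede that the divisorial contribution depends on the position of $D$, and leave unresolved how to produce a destabilizer for every admissible $D$. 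Note also that test configurations with total space $X\times\bA^1$ induced by $1$-PS's of $\Aut(X)$ are not a priori sufficient to detect K-instability of a pair with finite automorphism group. The paper closes this step in two lines and without even using Theorem \ref{thm:surfaces}: if $X$ is singular at $x$, then Theorem \ref{thm:local-vol-global} together with the bound $\hvol(x,X)\le 2$ for singular surface germs gives $8(1-2c)^2=(-K_X-cD)^2\le\frac{9}{4}\hvol(x,X,cD)\le\frac{9}{4}\hvol(x,X)\le\frac{9}{2}$, forcing $c\ge\frac{1}{8}$; hence for $c<\frac{1}{8}$ every surface appearing in $\oMK_c$ is smooth, so isomorphic to $\bP^1\times\bP^1$. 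If you prefer to keep your route, the uniform destabilizer you are missing is valuative rather than a $1$-PS: for $E$ the exceptional divisor over the vertex of $\bP(1,1,2)$ one has $A_{(X,cD)}(E)\le 1$ for every admissible $D$, while $S_{(X,cD)}(E)=\frac{4-8c}{3}>1$ precisely when $c<\frac{1}{8}$, so the valuative criterion kills all such pairs at once and explains the wall value.

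The second gap concerns the ``Moreover'' clause. You take for granted that $\oMK_c=[\bfP_{4,4}^{\mathrm{Kss}}/\Aut(\bP^1\times\bP^1)]$ (``Granting this\dots''), but $\oMK_c$ is defined as a quotient $[Z_c^{\red}/\PGL(N_m+1)]$ of a locus in a Hilbert scheme, and identifying it with the quotient stack of $\bfP_{4,4}^{\mathrm{Kss}}$ by $\Aut(\bP^1\times\bP^1)$ is not formal. Universality only gives a morphism $[\bfP_{4,4}^{\mathrm{Kss}}/\Aut(\bP^1\times\bP^1)]\to\oMK_c$; your properness argument upgrades this to an isomorphism of good moduli spaces $\fM\cong\oK_c$ and to the equality of semistable loci, but an isomorphism of Artin stacks does not follow from bijectivity on points together with matching stabilizers. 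This is exactly what the final paragraph of the paper's proof supplies: it builds the $\PGL(N_m+1)$-torsor $P$ over $\bfP_{4,4}^{\rm ss}$ from $\pi_*\cO_{\cX}(-mK_{\cX/T})$, produces an $\Aut(\bP^1\times\bP^1)$-equivariant map $\psi\colon P\to Z_c^\circ$, and checks, by trivializing the isotrivial universal family on an \'etale cover, that $\psi$ is an $\Aut(\bP^1\times\bP^1)$-torsor, whence the descended map of stacks is an isomorphism. Without this (or an equivalent descent argument) your proposal establishes the stability comparison and the space-level isomorphism, but not the asserted isomorphism of Artin stacks.
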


\begin{proof}
We first show that K-(poly/semi)stability of $(\bP^1\times\bP^1, cC)$ implies GIT (poly/semi)stability of $C$ for any $c\in (0,\frac{1}{2})$. Consider the universal family $\pi: (\bP^1 \times \bP^1 \times \mathbf{P}_{(4,4)}, c\calC) \to \mathbf{P}_{(4,4)}$ over the parameter space of $(4,4)$-curves on $\bP^1\times\bP^1$. It is clear that $\calC \in |\cO(4,4,1)|$. Hence by Proposition \ref{prop:logCM2} we have
\begin{align*}
\lambda_{\CM, \pi, c\calC}&  = -\pi_* (-K_{\bP^1 \times \bP^1 \times \mathbf{P}_{(4,4)}/\bfP_{(4,4)}}-c\calC)^3
= -\pi_* (\cO(2-4c,2-4c,-c))^3 \\
& = -3(\cO_{\bP^1\times\bP^1}(2-4c,2-4c)^2) \cO_{\bfP_{(4,4)}}(-c)
 = \cO_{\bfP_{(4,4)}}(3(2-4c)^2 c).
\end{align*}
Hence the CM line bundle $\lambda_{\CM, \pi, c\calC}$ is ample whenever $c\in (0, \frac{1}{2})$. Hence the statement of K implying GIT directly follows from Theorem \ref{thm:paultian}.

Next we show the converse, i.e. GIT-(poly/semi)stability of $C$ implies K-(poly/semi)stability of $(\bP^1\times\bP^1,cC)$ for $c< \frac{1}{8}$. Indeed, using similar argument as the proof of \cite[Theorem 5.2]{ADL} with a key ingredient from properness of K-moduli spaces, it suffices to show that any pair $(X,D)$ appearing in the K-moduli stack $\ocK_c$ for $c < \frac{1}{8}$ satisfies that $X\cong\bP^1\times\bP^1$ and $D$ is a $(4,4)$-curve. Since $\bP^1\times\bP^1$ has no non-trivial smooth degeneration, it suffices to show that $X$ is smooth. Assume to the contrary that $X$ is singular at a point $x\in X$. Then by \cite{LL16} we know that 
\[
 8(1-2c)^2=(-K_X-cD)^2\leq \frac{9}{4}\hvol(x,X,cD)\leq \frac{9}{4}\hvol(x,X)\leq \frac{9}{2}.
\]
This implies that $c\geq \frac{1}{8}$ which is a contradiction.
Hence, for $c < \frac{1}{8}$, a K-semistable pair $(X,cD)$ must be isomorphic to $(\bP^1 \times \bP^1, cC)$, where $C$ is a $(4,4)$-curve.


Summing up, the equivalence of K-(poly/semi)stability with GIT (poly/semi)stability yields a morphism $\phi: \sM\to \oMK_{c}$ which descends to an isomorphism $\fM\xrightarrow{\cong}\oK_c$. 
To conclude, it suffices to show that $\phi$ is an isomorphism between Artin stacks. The proof is similar to \cite[Theorem 3.24]{ADL}. Denote by $T:=\bfP_{4,4}^{\rm ss}$. Let $\pi:(\cX,\cD)\to T$ be the universal family. Recall from \cite[Section 3.1]{ADL} and Theorem \ref{thm:modnormal} that $\ocK_c\cong [Z_{c}^\circ/\PGL(N_m+1)]$ where $Z_{c}^\circ$ is the K-semistable locus in the Hilbert scheme of embedded by $m$-th multiple of anti-canonical divisors. Denote by $\pi': (\cX',\cD')\to T'$ the universal family over $T':=Z_c^\circ$. Let $P$ be the $\PGL(N_m+1)$-torsor over $T$ induced from the vector bundle $\pi_*\cO_{\cX}(-mK_{\cX/T})$. Then from \cite[Proof of Theorem 3.24]{ADL} we see that there is an $\Aut(\bP^1\times\bP^1)$-equivariant morphism $\psi: P\to T'$ whose descent is precisely $\phi$. Hence in order to show $\phi$ is isomorphic it suffices to show that $\psi$ provides an $\Aut(\bP^1\times\bP^1)$-torsor. Indeed, since $\pi':\cX'\to T'$ is isotrivial where all fibers are isomorphic to $\bP^1\times\bP^1$, we may find an \'etale covering $\cup_i V_i\twoheadrightarrow T'$ such that there is an isomorphism $\rho_i: \cX'\times_{T'} V_i\xrightarrow{\cong} (\bP^1\times\bP^1)\times V_i$. Hence by pushing forward $(\cX',\cD')\times_{T'} V_i$ and its natural frame from $\bP^{N_m+1}$ to $(\bP^1\times\bP^1)\times V_i$ under $\rho_i$, we obtain a section $V_i\to P\times_{T'}V_i$ of $\psi\times_{T'}V_i$ which trivializes $\psi$. Thus the proof is finished.
\end{proof}


The following proposition shows that $c_1=\frac{1}{8}$ is the first wall of the K-moduli stacks $\oMK_c$. Note that it is also proved by Fujita \cite{Fuj20} independently using different methods.

\begin{prop}\label{prop:firstwallreplace}
Let $C=4H$ where $H$ is a smooth $(1,1)$-curve on $\bP^1\times\bP^1$. Let $c\in (0,\frac{1}{2})$ be a rational number. Then $(\bP^1\times\bP^1,cC)$ is K-semistable (resp. K-polystable) if and only if $c\leq \frac{1}{8}$ (resp. $<\frac{1}{8}$).  Moreover, the K-polystable degeneration of $(\bP^1\times\bP^1,\frac{1}{8}C)$ is isomorphic to $(\bP(1,1,2), \frac{1}{8}C_0)$ where $C_0=4 H_0$ and $H_0$ is the section at infinity.
\end{prop}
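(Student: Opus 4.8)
The plan is to organize everything around the degeneration of the smooth quadric to the quadric cone, treating $c>\frac18$, $c<\frac18$, and $c=\frac18$ in turn. First I would fix coordinates so the computation is explicit: realize $\bP^1\times\bP^1$ as the smooth quadric $Q=\{uv=w^2+z^2\}\subset\bP^3$, and, since all smooth $(1,1)$-curves are $\Aut(\bP^1\times\bP^1)$-equivalent, take $H=Q\cap\{z=0\}$. The associated object is the special test configuration $\cT$ with total space $\{uv=w^2+tz^2\}\subset\bP^3\times\bA^1$, boundary $4c\,(\{z=0\}\cap\cX)$, and $\bG_m$ acting by scaling $z$; one checks the total space is smooth and its central fiber is $(\bP(1,1,2),4cH_0)$, where $\bP(1,1,2)=\{uv=w^2\}$ is the quadric cone and $H_0=\{z=0\}\cap\{uv=w^2\}\in|\cO(2)|$ is the conic disjoint from the vertex, i.e. the section at infinity, with $H_0^2=2$ and $H_0\sim-\tfrac12K$. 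For $c<\tfrac14$ this is a $\bQ$-Gorenstein smoothing of klt log Fano pairs.

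To rule out K-semistability for $c>\frac18$ I would apply the local--global volume inequality (Theorem \ref{thm:local-vol-global}) at a smooth point $x\in H$. In local coordinates $(\eta,\zeta)$ at $x$ with $H=\{\zeta=0\}$, the monomial valuation $w$ of weights $(1,2)$ satisfies $A_{(\bP^1\times\bP^1,4cH)}(w)=3-8c$ and $\vol(w)=\tfrac12$, so $\hvol(x,\bP^1\times\bP^1,4cH)\le\tfrac12(3-8c)^2$. Since $(-K-4cH)^2=8(1-2c)^2$, Theorem \ref{thm:local-vol-global} would force $64(1-2c)^2\le 9(3-8c)^2$, i.e. $c\le\frac18$; for $\frac18<c<\frac14$ this is a contradiction, and for $c\ge\frac14$ the pair is not even klt. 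Hence $(\bP^1\times\bP^1,4cH)$ is not K-semistable for $c>\frac18$.

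For $c<\frac18$, Theorem \ref{thm:firstwall} reduces K-polystability of $(\bP^1\times\bP^1,cC)$ to GIT-polystability of the $(4,4)$-curve $4H$. Writing $H=\{x_0y_1-x_1y_0=0\}$ and $s=(x_0y_1-x_1y_0)^4$, a maximal-torus $1$-PS $\sigma_{a,b}$ (weights $a,-a,b,-b$ on $x_0,x_1,y_0,y_1$) assigns to the monomials of $s$ the weights $(2k-4)(a-b)$ for $k=0,\dots,4$; thus the Hilbert--Mumford weight is $\mu^{\calO(1)}(4H,\sigma_{a,b})=4|a-b|\ge 0$, vanishing exactly when $a=b$, in which case $\sigma$ fixes $s$. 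This shows $4H$ is GIT-semistable and that every weight-zero $1$-PS lies in the reductive stabilizer and fixes $4H$, so the orbit of $4H$ is closed in the semistable locus and $4H$ is GIT-polystable. By Theorem \ref{thm:firstwall}, $(\bP^1\times\bP^1,cC)$ is therefore K-polystable (in particular K-semistable) for all $c<\frac18$.

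It remains to analyze $c=\frac18$, where the key point is that $(\bP(1,1,2),\tfrac18C_0)$ is K-polystable. Here $\bP(1,1,2)$ is toric with fan rays $v_a=(1,0)$, $v_b=(-1,-2)$, $v_z=(0,1)$, and since $H_0=\{z=0\}$ is torus-invariant, the class $-(K+4cH_0)$ has moment polytope $P=\{m_1\ge-1,\ m_1+2m_2\le1,\ m_2\ge-(1-4c)\}$, a triangle whose barycenter equals $\tfrac{1-8c}{3}(1,-1)$. This vanishes exactly at $c=\frac18$, so by the barycenter criterion for K-polystability of toric log Fano pairs the pair $(\bP(1,1,2),\tfrac18C_0)$ is K-polystable. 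Applying openness of K-semistability in the $\bQ$-Gorenstein family $\cT$ \cite{BLX19}, whose special fiber is now K-semistable, shows the general fiber $(\bP^1\times\bP^1,\tfrac18C)$ is K-semistable, completing the equivalence ``K-semistable $\iff c\le\frac18$''. Finally, $\cT$ defines a morphism $\bA^1\to\oMK_{1/8}$ all of whose fibers are K-semistable, so its general and special fibers have the same image under the good moduli space $\oMK_{1/8}\to\oK_{1/8}$; as the special fiber is K-polystable, it is the K-polystable representative, so $(\bP^1\times\bP^1,\tfrac18C)$ is strictly K-semistable with K-polystable degeneration $(\bP(1,1,2),\tfrac18C_0)$. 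The main obstacle is the careful moduli-theoretic identification of this degeneration, namely verifying that $\cT$ is a genuine $\bQ$-Gorenstein family of log Fano pairs to which openness and the good moduli space apply, together with the correct toric normalization in the barycenter computation; the two independent volume computations at the vertex of $\bP(1,1,2)$ and at $x\in H$, both of which single out $c=\frac18$, serve as a reassuring cross-check.
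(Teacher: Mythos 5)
Your proposal is broadly correct and takes a genuinely different route from the paper's, and in one respect it is more complete: the paper's own proof establishes K-semistability at $c=\tfrac18$ (with the polystable degeneration) and K-polystability for $c<\tfrac18$, but never explicitly rules out K-semistability for $c>\tfrac18$, whereas your normalized-volume argument does exactly that. Your numbers check out: the monomial valuation of weights $(1,2)$ at a point of $H$ gives $A_{(\bP^1\times\bP^1,4cH)}(w)=3-8c$, $\vol(w)=\tfrac12$, and Theorem \ref{thm:local-vol-global} yields $8(1-2c)^2\leq\tfrac98(3-8c)^2$, i.e.\ $c\leq\tfrac18$, with the non-klt range $c\geq\tfrac14$ handled separately. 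At $c=\tfrac18$ you use the same test configuration as the paper, but you certify K-polystability of the central fiber $(\bP(1,1,2),\tfrac12 H_0)$ by the toric barycenter criterion (your fan, polytope, and barycenter $\tfrac{1-8c}{3}(1,-1)$ are correct), where the paper instead invokes the conical K\"ahler--Einstein metric of \cite{LL16}; your identification of the polystable degeneration via constancy of the induced map $\bA^1\to\oK_{1/8}$ and uniqueness of closed points in good moduli fibers is correct and makes explicit what the paper leaves implicit. For $c<\tfrac18$ the two proofs diverge most: the paper stays on the K-side (interpolation gives K-semistability, Theorem \ref{thm:firstwall} constrains the polystable degeneration to be $(\bP^1\times\bP^1,4cH_0)$, and reducible $H_0$ is excluded by Zhuang's product theorem \cite{Zhu19} applied to $(\bP^1,4c[0])$), whereas you transfer to GIT via Theorem \ref{thm:firstwall} and compute directly.

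That GIT computation is where you have a genuine gap. The Hilbert--Mumford criterion requires $\mu([4H],\lambda)\geq 0$ for \emph{all} $1$-PSs $\lambda$ of $\Aut(\bP^1\times\bP^1)$; equivalently, one must check the torus $1$-PSs $\sigma_{a,b}$ against \emph{all translates} $g\cdot[4H]$, and you only checked $g=\mathrm{id}$. More seriously, your polystability step rests on the claim that every $1$-PS of weight zero fixes $[4H]$, and that claim is false. Take the translate $4H'$ with $H'=\{x_0y_0+x_0y_1+x_1y_0=0\}$ (a nondegenerate $(1,1)$-form, so $[4H']$ lies in the orbit of $[4H]$): the $1$-PS $\sigma_{1,1}$ (or its inverse, depending on sign convention) has Hilbert--Mumford weight $0$ on $[4H']$, yet its limit is $[(x_0y_1+x_1y_0)^4]\neq[4H']$. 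So the corresponding conjugate $1$-PS has zero weight on $[4H]$ without fixing it. The conclusion survives, but for a different reason: every such zero-weight limit is again the fourth power of a nondegenerate $(1,1)$-form, hence stays in the orbit; equivalently, $\overline{G\cdot[4H]}$ consists of all fourth powers $[h^4]$ of $(1,1)$-forms, the degenerate ones $[\ell^4 m^4]$ are GIT-unstable (a torus $1$-PS gives negative weight), and therefore the orbit is closed in the semistable locus. With the semistability check extended to all translates (an easy two-case analysis on whether the $x_1y_1$-coefficient of $g^{-1}h$ vanishes) and the polystability argument repaired as above, your proof is complete.
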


\begin{proof}
We first show that $(\bP^1\times\bP^1,\frac{1}{8}C)$ is K-semistable where $(\bP(1,1,2), \frac{1}{8}C_0)$ is its K-polystable degeneration. Choose an embedding $\bP^1\times\bP^1\hookrightarrow\bP^3$ as a smooth quadric surface. Then $H$ is a hyperplane section of $\bP^1\times\bP^1$. Pick projective coordinates $[x_0,x_1,x_2,x_3]$ of $\bP^3$ such that the hyperplane section through $H$ is given by $x_3=0$. Then the $1$-PS $\sigma: \bG_m \to \PGL(4)$ given by $\sigma(t)[x_0,x_1,x_2,x_3]=[tx_0,tx_1,tx_2,x_3]$ provides a special test configuration of $(\bP^1\times\bP^1, \frac{1}{2}H)$ whose central fiber is an ordinary quadric cone with a section at infinity of coefficient $\frac{1}{2}$, i.e. isomorphic to $(\bP(1,1,2), \frac{1}{2}H_0)$. By \cite{LL16} we know that $(\bP(1,1,2), \frac{1}{2}H_0)$ admits a conical K\"ahler-Einstein metric hence is K-polystable. The K-semistability of $(\bP^1\times\bP^1,\frac{1}{8}C)$ follows from openness of K-semistability \cite{BLX19, Xu19}. 

Next we show that $(\bP^1\times \bP^1, cC)$ is K-polystable for $c\in (0,\frac{1}{8})$. Clearly, it is K-semistable by interpolation \cite[Proposition 2.13]{ADL}. Let $(X,cD)$ be its K-polystable degeneration. By Theorem \ref{thm:firstwall}, we know that $X\cong\bP^1\times\bP^1$. Since $C=4H$, we have $D=4H_0$ for some $(1,1)$-curve $H_0$. If $H_0$ is reducible, then $(X,cD)$ is isomorphic to the self-product of $(\bP^1, c[0])$. Since $(\bP^1, c[0])$ is K-unstable, we know that $(X,cD)$ is also K-unstable by \cite{Zhu19}. Thus $H_0$ must be irreducible which implies that $(\bP^1\times\bP^1, cC)\cong (X,cD)$ is K-polystable. 
Thus the proof is finished.
\end{proof}

\begin{remark}\leavevmode
\begin{enumerate}
    \item The first K-moduli wall crossing at $c_1 = \frac{1}{8}$ has the following diagram
    \[
    \oK_{\frac{1}{8}+\epsilon}\xrightarrow{\phi_1^+} \oK_{\frac{1}{8}} \xleftarrow[\cong]{\phi_1^-} \oK_{\frac{1}{8}-\epsilon}= \fM 
    \]
    where the composition $(\phi_1^{-})^{-1}\circ \phi_1^+: \oK_{\frac{1}{8}+\epsilon}\to \fM$ is 
    the Kirwan blowup of the point $[4H]$ in the GIT quotient $\fM$.  Across this wall, we replace the quadruple $(1,1)$ curve $4H$ on $\bP^1 \times \bP^1$ with GIT polystable degree $8$ curves on $\bP(1,1,2)$ which do not pass through the singular point $[0,0,1]$. This behavior is similar to \cite[Theorem 1.3]{ADL}.
   \item From Remarks \ref{rem:walls-value} and \ref{rem:walls-detail}, we will see that $c_2=\frac{1}{5}$ is the second K-moduli wall. Moreover, if a degree $8$ curve $D$ passes through the singular point of $X =\bP(1,1,2)$, then we see that for any $c < \frac{1}{5}$ the pair $(X, cD)$ is K-unstable.
\end{enumerate} \end{remark}

\subsection{Computations on CM line bundles}\label{sec:CM}
The main goals of this section are to compute the CM line bundle of the log Fano family from Section \ref{sec:LOG-VGIT}, and to show that over the complete intersection locus $U$, the CM $\bQ$-line bundle is proportional to the VGIT line bundle.

\begin{prop}\label{prop:CM-Z}
With the notation from Section \ref{sec:LOG-VGIT}, we have
\[
-f_*((-K_{\sX/\bP(E)} - c\sD)^3)=
(2-4c)^2(4c+4)\left(\eta+\frac{3c}{2c+2}\xi\right).
\]
\end{prop}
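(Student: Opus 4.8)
The plan is to compute the pushforward directly, by expressing every class in terms of explicit divisors on the ambient space $\bP^3\times\bP(E)$ and then restricting to $\sX$. Write $h$ for the pullback of the hyperplane class of $\bP^3$, and keep $\eta,\xi$ as in Section \ref{sec:LOG-VGIT}. First I would record the three structural classes. Since $\sX$ is the pullback of the universal quadric under $\id\times\pi$, it is cut out in $\bP^3\times\bP(E)$ by a section of $\cO_{\bP^3}(2)\boxtimes\pi^*\cO_{\bP^9}(1)$, so $[\sX]=2h+\eta$. Adjunction then gives $-K_{\sX/\bP(E)}=(2h-\eta)|_{\sX}$, which correctly restricts to $-K_Q=\cO_Q(2)$ on each fiber because $\eta$ is pulled back from the base and so vanishes on fibers. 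Finally, the universal quartic section defining $\sD$ is a section of $\cO_{\sX}(4h)\otimes f^*\cO_{\bP(E)}(1)$ coming from the tautological construction of $\bP(E)$, so $[\sD]=(4h+\xi)|_{\sX}$, which again restricts to $\cO_Q(4)$ on fibers. Combining these, $-K_{\sX/\bP(E)}-c\sD=(2-4c)h-\eta-c\xi$.

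Next I would reduce the pushforward of the cube to a few elementary fiber integrals. Setting $A:=2-4c$ and grouping the base classes as $u:=-\eta-c\xi$ (pulled back from $\bP(E)$), the projection formula gives
\begin{equation*}
f_*\big((Ah+u)^3\big)=A^3 f_*(h^3)+3A^2 f_*(h^2)\,u+3A f_*(h)\,u^2+f_*(1)\,u^3 .
\end{equation*}
The fibers of $f$ have dimension $2$, so $f_*(h^k)=0$ for $k\le 1$, killing the last two terms. The surviving integrals are computed through the embedding $\iota:\sX\hookrightarrow\bP^3\times\bP(E)$ via $f_*\iota^*\alpha=p_{2*}(\alpha\cdot[\sX])$: using $h^4=0$ and $p_{2*}(h^3)=1$ one gets $f_*(h^2)=p_{2*}\big(h^2(2h+\eta)\big)=2$ and $f_*(h^3)=p_{2*}\big(h^3(2h+\eta)\big)=\eta$. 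Substituting yields $f_*\big((Ah+u)^3\big)=A^3\eta+6A^2u$.

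Finally I would expand and simplify. With $u=-\eta-c\xi$ this gives $-f_*(\,\cdot\,)=A^2\big((6-A)\eta+6c\xi\big)$, and substituting $A=2-4c$, so that $6-A=4c+4$ and $A^2=(2-4c)^2$, produces $(2-4c)^2\big((4c+4)\eta+6c\xi\big)$. Pulling $(4c+4)$ out of the bracket and using $\tfrac{6c}{4c+4}=\tfrac{3c}{2c+2}$ recovers the stated formula.

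The step I expect to require the most care is pinning down the class $[\sD]$: one must verify both that there is no $\eta$-contribution and that the sign of the $\xi$-term is $+\xi$ rather than $-\xi$, which hinges on the chosen convention for $\cO_{\bP(E)}(1)$. A clean way to settle this is to cross-check against Proposition \ref{prop:Linfinity}, or simply to fix the convention so that the tautological section defining $\sD$ has class $4h+\xi$; the remaining identifications (the class $[\sX]$ and the adjunction computation of $-K_{\sX/\bP(E)}$) are routine, as is the bookkeeping in the fiber integrals.
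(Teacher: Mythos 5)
Your proposal is correct and follows essentially the same route as the paper's proof: identify the classes $[\sX]=2h+\eta$ and $[\sD]=(4h+\xi)|_{\sX}$, compute $-K_{\sX/\bP(E)}=(2h-\eta)|_{\sX}$ by adjunction, then expand the cube and push forward using the fiber integrals $f_*(h^2)=2$ and $f_*(h^3)=\eta$. The only cosmetic difference is that you package the pushforwards via $f_*\iota^*\alpha=p_{2*}(\alpha\cdot[\sX])$ while the paper carries out the same intersection-theoretic bookkeeping term by term; the sign conventions and final simplification agree exactly.
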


\begin{proof}
By construction we have: 
\begin{align*} \calO_{\bP^3 \times \bP(E)}(\sX) &= p_1^*\calO_{\bP^3}(2) \otimes p_2^* \pi^* \calO_{\bP^9}(1); \\ 
 \calO_{\sX}(\sD) &= p_1^* \calO_{\bP^3}(4)\vert_{\sX} \otimes p_2^*\calO_{\bP(E)}(1) \vert_{\sX}. \end{align*}
First note that $K_{\sX/\bP(E)}  = K_{\sX} - f^*K_{\bP(E)}$, and by adjunction, 
\begin{align*}
K_{\sX} &= (K_{\bP^3 \times \bP(E)} + \sX)|_{\sX} \\
& = (p_1^*\calO(-4) \otimes p_2^*\calO(K_{\bP(E)}) \otimes p_1^*\calO(2) \otimes p_2^*\pi^*\calO_{\bP^9}(1))\vert_{\sX} \\
&= \calO_{\sX}(-2) \otimes p_2^*\calO(K_{\bP(E)})\vert_{\sX} \otimes p_2^*\pi^*\calO_{\bP^9}(1)\vert_{\sX}
\end{align*}
So in particular we have  
\[K_{\sX/\bP(E)} =  \calO_{\sX}(-2) \otimes f^*\pi^*\calO_{\bP^9}(1) .\]

Since $\sD = \calO_{\sX}(4) \otimes p_2^*\calO_{\bP(E)}(1) \vert_{\sX}$, we see that 
\[ \calO_{\sX}(-K_{\sX/\bP(E)} - c\sD) = \calO_{\sX}(2-4c) \otimes f^* \pi^*\calO_{\bP^9}(-1) \otimes f^* \calO_{\bP(E)}(-c).\] 
Let $H_Y$ denote an element of the class $\calO_Y(1)$ for $Y = \sX, \bP^3, \bP(E),$ or $\bP^9$.  We compute 
\begin{align*}
    -f_*(-K_{\sX/\bP(E)} -c\sD)^3 &= -f_*(((2-4c)H_{\sX})^3 - 3((2-4c)H_{\sX})^2\cdot(cf^*H_{\bP(E)}+f^*\pi^*H_{\bP^9}) + \\
    & 3((2-4c)H_\sX \cdot (cf^*H_{\bP(E)}+f^*\pi^*H_{\bP^9})^2  ) - (cf^*H_{\bP(E)}+f^*\pi^*H_{\bP^9}) ^3) \\
    &= -f_*((2-4c)^3(\sX|_{\sX}) - 3(2-4c)^2H_\bP^3 \cdot (\sX|_{\sX})\cdot (cf^*H_{\bP(E)}+f^*\pi^*H_{\bP^9}))\\
    &= -(2-4c)^3 \pi^*H_{\bP^9} + 6(2-4c)^2(cH_{\bP(E)} + \pi^*H_{\bP^9})
\end{align*}
%
%
Thus the proof is finished since $\eta = \pi^*H_{\bP^9}$ and $\xi = H_Z$.\end{proof}

\begin{prop}\label{prop:CM-U}
Let $f_U:(\sX_U,\sD_U)\to U$ be the restriction of $f:(\sX,\sD)\to \bP(E)$ over $U\subset \bP(E)$. 
We denote the CM $\bQ$-line bundle of $f_U$ with coefficient $c$ by $\lambda_{U,c}:=\lambda_{\CM, f_U, c\sD_U}$.
Denote by $\eta_U$ and $\xi_U$ the restriction of $\eta$ and $\xi$ to $U$.  
Then for any $c\in [0,\frac{1}{2})$ we have 
\begin{equation}\label{eq:CM-U}
\lambda_{U,c}=(2-4c)^2(4c+4)\left(\eta_U+\frac{3c}{2c+2}\xi_U\right).
\end{equation}
\end{prop}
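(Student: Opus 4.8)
The plan is to identify $\lambda_{U,c}$ with the restriction to $U$ of a CM $\bQ$-line bundle that lives over the \emph{whole} proper base $\bP(E)$, and then to read that $\bQ$-line bundle off from the intersection computation already performed in Proposition \ref{prop:CM-Z}.

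First I would observe that the CM $\bQ$-line bundle $\lambda_{\CM,f,c\sD}$ of $f:(\sX,\sD)\to\bP(E)$ is defined over all of $\bP(E)$, even though the fibers over the rank $\le 2$ quadrics are not log Fano. Indeed, the proof of Proposition \ref{prop:CM-Z} exhibits $\cL:=\calO_{\sX}(-K_{\sX/\bP(E)}-c\sD)=\calO_{\sX}(2-4c)\otimes f^*\pi^*\calO_{\bP^9}(-1)\otimes f^*\calO_{\bP(E)}(-c)$ as a $\bQ$-Cartier class on $\sX$ which is $f$-ample for every $c\in[0,\tfrac12)$, since $\calO_{\sX}(1)$ restricts to the ample hyperplane class on each quadric fiber. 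Moreover $\sD\to\bP(E)$ is flat of pure relative dimension $1$: writing $\sD_{(Q,B)}=B\in|\calO_Q(4)|$ for the fiber, the sequence $0\to\calO_Q(k-4)\to\calO_Q(k)\to\calO_B(k)\to 0$ shows its Hilbert polynomial equals $\chi(Q,\calO_Q(k))-\chi(Q,\calO_Q(k-4))$, independent of $[Q]\in\bP^9$. The same sequence with $k=4$ gives $h^0(Q,\calO_Q(4))=25$ and $h^1(Q,\calO_Q(4))=0$ for every quadric, so $E$ is locally free and $\bP(E)$ is a smooth projective $\bP^{24}$-bundle over $\bP^9$. Hence every Knudsen--Mumford line bundle of Definition \ref{defn:logCM} is defined over $\bP(E)$; since the expansion commutes with base change and all fiberwise intersection numbers are constant, functoriality yields $\lambda_{U,c}=\lambda_{\CM,f,c\sD}\big|_{U}$.

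Next I would compute $c_1(\lambda_{\CM,f,c\sD})$ on $\bP(E)$ via the intersection formula, which by Proposition \ref{prop:CM-Z} should equal $-f_*\big((-K_{\sX/\bP(E)}-c\sD)^3\big)=(2-4c)^2(4c+4)\big(\eta+\tfrac{3c}{2c+2}\xi\big)$. The delicate point is that Proposition \ref{prop:logCM2} assumes \emph{all} fibers are log Fano, while $f$ fails this along the preimage of the rank $\le 2$ quadrics. To get around this I would restrict to the open locus $V\subseteq U$ over which the fibers are normal quadrics, so that $f_V$ is a genuine $\bQ$-Gorenstein flat log Fano family; its complement in $\bP(E)$ is contained in $(\bP(E)\setminus U)\cup\{\,\mathrm{rank}\le 2\,\}$, of codimension $\ge 2$, because $\bP(E)\setminus U$ is and the rank $\le 2$ locus has codimension $3$ in $\bP^9$. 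As $\bP(E)$ is smooth, restriction induces an isomorphism $\Pic(\bP(E))_\bQ\xrightarrow{\sim}\Pic(V)_\bQ$, so the identity may be checked after pairing against two complete curves in $V$ spanning $N_1(\bP(E))_\bQ$ (for instance a line in a fiber over a smooth quadric, and a section of $\bP(E)$ over a general line in $\bP^9$ missing the rank $\le 2$ locus). For any such curve $B\subset V$ the pulled-back family is log Fano over the proper base $B$, so Proposition \ref{prop:logCM2} applies and gives $\deg_B\lambda_{U,c}=-f_*\big((-K_{\sX/\bP(E)}-c\sD)^3\big)\cdot B$, matching the asserted right-hand side.

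Combining these, $\lambda_{\CM,f,c\sD}$ and $(2-4c)^2(4c+4)(\eta+\tfrac{3c}{2c+2}\xi)$ agree on a spanning family of curves, hence coincide on $\bP(E)$, and restricting to $U$ gives \eqref{eq:CM-U}. I expect the main obstacle to be exactly this passage through Proposition \ref{prop:logCM2}: because $U$ is non-proper and the family acquires non-log-Fano fibers on the boundary, the intersection formula cannot be invoked verbatim over the full base, and the codimension $\ge 2$ reduction to test curves lying in the honest log Fano locus $V$ is what repairs the argument. (Alternatively, one could invoke the general Knudsen--Mumford form of the CM intersection formula, valid for any polarized flat family, and apply it directly over $\bP(E)$, bypassing the curve argument entirely.)
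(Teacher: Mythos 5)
Your first step contains a genuine error, and it is exactly the pitfall this paper is built around. You claim that $\sD\to\bP(E)$ is flat of pure relative dimension $1$, justified by the sequence $0\to\calO_Q(k-4)\to\calO_Q(k)\to\calO_B(k)\to 0$; but that sequence is exact only when the section $s$ cutting out the fiber is a non-zerodivisor on $Q$, and this fails precisely on $\bP(E)\setminus U$. Points there are pairs $([Q],[s])$ with $Q$ non-normal (e.g.\ $Q=(x_0x_1=0)$ or $(x_0^2=0)$) and $s$ divisible by the equation of a plane component, so the fiber of $\sD$ contains a plane of $\bP^3$ and is $2$-dimensional: $\sD\to\bP(E)$ is neither flat nor pure-dimensional, and the term $\lambda_{\Chow,f|_{\sD},\cL|_{\sD}}$ in Definition \ref{defn:logCM} is simply undefined over $\bP(E)$. (Compare the introduction: the VGIT family is a $\bQ$-Gorenstein log Fano family only over the quasi-projective locus $U$, never over a proper base.) The same objection defeats your closing parenthetical: there is no flat polarized family \emph{of pairs} over $\bP(E)$ to which a general Knudsen--Mumford intersection formula could be applied verbatim, so that shortcut is not available either.

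The error is, however, localized, and your second step already contains the repair: you never need $\lambda_{\CM,f,c\sD}$ on all of $\bP(E)$. Since $\bP(E)$ is smooth and $\codim_{\bP(E)}(\bP(E)\setminus U)\geq 2$, the $\bQ$-line bundle $\lambda_{U,c}$ (honestly defined on $U$) extends uniquely to $\bP(E)$; one then runs your curve argument on the extension, pairing against a line in a fiber over a smooth quadric and a section over a general line in $\bP^9$ missing the rank $\le 2$ locus, both complete curves in the log Fano locus $V$, where functoriality of CM line bundles under base change and Proposition \ref{prop:logCM2} over those proper curves apply; since $\Pic(\bP(E))_{\bQ}=\bQ\eta\oplus\bQ\xi$ has no continuous part, agreement on these two curves forces equality with the class computed in Proposition \ref{prop:CM-Z}. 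With this fix your route is correct and genuinely different from the paper's: the paper never invokes Proposition \ref{prop:logCM2}, but instead computes the Knudsen--Mumford terms by Grothendieck--Riemann--Roch applied to $\sL^{\otimes q}$ and $\sL^{\otimes q}\otimes\cO_{\sX}(-\sD)$ on the smooth projective $\sX$ --- these are line bundles on $\sX$, hence flat over $\bP(E)$ even where $\sD$ is not --- and only at the end restricts to $U$ to identify the combination with $\lambda_{U,c}$. The GRR route gives the formula over all of $\bP(E)$ in one computation; your route replaces that computation by a geometric reduction to test curves, at the price of the codimension estimate and the choice of spanning curves inside $V$.
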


\begin{proof}
We take $l\in\bZ_{>0}$ sufficiently divisible such that $\sL:=-l(K_{\sX/\bP(E)}+c\sD)$ is a Cartier divisor on $\sX$. From the above computation, we see that $\sL\sim_{f}\cO_X(l(2-4c))$ which implies that $\sL$ is $f$-ample. 
Denote by $\sL_{U}:=\sL|_{\sX_U}$. 
Since both $\sX$ and $\bP(E)$ are smooth projective varieties, using Grothendieck-Riemann-Roch theorem, for $q\gg 1$ we have that 
\begin{align*}
\mathrm{c}_1(f_*(\sL^{\otimes q}))& =\frac{q^{3}}{6}f_*(\sL^3)-\frac{q^2}{4}f_*(K_{\sX/\bP(E)}\cdot\sL^2)+O(q),\\
\mathrm{c}_1(f_*(\sL^{\otimes q}\otimes\cO_{\sX}(-\sD)))& =\frac{q^{3}}{6}f_*(\sL^3)-\frac{q^2}{2}f_*(\sD\cdot\sL^2)-\frac{q^2}{4}f_*(K_{\sX/\bP(E)}\cdot\sL^2)+O(q).
\end{align*}
Thus $\mathrm{c}_1((f|_\sD)_*(\sL|_{\sD}^{\otimes q}))=\frac{q^2}{2}f_*(\sD\cdot\sL^2)+O(q)$. 
Since CM line bundles are functorial, by similar arguments to \cite[Proposition 2.23]{ADL} we have that 
\[
\mathrm{c}_1(\lambda_{\CM, f_U, c\sD_U, \sL_U})= -l^2 f_*((-K_{\sX/\bP(E)}-c\sD)^3)|_U.
\]
This implies \eqref{eq:CM-U} by Proposition \ref{prop:CM-Z}.
\end{proof}

\begin{prop}\label{prop:proportional} The CM $\bQ$-line bundle $\lambda_{U,c}$ and the VGIT polarization $N_t$ are proportional up to a positive constant when restricted to $U$ where $t=t(c):=\frac{3c}{2c+2}$. 
\end{prop}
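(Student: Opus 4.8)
The plan is to compare the two $\bQ$-line bundles directly by writing both in terms of the restrictions $\eta_U,\xi_U$. By Proposition \ref{prop:CM-U} the CM side is already in this form,
\[
\lambda_{U,c}=(2-4c)^2(4c+4)\left(\eta_U+\frac{3c}{2c+2}\xi_U\right)=(2-4c)^2(4c+4)(\eta_U+t\xi_U),
\]
where $t=t(c)=\frac{3c}{2c+2}$ and the scalar $(2-4c)^2(4c+4)$ is strictly positive for $c\in[0,\frac12)$. It therefore suffices to show that $N_t|_U=\eta_U+t\xi_U$, for then the two classes differ by this positive factor.

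First I would restrict the defining expression for $N_t$ to $U$. Since $p_1|_U$ is the inclusion $U\hookrightarrow\bP(E)$ and $p_2|_U=\chow$ is the Hilbert--Chow morphism, we have $p_1^*\eta|_U=\eta_U$, $p_1^*\xi|_U=\xi_U$, and $p_2^*L_\infty|_U=\chow^*L_\infty$. The essential input is Proposition \ref{prop:Linfinity}, which identifies the unique extension of $\chow^*L_\infty$ to $\bP(E)$ as $\overline{L}_\infty=4\eta+2\xi$; restricting to $U$ gives $p_2^*L_\infty|_U=4\eta_U+2\xi_U$.

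Substituting these into
\[
N_t=\frac{1-2t}{1-2\delta}\,p_1^*(\eta+\delta\xi)+\frac{t-\delta}{2(1-2\delta)}\,p_2^*L_\infty
\]
and collecting terms, I expect the coefficient of $\eta_U$ to simplify via $\frac{(1-2t)+2(t-\delta)}{1-2\delta}=1$ and the coefficient of $\xi_U$ via $\frac{(1-2t)\delta+(t-\delta)}{1-2\delta}=t$. The crucial feature is that both coefficients come out independent of $\delta$: the auxiliary parameter cancels completely, which is consistent with the fact (noted by Laza--O'Grady) that the VGIT quotients do not depend on the choice of $\delta$. This yields $N_t|_U=\eta_U+t\xi_U$, hence $\lambda_{U,c}=(2-4c)^2(4c+4)\,N_t|_U$ with positive proportionality constant. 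One may also note $t(c)\in(0,\tfrac12)$ for $c\in(0,\tfrac12)$, so the slope lies in the VGIT range.

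There is no serious obstacle here: once Proposition \ref{prop:Linfinity} is invoked, the statement reduces to the elementary cancellation above. The only point demanding mild care is bookkeeping of the restriction maps — in particular confirming that it is $\chow^*L_\infty$, and not some other class, that governs $p_2^*L_\infty|_U$, which is exactly the content of the identification of $U$ inside $\sP$ with $p_2|_U=\chow$.
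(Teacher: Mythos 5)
Your proposal is correct and follows essentially the same route as the paper's proof: both sides are expressed in terms of $\eta_U,\xi_U$ via Proposition \ref{prop:CM-U}, then Proposition \ref{prop:Linfinity} is used to restrict $p_2^*L_\infty$ to $U$, and the $\delta$-independent cancellation gives $N_t|_U=\eta_U+t\xi_U$. Your explicit coefficient computations $\frac{(1-2t)+2(t-\delta)}{1-2\delta}=1$ and $\frac{(1-2t)\delta+(t-\delta)}{1-2\delta}=t$ are exactly the arithmetic the paper's displayed equalities rely on.
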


\begin{proof}
By Proposition \ref{prop:CM-U}, we see that $\lambda_{U,c}$ is a positive multiple of $\eta_U + \frac{3c}{2c+2}\xi_U$. 
By Proposition \ref{prop:Linfinity}, 
\begin{align*}
    N_t |_U &= \frac{1 - 2t}{1-2\delta} p_1^*(\eta + \delta \xi) |_U + \frac{t - \delta}{2(1-2\delta)} p_2^*L_{\infty}|_U \\
    &= \frac{1 - 2t}{1-2\delta} (\eta_U + \delta \xi_U) +  \frac{t - \delta}{2(1-2\delta)} (4 \eta_U + 2 \xi_U ) \\
    &= \eta_U + t \xi_U.
\end{align*}
Hence for $t = \frac{3c}{2c+2}$, we see that $\lambda_{U,c}$ is a positive multiple of $N_t |_U$. \end{proof}

\subsection{K-moduli wall crossings and VGIT}
In this section we will prove Theorem \ref{mthm:thmintro}(2) by an inductive argument on walls.

\begin{theorem}[=Theorem \ref{mthm:thmintro}(2)]\label{thm:wallscoincide} 
Let $c \in (0, \frac{1}{2})$ be a rational number. Then there is an isomorphism between Artin stacks $\oMK_c\cong \sM(t(c))$ with $t(c)=\frac{3c}{2c+2}$. Moreover, such isomorphisms commute with wall crossing morphisms.
\end{theorem}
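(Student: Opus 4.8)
The plan is to realize both $\oMK_c$ and $\sM(t(c))$ as open substacks of the single quotient stack $[U/\PGL(4)]$ and then to prove that their semistable loci inside $U$ coincide. On the K-side, Theorem~\ref{thm:surfaces} and Remark~\ref{rmk:oursurfaces} guarantee that every K-semistable pair $(X,cD)$ is a $(2,4)$-complete intersection in $\bP^3$, hence corresponds to a point of $U$; together with a $\PGL(4)$-torsor argument as at the end of the proof of Theorem~\ref{thm:firstwall}, this identifies $\oMK_c$ with $[U^{\rm K}_c/\PGL(4)]$, where $U^{\rm K}_c\subseteq U$ is the locus of $c$-K-semistable fibers, open by openness of K-semistability. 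On the VGIT-side, Lemma~\ref{lem:GITssU} gives $\sM(t)=[\sP^{\rm ss}(N_t)/\PGL(4)]$ with $\sP^{\rm ss}(N_t)\subseteq U$ open. Thus it suffices to prove $U^{\rm K}_c=\sP^{\rm ss}(N_{t(c)})$ as $\PGL(4)$-invariant open subsets of $U$ for every non-wall $c\in(0,\tfrac12)$. Since all of $\oMK_{c_i\pm\epsilon}$, $\oMK_{c_i}$, $\sM(t_i\pm\epsilon')$, $\sM(t_i)$ are then open substacks of the one stack $[U/\PGL(4)]$ whose wall-crossing morphisms are tautological open immersions, compatibility with wall crossings is automatic once the loci are matched at every value of $c$.

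I would argue by induction on the finitely many K-moduli walls $0=c_0<c_1=\tfrac18<\dots<c_k=\tfrac12$ of Theorem~\ref{thm:generalwall}. The base case is the first chamber $c\in(0,\tfrac18)$: Theorem~\ref{thm:firstwall} identifies $U^{\rm K}_c$ with the locus of smooth quadrics carrying a GIT-semistable $(4,4)$-curve, while Theorem~\ref{thm:LOmain}(1)--(2) identifies $\sP^{\rm ss}(N_{t(c)})$ (with $t(c)<\tfrac16$) with the same locus. The engine for the inductive step is a weight comparison: for a one-parameter subgroup $\sigma$ of $\SL(4)$ and $x\in U$ whose limit $x_0=\lim_{s\to0}\sigma(s)\cdot x$ again lies in $U$, the degeneration is a $\bQ$-Gorenstein test configuration of $(X_x,cD_x)$ with central fiber the quadric pair $(X_{x_0},D_{x_0})$, and the intersection formula for the CM line bundle (Propositions~\ref{prop:logCM2}, \ref{prop:CM-Z}) together with the proportionality $N_{t(c)}|_U\sim\lambda_{U,c}$ (Proposition~\ref{prop:proportional}) yields
\[
\mu^{N_{t(c)}}(x,\sigma)=(\text{positive constant})\cdot\Fut\big(\cX,c\cD;\cL\big),
\]
with the constant depending only on $c$. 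In particular the sign of the VGIT weight equals that of the generalized Futaki invariant whenever the limit stays in $U$.

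At a wall $t_i=t(c_i)$ (where $N_{t_i}$ is still ample, as $t_i<\tfrac12$) the two problems are linked through their polystable objects. For $x\in\sP^{\rm ss}(N_{t_i})$ and any $\sigma$ with $\mu^{N_{t_i}}(x,\sigma)=0$, Kempf's Lemma~\ref{lem:zerofut}(1) forces $x_0\in\sP^{\rm ss}(N_{t_i})\subseteq U$; dually, for $x\in U^{\rm K}_{c_i}$ and a special test configuration with $\Fut=0$, Lemma~\ref{lem:zerofut}(2) and Theorem~\ref{thm:surfaces} produce a central fiber again in $U$. Hence \emph{every one-parameter subgroup relevant to either wall crossing has its limit in} $U$, where the weight comparison applies. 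This proves the clean inclusion $\sP^{\rm ss}(N_{t_i})\subseteq U^{\rm K}_{c_i}$ (a VGIT-destabilizing $\sigma$ for a would-be K-unstable pair, obtained from the optimal special degeneration of \cite{LX14} realized as a $1$-PS of $\SL(4)$, would satisfy $\mu^{N_{t_i}}<0$, a contradiction) and identifies the $N_{t_i}$-polystable objects with the $c_i$-K-polystable ones. I would then deduce the reverse inclusion from a property of good moduli spaces: an open substack containing every closed point of a stack admitting a good moduli space is the whole stack. Since $[\sP^{\rm ss}(N_{t_i})/\PGL(4)]\subseteq\oMK_{c_i}$ is open and, by the polystable matching, contains all closed (K-polystable) points, it must equal $\oMK_{c_i}$, giving $U^{\rm K}_{c_i}=\sP^{\rm ss}(N_{t_i})$; the same comparison just off the wall propagates the equality into the next chamber $(c_i,c_{i+1})$ and shows that K-walls and VGIT-walls correspond bijectively under $t=t(c)$.

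The main obstacle is exactly the non-properness of $U$ flagged in the introduction: a priori a $c_i$-K-semistable point could be $N_{t_i}$-destabilized by a $1$-PS $\sigma$ whose limit escapes into the boundary $\sP\setminus U$, where the proportionality $N_{t(c)}\sim\lambda_{U,c}$ — valid only over $U$ — says nothing. The Kempf/Li--Xu argument above handles this for the polystable, $\mu=0$ directions, which is all the good-moduli-space trick needs, but controlling these boundary-bound subgroups for strictly semistable points is the technical heart. The plan there is to bound them by normalizing the (possibly non-normal) test configuration attached to $\sigma\subseteq\SL(4)$: since normalization does not increase the generalized Futaki invariant (\cite{BHJ17}) and $(X_x,c_iD_x)$ is K-semistable, the CM/Futaki weight of $\sigma$ is $\geq0$, and one must check that the Chow correction $\tfrac{t_i-\delta}{2(1-2\delta)}\big(\mu^{p_2^*L_\infty}-\mu^{p_1^*\overline{L}_\infty}\big)(x,\sigma)$ distinguishing $N_{t_i}$ from the CM class on $\bP(E)$ is non-negative precisely when the limit cycle degenerates off $U$. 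Equivalently, and more concretely, I would trace the finitely many strictly semistable ``replacement'' pairs appearing at each wall — as in Proposition~\ref{prop:firstwallreplace} for $c_1=\tfrac18$ — and verify by hand that the new K-polystable and the new VGIT-polystable objects coincide; this explicit matching along the wall crossings is where the bulk of the work lies.
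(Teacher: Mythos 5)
Your overall skeleton matches the paper's: identify $\oMK_c$ with $[U_c^{\K}/\PGL(4)]$ (the paper's Proposition \ref{prop:K-stackinU}, resting on Theorem \ref{thm:surfaces}), use the CM/VGIT proportionality of Proposition \ref{prop:proportional} as the weight-comparison engine, and induct on walls starting from the first chamber, invoking Lemma \ref{lem:zerofut}. But there is a genuine gap at the step you rely on most: your claimed ``clean inclusion'' $\sP^{\rm ss}(N_{t_i})\subseteq U^{\rm K}_{c_i}$, argued by taking a would-be K-unstable pair, extracting a destabilizing special test configuration from \cite{LX14}, and ``realizing it as a $1$-PS of $\SL(4)$'' to contradict GIT semistability. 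Nothing guarantees such a realization exists: the central fiber of a destabilizing special test configuration is in general \emph{not} K-semistable, so Theorem \ref{thm:surfaces} does not apply to it, it need not be a $(2,4)$ complete intersection, and hence the degeneration need not be induced by any $1$-PS acting on $\sP$ at all. This is exactly the non-properness problem the paper flags in the introduction, and it bites in the K-unstable direction, not only for strictly semistable points as you suggest. Your two proposed patches do not close it: the ``Chow correction non-negativity'' is never formulated or proved, and the case-by-case matching of polystable replacements at each of the walls is acknowledged but not carried out.

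The paper's Propositions \ref{prop:induction1} and \ref{prop:induction2} avoid this entirely, and the difference is worth internalizing. For the inclusion $U_w^{\GIT}\subseteq U_w^{\K}$ at a wall, the paper never destabilizes an abstract pair; instead it uses surjectivity of the VGIT wall-crossing morphism (Theorem \ref{thm:LOwallcrossings}) to produce a $1$-PS $\sigma'$ of $\SL(4)$ degenerating a point of the \emph{previous} chamber --- where $U^{\GIT}=U^{\K}$ is already known by induction --- to the GIT-polystable point; proportionality gives $\Fut=0$, normality of the induced test configuration is checked via \cite[Section 8.2]{LX14} plus an $S_2$ argument, and then Lemma \ref{lem:zerofut}(2) and openness of K-semistability conclude. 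All $1$-PS's in play are handed to you by the ambient group action, so the realizability question never arises. Similarly, in the chamber step the paper uses Lemma \ref{lem:VGITbasics}(2) to produce a $1$-PS with $\mu^{N_{t(w)}}=0$ and $\mu^{N_{t(w+\epsilon)}}<0$, and Kempf's lemma keeps its limit GIT-semistable, hence inside $U$ by Lemma \ref{lem:GITssU}, which is what licenses the weight comparison. Two smaller points: your induction runs only over K-moduli walls, whereas the paper must induct over the union of K-walls and VGIT-walls (a priori different sets --- their coincidence is part of the conclusion, so assuming it risks circularity when propagating across a chamber); on the other hand, your final ``open substack containing all closed points of a stack with good moduli space is everything'' argument is correct and is a reasonable alternative to the paper's finiteness argument via \cite[Proposition 6.4]{alper}, but it cannot rescue the proposal because it presupposes the gapped inclusion.
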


We first set up some notation. 
Recall that the open subset $U\subset \bP(E)$ is defined to be the locus parametrizing $(X, D)$ where $X$ is a quadric surface in $\bP^3$ and $D$ is the complete intersection of $X$ with some quartic surface in $\bP^3$.
Let $U_c^{\K}$ denote the open subset of $U$ parametrizing $c$-K-semistable log Fano pairs. Let $U_c^{\GIT}:=\sP^{\rm ss}(N_{t})$ denote the VGIT semistable locus in $\sP$ with slope $t=t(c)=\frac{3c}{2c+2}$ which is also contained in $U$ by Lemma \ref{lem:GITssU}. 
We say a point $[(X,D)]\in U$ is $c$-GIT (poly/semi)stable if it is GIT (poly/semi)stable in $\sP$ with slope $t(c)$.
By Theorem \ref{thm:generalwall}, we know that there are finitely many walls in $(0,\frac{1}{2})$ for K-moduli stacks $\oMK_c$. Denote the sequence of VGIT walls and K-moduli walls by  
\[
0=w_0<w_1<w_2<\cdots<w_{\ell}=\frac{1}{2},
\]
i.e. either $c=w_i$ is a wall for K-moduli stacks $\oMK_c$, or $t=t(w_i)$ is a wall for VGIT moduli stacks $\sM(t)$.

The following proposition allows us to replace K-moduli stacks $\oMK_c$ by a quotient stack of $U_c^{\K}$. An essential ingredient is Theorem \ref{thm:surfaces}.

\begin{prop}\label{prop:K-stackinU}
There is an isomorphism of stacks $[U_c^{\K}/\PGL(4)]\xrightarrow{\cong} \oMK_c$. Moreover, we have open immersions $
U_{c-\epsilon}^{\K}\hookrightarrow U_{c}^{\K}\hookleftarrow U_{c+\epsilon}^{\K}$
which descends (via the above isomorphisms) to wall-crossing morphisms $\oMK_{c-\epsilon}\hookrightarrow \oMK_{c}\hookleftarrow\oMK_{c+\epsilon}$.
\end{prop}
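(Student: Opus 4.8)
The plan is to construct the morphism $\Phi\colon [U_c^{\K}/\PGL(4)]\to\oMK_c$ directly from the universal family and then to prove it is an isomorphism of stacks by a frame-bundle argument that runs parallel to the proof of Theorem~\ref{thm:firstwall} (itself modeled on \cite[Theorem 3.24]{ADL}). The essential geometric input throughout is Theorem~\ref{thm:surfaces} together with Remark~\ref{rmk:oursurfaces}.

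\emph{Construction of $\Phi$.} By Theorem~\ref{thm:surfaces} and Remark~\ref{rmk:oursurfaces}, every fiber of the restricted universal family $f_U\colon (\sX_U, c\sD_U)\to U_c^{\K}$ is a $c$-K-semistable log Fano pair whose surface is $\bP^1\times\bP^1$ or $\bP(1,1,2)$. Both are Gorenstein del Pezzo surfaces of degree $8$ embedded in $\bP^3$ as quadrics by the complete linear system $|-\tfrac12 K|$, so $\sD_U\sim_{\bQ}-2K_{\sX_U/U_c^{\K}}$ is $\bQ$-Cartier and each fiber is $\bQ$-Gorenstein smoothable to $(\bP^1\times\bP^1, cC)$; hence $f_U$ defines a family in the moduli functor of $\oMK_c$, landing in the correct connected component since its general fiber is a smooth $(4,4)$-curve. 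As $f_U$ is $\SL(4)$-equivariant, the induced map $U_c^{\K}\to\oMK_c$ is $\PGL(4)$-invariant and descends to $\Phi$.

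\emph{$\Phi$ is an isomorphism.} Recall $\oMK_c\cong [Z_c^\circ/\PGL(N_m+1)]$, where $Z_c^\circ$ is the $c$-K-semistable locus in the Hilbert scheme of pairs embedded by $|-mK|$. I would introduce the space $P$ parametrizing a point of $U_c^{\K}$ together with a projective frame of $H^0(X,-mK_X)$; then $P\to U_c^{\K}$ is the $\PGL(N_m+1)$-torsor attached to $(f_U)_*\cO(-mK_{\sX_U/U_c^{\K}})$, and re-embedding each fiber by its $|-mK|$-frame produces a morphism $\psi\colon P\to Z_c^\circ$ whose $\PGL(N_m+1)$-descent is $U_c^{\K}\to\oMK_c$. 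The commuting $\PGL(4)$- and $\PGL(N_m+1)$-actions on $P$ (by changing the quadric embedding, resp. the frame) make $P\to U_c^{\K}$ a $\PGL(N_m+1)$-torsor and $\psi$ a $\PGL(4)$-invariant, $\PGL(N_m+1)$-equivariant morphism, so it suffices to prove that $\psi$ is a $\PGL(4)$-torsor; then
\[
[U_c^{\K}/\PGL(4)]=[P/(\PGL(4)\times\PGL(N_m+1))]=[Z_c^\circ/\PGL(N_m+1)]=\oMK_c ,
\]
and this composite is $\Phi$. The fiber of $\psi$ over a point of $Z_c^\circ$ is the set of quadric embeddings of the underlying surface, i.e. the projective frames of the $4$-dimensional space $H^0(X,-\tfrac12 K_X)$, on which $\PGL(4)$ acts simply transitively; surjectivity of $\psi$ is precisely Theorem~\ref{thm:surfaces}. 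For étale-local triviality I would use that the relative line bundle $-\tfrac12 K_{\cX/Z_c^\circ}$ exists étale-locally on $Z_c^\circ$ (its fiberwise square root is unique because $\Pic$ of each of the two surfaces is torsion-free), with pushforward locally free of rank $4$ by Kawamata--Viehweg vanishing and cohomology and base change; a local frame then yields a local section of $\psi$. This last point is the step I expect to require the most care: in Theorem~\ref{thm:firstwall} the analogous torsor was trivialized using \emph{isotriviality} of the family over $Z_c^\circ$, whereas here the fibers over $Z_c^\circ$ are genuinely of two isomorphism types, so the trivialization cannot come from a single model surface and must instead be produced from the relative half-anticanonical (quadric) embedding.

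\emph{Wall crossings.} For the final assertion I take $c=w_i$ to be a wall. Each $U_c^{\K}$ is open in $U$ by openness of K-semistability \cite{BLX19, Xu19}, and Theorem~\ref{thm:generalwall} provides open immersions of stacks $\oMK_{w_i-\epsilon}\hookrightarrow\oMK_{w_i}\hookleftarrow\oMK_{w_i+\epsilon}$. Since every $\Phi_c$ is induced by the one universal family over $U$ with the same fixed $\PGL(4)$-action, these pull back under $\Phi_{w_i\pm\epsilon}$ and $\Phi_{w_i}$ to open immersions $U_{w_i-\epsilon}^{\K}\hookrightarrow U_{w_i}^{\K}\hookleftarrow U_{w_i+\epsilon}^{\K}$ of the semistable loci, and by construction these descend back to the wall-crossing morphisms.
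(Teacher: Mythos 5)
Your overall architecture is sound and is essentially the paper's argument run from the opposite end: where you build a $\PGL(N_m+1)$-torsor $P$ over $U_c^{\K}$ and try to show the re-embedding map $\psi\colon P\to T$ (the K-semistable locus in the Hilbert scheme, i.e.\ the relevant connected component of what you call $Z_c^\circ$) is a $\PGL(4)$-torsor, the paper constructs the same roof from the other side: it produces a $\PGL(4)$-torsor $\cP'$ over the surface Hilbert locus $T'=\pr_1(T)$, pulls it back to $\cP=\cP'\times_{T'}T$, and maps $\cP\to U_c^{\K}$ to get an explicit inverse of the universality morphism. Your construction of $\Phi$, the identification of the fibers of $\psi$ with projective frames of $H^0(X,-\tfrac{1}{2}K_X)$ via Theorem \ref{thm:surfaces} and Remark \ref{rmk:oursurfaces}, the double-quotient identification, and the wall-crossing paragraph all match what the paper does explicitly or implicitly.

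The gap is exactly at the step you flagged, and the justification you give there does not close it. Torsion-freeness of $\Pic$ of the two fiber types proves \emph{uniqueness} of a fiberwise square root, and it does take care of gluing the resulting projectivized frame bundles; it does not produce the bundle. A compatible choice of fiberwise classes is a priori only a section of the relative Picard functor, and the assertion that it is induced, \'etale-locally on the base, by an honest line bundle on the family is precisely what must be proved --- this is where essentially the whole of the paper's proof is spent. The paper's route: choose a flat quasi-finite cover $\widetilde{T}\to T'$ that is \'etale away from $H'$ (the divisor parametrizing $\bP(1,1,2)$) but \emph{ramified} along it --- ramification is unavoidable there, since the isotrivial $\bP^1\times\bP^1$-fibration over $T'\setminus H'$ has ruling-swapping monodromy around $H'$, so no \'etale base change can trivialize it --- then trivialize the family away from $H'$, take the Zariski closure of $\cO(1,1)$, push forward, and glue the associated $\PGL(4)$-torsors by fppf descent (the vector bundles themselves satisfy the cocycle condition only up to $\pm 1$). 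Note that this monodromy obstructs trivializing the \emph{family}, not the line bundle: $\cO(1,1)$ is monodromy-invariant, so your \'etale-local claim is in fact true, and it can be proved more directly than in the paper. Since $H^1(X_t,\cO_{X_t})=H^2(X_t,\cO_{X_t})=0$ for both $\bP^1\times\bP^1$ and $\bP(1,1,2)$, the relative Picard space of the universal family is \'etale over $T$; hence the class of the half-anticanonical bundle on any fiber spreads out uniquely to a section over an \'etale neighborhood, such a section is \'etale-locally represented by an actual line bundle, and squaring plus the fact that two sections of an \'etale space through a common point agree locally identifies it fiberwise with the half-anticanonical. Either this deformation-theoretic argument or the paper's descent construction has to be inserted where you currently have only the uniqueness remark; as written, your proof asserts its crucial step rather than proving it.
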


\begin{proof}
Since $U_c^{\K}$ parametrizes $c$-K-semistable log Fano pairs, by universality of K-moduli stacks we know that there exists a morphism $\psi: [U_c^{\K}/\PGL(4)]\to  \oMK_c$. In order to show $\psi$ is an isomorphism, we will construct the inverse morphism $\psi^{-1}:\oMK_c\to [U_c^{\K}/\PGL(4)]$. We follow notation from Theorem \ref{thm:modnormal}. Let $T\subset Z_{c}^{\red}$ be the connected component where a general point parametrizes $\bP^1\times\bP^1$. By Definition \ref{defn:modulispace} we know that $\oMK_c\cong [T/\PGL(N_m+1)]$. Let $T'=\pr_1(T)\subset \Hilb_{\chi}(\bP^{N_m})$. By Theorems \ref{thm:modnormal} and \ref{thm:surfaces} we know that $T'$ is smooth and contains a (possibly empty) smooth divisor $H'$ parametrizing $\bP(1,1,2)$. Moreover, both $T'\setminus H'$ and $H'$ are $\PGL(N_m+1)$-orbits in $\Hilb_{\chi}(\bP^{N_m})$.  

In order to construct $\psi^{-1}$, we will first construct a $\PGL(4)$-torsor $\cP'/T'$. The argument here is similar to \cite[Proof of Theorem 5.15]{ADL}. Let $\pi:(\cX,\cD)\to T$ and $\pi':\cX'\to T'$ be the universal families. Since $\pi'$ is an isotrivial $\bP^1\times\bP^1$-fibration over $T'\setminus H'$, there exists a flat quasi-finite morphism $\widetilde{T}\to T'$ from a smooth variety $\widetilde{T}$ that is \'etale away from $H'$ whose image intersects $H'$ (unless $H'$ is empty). From the fact that $T'\setminus H'$ and $H'$ are $\PGL(N_m+1)$-orbits, we know that there exists  $T_i'=g_i\cdot \widetilde{T}$ where $g_i\in\PGL(N_m+1)$ such that $\sqcup_i T_i'\to T$ is a fppf covering. Moreover, we may assume that $\pi'\times_{T'} (T_i'\setminus H_i'):\cX'_{T_i'\setminus H_i'}\to T_i'\setminus H_i'$ is a trivial $\bP^1\times\bP^1$-bundle for each $i$ where $H_i'=H'\times_{T'} T_i'$.  Let $\cL_i'$ be the Weil divisorial sheaf on $\cX'_{T_i'}$ as the Zariski closure of $\cO(1,1)$ on $\cX'_{T_i'\setminus H_i'}$. After replacing $T_i'$ by its Zariski covering, we may assume that $\cL_i'^{[-2]}\cong \omega_{\cX'_{T_i'}/T_i'}$. By Kawamata-Viehweg vanishing, we know that $(\pi'_{T_i'})_*\cL_i'$ is a rank $4$ vector bundle over $T_i'$. Let $\cP_i'/T_i'$ be the $\PGL(4)$-torsor induced by projectivized basis of $(\pi'_{T_i'})_*\cL_i'$. Since the cocycle condition of $\{(\pi'_{T_i'})_*\cL_i'/T_i\}_i$ is off by $\pm 1$, we know that $\{\cP_i'/T_i'\}$ is a fppf descent datum which descends to a $\PGL(4)$-torsor $\cP'/T'$ by \cite[Tag 04U1]{stacksproject}. It is clear that $\cP'/T'$ is $\PGL(N_m+1)$-equivariant. Denote by $\cP:=\cP'\times_{T'} T$. Hence the morphism $\cP\to U_c^{\K}$ given by $(t,[s_0,s_1,s_2,s_3])\mapsto [s_0,s_1,s_2,s_3](\cX_t,\cD_t)$ induces $\psi^{-1}:\oMK_c\to [U_c^{\K}/\PGL(4)]$. The proof is finished. 
\end{proof}

In order to prove Theorem \ref{thm:wallscoincide}, we run an inductive argument on the walls $w_i$. The following proposition is an initial step for induction.

\begin{prop}\label{prop:induction0}
For any $c\in (0, w_1)$, we have $U_c^{\K}=U_c^{\GIT}$.
\end{prop}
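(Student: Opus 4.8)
The plan is to pin down both semistable loci explicitly and to observe that they are the same subset of $U$, namely the locus of pairs $(X,D)$ with $X$ a smooth quadric and $D$ a GIT-semistable $(4,4)$-curve. First I would record the elementary numerics: since $t(c)=\frac{3c}{2c+2}$ is increasing with $t(\tfrac18)=\tfrac16$, and since there are no K-walls in $(0,\tfrac18)$ by Theorem \ref{thm:firstwall} and no VGIT-walls in $(0,\tfrac16)$ by Theorem \ref{thm:LOmain}(2), we have $w_1=\tfrac18$ and $t(c)\in(0,\tfrac16)$ for all $c\in(0,w_1)$. By Theorem \ref{thm:firstwall} (together with Theorem \ref{thm:surfaces}), a pair $(X,cD)\in U$ is $c$-K-semistable precisely when $X\cong\bP^1\times\bP^1$ and $D$ is a GIT-semistable $(4,4)$-curve; thus $U_c^{\K}=\{(X,D)\in U:\ X\cong\bP^1\times\bP^1,\ D\text{ GIT-semistable}\}$. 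It remains to show that $U_c^{\GIT}$ is this same set.

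For the VGIT side I would first reduce to a GIT problem on $\bP(E)$: since $t\in(0,\tfrac13)$, Proposition \ref{prop:benoist2,4} gives that $\eta+t\xi$ is ample, and Theorem \ref{thm:LOmain}(1) identifies $U_c^{\GIT}=\sP^{\rm ss}(N_t)$ with $\bP(E)^{\rm ss}(\eta+t\xi)$ (this locus lies in $U$ by Lemma \ref{lem:GITssU}). The crucial step is to exclude singular quadrics. Here I would use that the ring of $\SL(4)$-invariants of quadratic forms on $\bP^3$ is generated by the discriminant, so that $\bP^{9,\rm ss}$ consists exactly of the smooth quadrics (a single dense orbit). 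Since $\eta=\pi^*\calO_{\bP^9}(1)$ is a pullback, $\bP(E)^{\rm ss}(\eta)=\pi^{-1}(\bP^{9,\rm ss})$ is precisely the preimage of the smooth-quadric locus. Applying the semicontinuity of semistable loci (Lemma \ref{lem:VGITbasics}(1)) with $\calL_0=\eta$ and $\calL=\xi$, one gets $\bP(E)^{\rm ss}(\eta+\epsilon\xi)\subseteq\bP(E)^{\rm ss}(\eta)$ for $0<\epsilon\ll1$; because there are no VGIT-walls in $(0,\tfrac16)$ by Theorem \ref{thm:LOmain}(2), the semistable locus is constant there, so in fact $U_c^{\GIT}=\bP(E)^{\rm ss}(\eta+t\xi)\subseteq\pi^{-1}(\{\text{smooth quadrics}\})$ for every $t\in(0,\tfrac16)$. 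In particular no pair with singular $X$ (e.g. the rank-$3$ cone $\bP(1,1,2)$) is VGIT-semistable, matching the K-side.

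Finally, over the smooth-quadric locus I would identify VGIT-semistability with $(4,4)$-curve GIT-semistability. Since all smooth quadrics are projectively equivalent, $\pi^{-1}(\bP^9_{\rm sm})\cong\SL(4)\times_{G_0}\bfP_{4,4}$, where $G_0=\mathrm{Stab}_{\SL(4)}(Q_0)$ and $\bfP_{4,4}$ is the fiber over a fixed smooth quadric $Q_0$. The identity component $G_0^{\circ}$ is a finite cover of $\Aut(\bP^1\times\bP^1)^{\circ}\cong\PGL_2\times\PGL_2$, hence semisimple and without nontrivial characters; consequently the character by which the pullback $\eta$ acts on the fiber of $\calO_{\bP^9}(1)$ at $[Q_0]$ is trivial on $G_0^{\circ}$, and the restriction of $\eta+t\xi$ to $\bfP_{4,4}$ agrees, up to scaling and a finite cover, with the natural polarization $\calO_{\bfP_{4,4}}(1)$. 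By the standard description of semistability on an associated fiber bundle, $[(Q_0,D)]$ is $\SL(4)$-semistable for $\eta+t\xi$ if and only if $D$ is $G_0$-semistable on $\bfP_{4,4}$, i.e. GIT-semistable as a $(4,4)$-curve. Combining this with the previous paragraph yields $U_c^{\GIT}=\{(X,D)\in U:\ X\cong\bP^1\times\bP^1,\ D\text{ GIT-semistable}\}=U_c^{\K}$, as desired. The main obstacle is the exclusion of singular quadrics from $U_c^{\GIT}$; the point is that it follows cleanly from the discriminant being the only quadric invariant, together with the absence of VGIT-walls below $t=\tfrac16$, rather than from any delicate weight computation for the cone, and so the base case avoids the Paul–Tian difficulties that force the inductive argument for general $c$.
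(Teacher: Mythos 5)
Your K-side is exactly the paper's argument (reduce to $0<c\ll1$ using constancy of both loci on $(0,w_1)$, then quote Theorem \ref{thm:firstwall}; the paper pairs it with Proposition \ref{prop:K-stackinU}). On the VGIT side, however, the paper's entire proof is to read the description of $\sP^{\rm ss}(N_t)$ for $t<\frac16$ directly off Laza--O'Grady's result (Theorem \ref{thm:LOmain}(2)): semistable points are exactly pairs of a smooth quadric and a GIT-semistable $(4,4)$-curve. You instead try to re-derive that description, and the re-derivation has two genuine gaps. First, to exclude singular quadrics you apply Lemma \ref{lem:VGITbasics}(1) with $\calL_0=\eta$; but $\eta=\pi^*\calO_{\bP^9}(1)$ is not ample on $\bP(E)$ (it contracts the fibers of $\pi$), so the lemma, stated for a polarized projective variety, does not apply. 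Semicontinuity toward such a degenerate boundary linearization is true, but it requires the more general Thaddeus/Dolgachev--Hu statements or a direct Hilbert--Mumford estimate --- which is precisely the weight computation for singular quadrics that you set out to avoid (cf. Lemma \ref{lem:GITssnormal} and the proof of Theorem \ref{thm:LOmain-alldeg}(3)). Note also that invoking Theorem \ref{thm:LOmain}(2) for ``no VGIT walls in $(0,\frac16)$'' is nearly circular: that theorem is essentially the statement being re-derived, and once you allow yourself to use it you may as well quote the semistable locus outright, as the paper does; conversely, if you only work on $(0,w_1)$, wall-freeness is automatic from the definition of $w_1$ and you do not need it.

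Second, and more seriously, the fiber-bundle step conflates two different notions of semistability. Semistability on $\bP(E)$ (or $\sP$) means non-vanishing of an $\SL(4)$-invariant section of a power of $\eta+t\xi$ over the \emph{projective} variety $\bP(E)$, whereas your ``standard description of semistability on an associated fiber bundle'' computes the intrinsic GIT of the open, non-projective locus $\pi^{-1}(\bP^9_{\rm sm})\cong\SL(4)\times_{G_0}\bfP_{4,4}$. The implication ``ambient semistable $\Rightarrow$ fiberwise $G_0$-semistable'' is fine (restrict sections), but the converse --- which is what you need to show every GIT-semistable $(4,4)$-curve gives a VGIT-semistable point --- requires extending an invariant section from the open associated bundle to all of $\bP(E)$. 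This can be done by multiplying by powers of $\pi^*\mathrm{disc}\in H^0(\bP(E),4\eta)^{\SL(4)}$ to clear poles along the boundary divisor, but the twist changes the slope: a fiberwise section of degree $k$ whose spread-out has pole order $p$ yields an invariant section of $4j\eta+k\xi$ only for $j\geq p$, i.e.\ semistability at slopes $t\leq k/(4p)$, a bound depending on the point. So the asserted equivalence is unjustified as written. It can be repaired --- the point-dependent bound is harmless once you combine it with your reduction to $0<t\ll1$ and constancy of the semistable locus below the first wall, or by the invariant-section comparison of \cite[Lemma 4.17]{CMJL14} that the paper uses in Section \ref{sec:generaldegree} --- but the repair is exactly the kind of careful argument your proposal claims to sidestep.
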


\begin{proof}
Since both $U_c^{\K}$ and $U_c^{\GIT}$ are independent of the choice of $c\in (0, w_1)$, it suffices to show that they are equal for $0< c\ll 1$.
By Theorem \ref{thm:LOmain}(2), we know that $[(X,D)]\in U_c^{\GIT}$ if and only if $X\cong \bP^1\times\bP^1$ and $D$ is a GIT semistable $(4,4)$-curve. 
By Theorem \ref{thm:firstwall} and Proposition \ref{prop:K-stackinU}, we know that $U_c^{\K}$ consists of exactly the same points as $U_c^{\GIT}$. Hence the proof is finished.
\end{proof}

Next, we divide each induction step into two statements as Propositions \ref{prop:induction1} and \ref{prop:induction2}.

\begin{prop}\label{prop:induction1}
Assume that for any $c\in (0, w_i)$ we have $U_c^{\K}=U_c^{\GIT}$. Then $U_{w_i}^{\K}=U_{w_i}^{\GIT}$.
\end{prop}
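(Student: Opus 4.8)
The plan is to realize both $U_{w_i}^{\K}$ and $U_{w_i}^{\GIT}$ as open subsets of $U$ sharing the common open subset $W:=U_{w_i-\epsilon}^{\K}=U_{w_i-\epsilon}^{\GIT}$ (equal by the inductive hypothesis), and then to check that the two loci acquire exactly the same new points at the wall. That $W$ sits inside $U_{w_i}^{\GIT}$ is the VGIT semicontinuity of Lemma \ref{lem:VGITbasics}(1) applied to $\cL_0=N_{t(w_i)}$; that $W$ sits inside $U_{w_i}^{\K}$ is the open immersion $U_{w_i-\epsilon}^{\K}\hookrightarrow U_{w_i}^{\K}$ coming from Theorem \ref{thm:generalwall} and Proposition \ref{prop:K-stackinU}. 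So it remains to compare $U_{w_i}^{\GIT}\setminus W$ with $U_{w_i}^{\K}\setminus W$.

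The engine of the comparison is the identity $\lambda_{U,c}\propto N_{t(c)}$ on $U$ from Proposition \ref{prop:proportional}, together with the fact that for a one-parameter subgroup $\sigma\subset\PGL(4)$ whose limit $x'=\lim_{s\to 0}\sigma(s)\cdot x$ stays in $U$, the induced degeneration is a genuine $\bQ$-Gorenstein log Fano test configuration, so its generalized Futaki invariant at weight $c$ equals a positive multiple of the Hilbert--Mumford weight $\mu^{N_{t(c)}}(x,\sigma)$. The two sources of such $\sigma$ that I would use are: on the K side, the K-polystable replacements, i.e.\ the zero-Futaki special test configurations, whose central fibers are K-semistable by Lemma \ref{lem:zerofut}(2) and hence are quadrics lying in $U$ by Theorem \ref{thm:surfaces}; on the GIT side, the zero-weight degenerations produced by Lemma \ref{lem:VGITbasics}(2), whose limits remain GIT-semistable by Kempf's Lemma \ref{lem:zerofut}(1) and therefore lie in $U$ by Lemma \ref{lem:GITssU}.

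With this dictionary in place, I would match the new polystable objects. Given $x\in U_{w_i}^{\GIT}\setminus W$, Lemma \ref{lem:VGITbasics}(2) furnishes $\sigma$ with $\mu^{N_{t(w_i)}}(x,\sigma)=0$ and $\mu^{N_{t(w_i-\epsilon)}}(x,\sigma)<0$; Kempf's lemma places the limit $x'$ in $U$, the vanishing of the wall weight translates (via proportionality) into a zero-Futaki special test configuration, and Lemma \ref{lem:zerofut}(2) upgrades its central fiber to a $w_i$-K-semistable pair, giving $x'\in U_{w_i}^{\K}$ and hence $x\in U_{w_i}^{\K}$. Conversely, a point of $U_{w_i}^{\K}\setminus W$ specially degenerates within $U$ (Theorem \ref{thm:surfaces}) to its $w_i$-K-polystable representative via a one-parameter subgroup of zero Futaki, which by proportionality has $\mu^{N_{t(w_i)}}=0$; Kempf's lemma then transports GIT-semistability back, giving membership in $U_{w_i}^{\GIT}$. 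Running both directions yields $U_{w_i}^{\K}=U_{w_i}^{\GIT}$.

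The hard part is precisely the failure of properness flagged in the introduction: a priori a destabilizing one-parameter subgroup of a point in $U$ can have a limit that leaves $U$, and there the VGIT polarization $N_t$ --- built from the Chow summand, whose extension $\overline{L}_\infty=4\eta+2\xi$ of Proposition \ref{prop:Linfinity} is only valid over $U$ --- no longer agrees with the extended CM class of Proposition \ref{prop:CM-Z}, so the Futaki--weight identification breaks down. The whole scheme above is designed never to leave $U$: the reduction to special (hence normal, klt) test configurations together with the classification Theorem \ref{thm:surfaces}, and Kempf's lemma together with Lemma \ref{lem:GITssU}, confine every degeneration that enters the argument to the complete-intersection locus. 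The most delicate point to nail down is that the two zero-weight degenerations --- the K-polystable replacement and the GIT-polystable replacement --- land on the very same object of $U$; this is where the index and normalized-volume estimates underlying Theorem \ref{thm:surfaces} do the real work, by excluding higher-index degenerations such as $\bP(1,2,9)$ that would otherwise escape $U$.
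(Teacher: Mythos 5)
Your overall framework (work inside $U$ via Theorem \ref{thm:surfaces} and Lemma \ref{lem:GITssU}, convert between Futaki invariants and Hilbert--Mumford weights via Proposition \ref{prop:proportional}, and transfer stability along zero-weight degenerations using Lemma \ref{lem:zerofut}) matches the paper's, but both directions of your comparison are circular at the decisive step. In the direction $U_{w_i}^{\GIT}\subseteq U_{w_i}^{\K}$ you apply Lemma \ref{lem:zerofut}(2) to the test configuration induced by $\sigma$ whose \emph{general} fiber is $(X,D)$; that lemma has as a hypothesis that the general fiber $(X,w_iD)$ is K-semistable, which is precisely what you are trying to prove. (This is also why you cannot call this degeneration a ``special test configuration'': specialness is part of the \emph{conclusion} of Lemma \ref{lem:zerofut}(2), not a given, and the paper must in fact run a separate normality verification before invoking the lemma --- the total space is $R_1$ by \cite[Section 8.2]{LX14} \emph{because} its general fiber is K-semistable, and $S_2$ because the central fiber is Cohen--Macaulay.) Symmetrically, in the direction $U_{w_i}^{\K}\subseteq U_{w_i}^{\GIT}$ you say Kempf's lemma ``transports GIT-semistability back'' from the K-polystable limit $x_0$ to $x$; but Lemma \ref{lem:zerofut}(1) only goes \emph{forward}, from a point already known to be semistable to its zero-weight limit, so to use it you would need $x$ GIT-semistable --- again the goal --- while the backward transport (openness plus $G$-invariance of the semistable locus) requires first knowing $x_0\in U_{w_i}^{\GIT}$, which you never establish.

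The missing idea is the paper's use of \emph{surjectivity of the wall-crossing morphisms} to break this circle. Given $x\in U_{w_i}^{\K}$ (resp.\ $U_{w_i}^{\GIT}$) with polystable degeneration $x_0$ inside $U$, one does not argue along the degeneration $x\rightsquigarrow x_0$ at all; instead, surjectivity of $\oK_{w_i-\epsilon}\to\oK_{w_i}$ (Theorem \ref{thm:generalwall}), resp.\ of $\fM(t(w_i)-\epsilon)\to\fM(t(w_i))$ (Theorem \ref{thm:LOwallcrossings}), produces an auxiliary point $x'$ \emph{below} the wall together with a $1$-PS $\sigma'$ degenerating $x'$ to the same $x_0$ with zero Futaki invariant, equivalently zero Hilbert--Mumford weight. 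The inductive hypothesis applies to $x'$, since it lies in $U_{w_i-\epsilon}^{\K}=U_{w_i-\epsilon}^{\GIT}$, so $x'$ is semistable in the \emph{other} sense as well; then Lemma \ref{lem:zerofut}(1) (resp.\ Lemma \ref{lem:zerofut}(2), after the normality check above) transfers that semistability from $x'$ to $x_0$, and finally openness of the GIT-semistable locus (resp.\ openness of K-semistability \cite{BLX19, Xu19}) carries it from $x_0$ back up to $x$. Without this detour through a below-the-wall point --- the only place where the inductive hypothesis can be brought to bear --- neither containment can be closed, so as written your proof has a genuine gap rather than a fixable imprecision.
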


\begin{proof}
For simplicity, denote by $w:=w_i$. We first show that $U_w^{\K}\subset U_w^{\GIT}$. Let $[(X,D)]$ be a point in $U_w^{\K}$. By Proposition \ref{prop:K-stackinU}, we know that $[U_w^{\K}/\PGL(4)]\cong \oMK_w$. 
By Theorem \ref{thm:generalwall}, the K-moduli wall crossing morphism $\oK_{w-\epsilon}\to \oK_w$ is surjective which is induced by the open immersion $U_{w-\epsilon}^{\K}\hookrightarrow U_w^{\K}$. Hence there exists a $w$-K-polystable point $[(X_0, D_0)]\in U_w^{\K}$, a $(w-\epsilon)$-K-semistable point $[(X',D')]\in U_{w-\epsilon}^{\K}$, and two $1$-PS's $\sigma$ and $\sigma'$ of $\SL(4)$, such that 
\begin{equation}\label{eq:induction1}
 \lim_{t\to 0 }\sigma(t)\cdot [(X,D)]= [(X_0, D_0)],\qquad
 \lim_{t\to 0 }\sigma'(t)\cdot [(X',D')]= [(X_0, D_0)].
\end{equation}
In other words $(X_0,D_0)$ is the $w$-K-polystable degeneration of $(X,D)$, while the existence of $(X',D')$ follows from surjectivity of $\oK_{w-\epsilon}\to \oK_w$.
Denote the above two special test configurations by $(\cX, w\cD)$ and $(\cX',w\cD')$ respectively. Since $(X_0, wD_0)$ is K-polystable, we know that $\Fut(\cX',w\cD')=0$. Since the generalized Futaki invariant is proportional to the GIT weight of the CM $\bQ$-line bundle $\lambda_{U,w}$ which is again proportional to $N_t(w)|_U$ by Proposition \ref{prop:proportional}, we have that the GIT weight $\mu^{N_{t(w)}}([(X',D')], \sigma')=0$. By assumption, we have $[(X',D')]\in U_{w-\epsilon}^{\K}= U_{w-\epsilon}^{\GIT}\subset U_{w}^{\GIT}$. Hence Lemma \ref{lem:zerofut}(1) implies that $[(X_0,D_0)]\in U_{w}^{\GIT}$ which implies $[(X,D)]\in U_w^{\GIT}$ by openness of the GIT semistable locus. Thus we have shown that $U_w^{\K}\subset U_w^{\GIT}$.

Next we show the reverse containment $U_w^{\GIT}\subset U_w^{\K}$. Let $[(X,D)]$ be a point in $U_w^{\GIT}$. By almost the same argument as the previous paragraph except replacing K-stability with GIT stability, we can find $[(X_0,D_0)]\in U_w^{\GIT}$, $[(X',D')]\in U_{w-\epsilon}^{\GIT}$, and two $1$-PS's $\sigma,\sigma'$ of $\SL(4)$ such that \eqref{eq:induction1} holds, and 
\[
\mu^{N_{t(w)}}([(X,D)], \sigma)=\mu^{N_{t(w)}}([(X',D')], \sigma')=0.
\]
Note that the surjectivity of wall-crossing morphisms in VGIT follows from \cite{LO} (see Theorem \ref{thm:LOwallcrossings}).
By assumption we have $[(X',D')]\in U_{w-\epsilon}^{\GIT}=U_{w-\epsilon}^{\K}\subset U_w^{\K}$.
Again using Proposition \ref{prop:proportional} we get $\Fut(\cX',w\cD';\cL)=0$ where $(\cX',w\cD';\cL)$ is the test configuration of $(X',wD',\cO_{X'}(1))$ induced by $\sigma'$. Since $(X',wD')$ is K-semistable, by \cite[Section 8.2]{LX14} we know that $\cX'$ is regular in codimension $1$. Since $\cX_0'=X_0$ is Cohen-Macaulay, we know that $\cX'$ is $S_2$ which implies that $\cX'$ is normal. Hence Lemma \ref{lem:zerofut}(2) implies that $(X_0, wD_0)$ is K-semistable, and so is $(X,wD)$ by the openness of K-semistability \cite{BLX19, Xu19}. The proof is finished.
\end{proof}

\begin{prop}\label{prop:induction2}
Assume that for any $c\in (0, w_i]$ we have $U_c^{\K}=U_c^{\GIT}$. Then $U_{c'}^{\K}=U_{c'}^{\GIT}$ for any $c'\in (w_i, w_{i+1})$.
\end{prop}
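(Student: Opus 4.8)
The plan is to first fix a single representative $c' = w_i + \epsilon$ with $0 < \epsilon \ll 1$, since by Theorem \ref{thm:generalwall} (for K-moduli) and by the general VGIT chamber structure recalled above (for VGIT) both $U_{c'}^{\K}$ and $U_{c'}^{\GIT}$ are independent of $c'$ on the open chamber $(w_i, w_{i+1})$. The open immersion $\oMK_{w_i+\epsilon} \hookrightarrow \oMK_{w_i}$ of Theorem \ref{thm:generalwall}, read through Proposition \ref{prop:K-stackinU}, shows $U_{c'}^{\K}$ is a $\PGL(4)$-invariant open subset of $U_{w_i}^{\K}$; the semicontinuity of VGIT semistable loci in Lemma \ref{lem:VGITbasics}(1) shows $U_{c'}^{\GIT} = \sP^{\rm ss}(N_{t(c')})$ is a $\PGL(4)$-invariant open subset of $U_{w_i}^{\GIT}$. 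By hypothesis $M := U_{w_i}^{\K} = U_{w_i}^{\GIT}$, so it suffices to prove that these two open subsets of $M$ coincide.

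\textbf{The inclusion $U_{c'}^{\K} \subseteq U_{c'}^{\GIT}$.} Take $x = [(X,D)] \in U_{c'}^{\K} \subseteq M$ and suppose $x \notin U_{c'}^{\GIT}$. Since on $U$ one has $N_t|_U = \eta_U + t\xi_U$ and $t(c') > t(w_i)$, Lemma \ref{lem:VGITbasics}(2) produces a $1$-PS $\sigma$ of $\SL(4)$ with $\mu^{N_{t(w_i)}}(x,\sigma) = 0$ and $\mu^{N_{t(c')}}(x,\sigma) < 0$. As $x$ is $w_i$-GIT-semistable with $\mu^{N_{t(w_i)}}(x,\sigma) = 0$, Lemma \ref{lem:zerofut}(1) forces the limit $x_0 = \lim_{s\to 0}\sigma(s)\cdot x$ to lie in $U_{w_i}^{\GIT} \subseteq U$. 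Thus $\sigma$ induces a test configuration of $(X,D)$ whose central fiber remains inside the complete-intersection locus $U$, where the universal family is $\bQ$-Gorenstein flat; by the proportionality of $\lambda_{U,c'}$ and $N_{t(c')}$ (Proposition \ref{prop:proportional}) its generalized Futaki invariant $\Fut_{c'}$ has the same (negative) sign as $\mu^{N_{t(c')}}(x,\sigma) < 0$, and normalization does not increase $\Fut_{c'}$. Hence $(X,c'D)$ is $c'$-K-unstable, a contradiction, so $x \in U_{c'}^{\GIT}$.

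\textbf{The inclusion $U_{c'}^{\GIT} \subseteq U_{c'}^{\K}$.} By the previous step $U_{c'}^{\K}$ is an open subset of $U_{c'}^{\GIT}$, so $U_{c'}^{\GIT} \setminus U_{c'}^{\K}$ is closed and $\PGL(4)$-invariant in $U_{c'}^{\GIT}$; if nonempty it contains a closed orbit, i.e. a $c'$-GIT-polystable point $x_0 = [(X_0,D_0)]$. It therefore suffices to show that every such $x_0$ is $c'$-K-semistable. Since $x_0 \in M$, the pair $(X_0, w_i D_0)$ is $w_i$-K-semistable; let $(Y_0,E_0)$ be its $w_i$-K-polystable degeneration, a special test configuration with $\Fut_{w_i} = 0$. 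By proportionality and Lemma \ref{lem:zerofut}(1) the point $[(Y_0,E_0)]$ lies in $U_{w_i}^{\GIT} = M$ and is $w_i$-GIT-polystable, and by Theorem \ref{thm:surfaces} both $X_0$ and $Y_0$ are quadrics ($\bP^1\times\bP^1$ or $\bP(1,1,2)$) embedded in $\bP^3$, so $\Aut(Y_0,E_0) \subseteq \PGL(4)$ is reductive.

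\textbf{Main obstacle.} The crux is to compare the $c'$-K- and $c'$-GIT-semistability of $x_0$, and this is where the subtlety flagged in the introduction appears: the universal family over $\sP$ is $\bQ$-Gorenstein flat only over $U$, so proportionality may only be invoked for $1$-PS whose induced degenerations stay inside $U$, whereas an abstract K-destabilization of $(X_0,c'D_0)$ need not be of this form. I would resolve this using the local VGIT presentation of the K-moduli wall crossing at $w_i$ (Theorem \ref{thm:generalwall} and the formalism of \cite[(1.2)]{AFS17}): near the $w_i$-K-polystable point $(Y_0,E_0)$ the K-moduli wall crossing is modelled on the GIT of $\Aut(Y_0,E_0) \subseteq \PGL(4)$ acting on a local deformation space, linearized by the CM line bundle $\lambda_{U,c}$. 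Because $\lambda_{U,c}$ is proportional to $N_{t(c)}$ on $U$ (Proposition \ref{prop:proportional}), this local GIT problem is \emph{identical} to the one governing the VGIT wall crossing at $t(w_i)$, and all the relevant degenerating $1$-PS lie in $\Aut(Y_0,E_0) \subseteq \SL(4)$ with limits inside $U$. Consequently the $c'$-K-semistable and $c'$-GIT-semistable loci coincide in a neighborhood of $(Y_0,E_0)$; since $x_0$ sits in this deformation space and is $c'$-GIT-semistable, it is $c'$-K-semistable, the desired contradiction. Throughout, the hypothesis that all limits remain in $U$ is supplied by Lemma \ref{lem:zerofut}(1) and the classification in Theorem \ref{thm:surfaces}, and the normality needed to apply Lemma \ref{lem:zerofut}(2) follows from the $R_1 + S_2$ argument as in the proof of Proposition \ref{prop:induction1}.
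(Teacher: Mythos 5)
Your setup, your inclusion $U_{c'}^{\K}\subseteq U_{c'}^{\GIT}$, and your reduction of the reverse inclusion to $c'$-GIT-polystable points are all correct; the first inclusion is in fact essentially the paper's own argument (Lemma \ref{lem:VGITbasics}(2) produces the $1$-PS, Lemma \ref{lem:zerofut}(1) keeps its limit inside $U$, and Proposition \ref{prop:proportional} converts the negative GIT weight into a negative generalized Futaki invariant).

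The genuine gap is in your ``Main obstacle'' paragraph, where the entire content of the hard inclusion is compressed into the assertion that the AFS17-style local quotient presentation of the K-moduli wall crossing at $w_i$ is \emph{identical} to the Luna-slice local VGIT of $(\sP,N_t)$ at $(Y_0,E_0)$. Theorem \ref{thm:generalwall} is an abstract existence statement: it provides \emph{some} local VGIT presentation, constructed in \cite{ADL} from a slice of the parameter space inside a Hilbert scheme (embedding by $|-mK_X|$, group $\Aut(Y_0,E_0)$, character the CM weight). It does not identify that presentation with a Luna slice of $\sP^{\rm ss}(N_{t(w_i)})$ at $(Y_0,E_0)$, nor does it assert that every K-destabilizing degeneration of a nearby point is realized by a $1$-PS of $\Aut(Y_0,E_0)$ acting on a slice of $U$ with limit in $U$ --- which is precisely the difficulty you correctly flagged at the start of the paragraph. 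To close this you would need to (i) identify the two \'etale local quotient presentations of $\oMK_{w_i}\cong[U_{w_i}^{\GIT}/\PGL(4)]$ equivariantly and compatibly with both wall crossings, which requires uniqueness-type results for local quotient presentations that the paper never invokes, and (ii) match the two linearizing characters (CM weight versus $N_t$-weight) under that identification. As written, the sentence ``the $c'$-K-semistable and $c'$-GIT-semistable loci coincide in a neighborhood of $(Y_0,E_0)$'' is the statement to be proven, restated locally.

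For comparison, the paper's proof of this direction avoids any local identification. It first shows that every $(w_i+\epsilon)$-K-polystable point is $(w_i+\epsilon)$-GIT-polystable: degenerate by a $1$-PS of $\SL(4)$ to the GIT-polystable representative $[(X',D')]$; proportionality gives vanishing Futaki invariant; the induction hypothesis makes $(X',w_iD')$ K-semistable, hence klt, so the induced test configuration is special; K-polystability then forces the degeneration to be an isomorphism. It then upgrades this to equality of semistable loci by a finiteness argument: the open immersion $U_{w_i+\epsilon}^{\K}\hookrightarrow U_{w_i+\epsilon}^{\GIT}$ descends to a representable morphism of quotient stacks carrying closed points to closed points; the induced morphism $\oK_{w_i+\epsilon}\to \fM(t(w_i+\epsilon))$ of good moduli spaces is quasi-finite and proper (both spaces are proper), hence finite; \cite[Proposition 6.4]{alper} then makes the stack morphism finite, so the open immersion is finite and therefore surjective. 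That route uses only $1$-PS degenerations that stay inside $U$ by construction, together with properness of the two moduli spaces, and is the missing mechanism your proposal would need to replace.
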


\begin{proof}
For simplicity, denote by $w:=w_i$. 
Since the K-semistable locus $U_{c'}^{\K}$ and the GIT semistable locus $U_{c'}^{\GIT}$ are independent of the choice of  $c'\in (w_i,w_{i+1})$, it suffices to show that $U_{w+\epsilon}^{\K} = U_{w + \epsilon}^{\GIT}$. 
We first show $U_{w+\epsilon}^{\K} \subset U_{w + \epsilon}^{\GIT}$. Assume to the contrary that $[(X,D)]\in U_{w+\epsilon}^{\K}\setminus U_{w+\epsilon}^{\GIT}$. 
We note that by Proposition \ref{prop:K-stackinU} and Lemma \ref{lem:VGITbasics} there are open immersions $U_{w+\epsilon}^\K \hookrightarrow U_w^{\K}$ and $U_{w+\epsilon}^{\GIT} \hookrightarrow U_w^{\GIT}$.
By assumption we have $[(X,D)]\in U_{w+\epsilon}^\K\subset U_w^{\K}=U_w^{\GIT}$, hence $[(X,D)]$ is $w$-GIT semistable but $(w+\epsilon)$-GIT unstable. Thus by Lemma \ref{lem:VGITbasics}
there exists a 1-PS $\sigma: \bG_m \to \SL(4)$ such that 
\begin{equation}\label{eq:induction2}
\mu^{N_{t(w)}}([(X,D)], \sigma)=0, \qquad \mu^{N_{t(w+\epsilon)}}([(X,D)], \sigma)<0.
\end{equation}
Denote by $\zeta_0:=\lim_{t\to 0}\sigma(t)\cdot [(X,D)] \in \sP$. Since $[(X,D)]$ is $w$-GIT semistable, by Lemma \ref{lem:zerofut}(1) and \eqref{eq:induction2} we know that $\zeta_0$ is also $w$-GIT semistable, in particular $\zeta_0=[(X_0,D_0)]\in U$. Denote by  $(\cX,w\cD;\cL)/\bA^1$ the test configuration of $(X,wD;\cO_X(1))$ induced by $\sigma$. Hence by \eqref{eq:induction2} and Proposition \ref{prop:proportional}, we have $\Fut(\cX,(w+\epsilon)\cD)<0$. This implies that $(X,(w+\epsilon)D)$ is K-unstable which contradicts the assumption that $[(X,D)]\in U_{w+\epsilon}^{\K}$. Thus we conclude that $U_{w+\epsilon}^{\K}\subset U_{w+\epsilon}^{\GIT}$.

Next, if $[(X,D)] \in U_{w+\epsilon}^\K$ is $(w+\epsilon)$-K-polystable, then we claim that $[(X,D)]$ is $(w+\epsilon)$-GIT polystable. We have already shown that $[(X,D)]$ is $(w+\epsilon)$-GIT semistable. Let us take a  1-PS $\sigma'$ of $\SL(4)$ degenerating $[(X,D)]$ to a $(w+\epsilon)$-GIT polystable point $[(X',D')]$. Hence we have $\mu^{N_{t(w+\epsilon)}}([(X,D)],\sigma')=0$. 
By Proposition \ref{prop:proportional}, we have $\Fut(\cX',(w+\epsilon)\cD';\cL')=0$ where $(\cX',(w+\epsilon)\cD';\cL')$ is the test configuration of $(X,(w+\epsilon)D;\cO_X(1))$ induced by $\sigma'$. Since $[(X',D')]\in U_{w+\epsilon}^{\GIT}\subset U_w^{\GIT}=U_w^{\K}$ by assumption, we know that $(X',wD')$ is K-semistable hence klt. Thus $(\cX',(w+\epsilon)\cD')$ is a special test configuration with vanishing generalized Futaki invariant. Since $(X,(w+\epsilon)D$ is K-polystable, we know that $(X,D)\cong (X',D')$ which implies that $[(X,D)]$ and $[(X',D')]$ belong to the same $\SL(4)$-orbit in $U$. Hence $[(X,D)]$ is $(w+\epsilon)$-GIT polystable.

Finally we show that $U_{w+\epsilon}^{\K}= U_{w+\epsilon}^{\GIT}$. Consider the following commutative diagram
\begin{center}
\begin{tikzcd}
U_{w+\epsilon}^{\K} \arrow[d, hook, "f"] \arrow [r] & {[U_{w+\epsilon}^{\K}/\PGL(4)]} \arrow[d, hook, "g"]\arrow [r] & U_{w+\epsilon}^{\K}\sslash \PGL(4) \arrow[d, "h"]\\
U_{w+\epsilon}^{\GIT}  \arrow [r] & {[U_{w+\epsilon}^{\GIT}/\PGL(4)]} \arrow[r]& U_{w+\epsilon}^{\GIT}\sslash \PGL(4)
\end{tikzcd} 
\end{center}
Since $f$ is an open immersion between smooth varieties, its descent $g$ is separated and representable. By Lemma \ref{prop:K-stackinU} we know $[U_{w+\epsilon}^{\K}/\PGL(4)]\cong \oMK_{w+\epsilon}$, hence $g$ maps closed points to closed points as shown in the previous paragraph, and $h$ is quasi-finite. Since the GIT quotients on the third column are isomorphic to the K-moduli space $\oK_{w+\epsilon}$ and the VGIT moduli space $\fM(t(w+\epsilon))$ respectively, they are both proper. Thus $h$ is a finite morphism. Then we apply \cite[Proposition 6.4]{alper} to conclude that $g$ is a finite morphism as well. In particular, this implies that $f$ is finite hence surjective. The proof is finished. 
\end{proof}

\begin{proof} [Proof of Theorem \ref{thm:wallscoincide}]
By Propositions \ref{prop:induction0}, \ref{prop:induction1}, and \ref{prop:induction2} on induction of the walls $\{w_i\}_{i=0}^{\ell}$, we conclude that $U_c^{\K}=U_c^{\GIT}$ for any $c\in (0,\frac{1}{2})$. Hence the theorem follows from Proposition \ref{prop:K-stackinU} and the definition $\sM(t(c))=[U_c^{\GIT}/\PGL(4)]$.
\end{proof}

\begin{proof}[Proof of Theorem \ref{mthm:thmintro}]
Part (1) follows from Theorem \ref{thm:firstwall}. Part (2) is precisely Theorem \ref{thm:wallscoincide}.
\end{proof}

\begin{proof}[Proof of Theorem \ref{mthm:spaceiso}]
The first isomorphism follows from Theorem \ref{mthm:thmintro}. The second isomorphism follows from Theorem \ref{thm:LOwallcrossings}. For the proportionality statements, the first one between CM $\bQ$-line bundle and VGIT polarization follows from Proposition \ref{prop:proportional}, while the second one between VGIT polarization and push forward of $\lambda+\beta\Delta$ follows from \cite[Proposition 7.6]{LO}.
\end{proof}

\begin{proof}[Proof of Theorem \ref{mthm:slcK3}]
Since there are finitely many K-moduli (resp. GIT) walls for $c\in (0,\frac{1}{2})$ (resp. $t\in (0,\frac{1}{2})$),  we may assume that $\epsilon$ and $\epsilon'$ satisfy the relation $\epsilon=\frac{3\epsilon'}{2\epsilon'+2}$, i.e.  $\frac{1}{2}-\epsilon' = t(\frac{1}{2}-\epsilon)$.
By Theorem \ref{mthm:thmintro}, we have $\fM(\frac{1}{2}-\epsilon')\cong \oK_{\frac{1}{2}-\epsilon}$. The isomorphism $\fM(\frac{1}{2}-\epsilon')\cong\widehat{\sF}$ follows from \cite[Theorem 1.1]{LO}. 

For part (1), 
from the above isomorphisms we know that $\fM(\frac{1}{2}-\epsilon')$ parametrizes K-polystable klt log Fano pairs $(X,(\frac{1}{2}-\epsilon')D)$. By ACC of log canonical thresholds \cite{HMX14}, we know that $(X,\frac{1}{2}D)$ is log canonical. Hence taking double cover of $X$ branched along $D$ we obtain a hyperelliptic K3 surface $S$ with only slc singularities. The proof is finished.

For part (2), notice that by taking fiberwise double covers of the universal log Fano family over $\oMK_{\frac{1}{2}-\epsilon}$, we obtain a universal family of slc K3 surfaces $\cS\to \cT$ where $\cT\to \oMK_{\frac{1}{2}-\epsilon}$ is a $\bm{\mu}_2$-gerbe. In particular, the Hodge line bundle $\lambda_{\Hodge,\cT}$ of the K3 family $\cS/\cT$ is the pull-back of the Hodge line bundle $\lambda_{\Hodge, \frac{1}{2}-\epsilon}$ over $\oMK_{\frac{1}{2}-\epsilon}$. Taking good moduli spaces of $\cT\to T$ and $\oMK_{\frac{1}{2}-\epsilon}\to \oK_{\frac{1}{2}-\epsilon}$ gives an isomorphism $T\xrightarrow{\cong}\oK_{\frac{1}{2}-\epsilon}$. Since both spaces are isomorphic to $\widehat{\sF}$, we know that $\sF$ admits an open immersion into $T$ whose complement has codimension at least $2$.  In particular, we know that $\lambda_{\Hodge, T}|_{\sF}=\lambda_{\Hodge, \sF}$, and the conclusion follows from $\sF^*=\Proj R(\sF, \lambda_{\Hodge, \sF})$.
\end{proof}

\begin{rem}\label{rem:walls-value}
According to \cite{LO}, the $t$-walls for VGIT quotients $\fM(t)$ and $\beta$-walls for the Hassett-Keel-Looijenga program for $\sF(\beta)=\Proj R(\sF,\lambda+\beta\Delta)$ with $N=18$ (under the transformation rule $t=\frac{1}{4\beta+2}$) are given by 
\[
t\in \left\{\frac{1}{6}, \frac{1}{4}, \frac{3}{10}, \frac{1}{3}, \frac{5}{14}, \frac{3}{8}, \frac{2}{5}, \frac{1}{2} \right\},\qquad
\beta\in \left\{1, \frac{1}{2}, \frac{1}{3}, \frac{1}{4}, \frac{1}{5}, \frac{1}{6}, \frac{1}{8}, 0 \right\}.
\]
By the transformation rule $t=\frac{3c}{2c+2}$, we obtain the $c$-walls for K-moduli stacks $\oMK_c$ are
\[
c\in \left\{\frac{1}{8}, \frac{1}{5}, \frac{1}{4}, \frac{2}{7}, \frac{5}{16}, \frac{1}{3}, \frac{4}{11}, \frac{1}{2} \right\}.
\]
Note that $c=\frac{1}{2}$ corresponds to the log Calabi-Yau wall crossing $\oK_{\frac{1}{2}-\epsilon}\to \sF^*$, while the rest walls are in the log Fano region.
\end{rem}

\begin{rem}\label{rem:walls-detail}(cf. \cite[Section 6]{LO}) Let $i\in \{1,2,\cdots, 7\}$ be an index. For the $i$-th K-moduli wall $c_i$, we have K-moduli wall crossing morphisms
\[
\oK_{c_i-\epsilon}\xrightarrow{\phi_i^{-}}\oK_{c_i}\xleftarrow{\phi_i^{+}}\oK_{c_i+\epsilon}.
\]
Denote by $\Sigma_i^{\pm}$ the closed subset of $\oK_{c_i\pm\epsilon}$ parametrizing pairs that are  $(c_i\pm \epsilon)$-K-polystable but not $c_i$-K-polystable. As observed in \cite[Section 6]{LO}, we know that a general point $[(X,D)]$ in $\Sigma_i^{-}$ (resp.  $\Sigma_i^{+}$) parametrizes a curve $D$ on $X\cong\bP^1\times\bP^1$ (resp. $X\cong\bP(1,1,2)$).
In Table \ref{table:singularities}, we rephrase results from \cite{LO}, especially \cite[Table 2]{LO}, to describe the generic singularities (in local analytic form) presented in the curves $D$. 
Note that a general curve $D$ in $\Sigma_i^+$ is smooth when $i=1$, and singular only at the cone vertex $v=[0,0,1]$ of $\bP(1,1,2)$ when $2\leq i\leq 7$.

\begin{table}[htbp!]\renewcommand{\arraystretch}{1.5}
\caption{Singularities along the K-moduli  walls}\label{table:singularities}
\begin{tabular}{|c|c|l|l|}
\hline 
$i$ & $c_i$   & \textbf{Sing. of $D$ in $\Sigma_i^-$} & \textbf{Sing. of $D$ in $\Sigma_i^+$}\\ \hline \hline 
1 & $\frac{1}{8}$  & quadruple conic     &     $v\not\in D$                   \\ 
2 & $\frac{1}{5}$  & triple conic + transverse conic  &   $A_1$         \\ 
3 & $\frac{1}{4}$  & $J_{4,\infty}: ~x^3+x^2y^4=0$     &     $A_2$            \\ 
4 & $\frac{2}{7}$  & $J_{3,0}:~ x^3 + b_1 x^2y^3 + y^9 + b_2 xy^7=0$       &   $A_3$                    \\ 
5 & $\frac{5}{16}$ & $E_{14}:~ x^3 + y^8 + axy^6 = 0$  &     $A_4$      \\ 
6 & $\frac{1}{3}$  & $E_{13}:~ x^3 + xy^5 + ay^8 = 0$      & $A_5$              \\ 
7 & $\frac{4}{11}$ & $E_{12}:~ x^3 + y^7 + axy^5 = 0$      &  $A_7$             \\ \hline
\end{tabular}
\end{table}

\end{rem}

\section{Some results for $(d,d)$ curves}\label{sec:generaldegree}

In this section we discuss some generalizations of our results to $(d,d)$-curves on $\bP^1\times\bP^1$ including the proof of Theorem \ref{mthm:alldeg}. We  assume $d\geq 3$ throughout this section. 

\subsection{VGIT for $(2,d)$ complete intersections in $\bP^3$}
Let $\bfP_{(d,d)}:=\bP(H^0(\bP^1\times\bP^1,\cO(d,d)))$. We say a $(d,d)$-curve $C$ on $\bP^1\times\bP^1$ is \emph{GIT (poly/semi)stable} if $[C]$ is GIT (poly/semi)stable with respect to the natural $\Aut(\bP^1\times\bP^1)$-action on $(\bfP_{(d,d)},\cO(2))$. We define the GIT moduli stack $\sM_d$ and the GIT moduli space $\fM_d$ of degree $(d,d)$ curves as
\[
\sM_d:= [\bfP_{(d,d)}^{\rm ss}/\Aut(\bP^1\times\bP^1)],\qquad
\fM_d:=\bfP_{(d,d)}^{\rm ss}\sslash\Aut(\bP^1\times\bP^1).
\]

Next, we describe the VGIT of $(2,d)$ complete intersection curves in $\bP^3$ based on \cite{benoist, CMJL14, LO}. Our set-up is a direct generalization of Section \ref{sec:LOG-VGIT}. Let 
\[
\pi:\bP(E_d)\to \bP(H^0(\bP^3,\cO(2)))=\bP^9
\]
be the projective space bundle with fiber $\bP(H^0(Q,\cO_Q(d)))$ over a quadric surface $[Q]\in\bP^9$. Let $f: (\sX,\sD)\to \bP(E_d)$ the universal family of quadric surfaces with $(2,d)$ intersections over $\bP(E_d)$. Denote by $\eta:=\pi^*\cO_{\bP^9}(1)$ and $\xi:=\cO_{\bP(E_d)}(1)$.  Then we have the following result of Benoist, where a special case of $d=4$ is stated in Proposition \ref{prop:benoist2,4}.

\begin{prop}\label{prop:benoist-alldeg}\cite[Theorem 2.7]{benoist}
If $t \in \bQ$, then the $\bQ$-Cartier class $\oN_t:=\eta + t\xi$ on $\bP(E_d)$ is ample if and only if $t \in (0, \frac{1}{d-1}) \cap \bQ$. 
\end{prop}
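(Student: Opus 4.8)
The plan is to determine the full nef cone of the projective bundle $\pi\colon\bP(E_d)\to\bP^9$ and to recognize the $\bQ$-classes $\eta+t\xi$ with $0<t<\frac{1}{d-1}$ as exactly its interior. Since $\bP^9$ has Picard rank one and $\pi$ is a projective bundle, $\Pic(\bP(E_d))_{\bQ}=\bQ\eta\oplus\bQ\xi$, so the nef cone is a two-dimensional cone and the ample cone is its interior; it therefore suffices to identify the two boundary rays. By Kleiman's criterion this amounts to producing, for each candidate boundary ray, a curve on which the corresponding class has degree zero, together with a proof that these curves span the cone of curves $\overline{\mathrm{NE}}(\bP(E_d))$.

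The lower ray is immediate. A line $\ell$ contained in a fibre of $\pi$ satisfies $\eta\cdot\ell=0$ and $\xi\cdot\ell=1$, so $(\eta+t\xi)\cdot\ell=t$, which is negative for $t<0$ and zero for $t=0$. Thus $\eta+t\xi$ is not nef for $t<0$, and $\eta$ (the case $t=0$) spans a boundary ray, being nef as a pullback of an ample class but not ample since it is $\pi$-trivial. This disposes of the range $t\le 0$.

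For the upper ray I would analyse the positivity of $E_d$ via the multiplication sequence. Writing $\calQ\subset\bP^3\times\bP^9$ for the universal quadric and $\phi\colon\calQ\to\bP^9$ for the projection, twisting the ideal-sheaf sequence of $\calQ$ by $\cO(d,0)$ and pushing forward (using $H^1(\bP^3,\cO(d-2))=0$ for $d\ge 2$) yields the exact sequence
\[
0\to H^0(\bP^3,\cO(d-2))\otimes\cO_{\bP^9}(-1)\xrightarrow{\ \cdot Q\ } H^0(\bP^3,\cO(d))\otimes\cO_{\bP^9}\xrightarrow{\ q\ } E_d\to 0,
\]
with $q$ fibrewise restriction of degree-$d$ forms to $Q$. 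Restricting this to an arbitrary curve $\Gamma\xrightarrow{\nu}\bP^9$ of degree $e$ and studying line subbundles $L\hookrightarrow E_d|_{\Gamma}$, the goal is the sharp bound $\deg L\le (d-1)e$, with equality attained along a suitable special pencil of quadrics (a line in $\bP^9$ lying in the degenerate locus $\bP(E_d)\setminus U$, e.g.\ one whose members share a line of $\bP^3$). Under the correspondence between such subbundles and sections $C\subset\bP(E_d)$ over $\Gamma$—for which $\eta\cdot C=e$ and $\xi\cdot C=-\deg L$—this says exactly that $(\eta+t\xi)\cdot C\ge 0$ for all curves $C$ precisely when $t\le\frac{1}{d-1}$, and that the bound is sharp. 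Hence $\eta+\frac{1}{d-1}\xi$ spans the second boundary ray, and by Kleiman $\eta+t\xi$ is ample exactly for $0<t<\frac{1}{d-1}$; specializing to $d=4$ recovers Proposition \ref{prop:benoist2,4}.

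The main obstacle is the sharp subbundle bound $\deg L\le(d-1)e$. The naive estimate coming from $E_d|_{\Gamma}$ being a quotient of a trivial bundle (equivalently, that $q^{-1}(L)$ is a subbundle of a trivial bundle, forcing $\deg q^{-1}(L)\le 0$) only yields the far weaker $\deg L\le e\cdot h^0(\bP^3,\cO(d-2))$. Obtaining the sharp constant $d-1$ requires exploiting the precise structure of the multiplication-by-$Q$ map—the geometry of how degree-$d$ forms restrict to a moving quadric—rather than mere global generation; this is the content of Benoist's computation in \cite[Theorem 2.7]{benoist}, after which the remaining bookkeeping (that the two exhibited curves span $\overline{\mathrm{NE}}$, so Kleiman applies) is routine. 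One subtlety worth recording is that the extremal curve realizing the upper ray lies in the degenerate locus $\bP(E_d)\setminus U$ and not in the complete-intersection locus $U$: over a general line in $\bP^9$ the splitting type of $E_d$ produces a strictly smaller maximal subbundle degree, so only special (boundary) pencils are binding.
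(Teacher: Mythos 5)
The first thing to note is that the paper contains no proof of this proposition: it is imported verbatim from Benoist as \cite[Theorem 2.7]{benoist}, exactly as its $d=4$ special case (Proposition \ref{prop:benoist2,4}) is, so the only ``paper proof'' to compare against is the citation itself. Measured against that, your write-up is a correct reconstruction of the surrounding geometry, but it is not an independent proof, because the one step carrying all the content is deferred back to the very theorem being proved. Your scaffolding is sound: $\Pic(\bP(E_d))_{\bQ}=\bQ\eta\oplus\bQ\xi$; fiber lines kill $t\le 0$; the pushforward sequence
\[
0\to H^0(\bP^3,\cO(d-2))\otimes\cO_{\bP^9}(-1)\to H^0(\bP^3,\cO(d))\otimes\cO_{\bP^9}\to E_d\to 0
\]
is correctly derived from the ideal sequence of $\calQ$; and nefness of $\eta+t\xi$ does reduce, after pulling back to normalizations (so that arbitrary curves, i.e.\ multisections, become sections), to the bound $\deg L\le(d-1)e$ for line subbundles $L\subset\nu^*E_d$, with $\xi\cdot C_L=-\deg L$ in the sub-line-bundle convention the paper uses. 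But when you reach that bound you write that it ``is the content of Benoist's computation in [Theorem 2.7]'' --- which is the statement under proof. As a standalone argument this is circular at precisely its crucial point; as a treatment of the proposition it contributes nothing beyond what the paper already does by citing Benoist.

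Two smaller points. First, the extremal example can actually be made explicit, and doing so would at least settle the ``only if'' direction unconditionally: take the pencil of rank-two quadrics $q_\lambda=x_0(\lambda_0x_1+\lambda_1x_2)$, a line $\Gamma\subset\bP^9$, and the fixed form $g=x_0\prod_{i=1}^{d-1}(\mu_{i,0}x_1+\mu_{i,1}x_2)$ with the $\mu_i\in\bP^1$ distinct. The class of $g$ in $H^0(Q_\lambda,\cO_{Q_\lambda}(d))$ vanishes simply at exactly the $d-1$ parameters $\lambda=\mu_i$, so the constant section $\cO_\Gamma\to E_d|_\Gamma$ saturates to a subbundle $L\cong\cO(d-1)$, and the corresponding section $C_L$ satisfies $(\eta+t\xi)\cdot C_L=1-t(d-1)$, ruling out ampleness for $t\ge\frac{1}{d-1}$. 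Note these quadrics share the plane $\{x_0=0\}$ (not merely a line, as your parenthetical suggests), and $C_L$ lies entirely in $\bP(E_d)\setminus U_{(2,d)}$, confirming your subtlety. Second, what would remain --- and what you cannot outsource --- is the ``if'' direction, i.e.\ the sharp inequality $\deg L\le(d-1)e$ for \emph{all} curves and subbundles; your own observation that the naive estimate only gives $\deg L\le e\cdot h^0(\bP^3,\cO(d-2))$ shows this is a genuine computation, not bookkeeping. Either prove that bound, or leave the proposition as a bare citation, as the paper does.
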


Let $U_{(2,d)}\subset\bP(E_d)$ be the complete intersection locus as an open subset. Then we know that $\codim_{\bP(E_d)}\bP(E_d)\setminus U_{(2,d)}\geq 2$. There is a birational morphism $\chow: U_{(2,d)}\to \Chow_{(2,d)}$ as a restriction of the Hilbert-Chow morphism. Hence the graph of $\chow$ gives a locally closed embedding
\[
U_{(2,d)}\hookrightarrow \bP(E_d)\times \Chow_{(2,d)}.
\]
Denote by $\sP_d$ the closure of $U_{(2,d)}$ in $\bP(E_d)\times \Chow_{(2,d)}$. Let $p_1$ and $p_2$ be the first and second projections from $\sP_d$ to $\bP(E_d)$ and $\Chow_{(2,d)}$, respectively. The action of $\SL(4)$ on $\bP^3$ extends naturally to actions on $U_{2,d}$, $\bP(E_d)$, $\Chow_{(2,d)}$, and $\mathscr{P}_d$. Similar to Section \ref{sec:LOG-VGIT}, we will specify a family of $\SL(4)$-linearized ample $\bQ$-line bundles on $\sP_d$. 

Fix a rational number $0 < \delta < \frac{2}{3d}$.  For $t \in (\delta, \frac{2}{d}] \cap \bQ$, consider the $\mathbb{Q}$-line bundle 
\[N_t := \frac{2 - dt}{2-d\delta} p_1^*(\eta + \delta \xi) + \frac{t - \delta}{2-d\delta} p_2^*L_{\infty} ,\]
where $L_{\infty}$ is the restriction of the natural polarization of the Chow variety to $\Chow_{(2,d)}$. Since $\frac{2}{3d}<\frac{1}{d-1}$, Proposition \ref{prop:benoist-alldeg} implies that $\eta+\delta\xi$ is ample on $\bP(E_d)$. It is clear that $L_\infty$ is ample on $\Chow_{(2,d)}$. Hence $N_t$ is ample for $\delta<t<\frac{2}{d}$ and semiample for $t=\frac{2}{d}$.

\begin{definition}\label{def:VGIT-alldeg}
Let $\delta \in \mathbb{Q}$ satisfy $0 < \delta < \frac{2}{3d}$. For each $t \in (\delta, \frac{2}{d}) \cap \bQ$, we define the VGIT quotient stack $\sM_d(t)$  and the VGIT quotient space $\fM_d(t)$ of slope $t$ to be
\[ \sM_d(t) := [\sP_d^{\rm ss}(N_t)/\PGL(4)], \quad \fM_d(t):=\sP_d\sslash_{N_t} \SL(4).\]
\end{definition}

The above definition a priori depends on the choice of $\delta\in (0,\frac{2}{3d})$. Nevertheless, similar to \cite{LO} we will show in Theorem \ref{thm:LOmain-alldeg}(1) that both $\sM_d(t)$ and $\fM_d(t)$ do not depend on the choice of $\delta$, hence are well-defined for all $t\in (0,\frac{2}{d})$. Before stating the main VGIT result Theorem \ref{thm:LOmain-alldeg}, we need some preparation. 


\begin{lem}\label{lem:proportional-alldeg}
With notation as above, we have $N_t|_{U_{(2,d)}}= \oN_t|_{U_{(2,d)}}$ for any $t\in (\delta,\frac{2}{d}]\cap \bQ$. 
\end{lem}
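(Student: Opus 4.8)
The plan is to reduce the identity to a single intersection-theoretic computation, namely the generalization of Proposition \ref{prop:Linfinity} to arbitrary $d$, and then to finish by matching coefficients. Throughout write $U := U_{(2,d)}$. First I would record how the two classes restrict to $U$. Since $U$ is embedded in $\bP(E_d) \times \Chow_{(2,d)}$ as the graph of $\chow$, the projection $p_1$ restricts to the open immersion $U \hookrightarrow \bP(E_d)$ while $p_2$ restricts to $\chow$. Hence
\[
N_t|_U = \frac{2-dt}{2-d\delta}\bigl(\eta|_U + \delta\,\xi|_U\bigr) + \frac{t-\delta}{2-d\delta}\,\chow^* L_\infty,
\qquad \oN_t|_U = \eta|_U + t\,\xi|_U,
\]
so that the whole statement comes down to expressing $\chow^* L_\infty$ in terms of $\eta|_U$ and $\xi|_U$.

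The key claim is that $\chow^* L_\infty = (d\eta + 2\xi)|_U$. Since $\bP(E_d)$ is smooth and $\codim_{\bP(E_d)}(\bP(E_d)\setminus U) \geq 2$, the class $\chow^* L_\infty$ has a unique extension $\oL_\infty$ to $\bP(E_d)$, and it suffices to show $\oL_\infty = d\eta + 2\xi$. This is exactly the computation of \cite[Proposition 5.4]{LO} (our Proposition \ref{prop:Linfinity}) carried out for general $d$. Concretely, writing $W = \bP^3 \times \bP(E_d)$ with projections $\pr_1,\pr_2$ and $H := \pr_1^*\cO_{\bP^3}(1)$, the universal quadric and the universal degree-$d$ surface have classes $[\sX] = 2H + \pr_2^*\eta$ and $dH + \pr_2^*\xi$, so the universal cycle is $[\sD] = (2H + \pr_2^*\eta)(dH + \pr_2^*\xi)$. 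Applying the intersection-theoretic description of the Chow polarization, one has $c_1(\chow^* L_\infty) = \pr_{2*}\bigl(H^2 \cdot [\sD]\bigr)$; since $\pr_{2*}(H^3)=1$, $\pr_{2*}(H^j)=0$ for $j<3$, and $H^4 = 0$, only the terms $2H^3\pr_2^*\xi$ and $dH^3\pr_2^*\eta$ survive, giving $\oL_\infty = d\eta + 2\xi$ (and recovering $4\eta+2\xi$ when $d=4$, as a consistency check).

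Finally, substituting $\chow^* L_\infty = (d\eta + 2\xi)|_U$ into the expression for $N_t|_U$ and collecting terms, the coefficient of $\eta|_U$ is $\frac{(2-dt)+d(t-\delta)}{2-d\delta} = 1$ and the coefficient of $\xi|_U$ is $\frac{(2-dt)\delta + 2(t-\delta)}{2-d\delta} = \frac{t(2-d\delta)}{2-d\delta} = t$, so $N_t|_U = \eta|_U + t\,\xi|_U = \oN_t|_U$. The only nontrivial input is the value $\oL_\infty = d\eta + 2\xi$: the main thing to verify is that the Chow-form computation of the $d=4$ case goes through verbatim with $\cO_{\bP^3}(4)$ replaced by $\cO_{\bP^3}(d)$. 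I expect this to be the main obstacle only in the bookkeeping sense — once it is in place, the remaining step is the elementary coefficient matching above, which in particular makes the definition of $N_t$ independent of $\delta$ on $U$.
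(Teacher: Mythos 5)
Your proposal is correct and follows essentially the same route as the paper: the paper's proof likewise reduces everything to the identity $\oL_\infty = d\eta + 2\xi$ (citing ``the same argument as \cite[Proposition 5.4]{LO}'') and then concludes by exactly your coefficient matching $N_t|_{U_{(2,d)}} = (\eta + t\xi)|_{U_{(2,d)}}$. The only difference is that you spell out the intersection-theoretic computation $\pr_{2*}\bigl(H^2\cdot[\sD]\bigr) = d\eta + 2\xi$ that the paper leaves as a citation, which is a correct and welcome elaboration rather than a different argument.
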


\begin{proof}
Denote by $\oL_\infty$ the unique extension of $L_\infty|_{U_{(2,d)}}$ to $\bP(E_d)$. By the same argument as \cite[Proposition 5.4]{LO}, we get that $\oL_\infty=d\eta+2\xi$. Hence we have
\begin{align*}
N_t|_{U_{(2,d)}}& =\frac{2 - dt}{2-d\delta} (\eta + \delta \xi)|_{U_{(2,d)}} + \frac{t - \delta}{2-d\delta} \oL_{\infty}|_{U_{(2,d)}}\\
& = \frac{2 - dt}{2-d\delta} (\eta + \delta \xi)|_{U_{(2,d)}} + \frac{t - \delta}{2-d\delta} (d\eta+2\xi)|_{U_{(2,d)}} = (\eta+t\xi)|_{U_{(2,d)}}.
\end{align*}
The proof is finished.
\end{proof}

The following  lemma is very useful (see \cite[Propositions 4.6 and 6.2]{CMJL14} and Lemma \ref{lem:GITssU} for $d=3,4$).

\begin{lem}\label{lem:GITssU-alldeg}
For each $t\in (\delta,\frac{2}{d})\cap\bQ$ (resp. $t\in (0,\frac{1}{d-1})\cap\bQ)$, the VGIT semistable locus $\sP_d^{\rm ss}(N_t)$ (resp. $\bP(E_d)^{\rm ss}(\oN_t)$) of slope $t$ is a Zariski open subset of $U_{(2,d)}$. 
\end{lem}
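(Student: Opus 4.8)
The plan is to prove both inclusions by showing that every point of the boundary $\sP_d\setminus U_{(2,d)}$ (resp. $\bP(E_d)\setminus U_{(2,d)}$) is GIT-unstable for the relevant polarization, so that the semistable locus is forced into the open set $U_{(2,d)}$. Since $\codim_{\bP(E_d)}(\bP(E_d)\setminus U_{(2,d)})\geq 2$, the first step is to identify this boundary. Over a quadric $Q$ of rank $\geq 3$ the surface $Q$ is integral, and since a point of $\bP(E_d)$ corresponds to a nonzero section of $\cO(d)|_Q$ (which cannot vanish on $Q$), it cuts out an honest $(2,d)$ complete intersection curve; hence $\bP(E_d)\setminus U_{(2,d)}$ lies entirely over the locus of quadrics of rank $\leq 2$, and $\sP_d\setminus U_{(2,d)}$ is its $p_1$-preimage. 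This geometric reduction is exactly the one carried out for $d=4$ in the proof of Lemma \ref{lem:GITssU} (\cite[Proposition 5.11]{LO}) and for $d=3$ in \cite{CMJL14}, and the remainder of the argument replicates that method with the numerical bounds adjusted to general $d$.

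For the statement on $\bP(E_d)$ with $\oN_t=\eta+t\xi$, I would run the Hilbert--Mumford criterion with diagonal one-parameter subgroups. Writing a boundary point as a pair $(Q,D)$ with $Q$ of rank $\leq 2$, a representative computation is the rank-one case $Q=\{x_0^2=0\}$: taking $\lambda=\mathrm{diag}(s^{-3},s,s,s)$ one finds $\mu^{\eta}((Q,D),\lambda)=-6$, while reducing the defining form of $D$ modulo $x_0^2$ and reading off minimal weights gives $\mu^{\xi}((Q,D),\lambda)\leq d$. Hence
\[
\mu^{\oN_t}((Q,D),\lambda)\leq -6+td<0\qquad\text{for all } t<\tfrac{6}{d},
\]
which in particular covers Benoist's ample range $t\in(0,\tfrac{1}{d-1})$ from Proposition \ref{prop:benoist-alldeg}, since $\tfrac{1}{d-1}<\tfrac{6}{d}$ for $d\geq 3$. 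The rank-two case $Q=\{x_0x_1=0\}$ is handled by an entirely analogous weight count, completing the $\bP(E_d)$ statement.

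For the statement on $\sP_d$ with $N_t$, the new feature is that $N_t$ involves $p_2^*L_\infty$ rather than $p_1^*\oN_t$, and these agree only over $U_{(2,d)}$: by Lemma \ref{lem:proportional-alldeg} together with the identity $\oL_\infty=d\eta+2\xi$ one computes
\[
N_t-p_1^*\oN_t=\tfrac{t-\delta}{2-d\delta}\bigl(p_2^*L_\infty-p_1^*\oL_\infty\bigr),
\]
a class supported on the $p_1$-exceptional boundary. The key observation is that the weight estimate above is in fact uniform: for the destabilizing $\lambda$ and any $t\leq\tfrac{2}{d}$ one has $\mu^{p_1^*\oN_t}(z,\lambda)=\mu^{\oN_t}(p_1(z),\lambda)\leq -6+td\leq -4$, so the quadric contribution alone stays strictly negative across the whole range $(\delta,\tfrac{2}{d})$, even though $\tfrac{2}{d}>\tfrac{1}{d-1}$ for $d\geq 3$. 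The plan is therefore to keep this same $\lambda$ and reduce the problem to bounding the Chow discrepancy: writing $\mu^{N_t}(z,\lambda)=\mu^{p_1^*\oN_t}(z,\lambda)+\tfrac{t-\delta}{2-d\delta}\bigl(\mu^{p_2^*L_\infty}(z,\lambda)-\mu^{p_1^*\oL_\infty}(z,\lambda)\bigr)$ and noting that the prefactor $\tfrac{t-\delta}{2-d\delta}$ is uniformly bounded on the interval, it suffices to show that the correction term is bounded above by the slack $4$.

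The main obstacle is precisely this last Chow computation: controlling $\mu^{L_\infty}$ on the boundary $1$-cycles of $\sP_d$ (degree-$2d$ cycles supported on degenerate quadrics) and bounding the discrepancy $\mu^{p_2^*L_\infty}-\mu^{p_1^*\oL_\infty}$ uniformly, so that the negative quadric contribution is never overwhelmed. This is the step requiring the detailed case analysis of \cite[Section 5]{LO} and \cite{CMJL14}; I expect the general-$d$ version to follow the same strategy, with the uniform bound $-6+td$ above and the extension $\oL_\infty=d\eta+2\xi$ furnishing the necessary numerical control. An alternative, which I would pursue if the discrepancy proves hard to bound directly, is to split the interval and destabilize by the Chow term itself for $t$ near $\tfrac{2}{d}$, exhibiting the boundary cycle as Chow-unstable so that the $p_2^*L_\infty$ contribution dominates.
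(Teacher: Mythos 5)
Your first half (instability of the boundary of $\bP(E_d)$) is essentially the paper's argument: the paper uses the same $1$-PS $\sigma$ of weights $(-3,1,1,1)$ together with Benoist's inequality. The one refinement you miss is structural: a point of $\bP(E_d)\setminus U_{(2,d)}$ is not just a pair lying over a quadric of rank $\leq 2$, but one whose section vanishes on a component of the quadric, so that a lifting can be written $g=x_0h$ with $h$ of degree $d-1$; this gives the paper the bound $\mu(g,\sigma)\leq d-4$ and hence $\mu^{\oN_t}\leq -2+t(d-4)<0$ uniformly on $(0,\tfrac{2}{d}]$. Your crude bound $\mu^{\xi}\leq d$ only yields $-2+td$ in the rank-two case $q=x_0x_1$ (your displayed estimate $-6+td$ is special to the double plane). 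For the $\bP(E_d)$ statement on $(0,\tfrac{1}{d-1})$ this still suffices, but it destroys the ``slack of $4$'' on which your second half is built.

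That second half, the statement for $\sP_d$, is where the genuine gap lies. Your identity $N_t=p_1^*\oN_t+\tfrac{t-\delta}{2-d\delta}\bigl(p_2^*L_\infty-p_1^*\oL_\infty\bigr)$ is algebraically correct, but the plan built on it fails twice. Quantitatively, for rank-two boundary points the quadric contribution is at best $-2+t(d-4)\leq -\tfrac{8}{d}$ (and with your crude bound it tends to $0$ as $t\to\tfrac{2}{d}$), so there is no uniform slack of $4$ available to absorb a Chow discrepancy. More fundamentally, you never establish any bound on $\mu^{p_2^*L_\infty}-\mu^{p_1^*\oL_\infty}$ along the boundary; you explicitly defer it, and it is exactly the content that cannot be avoided. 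The paper sidesteps the discrepancy entirely: it uses the defining expression $N_t=\tfrac{2-dt}{2-d\delta}p_1^*(\eta+\delta\xi)+\tfrac{t-\delta}{2-d\delta}p_2^*L_\infty$, whose two coefficients are positive precisely for $\delta<t<\tfrac{2}{d}$, notes that a boundary point $z\in\sP_d\setminus U_{(2,d)}$ has $p_1(z)\in\bP(E_d)\setminus U_{(2,d)}$ and $p_2(z)=\chow(\sC)$ with $\sC\in\Hilb_{(2,d)}\setminus U_{(2,d)}$, and then shows both weights are separately negative for the same $\sigma$: the first by the $\bP(E_d)$ computation at slope $\delta$, the second by invoking \cite[Proposition 5.8]{LO}, which gives $\mu^{L_\infty}(\chow(\sC),\sigma)<0$ for such degenerate cycles. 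Your fallback suggestion (``destabilize by the Chow term itself for $t$ near $\tfrac{2}{d}$'') gestures at this, but negativity of the Chow weight is needed on the whole interval $(\delta,\tfrac{2}{d})$, and it is an input you must prove or cite rather than a detail to be deferred; without it your argument does not close.
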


\begin{proof}
We first consider the VGIT semistable locus of $\bP(E_d)$.
Let $([Q], [s])$ be a point in $\bP(E_d)\setminus U_{(2,d)}$ where $Q=(q=0)$ is a non-normal quadric surface in $\bP^3$ and $0\neq s\in H^0(Q,\cO_Q(d))$. Let $g\in H^0(\bP^3, \cO_{\bP^3}(d))$ be a lifting of $s$. We choose suitable projective coordinates $[x_0,x_1,x_2,x_3]$ of $\bP^3$ such that one of the following holds:
\begin{enumerate}[label=(\alph*)]
    \item $q=x_0 x_1$, and $g=x_0 h$ where $h\in \bC[x_0,\cdots, x_3]_{d-1}$, and $x_1\nmid h$.
    \item $q=x_0^2$, and $g=x_0 h$ where  $h\in \bC[x_0,\cdots, x_3]_{d-1}$, and $x_0\nmid h$.
\end{enumerate}
Let $\sigma$ be the $1$-PS in $\SL(4)$ of weights $(-3,1,1,1)$ with respect to the chosen coordinates. By \cite[Proposition 2.15]{benoist}, for any $t\in (0,\frac{2}{d}]$ we have
\[
\mu^{\oN_t}(([Q], [s]), \sigma)\leq \mu(q,\sigma)+t\mu(g,\sigma)\leq -2+t(d-4)<0.
\]
Hence  $([Q], [s])$ is VGIT unstable of slope $t$ by the Hilbert-Mumford numerical criterion. 

Next, we consider the VGIT semistable locus of $\sP_d$. It is clear that any point $z$ in $\sP_d\setminus U_{(2,d)}$ has the form $z=(([Q],[s]), \chow(\sC))$ where $([Q],[s])\in \bP(E_d)\setminus U_{(2,d)}$, $\sC\in \Hilb_{(2,d)}\setminus U_{(2,d)}$, and $\chow: \Hilb_{(2,d)}\to \Chow_{(2,d)}$ is the Hilbert-Chow morphism. We choose $[x_0,\cdots,x_3]$ and $\sigma$ as above. Then
\[
\mu^{N_t}(z, \sigma)=\frac{2-dt}{2-d\delta} \mu^{\oN_{\delta}}(([Q],[s]),\sigma)+\frac{t-\delta}{2-d\delta}\mu^{L_\infty}(\chow(\sC),\sigma).
\]
From the above argument we get $\mu^{\oN_{\delta}}(([Q],[s]),\sigma)<0$. By \cite[Propostion 5.8]{LO} we know that $\mu^{L_\infty}(\chow(\sC),\sigma)<0$. Hence $\mu^{N_t}(z,\sigma)<0$ for any $t\in (\delta,\frac{2}{d})\cap\bQ$ and the proof is finished.
\end{proof}

Indeed, we have a stronger result on VGIT semistable loci (see \cite[Lemma 6.8]{LO} for $d=4$). 

\begin{lem}\label{lem:GITssnormal}
For each $t\in (\delta,\frac{2}{d})\cap\bQ$ (resp. $t\in (0,\frac{1}{d-1})\cap\bQ)$, any VGIT semistable point in  $\sP_d^{\rm ss}(N_t)$ (resp. $\bP(E_d)^{\rm ss}(\oN_t)$) of slope $t$ has the form $([Q],[s])$ where $\rank(Q)\geq 3$.
\end{lem}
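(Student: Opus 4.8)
The plan is to prove the contrapositive: if $\rank(Q)\le 2$, then the point is VGIT unstable for every slope $t$ in the stated range, for both linearizations. By Lemma \ref{lem:GITssU-alldeg} the semistable loci are contained in $U_{(2,d)}$, so a point off $U_{(2,d)}$ is automatically unstable and I may restrict attention to genuine complete intersections $([Q],[s])\in U_{(2,d)}$. Since GIT (in)stability is $\SL(4)$-invariant, I first normalize coordinates: a quadratic form of rank at most $2$ involves only two variables, so after a change of basis I may assume the equation $q$ of $Q$ lies in $\bC[x_0,x_1]$; equivalently $Q$ is singular (or non-reduced) along the line $\{x_0=x_1=0\}$.

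The engine is the single one-parameter subgroup $\sigma(s)=\mathrm{diag}(s^{-1},s^{-1},s,s)$ of $\SL(4)$, i.e. the $1$-PS of weights $(-1,-1,1,1)$. The key numerical observation is that every degree-$2$ monomial in $x_0,x_1$ has $\sigma$-weight exactly $-2$, so $\mu(q,\sigma)=-2$, whereas every degree-$d$ monomial $x^\alpha$ has $\sigma$-weight $-\alpha_0-\alpha_1+\alpha_2+\alpha_3\le\alpha_2+\alpha_3\le d$; hence any lift $g$ of the section has $\mu(g,\sigma)\le d$, so $\mu^{\xi}([s],\sigma)\le d$. Feeding these into Benoist's numerical bound \cite[Proposition 2.15]{benoist}, exactly as in the proof of Lemma \ref{lem:GITssU-alldeg}, gives
\[
\mu^{\oN_t}(([Q],[s]),\sigma)\le \mu(q,\sigma)+t\,\mu^{\xi}([s],\sigma)\le -2+td<0
\]
for all $t<\tfrac{2}{d}$, and in particular for $t\in(0,\tfrac{1}{d-1})$ since $\tfrac{1}{d-1}<\tfrac{2}{d}$ when $d\ge 3$. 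This settles the $\bP(E_d)$ statement at once. The computation is sharp: when $\rank(Q)\ge 3$ the form $q$ contains a monomial of positive $\sigma$-weight, so $\mu(q,\sigma)=2>0$ and this $1$-PS no longer destabilizes, which is consistent with the claim.

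For the $\sP_d$ statement I would use the same $\sigma$ together with the splitting of the Mumford weight induced by $N_t=\frac{2-dt}{2-d\delta}p_1^*(\eta+\delta\xi)+\frac{t-\delta}{2-d\delta}p_2^*L_{\infty}$. The first summand is a positive multiple of $\mu^{\oN_\delta}(([Q],[s]),\sigma)\le -2+\delta d<0$ by the computation above. The remaining, and only genuinely delicate, task is to control the Chow contribution $\mu^{L_{\infty}}(\chow(\sC),\sigma)$ with $\sC=Q\cap S$. Here I would invoke the identity $\oL_{\infty}=d\eta+2\xi$ (the degree-$d$ analogue of \cite[Proposition 5.4]{LO} already used in Lemma \ref{lem:proportional-alldeg}), which on $U_{(2,d)}$ translates into the weight identity $\mu^{L_{\infty}}(\chow(\sC),\sigma)=d\,\mu(q,\sigma)+2\,\mu^{\xi}([s],\sigma)\le -2d+2d=0$. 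Combining the two summands then forces the coefficients to collapse to $\mu^{N_t}(z,\sigma)=\mu(q,\sigma)+t\,\mu^{\xi}([s],\sigma)\le -2+td<0$, in agreement with $N_t|_{U_{(2,d)}}=\oN_t|_{U_{(2,d)}}$. The main obstacle is precisely justifying this Chow weight identity: one must verify that the weight computed at the (possibly non-complete-intersection) limit cycle $\lim_{s\to 0}\sigma(s)\cdot\sC$ is still governed by the line-bundle relation $\oL_{\infty}=d\eta+2\xi$ on $\bP(E_d)$, which is where a suitable generalization of \cite[Proposition 5.8]{LO} is needed.
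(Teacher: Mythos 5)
Your treatment of the $\bP(E_d)$ half is complete and is exactly the paper's argument: the same $1$-PS $\sigma$ of weights $(-1,-1,1,1)$ together with Benoist's bound \cite[Proposition 2.15]{benoist} gives $\mu^{\oN_t}(([Q],[s]),\sigma)\le -2+td<0$ for all $t\in(0,\frac{2}{d})$. The problem is the $\sP_d$ half, and it is precisely the point you flag yourself. Your argument hinges on the exact weight identity $\mu^{L_\infty}(\chow(\sC),\sigma)=d\,\mu^{\eta}(z,\sigma)+2\,\mu^{\xi}(z,\sigma)$; but $\oL_\infty=d\eta+2\xi$ is an identity of line bundle classes on $\bP(E_d)$ obtained by extending $\chow^*L_\infty$ from $U_{(2,d)}$, whereas Mumford weights are computed on the fibers over the fixed point $\lim_{r\to 0}\sigma(r)\cdot z$. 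The identity therefore transfers to weights only when that limit stays inside $U_{(2,d)}$, where $p_1$ identifies $\sP_d$ with the graph of $\chow$ and Lemma \ref{lem:proportional-alldeg} applies. For special $s$ --- e.g. when $g(0,0,x_2,x_3)\equiv 0$ --- the limit escapes $U_{(2,d)}$, the limiting Chow cycle is no longer determined by the $\bP(E_d)$ data, and neither the identity nor even an inequality of the right sign is available without a genuinely new computation (your ``suitable generalization of \cite[Proposition 5.8]{LO}''). As written, your proof of the $\sP_d$ statement is conditional on an unproved lemma, so there is a gap.

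The paper closes this hole without ever computing a Chow weight at a bad limit, by a two-step trick worth internalizing: (i) for \emph{general} $s$ (namely $g(0,0,x_2,x_3)\not\equiv 0$) the limit $\lim_{r\to 0}\sigma(r)\cdot([Q],[s])=([Q],[g(0,0,x_2,x_3)])$ lies in $U_{(2,d)}$, so the limit in $\sP_d$ is the corresponding graph point and Lemma \ref{lem:proportional-alldeg} gives $\mu^{N_t}(z,\sigma)=\mu^{\oN_t}(z,\sigma)<0$ directly, proving instability for general $s$; (ii) the $N_t$-unstable locus is closed in $\sP_d$, and for each fixed $Q$ of rank at most $2$ the points with general $s$ are dense in the fiber, so every point with $\rank(Q)\le 2$ is unstable. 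In other words, the paper replaces your missing Chow-weight lemma by the elementary observation that instability is a closed condition; if you add steps (i) and (ii), your argument becomes complete (and your splitting of $N_t$ into the two summands becomes unnecessary).
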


\begin{proof}
Let $z=([Q],[s])$ be a point in $U_{(2,d)}$ where $\rank(Q)\leq 2$. Hence by Lemma \ref{lem:GITssU-alldeg} it suffices to show instability of $z$ in $\bP(E_d)$ and $\sP_d$ respectively. We will assume $t\in (0,\frac{2}{d})\cap \bQ$ throughout the proof. Choose a projective coordinate $[x_0,\cdots,x_3]$ such that $Q=(q=0)$ is defined by $q=x_0^2$ or $x_0x_1$. Let $g\in H^0(\bP^3, \cO_{\bP^3}(d))$ be a lifting of $s$. Let $\sigma$ be the $1$-PS in $\SL(4)$ of weights $(-1,-1,1,1)$ with respect to the chosen coordinates. Then by \cite[Proposition 2.15]{benoist}
\[
\mu^{\oN_t}(z, \sigma)\leq \mu(q,\sigma)+t\mu(g,\sigma)\leq -2+td<0.
\]
Hence $z$ is $\oN_t$-unstable in $\bP(E_d)$. It is clear that $\lim_{r\to 0}\lambda(r)\cdot ([Q],[s])=([Q], [g(0,0,x_2,x_3)])$ in $\bP(E_d)$. Hence for general $s$ we see that $\lim_{r\to 0}\lambda(r)\cdot ([Q],[s])$ belongs to $U_{(2,d)}$. In particular, Lemma \ref{lem:proportional-alldeg} implies that $\mu^{N_t}(z,\sigma)=\mu^{\oN_t}(z,\sigma)<0$, so $z$ is $N_t$-unstable in $\sP_d$ when $s$ is general. Since the GIT unstable locus is closed, we conclude that $z$ is  $N_t$-unstable for any choice of $s$. 
\end{proof}

The following theorem is a generalization of \cite[Theorem 5.6]{LO}.

\begin{theorem}\label{thm:LOmain-alldeg}
Let $\delta$ be as above. The following hold:
\begin{enumerate}
    \item The VGIT semistable locus $\sP_d^{\rm ss}(N_t)$ is independent of the choice of $\delta$.
    \item For $t \in (\delta, \frac{1}{d-1})$, we have $\sM_d(t)\cong [\bP(E_d)^{\rm ss}(\oN_t)/\PGL(4)]$  and  $\fM_d(t) \cong \bP(E_d) \sslash_{\oN_t} \SL(4)$. 
    \item For $t \in (\delta, \frac{2}{3d})$, we have $\sM_d(t)\cong \sM_d$ and $\fM_d(t) \cong \fM_d$. 
\end{enumerate}
\end{theorem}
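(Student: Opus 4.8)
The plan is to reduce all three statements to a comparison of Hilbert--Mumford indices on the complete intersection locus $U_{(2,d)}$. The two basic facts I would use throughout are that $p_1\colon\sP_d\to\bP(E_d)$ restricts to an isomorphism $p_1^{-1}(U_{(2,d)})\xrightarrow{\cong}U_{(2,d)}$, and that by Lemma~\ref{lem:GITssU-alldeg} every one of the semistable loci $\sP_d^{\rm ss}(N_t)$ and $\bP(E_d)^{\rm ss}(\oN_t)$ is contained in $U_{(2,d)}$. Thus all comparisons take place inside the single $\SL(4)$-invariant quasi-projective variety $U_{(2,d)}$, and I will freely write $z$ for both a point of $U_{(2,d)}$ and its image $p_1(z)$.

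I would treat parts (1) and (2) together. Fix $z=([Q],[s])\in U_{(2,d)}$ and a one-parameter subgroup $\sigma$ of $\SL(4)$. Since $p_1,p_2$ are $\SL(4)$-equivariant and the Hilbert--Mumford index is additive and $\bQ$-linear in the linearization,
\[
\mu^{N_t}(z,\sigma)=\frac{2-dt}{2-d\delta}\,\mu^{\eta+\delta\xi}(z,\sigma)+\frac{t-\delta}{2-d\delta}\,\mu^{L_\infty}(\chow(z),\sigma).
\]
The key input is the weight-level refinement of Lemma~\ref{lem:proportional-alldeg}: for $z\in U_{(2,d)}$ the Chow index of the $(2,d)$ complete intersection equals the index read off from $\oL_\infty=d\eta+2\xi$, that is $\mu^{L_\infty}(\chow(z),\sigma)=d\,\mu^{\eta}(z,\sigma)+2\,\mu^{\xi}(z,\sigma)$. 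Substituting this and $\mu^{\eta+\delta\xi}=\mu^\eta+\delta\mu^\xi$, the $\delta$-dependent coefficients recombine exactly as in the proof of Lemma~\ref{lem:proportional-alldeg} to give $\mu^{N_t}(z,\sigma)=\mu^{\eta}(z,\sigma)+t\,\mu^{\xi}(z,\sigma)=\mu^{\oN_t}(z,\sigma)$. The right-hand side is manifestly independent of $\delta$, so the condition ``$\mu^{N_t}(z,\sigma)\ge 0$ for all $\sigma$'' is independent of $\delta$, proving (1); it also shows $\sP_d^{\rm ss}(N_t)=\bP(E_d)^{\rm ss}(\oN_t)$ as open $\PGL(4)$-invariant subschemes of $U_{(2,d)}$. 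For $t\in(\delta,\tfrac{1}{d-1})$ the class $\oN_t$ is ample by Proposition~\ref{prop:benoist-alldeg}, so $\bP(E_d)\sslash_{\oN_t}\SL(4)$ is a genuine GIT quotient; since the two semistable loci agree as polarized $\SL(4)$-schemes (their polarizations both restrict to $\oN_t|_{U_{(2,d)}}$ by Lemma~\ref{lem:proportional-alldeg}), the quotient stacks are literally identified and their good moduli spaces coincide, giving (2).

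For part (3) I would invoke (2) to replace $\sM_d(t),\fM_d(t)$ by the quotient of $(\bP(E_d),\oN_t)$, and then analyze its semistable locus for $t\in(\delta,\tfrac{2}{3d})$, noting $\tfrac{2}{3d}<\tfrac{1}{d-1}$. By Lemma~\ref{lem:GITssnormal} every semistable point lies over a quadric of rank $\ge 3$. To exclude rank exactly $3$, consider the cone $Q=\{x_0^2+x_1^2+x_2^2=0\}$ with vertex $[0{:}0{:}0{:}1]$ and the one-parameter subgroup $\sigma$ of weights $(-1,-1,-1,3)$; then $\mu^{\eta}([Q],\sigma)=-2$, while by Benoist's numerical formula \cite[Proposition 2.15]{benoist} one has $\mu^{\xi}(z,\sigma)\le 3d$ for every $s$ (the degree-$d$ monomial of smallest weight is $x_3^d$, of weight $-3d$). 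Hence $\mu^{\oN_t}(z,\sigma)\le -2+3dt<0$ precisely when $t<\tfrac{2}{3d}$, so all such points are unstable. Therefore for these $t$ the semistable locus lies over the single $\PGL(4)$-orbit of smooth quadrics $Q\cong\bP^1\times\bP^1$, over which the base quotient $\bP^9\sslash\SL(4)$ is a point. A fibration/Luna-slice argument over this polystable orbit then reduces $\oN_t$-semistability to semistability in the fibre $\bfP_{(d,d)}=\bP(H^0(\bP^1\times\bP^1,\cO(d,d)))$ for the residual $\Aut(\bP^1\times\bP^1)$-action with linearization $\xi|_{\rm fibre}=\cO_{\bfP_{(d,d)}}(1)$ (the $\eta$-contribution is zero since $\Aut(\bP^1\times\bP^1)^\circ=\PGL(2)^2$ has no nontrivial characters, and $\cO(1),\cO(2)$ cut out the same semistable locus). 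This identifies the semistable locus with $\{([Q],[s]):Q\cong\bP^1\times\bP^1,\ C_s\in\bfP_{(d,d)}^{\rm ss}\}$ and the quotients with $\sM_d$ and $\fM_d$, exactly paralleling \cite[Theorem~5.6(2)]{LO}.

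The main obstacle is the weight-level identity $\mu^{L_\infty}(\chow(z),\sigma)=d\,\mu^\eta(z,\sigma)+2\,\mu^\xi(z,\sigma)$ underlying parts (1)--(2). The equality of line bundles on $U_{(2,d)}$ furnished by Lemma~\ref{lem:proportional-alldeg} does \emph{not} by itself suffice, because the index is read off at the limit $\lim_{r\to 0}\sigma(r)\cdot z$, which may leave $U_{(2,d)}$; and since $\sP_d$ need not be normal and $\sP_d\setminus U_{(2,d)}$ need not have codimension $\ge 2$, one cannot extend invariant sections by Hartogs as one does on the smooth variety $\bP(E_d)$. I would resolve this precisely as in \cite[Proposition~5.4]{LO}, computing the Chow index of the complete intersection directly: the orbit $\{\sigma(r)\cdot z\}_{r\ne 0}$ stays in the $\SL(4)$-invariant set $U_{(2,d)}$, on which $p_2^*L_\infty$ and $p_1^*\oL_\infty$ agree $\SL(4)$-equivariantly, so the two indices can differ only by the contribution of the boundary class $p_2^*L_\infty-p_1^*\oL_\infty$ to the orbit closure, which the explicit Chow-weight computation shows to vanish. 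The only secondary point, for part (3), is that no wall of the variation occurs in $(0,\tfrac{2}{3d})$, and this is exactly what the displayed estimate confirms, $t=\tfrac{2}{3d}$ being the threshold at which rank-$3$ quadrics would enter the semistable locus.
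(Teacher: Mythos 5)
Your argument for parts (1) and (2) rests on the pointwise identity of Hilbert--Mumford indices
$\mu^{L_\infty}(\chow(z),\sigma)=d\,\mu^{\eta}(z,\sigma)+2\,\mu^{\xi}(z,\sigma)$ for \emph{every} $z\in U_{(2,d)}$ and every $1$-PS $\sigma$, which you present as a ``weight-level refinement'' of Lemma \ref{lem:proportional-alldeg}. This identity is false, and it fails exactly in the case your proof must handle, namely when $\lim_{r\to 0}\sigma(r)\cdot z$ exits $U_{(2,d)}$. Concretely, take $q=x_0x_3-x_1x_2$, $s=x_1^d|_Q$, and $\sigma$ with weights $(-1,3,-1,-1)$. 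Then $z=([q],[s])\in U_{(2,d)}$, and the cycle of $V(q,s)$ is the $\sigma$-fixed cycle $d\{x_0=x_1=0\}+d\{x_1=x_3=0\}$, whose Chow form is the product of the $d$-th powers of the two line Chow forms; one computes $\mu^{L_\infty}(\chow(z),\sigma)=4d$, whereas the limit in $\bP(E_d)$ is $([x_1x_2],[x_1^d])\notin U_{(2,d)}$ and $d\mu^\eta(z,\sigma)+2\mu^\xi(z,\sigma)=2d+6d=8d$. The structural reason is that the Chow form of a $(2,d)$ complete intersection is a polynomial $R(q,g)$ of bidegree $(d,2)$ in the coefficients, and the extremal weight term of $R(q,g)$ has coefficient proportional to $R(q_0,g_0)$, which vanishes precisely when the limit pair fails to be a complete intersection. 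In general one only gets the inequality $\mu^{L_\infty}(\chow(z),\sigma)\le d\mu^\eta+2\mu^\xi$, equivalently $\mu^{N_t}(z,\sigma)=\mu^{\oN_t}(z,\sigma)-\tfrac{t-\delta}{2-d\delta}D(z,\sigma)$ with a defect $D(z,\sigma)\ge 0$ that can be strictly positive (here $D=4d$). Since the coefficient of $D$ depends on $\delta$, your computation yields neither independence of $\delta$ nor the equality $\sP_d^{\rm ss}(N_t)=\bP(E_d)^{\rm ss}(\oN_t)$; it gives only one-sided inclusions of semistable loci. Your proposed repair via \cite[Proposition 5.4]{LO} cannot close this gap: that proposition, like Lemma \ref{lem:proportional-alldeg}, is an identity of divisor classes on $U_{(2,d)}$ and its extension to $\bP(E_d)$, not an identity of equivariant weights at limit points in $\sP_d\setminus U_{(2,d)}$, where $p_2^*L_\infty$ and $p_1^*\oL_\infty$ genuinely differ.

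The paper's proof of (1) and (2) avoids comparing Hilbert--Mumford indices altogether, and this is the missing idea: since both semistable loci are contained in $U_{(2,d)}$ (Lemma \ref{lem:GITssU-alldeg}) and the two linearizations restrict to the same $\bQ$-line bundle there (Lemma \ref{lem:proportional-alldeg}), the lemma \cite[Lemma 4.17]{CMJL14} identifies the graded rings of invariants, $H^0(\sP_d,N_t^{\otimes m})^{\SL(4)}\cong H^0(U_{(2,d)},N_t|_{U_{(2,d)}}^{\otimes m})^{\SL(4)}$ for sufficiently divisible $m$, simultaneously for all choices of $\delta$ (and for $(\bP(E_d),\oN_t)$ when $\oN_t$ is ample); as each semistable locus is the union of the non-vanishing loci of invariant sections, the loci coincide even though the pointwise indices do not. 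Your part (3) is essentially the paper's argument (destabilize singular quadrics with the $1$-PS fixing the vertex, Benoist's inequality \cite[Proposition 2.15]{benoist}, threshold $t=\tfrac{2}{3d}$, then reduce to the fiber over the smooth quadric as in \cite[Lemma 4.18]{CMJL14}), apart from a sign slip --- with weights $(-1,-1,-1,3)$ the monomial $x_3^d$ has weight $+3d$, not $-3d$ --- and note that the paper invokes the weight identity there only because $\sigma$ fixes $Q$, so the limit provably stays in $U_{(2,d)}$, which is exactly the case where the identity is valid; but since your part (3) relies on your part (2), the gap above must be repaired first.
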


\begin{proof}
(1) Let $\delta$ and $\delta'$ be two rational numbers in $(0, \frac{2}{3d})$. Denote by $G:=\SL(4)$. Denote the corresponding polarization on $\sP_d$ by $N_t$ and $N_t'$. Since both GIT semistable loci $\sP_d$ with respect to $N_t$ and $N_t'$ are contained in $U_{(2,d)}$ where their restrictions are the same by Lemmas \ref{lem:proportional-alldeg} and \ref{lem:GITssU-alldeg}, \cite[Lemma 4.17]{CMJL14} implies that for $m\in\bN$ sufficiently divisible we have
\[
H^0(\sP_d, N_t^{\otimes m})^G\xrightarrow{\cong } H^0(U_{(2,d)}, N_t|_{U_{(2,d)}}^{\otimes m})^G=H^0(U_{(2,d)}, N_t'|_{U_{(2,d)}}^{\otimes m})^G\xleftarrow[]{\cong} H^0(\sP_d, N_t'^{\otimes m})^G.
\]
Since both $\sP_d^{\rm ss}(N_t)$ and $\sP_d^{\rm ss}(N_t')$ are the union of non-vanishing loci of $G$-invariant sections in the first and last terms of the above diagram, we know that they are equal. Hence $\sP_d^{\rm ss}(N_t)$ is independent of the choice of $\delta$. 

(2) The proof is similar to (1) using Lemmas \ref{lem:proportional-alldeg}, \ref{lem:GITssU-alldeg}, and \cite[Lemma 4.17]{CMJL14}.

(3) By (2) it suffices to show that $[\bP(E_d)^{\rm ss}(\oN_t)/\PGL(4)]\cong \sM_d$ for $t\in (0, \frac{2}{3d})$. By Lemma \ref{lem:GITssnormal}, we know that any GIT semistable point $z\in \bP(E_d)$ with respect to $\oN_t$ has the form $z=([Q],[s])$ where $\rank(Q)\geq 3$. We will show that under the assumption $t<\frac{2}{3d}$ the quadric surface $Q$ must be smooth. Assume to the contrary that $Q=(q=0)$ is singular. Then we may choose a projective coordinate $[x_0,\cdots,x_3]$ of $\bP^3$ such that $q\in \bC[x_1,x_2,x_3]_2$. Let $\sigma$ be the $1$-PS in $\SL(4)$ with weights $(3,-1,-1,-1)$. Let $g\in H^0(\bP^3,\cO_{\bP^3}(d))$ be a lifting of $s$. Then by \cite[Proposition 2.15]{benoist} we have
\[
\mu^{\oN_t}(z, \sigma)\leq \mu(q, \sigma)+t\mu(g, \sigma)\leq -2+t\cdot 3d<0.
\]
Hence $z$ is $\oN_t$-unstable on $\bP(E_d)$. Since $\sigma$ fixes $Q$, we know that $\lim_{r\to 0}\sigma(r)\cdot z$ belongs to $U_{(2,d)}$. Hence $\mu^{N_t}(z, \sigma)=\mu^{\oN_t}(z, \sigma)<0$ by Lemma \ref{lem:proportional-alldeg} which implies that $z$ is $N_t$-unstable on $\sP_d$. The rest of the proof is similar to \cite[Lemma 4.18]{CMJL14}.
\end{proof}

\begin{rem}
When $t=\frac{2}{d}$, we can define the VGIT quotient stack and space by 
\[
\sM_d(\tfrac{2}{d}):=[\Chow_{(2,d)}^{\rm ss}/\PGL(4)],\qquad \fM_d(\tfrac{2}{d}):=\Chow_{(2,d)}\sslash \SL(4).
\]
As in \cite{LO}, one can show that there are natural wall crossing morphisms  $\sM_d(\frac{2}{d}-\epsilon)\to \sM_d(\frac{2}{d})$ and $\fM_d(\frac{2}{d}-\epsilon)\to \fM_d(\frac{2}{d})$ for $0<\epsilon\ll 1$. We omit further discussion on the Chow quotient since it is not directly related to our K-moduli spaces when $d\neq 4$ (see e.g. Remark \ref{rem:OSS}).
\end{rem}

\subsection{Proofs}
In this section we prove Theorem \ref{mthm:alldeg}.
We first prove part (1) of Theorem \ref{mthm:alldeg}.


\begin{proof}[Proof of Theorem \ref{mthm:alldeg}(1)]
The proof is similar to Theorem \ref{thm:firstwall}. Consider the universal family $\pi_d: (\bP^1 \times \bP^1 \times \mathbf{P}_{(d,d)}, c\calC) \to \mathbf{P}_{(d,d)}$ over the parameter space of $(d,d)$-curves on $\bP^1\times\bP^1$. It is clear that $\calC \in |\cO(d,d,1)|$.  Hence by Proposition \ref{prop:logCM2} we know that the CM $\bQ$-line bundle $\lambda_{\CM, \pi_d, c\calC}$ is equal to  $\cO_{\bfP_{(d,d)}}(3(2-dc)^2 c)$ which is ample for $c\in (0,\frac{2}{d})$. Hence K-(poly/semi)stability of $(\bP^1\times\bP^1, cC)$ implies GIT (poly/semi)stability of $C$. For the other direction, let $(X,cD)$ be a K-semistable pair parametrized by $\oMK_{d,c}$ with $c\in (0, \frac{1}{2d})$. By \cite{LL16}, for any point $x\in X$ we have
\[
\hvol(x,X)\geq \hvol(x,X,cD)\geq \frac{4}{9}(-K_X-cD)^2=\frac{32}{9}(1-dc)^2>2.
\]
This implies that any $x\in X$ is smooth, hence $X\cong\bP^1\times\bP^1$. The rest of the proof is exactly the same as Theorem \ref{thm:firstwall}.
\end{proof}

\begin{rem}
Similar to Proposition \ref{prop:firstwallreplace}, we have that $c_1=\frac{1}{2d}$ is the first K-moduli wall for $(d,d)$-curves on $\bP^1\times\bP^1$ which replaces $(\bP^1\times\bP^1, dH)$ by $(\bP(1,1,2), D)$ where $H$ is a smooth $(1,1)$-curve.
\end{rem}



Next, we prove part (2) of Theorem \ref{mthm:alldeg}. Before starting the proof, we need some preparation on CM line bundles as a generalization of Propositions \ref{prop:CM-U} and \ref{prop:proportional}. 

\begin{prop}\label{prop:CM-U-alldeg}
For simplicity, denote by $U:=U_{(2,d)}$.
Let $f_U:(\sX_U,\sD_U)\to U$ be the restriction of $f:(\sX,\sD)\to \bP(E_d)$ over $U\subset \bP(E_d)$. 
We denote the CM $\bQ$-line bundle of $f_U$ with coefficient $c$ by $\lambda_{U,c}:=\lambda_{\CM, f_U, c\sD_U}$.
Then $\lambda_{U,c}$ and $N_t|_U$ are proportional up to a positive constant where $t=t(c):=\frac{6c}{dc+4}$ and $c\in (0,\frac{2}{d})$. 
\end{prop}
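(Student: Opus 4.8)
The plan is to combine the two ingredients behind the $d=4$ statement, namely the CM computation of Propositions \ref{prop:CM-Z}--\ref{prop:CM-U} and the identification of the VGIT polarization in Proposition \ref{prop:proportional}, with the only change being the bookkeeping of the exponent $d$. First I would compute the relative log-anticanonical class of $f:(\sX,\sD)\to\bP(E_d)$. Writing $H_\sX:=p_1^*\calO_{\bP^3}(1)|_\sX$ and using $\calO_{\bP^3\times\bP(E_d)}(\sX)=p_1^*\calO_{\bP^3}(2)\otimes p_2^*\eta$ together with $\calO_\sX(\sD)=p_1^*\calO_{\bP^3}(d)|_\sX\otimes f^*\xi$, adjunction gives $K_{\sX/\bP(E_d)}=-2H_\sX+f^*\eta$ exactly as in Proposition \ref{prop:CM-Z} (only the coefficient of $\sD$ changes), so that
\[
-K_{\sX/\bP(E_d)}-c\sD=(2-cd)H_\sX-f^*(\eta+c\xi).
\]

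Next I would push forward the cube. Lifting to the smooth ambient $\bP^3\times\bP(E_d)$, I write the class as $a\,h-(\eta+c\xi)$ with $a:=2-cd$ and $h:=p_1^*\calO_{\bP^3}(1)$, expand the cube, and multiply by $[\sX]=2h+\eta$; applying $p_{2*}$ retains only the coefficient of $h^3$. The surviving terms are $a^3 h^3\cdot\eta$ and $-3a^2h^2(\eta+c\xi)\cdot 2h$, giving $f_*\big((-K_{\sX/\bP(E_d)}-c\sD)^3\big)=a^3\eta-6a^2(\eta+c\xi)$ and hence
\[
-f_*\big((-K_{\sX/\bP(E_d)}-c\sD)^3\big)=(2-cd)^2(4+cd)\Big(\eta+\tfrac{6c}{cd+4}\,\xi\Big).
\]
This specializes to Proposition \ref{prop:CM-Z} when $d=4$, and its leading factor $(2-cd)^2(4+cd)$ is positive for $c\in(0,\tfrac2d)$.

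The main obstacle is that $U=U_{(2,d)}$ is not proper, so Proposition \ref{prop:logCM2} cannot be applied directly to $f_U$; this is precisely the difficulty already resolved in Proposition \ref{prop:CM-U}, and I would repeat that argument. Choosing $l\in\bZ_{>0}$ sufficiently divisible so that $\sL:=-l(K_{\sX/\bP(E_d)}+c\sD)$ is an $f$-ample Cartier divisor on the smooth projective $\sX$, one computes the Knudsen--Mumford coefficients over the smooth projective base $\bP(E_d)$ by Grothendieck--Riemann--Roch and invokes functoriality of the CM construction (as in \cite[Proposition 2.23]{ADL}) to obtain $c_1(\lambda_{\CM,f_U,c\sD_U,\sL_U})=-l^2 f_*\big((-K_{\sX/\bP(E_d)}-c\sD)^3\big)|_U$. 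Dividing by $l^2$ (the fibers have dimension $2$) yields $\lambda_{U,c}=-f_*\big((-K_{\sX/\bP(E_d)}-c\sD)^3\big)|_U$, which by the previous step is a positive multiple of $(\eta+\tfrac{6c}{cd+4}\xi)|_U$.

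Finally I would identify this with the VGIT polarization: by Lemma \ref{lem:proportional-alldeg} we have $N_t|_U=(\eta+t\xi)|_U$, so taking $t=t(c)=\tfrac{6c}{dc+4}$ shows that $\lambda_{U,c}$ and $N_t|_U$ agree up to the positive scalar $(2-cd)^2(4+cd)$, which completes the proof. I expect every step except the non-properness issue to be routine intersection theory; that issue is handled verbatim as in the $d=4$ case, the sole difference throughout being the replacement of $4$ by $d$ in the class of $\sD$.
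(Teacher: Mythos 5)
Your proposal is correct and follows essentially the same route as the paper, whose proof of this proposition is simply the remark that the computations of Section \ref{sec:CM} (Propositions \ref{prop:CM-Z}, \ref{prop:CM-U}, \ref{prop:proportional}) carry over with $4$ replaced by $d$, yielding $\lambda_{U,c}=(2-dc)^2(dc+4)\left(\eta+\tfrac{6c}{dc+4}\xi\right)\big|_U$ and then invoking Lemma \ref{lem:proportional-alldeg}. Your explicit intersection computation, the Grothendieck--Riemann--Roch/functoriality argument handling the non-properness of $U$, and the final identification with $N_t|_U$ are exactly those steps spelled out.
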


\begin{proof}
By the same computations as Section \ref{sec:CM}, we get $\lambda_{U,c}=(2-dc)^2(dc+4)(\eta+\frac{6c}{dc+4}\xi)|_U$.
\end{proof}

\begin{proof}[Proof of Theorem \ref{mthm:alldeg}(2)] 
We first fix some notation. Let $U_c^{\K}$ be the open subset of $U=U_{(2,d)}$ parametrizing $c$-K-semistable log Fano pairs. Let $U_c^{\GIT}:=\sP_d^{\rm ss}(N_t)$ be the open subset of $U$ parametrizing VGIT semistable points of slope $t=t(c)=\frac{6c}{dc+4}$. Similar to Proposition \ref{prop:K-stackinU}, by Theorem \ref{thm:surfacesalld} we know that $[U_c^{\K}/\PGL(4)]\cong \oMK_{d,c}$ as long as $c\in (0, \frac{4-\sqrt{2}}{2d})$. Hence it suffices to show $U_c^{\K}=U_c^{\GIT}$ for $c\in (0,\frac{4-\sqrt{2}}{2d})$. 

We follow the strategy in the proof of Theorem \ref{thm:wallscoincide}, that is, by induction on the walls for K-moduli and VGIT. It suffices to generalize Propositions \ref{prop:induction0}, \ref{prop:induction1}, and \ref{prop:induction2} to $(2,d)$ complete intersections under the assumption $c<\frac{4-\sqrt{2}}{2d}$. The generalization of Proposition \ref{prop:induction0} follows from Theorems \ref{mthm:alldeg}(1) and \ref{thm:LOmain-alldeg}(3). For Propositions \ref{prop:induction1} and \ref{prop:induction2}, we can generalize them using $[U_c^{\K}/\PGL(4)]\cong \oMK_{d,c}$, Proposition \ref{prop:CM-U-alldeg}, and Theorem \ref{thm:generalwall}.
\end{proof}


\begin{rem}\label{rem:OSS}
If $d\neq 4$ then the isomorphism $\oK_{d,c}\cong \fM_d(t)$ can fail for $c>\frac{4-\sqrt{2}}{2d}$. For instance, it was observed in \cite[Example 5.8]{OSS16} that $\bP(1,2,9)$ appears in the K-moduli space $\oK_{3,\frac{1}{2}}$. We will further investigate the case $d=3$ in a forthcoming work. It would also be interesting to consider more general divisors as well as other del Pezzo surfaces. 
\end{rem}

\begin{rem}
In the forthcoming work \cite{ADL21}, we give a complete description of wall-crossing for K-moduli compactifications of $(\bP^3, cS)$ where $S\subset \bP^3$ is a smooth degree $4$ K3 surface. As an application, we prove Laza-O'Grady's conjecture \cite{LO16, LO18b} on birational models of moduli of degree $4$ K3 surfaces. An essential ingredient is Theorem \ref{mthm:thmintro} which fully describes the wall-crossing behavior for K-moduli spaces of hyperelliptic quartic K3 surfaces.
\end{rem}

\bibliographystyle{alpha}
\bibliography{p1p1}

\end{document}